% !TeX TXS-program:bibliography = txs:///biber
% !TEX encoding = utf8
% !TEX spellcheck = en_US

\PassOptionsToPackage{dvipsnames,svgnames}{xcolor}
\documentclass{amsart}

\usepackage{breqn}
\usepackage[utf8]{inputenc}
\usepackage{subfigure}
\usepackage{tikz}
\usetikzlibrary{shapes.multipart}
\usepackage{varwidth}
\usepackage{gensymb} % for \degree
\usepackage{placeins}
\usepackage{todonotes}
\usepackage[font=small]{caption}
% -- biblo via biblatex --
\usepackage[doi=false,isbn=false,backend=biber,style=alphabetic]{biblatex}
\addbibresource{biblio.bib}
% ---
\usepackage[normalem]{ulem}

\usepackage{microtype}

% Order matters !
% hyperref first, then cleveref, theorem definitions after

\usepackage{hyperref}
\usepackage{cleveref}

\graphicspath{{fig/}}

\pdfsuppresswarningpagegroup=1

\numberwithin{equation}{section}

\def\eps{{\epsilon}}

\def\C{{\mathbb C}}

\def\N{{\mathbb N}}

\def\R{{\mathbb R}}

\def\U{{\mathbb U}}
\def\Z{{\mathbb Z}}
\def\CP{{\mathbb C\mathbb P}}

\def\ov{\overline}

\newcommand{\cal}[1]{\mathcal{#1}}
\newcommand{\tend}{\longrightarrow}
\newcommand{\re}{\operatorname{Re}}
\newcommand{\im}{\operatorname{Im}}
\newcommand{\res}{\operatorname{res}}
\renewcommand{\Re}{\re}
\renewcommand{\Im}{\im}

\newcommand{\setof}[2]{\left\{ #1 \,\middle|\, #2  \right\}}

\theoremstyle{plain}
\newtheorem{lemma}{Lemma}[section]
\newtheorem{proposition}[lemma]{Proposition}
\newtheorem*{proposition*}{Proposition}

\newtheorem{theorem}[lemma]{Theorem}
\newtheorem*{theorem*}{Theorem}
\newtheorem{corollary}[lemma]{Corollary}
\newtheorem{convention}[lemma]{Convention}
\newtheorem*{convention*}{Convention}

\theoremstyle{definition}
\newtheorem{example}[lemma]{Example}
\newtheorem{remark}[lemma]{Remark}
\newtheorem{definition}[lemma]{Definition}

\theoremstyle{remark}

\title{Dynamics of generic $1$-parameter perturbations of a vector field with a singular point of codimension $k$}

\begin{document}

\author[A. Ch\'eritat]{Arnaud Ch\'eritat}
\address{Arnaud Ch\'eritat, Institut de Math\'ematiques de Toulouse, Université Paul Sabatier
118, route de Narbonne,
F-31062 Toulouse Cedex 9, France.}
\email{arnaud.cheritat@math.u-toulouse.fr}

\author[C. Rousseau]{Christiane Rousseau}
\address{Christiane Rousseau, D\'epartement de
math\'ematiques et de statistique\\Universit\'e de Montr\'eal\\C.P. 6128,
Succursale Centre-ville, Montr\'eal (Qc), H3C 3J7, Canada.}
\email{christiane.rousseau@umontreal.ca}

\thanks{The second author was supported by NSERC in Canada during part of this research. The two authors thank BIRS where this research was first initiated.}
\subjclass[2010]{32M25, 32S65, 37F75, 34M99} 

\date{\today}

\begin{abstract} 
The paper describes the bifurcation diagram of germs of generic $1$-parameter families of complex vector fields $\dot z = \omega_\eps(z)$ on $\C$, unfolding a singular point of multiplicity $k+1$: $\omega_0= z^{k+1} +o(z^{k+1})$. As a preparatory step, the bifurcation diagram of the family of vector fields $\dot z = z^{k+1}-\eps$ over $\CP^1$ is given,
through a description of its associated translation surface as a star shaped domain (a straightening coordinate, a.k.a.\ time coordinate, of a vector field defines a translation surface structure on the complement of the singularities).
In this article, we define a notion of generic families of vector fields $\dot z = \omega_\eps(z)$, and classify them up to conjugacy by a holomorphic change of coordinate and parameter.
A description of the modulus space and several (almost) unique normal forms are provided.
Then, we describe an analogue of the star shaped domain for these generic vector fields, that we call the geometric model and which allows to describe the local bifurcation diagram for generic families of vector fields in fine details and compare it with that of the simple vector field $\dot z = z^{k+1}-\eps$ restricted to $B(0,r)$.
\end{abstract}

\maketitle

\pagestyle{myheadings}\markboth{A. Ch\'eritat and C. Rousseau}{Generic $1$-parameter perturbations of a singular point  of codimension $k$}

\let\oldtocsection=\tocsection
\let\oldtocsubsection=\tocsubsection
\let\oldtocsubsubsection=\tocsubsubsection

\renewcommand{\tocsection}[2]{\hspace{0em}\oldtocsection{#1}{#2}}
\renewcommand{\tocsubsection}[2]{\hspace{1em}\oldtocsubsection{#1}{#2}}
\renewcommand{\tocsubsubsection}[2]{\hspace{2em}\oldtocsubsubsection{#1}{#2}}

\tableofcontents

\section{Introduction}\label{sec:intro}

Consider a germ of vector field $\dot z = \omega(z)$ defined in a neighborhood of $0$ in $\C$ and such that
\begin{equation}
\dot z = \omega(z)=az^{k+1} + O(z^{k+2})\label{vector_field0}
\end{equation}
for some $k\geq 1$ and $a\in\C^*$.
The singular point $z=0$ of the vector field has multiplicity $k+1$.
It is also called a \emph{parabolic point} of multiplicity $k$ (or of codimension $k$, since
this is a condition of codimension $k$ on the space of vector fields with a singularity at $0$).
The system is not structurally stable and the multiple singular point typically splits into several singular points when perturbing the system.\footnote{In the case $k=0$, i.e.\ $\omega(0)=0$ and $\omega'(0)\neq 0$, the vector field and its perturbations are uniformly locally linearizable, so the classification is simpler.} 
To study all possible behaviors (phase portraits) it is natural to embed the vector field in a generic $k$-parameter unfolding. This has been done by Kostov \cite{K} who provided a simple normal form
\begin{equation} 
\dot z = \frac{z^{k+1} + \eps_{k-1} z^{k-1} + \dots + \eps_1z+\eps_0}{1+a(\eps)z^k}.\label{Kostov}
\end{equation}
It is nearly polynomial, save for the term with $a(\epsilon)$, which cannot be removed.
The Kostov normal form is obtained by a change of coordinate and multi-parameter. 
In \cite{RT} it is shown that the multi-parameter is almost unique, the only degree of freedom coming from rotations in $z$ of order dividing $k$. However, in practice, it is quite common to encounter $1$-dimensional perturbations $\omega_\eps$ of \eqref{vector_field0}. Generically, such a perturbation satisfies $\frac{\partial \omega_\eps}{\partial \eps}\neq0$. What are the possible phase portraits and bifurcations occurring in such perturbations? This is the question we address in this paper.
Note that we are interested in the local behavior near $0$.

One of our hopes is that the fine description done here will be a useful tool for mathematicians to understand and refine the study of generic one-parameter perturbation of parabolic points in one-dimensional complex dynamics, i.e.\ when the time is discrete.

\medskip

In \cref{sec:local_vect} we show that a change of coordinate independent of the parameter allows bringing a germ of generic $1$-parameter perturbation of \eqref{vector_field0} to the form
\begin{equation}\label{eq:pos:0}
\dot z = (z^{k+1} -\eps) u(z,\eps),
\end{equation}
with $u(0,0) \neq 0$. In this form, the
singular points are given by the roots of $z^{k+1}-\eps$. 
A further \emph{linear} change of variable and of parameter allows to assume that $u(0,0)=1$.

Coming back to the initial coordinate we immediately see that, in a generic perturbation, the singular points are approximately located at the vertices of a regular $(k+1)$-gon.
If $u$ were constant, the vector field would be conjugated by a linear change of variable to $\dot z=z^{k+1}-\eps'$ for some $\epsilon'$.
It is hence natural to study, as a first step, the family of vector fields $\dot z=z^{k+1}-\eps$, its phase portraits and bifurcation diagram, as a model for the behavior of the family $\omega_\epsilon$.
We give the bifurcation diagram of this simplified family over $\CP^1$ in \Cref{sec:ideal:global}.
A straightening coordinate (a.k.a.\ time coordinate) of a vector field defines a translation surface structure on the complement of the singularities.
We describe in \Cref{sec:ideal:global} the translation surface associated to $\dot z=z^{k+1}-\eps$, as a star shaped domain, and use it to determine the bifurcation diagram.

As a second step, we explore in \Cref{sec:local_vect} two equivalence problems solved analytically by Rib\'on in \cite{Ri} in a more general setting:
\begin{enumerate}
\item {\bf Equivalence Problem 1:} When are two generic perturbations $\omega_\eps$ of \eqref{vector_field0} conjugate under an analytic change of coordinate?
\item {\bf Equivalence Problem 2:} When are two generic perturbations $\omega_\eps$ of \eqref{vector_field0} conjugate under an analytic change of coordinate and parameter?
\end{enumerate} 
In both cases, the change of coordinate is allowed to depend on $\eps$.
As noted by Rib\'on a necessary and sufficient condition is that the eigenvalues be the same. 
 
In comparison, Rib\'on's paper addresses the very general case of an unfolding of a germ of vector field with zero $1$-jet at the origin, including the zero vector field. His unfoldings can depend on several parameters and are not necessarily generic.  We complete his  work in the $1$-parameter generic case by identifying  the moduli space and showing a realization theorem.
The very general and abstract statement of Rib\'on's result makes it difficult to derive the simple statement in the generic case without reading a significant part of his paper.

We start by studying a variation of the equivalence problem.
We focus on generic $1$-parameter perturbations, which we call \emph{unitary}, such that $\partial \omega_\eps / \partial \eps = -1$ at $(z,\eps)=(0,0)$ and $a=1$ in $\omega_0(z) = az^{k+1} + o(z^{k+1})$, i.e.\ %
\begin{equation}\label{eq:uni}
\omega_\eps(z) = z^{k+1}-\eps + \cal O(z^{k+2},\eps z,\eps^2)
.
\end{equation}
We look at local changes of variables and parameters, i.e.\ germs of analytic diffeomorphisms near $0$:
$\Psi: B(0,r)\times B(0,\rho) \rightarrow \C^2$,
$(z,\eps)\mapsto (\psi_\eps(z),h(\eps))$
such that $\Psi(0,0)=(0,0)$ 
and such that $\Psi$ is \emph{unitary}, i.e.\ its differential at $(0,0)$ is the identity, or  \emph{unipotent} i.e.\ of the form
$
D_{(0,0)} \Psi = \left[\begin{array}{cc} 1 & \bullet \\ 0 & 1 \end{array}\right]
$
where the dot stands for any complex number.
We say that it is \emph{parameter fixing} if $h(\eps)=\eps$.
The questions we address are:
\begin{enumerate}
\item {\bf Equivalence Problem 1':} When are two unitary generic perturbations $\omega_\eps$ of \eqref{vector_field0} conjugate under a parameter fixing unitary/unipotent change of coordinate?
\item {\bf Equivalence Problem 2':} When are two unitary generic perturbations $\omega_\eps$ of \eqref{vector_field0} conjugate under a unitary or unipotent change of coordinate and parameter?
\end{enumerate}
The classification under unitary or unipotent changes turns out to be identical (see \Cref{lem:0fix}).
We address the two equivalence problems above in \Cref{sub:chvar_roots,sub:eig}.
In \Cref{sub:ppal}, we deduce the answer to the initial equivalence problems from the answers to the problems above.
Moreover, we give in \Cref{sub:chvar_roots} additional normal forms and, for all normal forms, including that of Rib\'on, we prove their uniqueness up to a rotation of order $k$ of the coordinate. These in turn provide \emph{canonical parameters}, which become analytic invariants. The details are as follows.
 
For equivalence problems 1' and 2' we introduce an invariant in the form of an \emph{eigenvalue function}. 
It is a function $\lambda$ of $\delta=\eps^{1/k+1}$ which contains all the information about the eigenvalues at each singular point.
More precisely, we first put the family in the form \cref{eq:pos:0} with a unitary change of variable independent of the parameter.
Then the $k+1$ eigenvalues at the singular points of $\omega_\eps$ are given by the $\lambda(\delta_j)$, where $\delta_1,\dots, \delta_{k+1}$ are the solutions of $\delta^{k+1}=\eps$.
The function $\lambda$ vanishes at $\delta=0$ with order precisely $k$ and $\lambda(\delta) \sim (k+1)\delta^k$ as $\delta\to 0$. Moreover, any holomorphic germ equivalent at $0$ to $(k+1)\delta^k$ arises as the eigenvalue function of some family of the form \cref{eq:uni}.
If the family $\omega_\eps$ is not assumed unitary, there are $k+1$ eigenvalue functions associated to it, any two being equal up to right composition with a rotation of order dividing $k+1$.

For Equivalence Problem 1', we show that two generic families $\omega_\eps$ are equivalent if and only if their eigenvalue functions are equal and for Equivalence problem 1, if and only if their eigenvalue functions are equivalent  up to right-composition by a rotation of order  dividing $k+1$.
Equivalence Problems 2 and 2' are later reduced to Equivalence Problem 1 and 1' modulo a change to a \emph{canonical parameter}.
In addition, we show that $\omega_\eps$ is conjugate to the form
\[\dot z = (z^{k+1}-\eps)\sigma(z)\]
by a change of coordinate preserving the parameter, for some analytic function $\sigma$  independent of $\eps$ with $\sigma(0)\neq 0$.

For Equivalence Problems 2 and 2', it is possible to describe the action of a change of parameter on $\lambda$. In particular, we can bring the eigenvalue function to an almost unique normal form:
\begin{equation}\label{form_sigma}
\lambda(\delta)= (k+1)\delta^k\sigma(\delta)= (k+1)\delta^k\left(1+\sum_{j=1}^k a_j(\delta^{k+1})\delta^j\right)
,
\end{equation}
i.e. $\sigma(0)=1$ and the power series expansion of $\sigma$ contains no terms of degree $m(k+1)$ for $m\geq1$.
There is only one degree of freedom, and only if non-unipotent changes of variables and parameters are allowed (Equivalence Problem 2): it comes from substituting $\nu\delta$ to $\delta$ in \eqref{form_sigma} with $\nu$ a $k$-th root of unity.
A choice of parameter for which the eigenvalue function has this form is called \emph{canonical}: it is unique (resp.\ unique up to $\eps\mapsto \nu \eps$, $\nu$ a  $k$-th root of unity if non-unipotent changes of variables and parameters are allowed).
The modulus space is exactly the space of germs of analytic functions of the form \eqref{form_sigma}, (up to this equivalence relation if non-unipotent changes of variables and parameters are allowed).
A second consequence is  that a generic perturbation $\omega_\eps$ of the form \cref{eq:uni} with an eigenvalue function having normal form \eqref{form_sigma} is conjugate by a unitary change of coordinate and parameter to the unique normal form

\begin{equation} \dot z = (z^{k+1}-\eps)\sigma(z),\end{equation}
where the function $\sigma$ does not depend on $\epsilon$.
If non-unipotent changes of variables and parameters are allowed, the only degree of freedom comes from $(z,\eps)\mapsto(\nu z, \nu\eps)$ for $\nu$ a $k$-th root of unity.

In \Cref{sec:geom}, we describe an isomorphic model, which we call the \emph{geometric model} of the translation surface associated to $\omega_\eps$ on $B(0,r)$, inspired by the star shaped model for the simple vector field $\dot z = z^{k+1}-\eps$ of \Cref{sec:ideal:global}.

In \Cref{sec:bif2}, we define the bifurcation locus for the vector field family on $B(0,r)$ as the set of $\eps$ corresponding to vector fields that are not structurally stable. A vector field $\omega_{\eps_0}$ on B(0,r) is structurally stable if all vector fields $\omega_{\eps}$ for $\eps$ in a neighborhood of $\eps_0$ are conjugated to $ \omega_{\eps_0}$  under a homeomorphism, up to a reparametrization of time (we also say that $\omega_{\eps}$ is \emph{topologically orbitally equivalent} to $\omega_{\eps_0}$).
The regions of structural stability are of two 
types:
\begin{itemize}
\item $2k$ sectorial regions in parameter space 
where any singular point is linked to any other one by a finite chain of trajectories inside $B(0,r)$,
\item separated by thin regions where there exist trajectories crossing the disk and separating the singular points into subgroups with no trajectory between disjoint subgroups.
\end{itemize}
The vector field is tangent to the boundary $\partial B(0,r)$ at $2k$ points. The bifurcation diagram consists in $4k^2$ real analytic arcs starting at $\eps=0$, tangent to $2k$ equidistant directions at this point, one for each pair of tangency points: on such an arc there exists  an oriented trajectory tangent to $\partial B(0,r)$ at the two points of the pair.

\section{Study of the polynomial vector field \texorpdfstring{$\dot z= z^{k+1}-\eps$ on $\CP^1$}{z = z\^{}\{k+1\} - epsilon on CP1}}\label{sec:ideal:global}

This particular case will serve as an inspiration for the more general case.

Let us stress that we are interested in \emph{real-time} trajectories, with in mind an application to discrete dynamical systems.
Also, we start with a global study, i.e.\ on $\CP^1$, as opposed to a local study near $0$ that we will deduce later in \Cref{sub:ideal:case}.
This section is deeply inspired by \cite{DES} both for the terminology and the spirit.

\begin{figure}[htbp]
  \begin{center}
    \subfigure[]{
      \includegraphics[width=5cm]{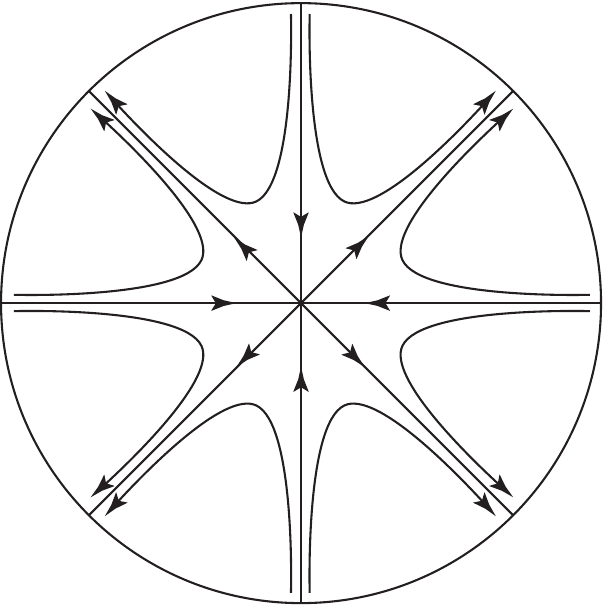}
    }
    \qquad
    \quad
    \subfigure[]{
      \includegraphics[width=5cm]{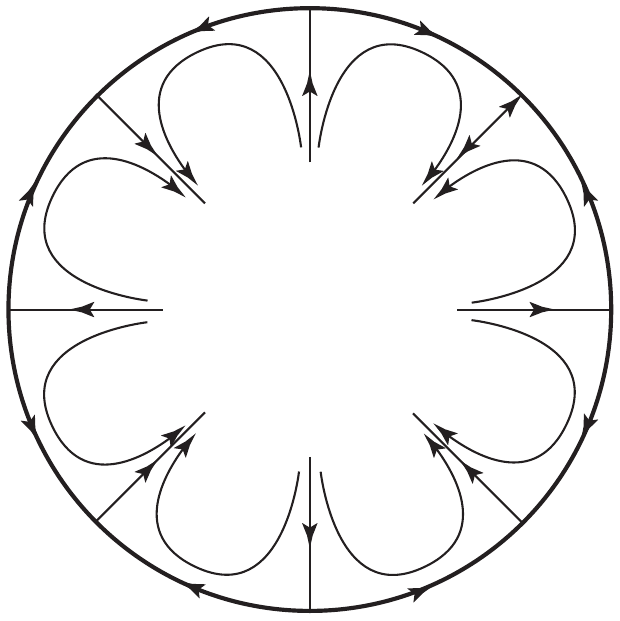}
    }
    \caption{\label{fig:X}
      Case $k=4$. In (a) The phase portrait near $\infty$ after the change $z\mapsto Z=1/z$ and mutiplication by $|Z|^{2(k-1)}$. In (b) the blow-up near $\infty$ after i) passing to the coordinates $(r,\theta)$ where $z=\frac{e^{i\theta}}{r} $, ii) multiplying by $r^k$, and iii) making the change $r\mapsto 1-r$.
    }
\end{center}
\end{figure}

The point at infinity is a pole of of order $k-1$ of the vector field: it has $k$ incoming and $k$ outgoing separatrices, see \Cref{fig:X}(a).
Blowing up the pole at infinity\footnote{In the blow up we replace the pole by a circle whose points are the directions at infinity.}, it is common to represent the neighborhood of infinity as the interior neighborhood of a circle as in \Cref{fig:X}(b).
In drawing the phase portraits we will indifferently draw them on the plane or on a disk where the boundary is invariant under the flow and represents the blow-up of infinity\footnote{A geometric visualization is obtained by projecting the plane on a sphere tangent to the plane at the origin, through the center of the sphere. The plane is projected on the lower hemisphere and the equator represents the circle of directions at infinity. The vector field can be extended to a line field along the equator. For the line field, the equator is invariant and has $2k$ singular points, which look like topological saddles but are reached in finite time. Slowing down the vector field by multiplying it by a real (non holomorphic) function $f(|z|)$ tending to $0$ at the right speed turns them into actual saddles}.

The polynomial vector field 
\begin{equation}\dot z= \frac{dz}{dt}= P_\eps(z)=z^{k+1}-\eps\label{vf}\end{equation} has a pole of order  $k-1$ at infinity (a regular point if $k=1$) and $2k$ \emph{separatrices}, alternately ingoing and outgoing,\footnote{It may happen that some outgoing separatrices are joined to some ingoing ones, i.e.\ they form a homoclinic loop.} i.e.\ trajectories that tend to $\infty$ in finite positive or negative time, with asymptotical directions $\arg{z}= \frac{2\pi j}{k}$ for $j=0, \dots, 2k-1$. The family is invariant under 
\begin{equation}
(z,\eps,t)\mapsto\left (e^{\frac{\pi i}{k}}z,-e^{\frac{\pi i}{k}}\eps, -t\right),\label{sym_rotation}
\end{equation}
and under
\begin{equation}
(z,\eps)\mapsto (\ov{z},\ov{\eps}).\label{sym_real}
\end{equation}
Note that in the particular case when $k$ is odd, \eqref{sym_rotation} yields that the family is invariant under
\begin{equation}
(z,\eps)\mapsto (-z,\eps, -t).\label{sym_imag}\end{equation}
For nonzero $\eps$, using the rescaling  
\begin{equation}\label{rescaling}
(z,\eps,t)\mapsto \left(|\eps|^{-\frac1{k+1}}z, \frac{\eps}{|\eps|},|\eps|^{\frac{k}{k+1}}t\right),
\end{equation}
we can reduce the study of the vector field to the case $|\epsilon|=1$. Hence,  we now suppose that
\begin{equation}
\eps=e^{i\theta}, \qquad\theta\in [0,2\pi].\label{def:eps}
\end{equation}
The singular points given by
$$z_\ell=e^{\frac{i(\theta+2\ell\pi)}{k+1}}, \qquad \ell=0, \dots, k,$$
are the vertices of a regular $(k+1)$-gon, and their eigenvalues are
$$P_\eps'(z_\ell)= (k+1) z_\ell^k=(k+1)e^{\frac{ik(\theta+2\ell\pi)}{k+1}}= (k+1)e^{\frac{ik\theta}{k+1}-\frac{2\ell\pi i}{k+1}}$$
using $\frac{2\pi k\ell}{k+1} \equiv -\frac{2\pi \ell}{k+1}, \bmod\, 2\pi$. In particular, the circular ordering of the eigenvalues is reversed as compared to that of $z_\ell$.
Let $$\kappa_\ell=\frac1{P_\eps'(z_\ell)}= \text{Res}\left(\frac1{P_\eps}, z_\ell\right),$$
which are ordered as the $z_\ell$.
\bigskip

\begin{figure}[htbp]
\begin{center}
\includegraphics[width=9cm]{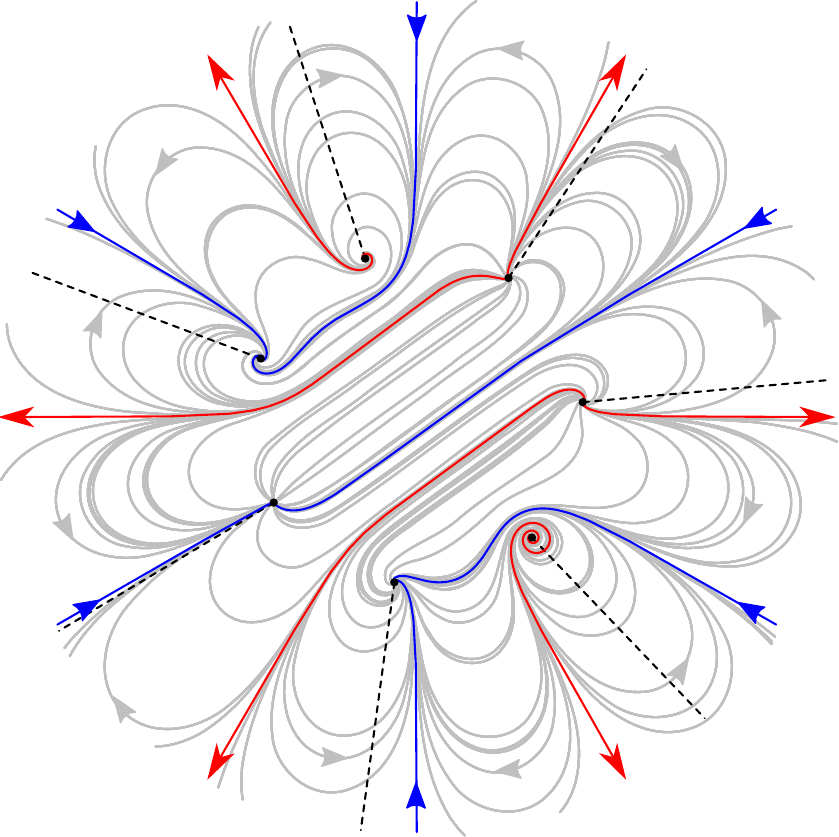}
\end{center}
\caption{Example of phase portrait for the vector field $\dot z=z^7-\epsilon$, with $\epsilon = e^{i2\pi/10}$. The seven black dots indicate the position of the singularities. Dark blue lines are incoming separatrices (field lines coming from infinity). Red lines are outgoing separatrices. In gray, we drew a random sample of other trajectories.}
\label{fig:f7}
\end{figure}

When $k=1$, infinity is a regular point of the vector field on $\CP^1$, but we will still speak of separatrices for the incoming and outgoing trajectories through $\infty$.
We will call \emph{homoclinic loop} a periodic loop on $\CP^1$ through $\infty$.

When $\eps=0$, the $2k$ separatrices of $\dot z=z^{k+1}$ coincide with some trajectories coming from or going to the parabolic singular point $0$, in infinite time: they form alternating and equally spaced straight half-lines from $0$ to infinity. They are in this case also called the repelling (coming from $0$) or attracting (going to $0$) axes; the positive real axis is a repelling axis. The situation is symmetric by a rotation of order $k$.

When $\eps\neq 0$, the $k+1$ singular points $z_\ell$ fit in this picture in a way that cannot be invariant by the same symmetry. There is a pleasant geometrical way of figuring out the argument of the eigenvalues (from which one can for instance deduce if the singular points are attracting)  according to the placement of these points with respect to the axes. Indeed since $P'_\eps(z) = P_0'(z) = (k+1)z^k$ it follows that an eigenvalue is positive and real if and only if the singularity $z_\ell$ of $P_\eps$ is on one of the half-lines that were repelling axes for $P_0$. It is real negative iff $z_\ell$ is on an attracting axis. And it is imaginary iff $z_\ell$ is equidistant from a neighborhing pair of repelling/attracting axes of $P_0$. See \Cref{fig:zones}.
We recall that the real part of the eigenvalue $\lambda$ of a singular point characterizes its attracting ($\Re\lambda<0$), neutral ($\Re\lambda=0$) or repelling ($\Re\lambda>0$) nature.
It turns out there is another simple characterization of the nature of the singular points of $\dot z = P_\eps(z)$:
indeed at a singularity $z$, $\lambda =P_\eps'(z) = (k+1)z^k = (k+1)\eps/z$, so its real part is $<0$ if and only if $\Re(z/\eps) <0$, if and only if $z$ is in the half-plane containing $-\eps$ and delimited by the line through $0$, orthogonal to $\eps$.
Note that $-\eps = P(0)$.
It may come as a suprise that, ordering the singularities according to their arguments, the attracting ones are consecutive, and the repelling ones too. This will follow naturally from the star-shaped translation surface model of the vector field we describe in \Cref{sec:geom}.

\begin{figure}[htbp]
\begin{center}
\begin{tikzpicture}
\node at(0,0){\includegraphics[scale=.4]{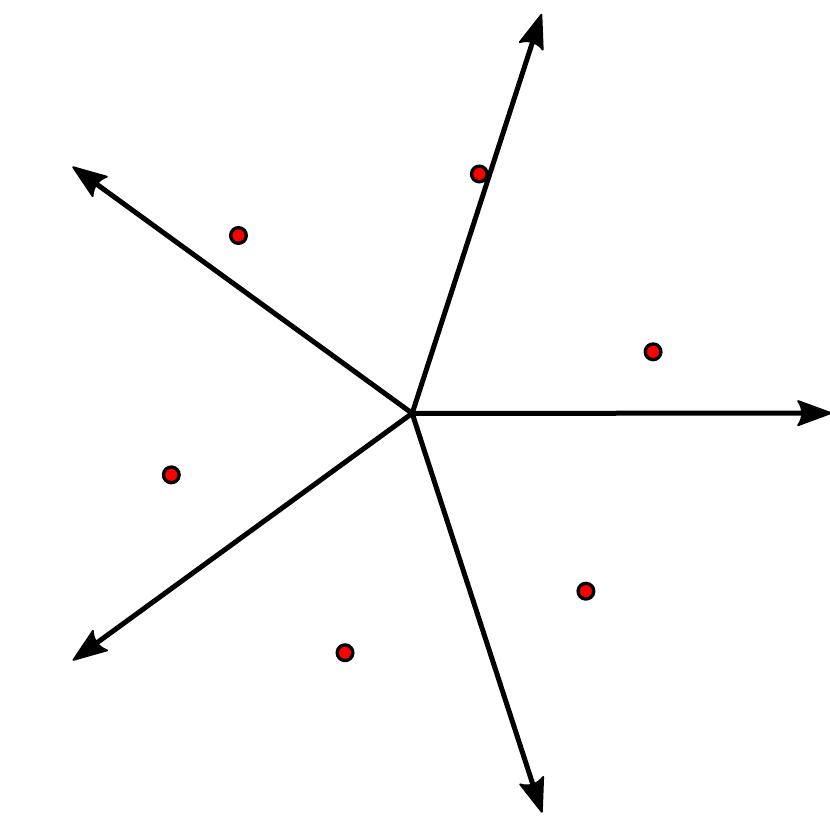}};
\node at(6.7,0){\includegraphics[scale=.4]{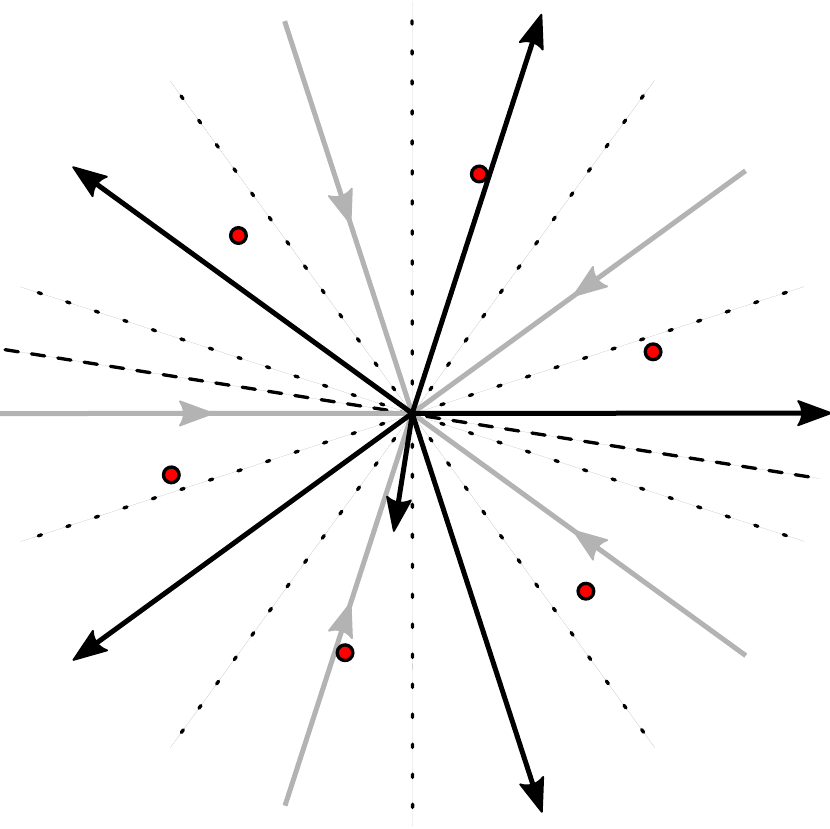}};
\end{tikzpicture}
\end{center}
\caption{A possible placement of the singular points of $\dot z = z^6-\eps$ for some $\eps$, compared to the position of the axes of $\dot z = z^6$. Left: repelling axes of $\dot z=z^6$ only. Right: added attracting axes of $\dot z=z^6$ in gray and intermediate axes in dotted style. A singular point that is closer to an attracting axis is attracting, and it is repelling if it is closer to a repelling axis. The attracting singular points are on the side containing $P(0)=-\eps$ (the short arrow) of the half-plane delimited by the dashed line.}
\label{fig:zones}
\end{figure}

Generically, for all but finitely many values of $\theta$ (recall $\theta=\arg \eps$), the separatrices of $\infty$ land at the singular points. Indeed, if a separatrix does not land, then it has to come back to infinity, thus forming a homoclinic loop $\gamma$: this loop separates  a group of singular points $\{z_\ell\}_{\ell\in I}$ from the other singular points.
It follows that $\sum_{\ell\in I} \kappa_\ell\in i\R$.
This comes from the residue theorem. Indeed, let $T$ be the travel time along $\gamma$ (note that separatrices reach infinity in finite time, because $\infty$ is either a pole (if $k>1$) or a regular point (if $k=1$) of the vector field). Then, by the residue theorem 
$$T=\int_\gamma dt= \int_{\gamma}\frac{dz}{P_\eps(z)} = \pm 2\pi i \sum_{\ell\in I} \kappa_\ell \in \R.$$
Note that the converse does not necessarily hold and it could happen that $\sum_{\ell\in I} \kappa_\ell\in i\R$ without having the corresponding group of singular points separated by a (homoclinic) loop: this will be clear once we will have determined exactly when homoclinic loops occur in this family, see \Cref{thm:bif_vf_simpl}.

\bigskip

Let us call \emph{singular-gon} the regular $(k+1)$-gon formed by the singularities of the vector field.
The \emph{period} of a singular point $z_\ell$ is $\mu_\ell$ defined by
\[ \mu_\ell=2\pi i\kappa_\ell=\frac{2\pi i}{P_\eps'(z_\ell)}
.\]
If $z_0, \dots, z_k$ is the order of the vertices of the singular-gon when turning in the positive direction, then $\mu _0, \dots \mu_k$ is the cyclic order of  the arguments of the periods when turning in the positive direction too.

We define the \emph{period-gon}, this time not using $\mu_\ell$ as vertices but as edge vectors, and with a minus sign. To be more precise, let us use $\frac12+\Z/(k+1)\Z$ as an indexing set for the vertices: there is a unique (convex) regular $(k+1)$-gon centered at the origin with vertices $v_{\ell+\frac12}$, such that
\[v_{\ell+\frac12}-v_{\ell-\frac12} = - \mu_\ell
.\]
Notice the minus sign in front of $\mu_\ell$.

The singular-gon rotates at speed $\frac1{k+1}$ when $\theta$ moves at speed $1$, while
the periods, and thus the period-gon, turn at negative speed $-\frac{k}{k+1}$. When $\theta$ makes a full turn, the singular-gon turns by $2\pi/(k+1)$, and since it is invariant by this rotation, it returns as a set to its initial position with the indexing of its vertices having shifted by $-1$. Meanwhile, the period-gon turns by $-2\pi k/(k+1) \equiv 2\pi/(k+1) (\bmod2\pi)$. So it is also unchanged as a set and undergoes exactly the same shift on indices as the singular-gon. This is coherent with the fact that the cyclic order of the singularities and the cyclic order of the argument of their respective periods are the same.

\begin{definition}\label{def:generic}
A polynomial  vector field  $\dot z = P(z)$ is \emph{generic} if its singular points are simple and there are no homoclinic loops.
\end{definition}

\begin{remark}
A complex 1-dimensional vector field of degree at least $2$ is generic if and only if it is structurally stable among complex $1$-dimensional vector fields of the same degree.\footnote{If a field is generic, then its topology is described by the combinatorial invariant, see \Cref{def:combi-inv}. Since every neighboring field has the same combinatorial invariant, it is topologically conjugate to it.
\\
If a field is non-generic, then there are neighboring fields that do not have the same topology, either because a multiple singular point is separated into several singular points, or because a homoclinic loop is broken.}
\end{remark}

Douady, Estrada and Sentenac \cite{DES} classified the monic generic polynomial vector fields of degree $k+1$.  They are completely characterized by an analytic invariant given by $k$ complex numbers with positive imaginary parts and a combinatorial invariant. We describe these now.

\begin{definition}\label{def:combi-inv}
Let $\dot z = P(z)$ be a \emph{monic} generic polynomial vector field in the sense of \Cref{def:generic}. Then the $2k$ separatrices of $\infty$ land at singular points. \begin{enumerate}
\item The \emph{Douady-Estrada-Sentenac combinatorial invariant} is the union of (see \Cref{DS_invariant}(a)):
\begin{itemize} \item the \emph{tree graph}, an embedded graph in the oriented plane (i.e. up to an orientation preserving homeomorphism) defined as follows: for the vertices, we take the set of singular points and, whenever there is trajectory joining two singular points, we choose one of them as an edge.
\item the information on how one separatrix of $\infty$ is attached to it (then the attachment of all other separatrices is determined).
\end{itemize}
\item Let the \emph{travel time} along a curve $\gamma$ be defined as $\int_\gamma dt= \int_\gamma\frac{dz}{P(z)}$. The \emph{Douady-Estrada-Sentenac analytic invariant} is the $k$-tuple of travel times $\chi_1, \dots, \chi_k$ along $k$ curves (the dotted curves in \Cref{DS_invariant}(b)) disjoint from the separatrices, each one going from $\infty$ to $\infty$ and cutting one edge of the Douady-Estrada-Sentenac combinatorial invariant: the direction of the trajectory is chosen so that the $\chi_j$ have positive imaginary part. (Since the curves are closed and the differential form is holomorphic at $\infty$, the travel time can be calculated by the residue theorem.) \end{enumerate}
\end{definition}

\begin{figure}[htbp]
\begin{center}
\subfigure[]{\includegraphics[width=5cm]{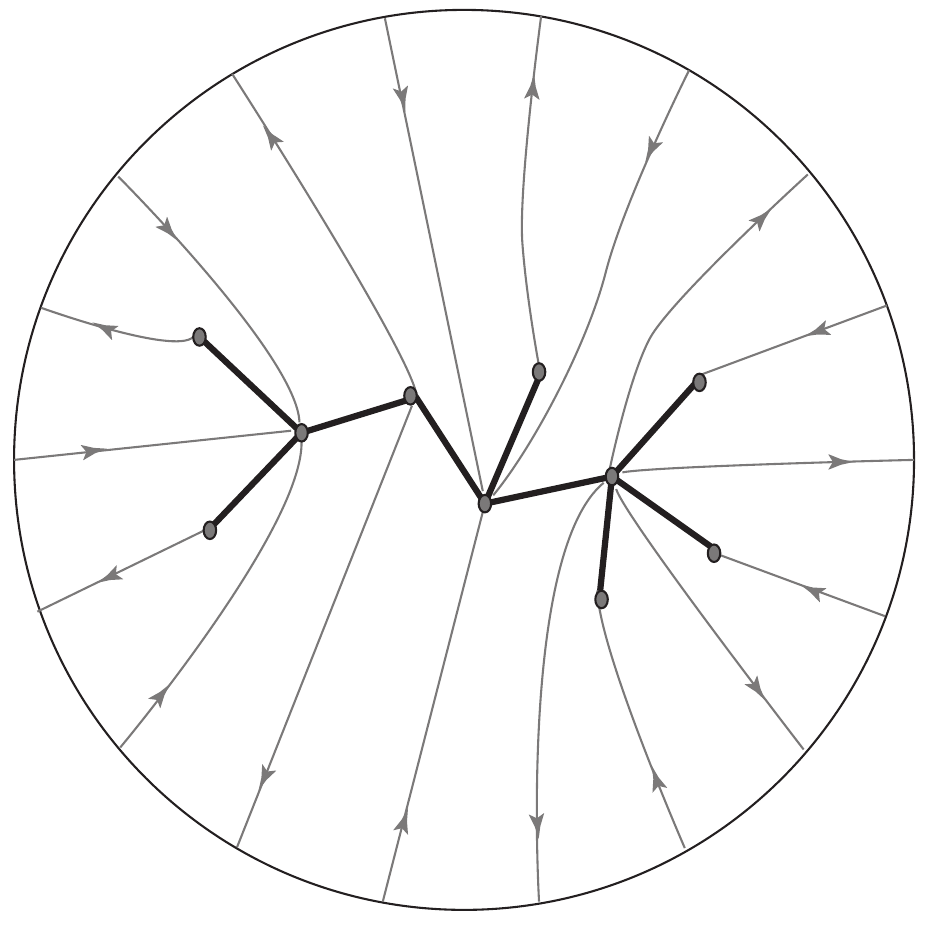}}\qquad\quad\subfigure[]{\includegraphics[width=5cm]{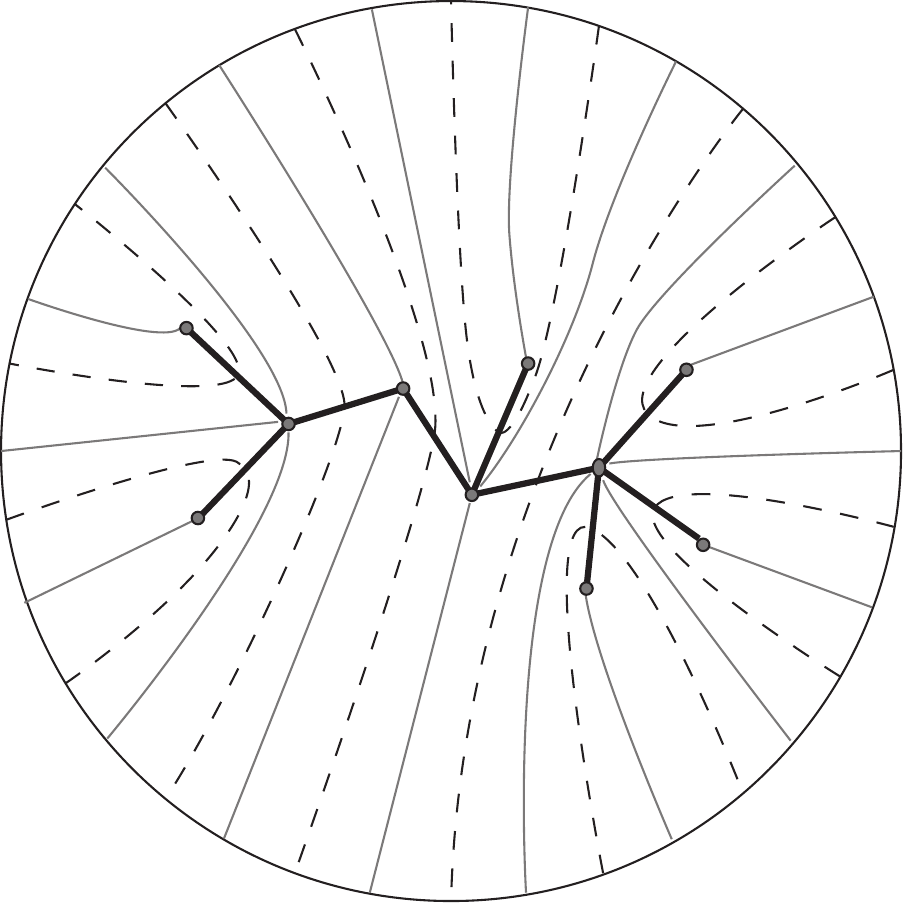}}
\caption{The Douady-Estrada-Sentenac invariant.}
\label{DS_invariant}
\end{center}
\end{figure}

In the following theorem will appear a special kind of linear ordering (i.e.\ a total order) on the set of singularities. A total order on a finite set $E$ of cardinal $m$ is equivalent to the data of a uniquely defined order preserving bijection $L : \{0,\ldots,m-1\}\to E$. Similarly, a circular order is defined by a bijection $L$ from the set $\U_m$ of $m$-th roots of unity to $E$, but this time it is unique only up to composition of $L$ by a rotation $\U_m\to\U_m$. For any $\eps\neq 0$, the set of $(k+1)$-th roots of $\eps$ is naturally circularly ordered.

\begin{definition}
Consider the set $\U_m$ of $m$-th roots of unity.
Consider the rotation $R_\theta:z\mapsto e^{i\theta} z$ followed by the projection $z \mapsto \re z$ to the real line.
Assume that this composition is injective on $\U_m$.
The order of $\R$ induces via this projection a linear ordering on $\U_m$, which we call a \emph{zig-zag ordering}.
We call \emph{zig-zag graph} the abstract graph whose set of vertices is $\U_m$ and which has an edge between two vertices if and only if they are consecutive in the linear ordering.\footnote{This kind of graph is an instance of a \emph{trunk}, i.e.\ a finite tree with no branch.}
See Figure~\ref{fig:zig-zag}.
This notion transfers to circularly ordered sets.
\end{definition}

Consider $\theta$ (non unique) as above. If $m\geq 3$, then no points in $R_\theta(\U_m)$ can be on the real line, for otherwise
there would be points in $R_\theta(\U_m)$ with the same real part. The real line cuts the unit circle into two halves.
The zig-zag ordering of $R_\theta(\U_m)$ alternates between these two halves, because the points in $R_\theta(\U_m)$ cuts the circle into arcs of equal angular span and each half-circle projects monotonically to the real line. Advancing two steps along the zig-zag order, one follows the points of the lower half-circle in the positive circular orientation, and the points it the upper half are followed in the negative circular orientation.
A zig-zag graph is thus also obtained as follows: choose an initial vertex and an initial direction on the circle; walk from the initial point in that direction until one meets another vertex; link these two vertices; walk in the opposite direction along the circle until one meets the next free point (free points are those not yet linked); link that free point to the most recent linked point; walk in the opposite direction and repeat until there are no more free points. \footnote{There are arithmetical characterization of zig-zag orderings. For instance on $\Z/m\Z$ these orderings take the form $n \mapsto n_0 + s \frac {1-(-1)^{n} (2n+1)}4 (\bmod m)$ with $s=1$ or $s=-1$ and $n_0\in\Z/m\Z$. There are also topological characterizations: for instance ``for any adjacent pair of points for the linear order, cut the circle at these points to form two arcs. Then the successors must be all on one side and the predecessors all on the other side''. And many others.}

\begin{figure}[htbp]
\begin{center}
\begin{tikzpicture}
\node at (0,0) {\includegraphics[width=10cm]{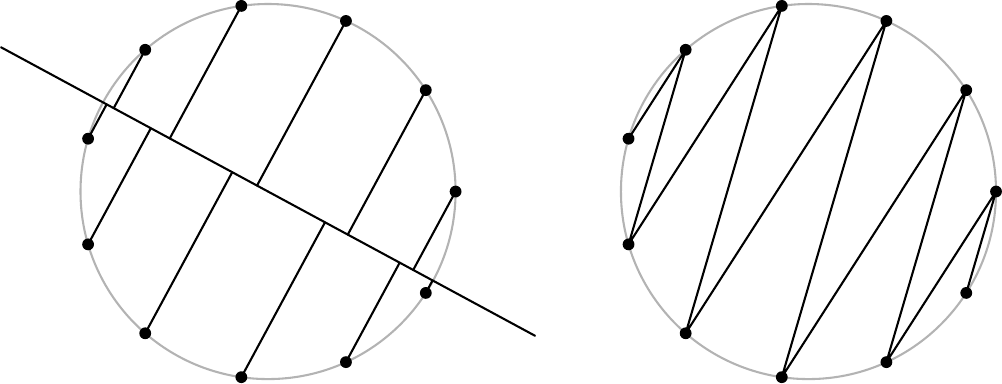}};
\end{tikzpicture}
\end{center}
\caption{The orthogonal projection of points of $\U_{11}$ to a straight line defines the zig-zag graph shown on the right.
}
\label{fig:zig-zag}
\end{figure}

\begin{theorem}\label{thm:bif_vf_simpl}
 We consider the polynomial vector field $\dot z = z^{k+1} -\eps$ with $\epsilon=e^{i\theta}$.
\begin{enumerate} 
\item In the absence of homoclinic connection, the Douady-Estrada-Sentenac combinatorial invariant (DES) is a zig-zag graph through the singular-gon: if $k>1$ it is given by the orthogonal projection of the singular-gon to the line of argument $\theta-\pi/2$ or, equivalently, the real parts of the periods of the corresponding singular points\footnote{Not to be confused with the period-gon.}. If $k=1$ there is only one possible graph.
\item If $k>1$ the following are equivalent:
\begin{itemize}
\item there are one or several homoclinic loop bifurcations,
\item the imaginary axis is a symmetry axis of the period-gon,\footnote{
Recall that the period-gon was defined as the polygon centered on $0$ and whose \emph{edges} are given by the negated periods $-\mu_\ell$.}
\item the vector field has an antisymmetry axis (a.k.a.\ \emph{axis of reversion}),
\item the singular-gon is symmetric with respect to one of the intermediate axes (i.e.\ a bissector of an attracting and an adjacent repelling axis, represented as dotted axes in \Cref{fig:zones}),
\item the singular-gon is symmetric with respect to the line through $0$ orthogonal to $\eps$,
\item the line through $0$ orthogonal to $\eps$ is an intermediate axis.
\end{itemize}
There are $2k$ such bifurcations occurring for 
\begin{equation}\label{theta_j}
\theta = \theta_j= \begin{cases} \frac{j\pi}{k}, & k\:{\rm odd,}\\
\frac{\pi}{2k} +\frac{j\pi}{k}, & k\:{\rm even,}\end{cases}
\end{equation}
for $j=0, \dots, 2k-1$.
\item If $k=1$ the homoclinic loop bifurcation occurs precisely at the value of $\epsilon$ such that the period-gon (which is a segment) is horizontal, which is equivalent to $\theta \equiv \pi \bmod 2\pi$.
\item (See \Cref{homoclinic1,homoclinic3}.) If $k>1$ then, across each bifurcation, the DES combinatorial invariant is modified as follows. Some of the segments are preserved and some are replaced. Preserved and replaced segments alternate.
For a given zig-zag invariant, there are exactly two ways to choose alternating segments, each choice corresponds to one end of the interval of structural stability in $\theta$-space with the given invariant.
Once a choice is made, modify the linear ordering of the elements by exchanging the two elements of each preserved segment, while keeping the order between the segments.
This reattaches the whole chain and gives the DES invariant on the other side of the bifurcation.
\end{enumerate}\end{theorem}
\begin{proof}
This follows from a description of the vector field in the rectifying coordinate (a.k.a.\ time coordinate)
\begin{equation}
t= \int_0^z\frac{du}{P_\eps(u)}.\label{coor_t}
\end{equation}
Such an integral is well defined, as a function of $z$, on $\C$ minus radial cuts from the singular points to $\infty$ (see \Cref{fig:f6}). 
In this coordinate, the vector field simply becomes $\dot t =1$, and the trajectories are horizontal lines. The vertices of the period-gon in $t$-space are $k$ images of $\infty$ in $z$-space. 
The singular points in $z$-space are sent to $\infty$ in $t$-space. The image of $\C$ minus the radial cuts is a \emph{star-shaped domain} obtained by  taking  the filled period-gon (i.e.\ the convex hull of the vertices of the period-gon) and gluing to it  $k+1$ half-infinite straight strips that we call \emph{branches}, which are perpendicular to the sides and of width given by the periods $\mu_\ell$.
To see all this, it is easier to start with the case $\eps\in\R$, i.e.\ $\theta=0$: see \Cref{fig:dessin-2} for a sector, which can then be spread by reflections along its two boundaries and whose image is spread to a star shaped domain as described above.

\begin{figure}[htbp]
\begin{center}
\subfigure{%
\begin{tikzpicture}
\node at (0,0) {\includegraphics[width=6cm]{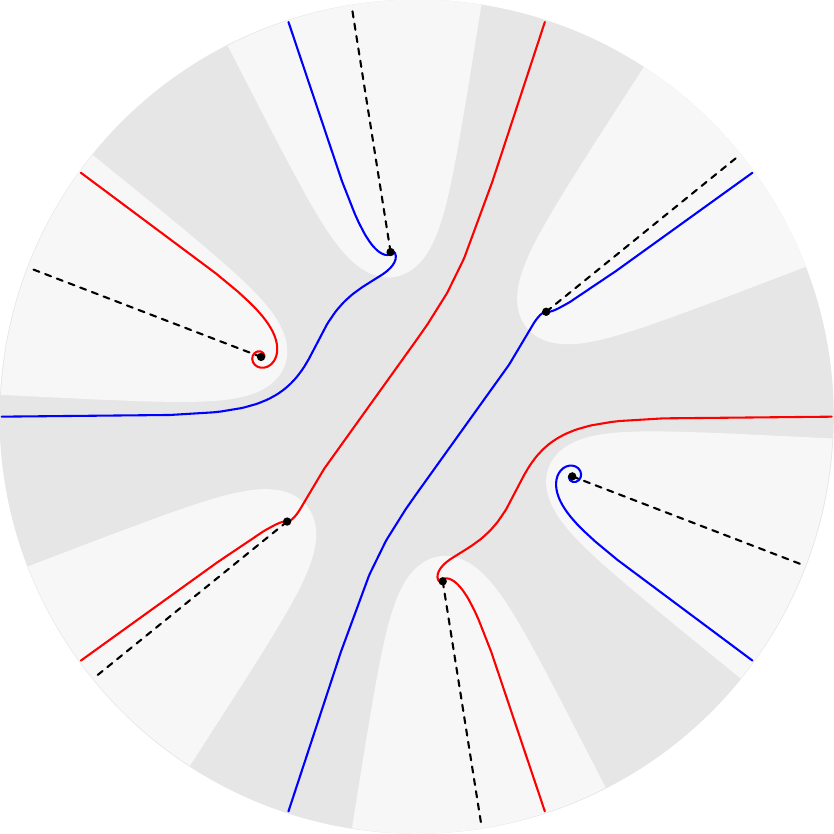}};
\node at (0.7,0.8) {$z_0$};
\node at (0.1,1.2) {$z_1$};
\node at (-1.4,0.35) {$z_2$};
\node at (-1,-0.55) {$z_3$};
\node at (-0.05,-1.2) {$z_4$};
\node at (1.4,-0.35) {$z_5$};
\end{tikzpicture}%
}%
\subfigure{%
\begin{tikzpicture}
\node at (0,0) {\includegraphics[width=6cm]{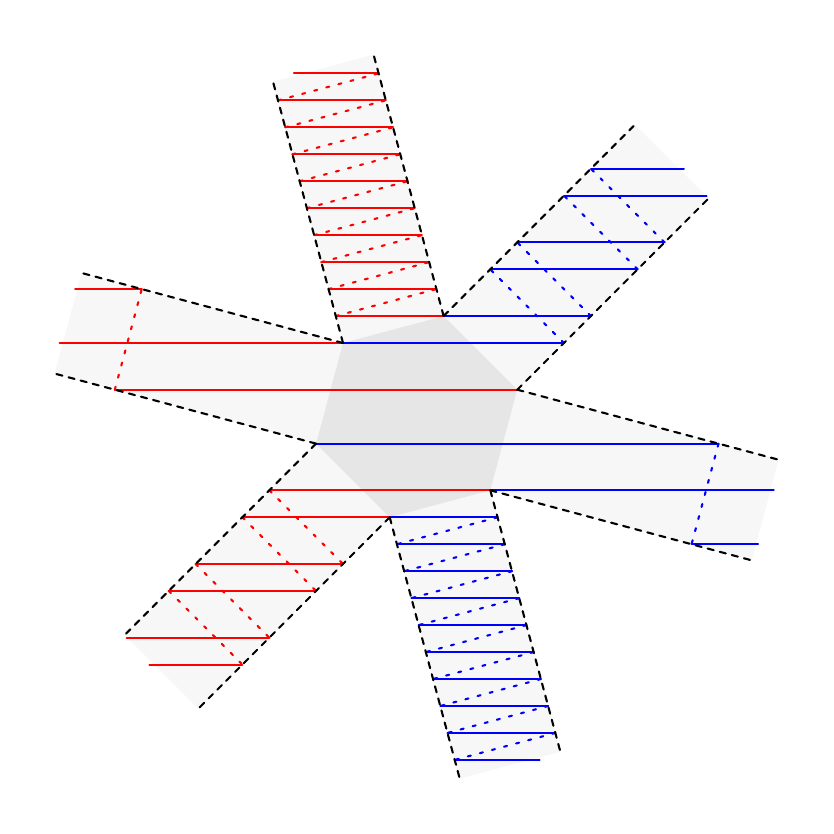}};
\node at (2.8,-0.8) {$z_0$};
\node at (2.1,2.1) {$z_1$};
\node at (-0.7,2.8) {$z_2$};
\node at (-2.8,0.7) {$z_3$};
\node at (-2.1,-2.1) {$z_4$};
\node at (0.8,-2.8) {$z_5$};
\end{tikzpicture}%
}%
\end{center}
\caption{Straightening coordinate $t$ of the vector field $\dot z=z^6-\exp\left(2\pi i\frac{13}{20}\right)$ (see \Cref{example:k5}). Left: the separatrices of the vector field in $z$-space.
Right: the image in $t$-space.
The transverse dashed lines (in blue and red) link parts of trajectories that are identified by the gluing of the sides of the strips (black dashed lines).
Between the left and right parts of the figure, the gray and light gray regions correspond.}
\label{fig:f6}
\end{figure}

\begin{figure}[htbp]
\begin{center}
\subfigure{%
\begin{tikzpicture}
\node at (0,0) {\includegraphics[width=6cm]{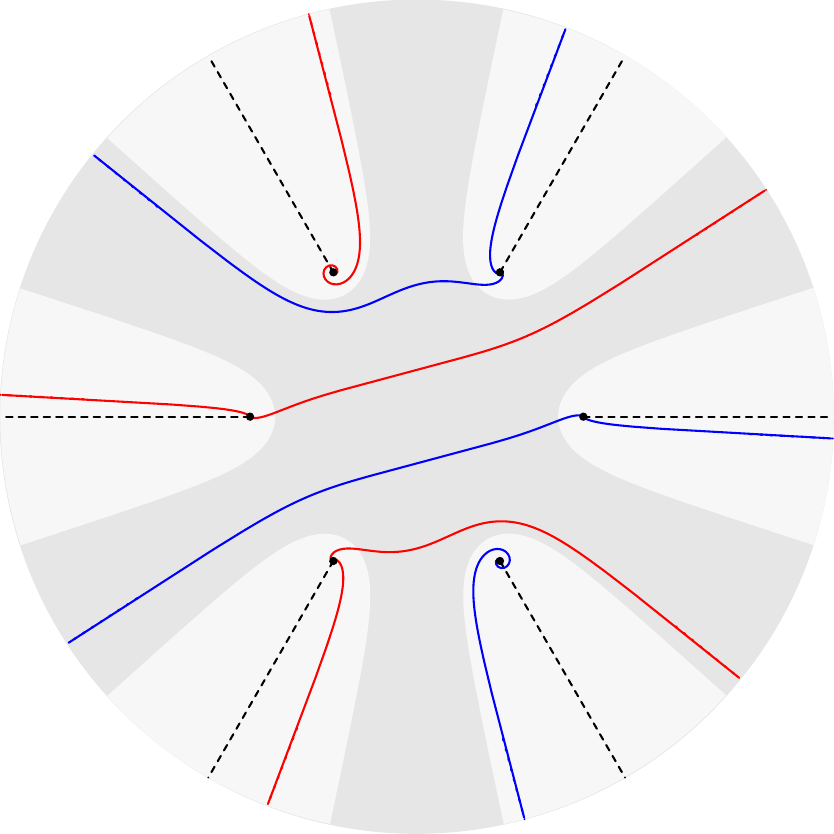}};
\end{tikzpicture}%
}%
\subfigure{%
\begin{tikzpicture}
\node at (0,0) {\includegraphics[width=6cm]{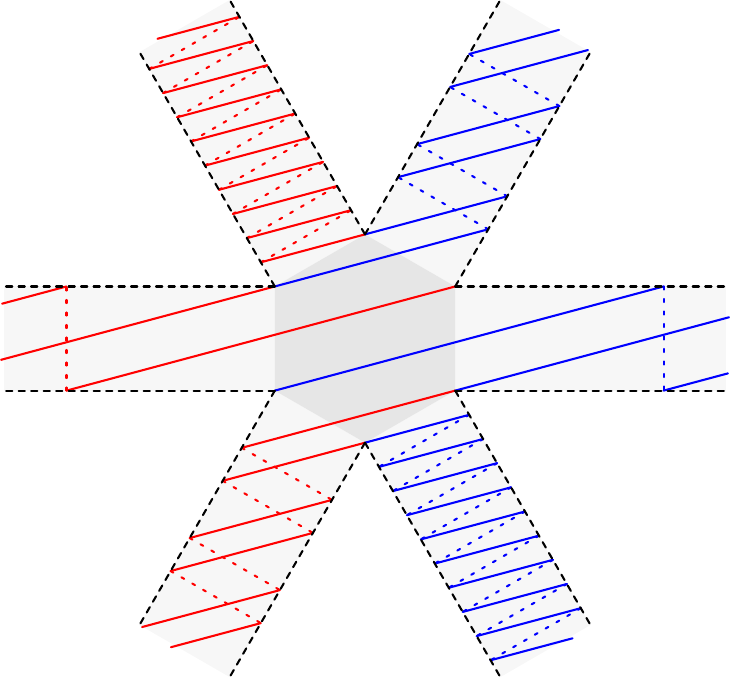}};
\end{tikzpicture}%
}%
\end{center}
\caption{We show the same images as on \Cref{fig:f6} but rotated, so that the isomorphism is easier to figure out. The left part is rotated by $2\pi\frac{-13}{120}=-39\degree$, the right part by $2\pi\frac{5}{120}=15\degree$.
The two parallel sides of a branch of the star on the right are mapped to the dashed line radiating from the singular point (one of the black dots) 
that points in the same direction. But the correspondence exchanges finite and infinite ends of each of these lines.
Between the left and right parts of the figure, the gray and light gray regions correspond.
}
\label{fig:f6b}
\end{figure}

\begin{figure}[htbp]
\centering
\begin{tikzpicture}
\node at (0,-2.25) {\includegraphics[width=8cm]{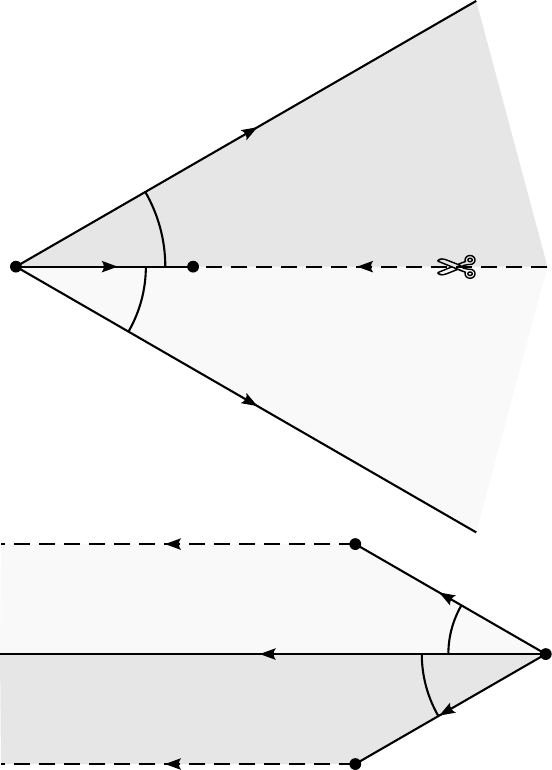}};
\node at (-4.1,-0.6) {$0$};
\node at (-1.2,-0.9) {$1$};
\node at (-0.6,1.8) {$A$};
\node at (1.3,-0.9) {$B$};
\node at (-2.5,-0.9) {$C$};
\node at (-0.6,-2.8) {$D$};
\node at (2,1) {$z$};
\node at (-0.7,0.1) {$\pi/(k+1)$};

\node at (2.7,-7.2) {$A$};
\node at (-1.5,-4.2) {$B$};
\node at (-1.5,-8.05) {$B$};
\node at (0,-5.7) {$C$};
\node at (2.7,-5.1) {$D$};
\node at (4.2,-6.15) {$0$};
\node at (-4.3,-6.15) {$1$};
\node at (1.15,-7.4) {$a$};
\node at (1.15,-4.2) {$\infty$};
\node at (1.15,-8.05) {$\infty$};
\node at (-3,-6.95) {$t$};
\node at (1.2,-6.7) {$\pi/(k+1)$};
\end{tikzpicture}
\caption{\small We prove here that for $\theta=0$, the dark gray sector of angle $\pi/(k+1)$ in $z$-space on the top is mapped one-to-one to the dark gray domain in $t$-space in the bottom.
Consider in $z$-space the oriented half-line marked $A$, from $0$ and of argument $\pi/(k+1)$, parametrized as $z=r \sigma$ with $\sigma =e^{\pm \pi i/(k+1)}$, $r>0$.
On it, $P_\eps(z) = -( r^{k+1} + \eps)$. In particular the integral converges along the line to some limit value $a\in\C$ of argument $\pi+\pi/(k+1)$.
Actually, $z=\infty$ is a zero of order $k-1$ of the differential $du/P_\eps(u)$, as the change of variable $w=1/z$ shows.
Coming back from $z=\infty$ up to $z=1$ along the half-line $B=(1,+\infty)\subset \R$ oriented from $\infty$ to $1$, so $z=r>1$, 
the integral traces a horizontal half-line from $a$ to $\infty$, with real part tending to $-\infty$, since $dt/dz = 1/(r^{k+1}-1)>0$, $r>1$ and $z$ decreases along the real line.
Finally, on part $C = (0,1)$, $P_\eps$ takes a negative value so $t$ varies from $0$ to $-\infty$ along the real line.
At $z=1$ we have $1/P_\eps(z) = \frac{1}{(k+1)(z-1)} + $ a holomorphic map, so $t=\frac{1}{k+1}\log(z-1)+$ a holomorphic map, so points close to $z=1$ are mapped close to $\infty$.
Using the argument principle ensures that the map $z\mapsto t$ is one-to-one and that its image is the indicated set.}
\label{fig:dessin-2}
\end{figure}

Then, since a change of coordinate and (complex!) time 
\begin{equation}
(z,t) \mapsto \left(ze^{-\frac{i\theta}{k+1}}, te^{-\frac{ik\theta}{k+1}}\right)
,
\label{symm:theta}
\end{equation} sends $\dot z= z^{k+1}-e^{i\theta}$ to $\dot z= z^{k+1}-1$, it follows that the descriptions for different $\theta$ just correspond to rotating the star-shaped domain.

This description allows an easy and visual interpretation of several phenomena.
A trajectory in the star shaped domain, once it exits the period-gon, remains trapped in the branch it has entered.
This is valid also when following a trajectory backwards.
As a consequence, a homoclinic loop occurs exactly when two vertices of the period-gon lie on a horizontal line.
This can only occur when the vertical axis is a symmetry of the period-gon, in which case homoclinic bifurcation(s) occur simultaneously, grouping the singular points in sets of one or two points.

The various equivalences of point (2) can be checked by determining explicitly the values of $\arg\eps$ for which they occur and realizing that they coincide.
But that is not very elegant.
As an alternative proof, we may start proving that the period-gon is symmetric w.r.t.\ the vertical axis if and only if the $\pi/2$-rotated vector field ($\dot t = i$ i.e.\ $\dot z = i P_\eps(z)$) is invariant under an anti-holomorphic involution. In $\CP^1$ it is necessarily a reflection through a line passing through $0$, and this line contains $i\eps$ so it is determined.
So the condition is equivalent to the vector field $i P_\eps$ being invariant under the reflection with respect to the line through $0$ and $i\eps$ (equivalently, $P_\eps$ is reversed by the reflection).
The intermediate axes of $i P_0$ are the attracting and repelling axes of $P_0$ and an involution preserving $i P_\eps$ must preserve $i P_0$ (by rescaling and taking a limit), so its set of repelling axes.
A rotation brings the vector field $\dot z =iP_\eps$ to the form $\dot z = Q(z) = z^{k+1}-\eta$ for some $\eta\in\C$.
Then we are left with proving that, given a line $L$ through $0$ and the reflection $s_L$ through this line, then among the following three conditions, any subset of two conditions imply the third one:
\begin{itemize}
\item $(s_L^*Q)(0) = Q(0)$,
\item $s_L$ preserves the set of roots of $Q$,
\item $s_L$ preserves the set of repelling axes of $Q$.
\end{itemize}
This is easy using algebra: 
$s_L^{*} Q$ expresses as 
$az^k-b$ and the three conditions are respectively equivalent to $b = \eta$, $b/a = \eta$ and $a=1$.

Consider now two singularities $\alpha$, $\beta$ of the vector field. Each of them corresponds to a side $A$, $B$ of the period-gon.
One sees that there are trajectories between $\alpha$ and $\beta$ if and only if there is a non-empty intersection of the interior of the orthogonal projections of $A$ and $B$ to the imaginary axis. In the absence of homoclinic loops, this corresponds to a zig-zag ordering, the same as the linear ordering of the $\Re(\kappa_k)$ if $k>1$.

Moreover, from the rotational movement of the period-gon, we deduce the monotonic behavior of the separatrices of $\infty$ very close to a bifucation: in the $t$-coordinate, the separatrices coming from infinity\footnote{Sometimes called attracting separatrices because they are asymptotic, at $\infty$ in $z$ coordinate, to the attracting axes of the parabolic point of the case $\eps=0$.} are the horizontal lines that stem rightwards from the vertices of the period-gon, while the horizontal lines stemming leftwards from the vertices correspond to separatrices going to infinity.
Between two consecutive values of $\theta$ such that the period-gon is symmetric w.r.t.\ the imaginary axis, the imaginary part of each vertex of the period-gon will depend monotonically on $\theta$, leading to the announced change in the Douady-Estrada-Sentenac combinatorial invariant.
\end{proof}

In the star shaped domain, the singularities are at the end of the branches. The attracting ones correspond to branches tending to infinity in a direction in $(-\pi/2,\pi/2)$, the neutral one to vertical branches and the repelling ones to a direction in $(\pi/2,3\pi/2)$.
In particular, this reproves that the attracting (resp.\ repelling) singularities have consecutive arguments.

\begin{corollary}
 Homoclinic loops are arranged as follows.
 \begin{enumerate}
\item For $k=2m$ even (i.e.\ an odd number of singularities), bifurcations of homoclinic loops occur precisely when one singular point $z_\ell$ has a eigenvalue in $i\R$.
These occur for $\arg(\eps)= \frac{\pi}{2k}+\frac{n\pi}{k} $ for $n=0, \dots, 2k-1$.
Then, there are $m$ simultaneous homoclinic loops separating $\C$ into $m+1$ regions containing respectively $z_\ell$ and the $m$ pairs of points $\{z_{\ell-j},z_{\ell+j}\}$, $j=1, \dots m$.
When $\eps$ rotates in the positive direction, the next group of simultaneous bifurcations to occur is the one in which  $z_{\ell-m}$ is isolated by a homoclinic loop. See \Cref{fig_k4}(c).
\item For $k=2m+1$ odd with $m\geq0$ (i.e. an even number of singularities), there are  one or  two kinds of possible arrangements:
  \begin{itemize}
  \item In the first type, two opposite singular points $z_\ell$ and $z_{\ell+m+1}$ (indices are modulo $k+1$) have eigenvalues in $i\R$ and  there are $m+1$ simultaneous homoclinic loops separating the singular points into the $m+2$ groups $\{z_\ell\}$, $\{z_{\ell-1},z_{\ell+1}\}$, \dots, $\{z_{\ell-m},z_{\ell+m}\}$, $\{z_{\ell+m+1}\}$. These occur when $\arg(\eps)= \frac{\pi}{k} +\frac{2n\pi}{k} $ for $n=0, \dots, k-1$. See \Cref{fig_k5}(e).
  \item In the second type,  which occurs only for $m\geq1$, there are $m$ homoclinic loops separating the points into $m+1$ pairs $\{z_\ell,z_{\ell+1}\}$, $\{z_{\ell-1},z_{\ell+2}\}$ \dots, $\{z_{\ell-m},z_{\ell+m+1}\}$. This case occurs  when $\arg(\eps)=\frac{2n\pi}{k} $ for $n=0, \dots, k-1$.  See \Cref{fig_k5}(b).
  \end{itemize}
When $\eps$ rotates uniformy in the positive direction, the two types of bifurcations alternate. The first type with an end group $\{z_\ell\}$ is followed by one of the second type with end group 
$\{z_\ell,z_{\ell+1}\}$, and then by one of the first type with end group $\{z_{\ell+1}\}$, etc.
\end{enumerate}
\end{corollary}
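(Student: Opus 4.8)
The statement refines Theorem~\ref{thm:bif_vf_simpl}(2), which has already located all bifurcation values at the $\theta_j$ of~\eqref{theta_j} and identified a bifurcation at $\eps=e^{i\theta}$ with the event that the imaginary axis is a symmetry axis of the period-gon. The plan is to work throughout in the rectifying coordinate $t$ of the proof of that theorem, in which the image of $\C$ minus the radial cuts is the filled period-gon with $k+1$ half-infinite branches glued on, one perpendicular to each edge $-\mu_\ell$ and corresponding to the singular point $z_\ell$ (which sits at its far end), and in which the vertices of the period-gon are the images of $\infty$. As recalled there, a homoclinic loop is exactly a horizontal segment joining two vertices, and at a bifurcation these are exactly the pairs of vertices at a common height; when the imaginary axis is a symmetry axis those are precisely the mirror-symmetric pairs, and the resulting horizontal lines are nested. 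So it remains, separately according to the parity of $k+1$, to count these mirror pairs, to read off from the two sides of each of the nested lines which branches --- hence which $z_\ell$ --- it isolates, and to match the parameter values; throughout one fixes once and for all a labelling of the edges of the period-gon by the index $\ell$ of the singular point $z_\ell$, which is legitimate since the cyclic order of the edges $-\mu_\ell$ and the cyclic order of the $z_\ell$ agree and shift together by $-1$ when $\theta$ increases by $2\pi$.

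For $k=2m$ even the period-gon is a regular $(2m+1)$-gon, every reflection axis of which passes through exactly one vertex and the midpoint of the opposite edge. The vertical axis being of this form says that opposite edge is horizontal, i.e.\ $-\mu_\ell\in\R$, equivalently $P'_\eps(z_\ell)=(k+1)e^{i(k\theta-2\ell\pi)/(k+1)}\in i\R$; conversely, $-\mu_\ell$ horizontal forces its perpendicular bisector, a symmetry axis, to be vertical, which is the converse direction. The remaining $2m$ vertices then form $m$ mirror pairs at distinct heights, giving $m$ nested homoclinic loops and $m+1$ regions; reading off the two sides of each line yields, going from the horizontal edge inward to the vertex on the axis, the regions $\{z_\ell\},\{z_{\ell-1},z_{\ell+1}\},\dots,\{z_{\ell-m},z_{\ell+m}\}$. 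The parameter values are those of~\eqref{theta_j} for $k$ even; substituting $\theta=\theta_j$ into the eigenvalue formula and solving $k\theta-2\ell\pi\equiv\tfrac{\pi}{2}\pmod{\pi}$ identifies $\ell\pmod{2m+1}$ and shows that it decreases by $m$ when $j$ increases by $1$ (using $2^{-1}\equiv m+1\equiv-m\pmod{2m+1}$), which is the asserted passage from $z_\ell$ isolated to $z_{\ell-m}$ isolated.

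For $k=2m+1$ odd the period-gon is a regular $(2m+2)$-gon, whose reflection axes are of two kinds --- through two opposite vertices, or through two opposite edge-midpoints --- alternating around the polygon and $\tfrac{\pi}{k+1}$ apart. Since consecutive $\theta_j$ differ by $\tfrac{\pi}{k}$ and the period-gon turns at speed $-\tfrac{k}{k+1}$, between consecutive bifurcations it rotates by exactly $\tfrac{\pi}{k+1}$, so the vertical axis passes from one reflection axis to the next and the two kinds alternate. In the edge-midpoint case the two horizontal edges are opposite, so $z_\ell$ and $z_{\ell+m+1}$ have eigenvalues in $i\R$; the $2m+2$ vertices form $m+1$ mirror pairs, hence $m+1$ nested loops and $m+2$ regions, reading off as $\{z_\ell\},\{z_{\ell-1},z_{\ell+1}\},\dots,\{z_{\ell-m},z_{\ell+m}\},\{z_{\ell+m+1}\}$; this is the first type. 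In the vertex case there are two on-axis vertices and no horizontal edge, hence no eigenvalue in $i\R$, and $m$ mirror pairs of the remaining vertices, hence $m$ loops and $m+1$ regions; the region around each on-axis vertex traps its two adjacent --- hence consecutive --- edges, giving the adjacent pairs $\{z_\ell,z_{\ell+1}\},\{z_{\ell-1},z_{\ell+2}\},\dots,\{z_{\ell-m},z_{\ell+m+1}\}$, and this requires $m\geq1$, i.e.\ $k>1$. Which of the two types sits at the even, respectively odd, multiples of $\tfrac{\pi}{k}$ is fixed by evaluating the eigenvalue formula at a single value such as $\theta=0$, and the precise alternation at an end, $\{z_\ell\}\to\{z_\ell,z_{\ell+1}\}\to\{z_{\ell+1}\}$ as $\eps$ turns positively, comes from the $\tfrac{\pi}{k+1}$ rotation: the vertex arriving on the axis between two edge-midpoint configurations is an endpoint of the formerly horizontal edge, so the isolated $z_\ell$ merges with its neighbour and one more step isolates that neighbour.

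The geometric input --- symmetry of the period-gon forcing a nest of horizontal-line homoclinic loops, with the singular points read off the two sides --- is immediate from Theorem~\ref{thm:bif_vf_simpl}, so the whole argument is a bookkeeping of a regular $(k+1)$-gon under reflection. The delicate points, which I would settle by calibrating all orientation and sign conventions once on a small explicit case (say $k+1=5$ or $k+1=6$) before writing the general indices, are: keeping the labelling of edges by the $z_\ell$ consistent, tracking the $\pm$ in the symmetric groups and the shift of the isolated index between consecutive bifurcations, and --- for $k$ odd --- pinning down which of the two types occurs at the even versus the odd multiples of $\tfrac{\pi}{k}$.
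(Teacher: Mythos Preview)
Your proposal is correct and follows essentially the same approach as the paper: both arguments reduce everything to the reflection symmetry of the regular period-gon, distinguish cases by whether the vertical axis hits edge-midpoints or vertices, and track how the configuration shifts as the period-gon rotates at speed $-k/(k+1)$. You supply considerably more detail than the paper does (its proof of part~(2) is a single sentence), but the underlying geometric picture and the bookkeeping are the same.
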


\begin{proof}
We treat the case $k=2m$.
Recall that the period-gon has edges given by the vectors $-\mu_\ell = -2\pi i/P'(z_\ell)$ and that $P'(z_\ell) = (k+1) e^{i\frac{k(\theta+2\pi\ell)}{k+1}}$.
By the previous theorem, there is a bifurcation if and only if the imaginary axis is a symmetry axis of the period-gon.
Since it has an odd number $k+1$ of sides, this happens if and only if there is a horizontal edge $\mu_\ell\in\R$, if and only if  $P'(z_\ell)\in i\R$, or equivalently if and only if $\kappa_\ell \in i\R$ (recall $\kappa_j=1/P'(z_j)$).
Solving for $\frac{k(\theta+2\pi\ell)}{k+1}\equiv \pi/2 \bmod \pi$ gives the values of $\theta$ in the statement.
For any $\theta$, the $\kappa_\ell$ are the vertices of a regular $(k+1)$-gon centered at the origin.
When $\eps$ rotates uniformly in the positive direction, the $\kappa_\ell$ rotate uniformly (slower, with factor $k/(k+1)$) in the negative direction. 
If some $\kappa_\ell$ belongs to $i\R$, when rotating $\theta$ in the positive direction, the next $\kappa_j$ to reach $i\R$ is $\kappa_{\ell-m}$.

The case $k=2m+1$ is treated similarly.
\end{proof}

When considering the phase portrait on $\CP^1$ for nonzero $\eps$ then, except when $\eps$ is on the $2k$ half-lines $e^{i\theta_j}\R^+$, where  $\theta_j$ is defined in \Cref{theta_j}, all separatrices  of $\infty$ end at singular points, and there exist a chain of trajectories between the singular points, forming a trunk. The $\eps$-space is then decomposed as the union of:
\begin{itemize} 
\item $2k$ open sectors $V_j= \{\eps\: :\: \arg\eps\in (\theta_j, \theta_{j+1})\}$ where the vector field is structurally stable;
\item  the bifurcation locus of real codimension 1, which is composed of the $2k$  half-lines $e^{i\theta_j}\R^+$ where homoclinic connections occur;
\item the limit point $\eps=0$: there, the homoclinic connections become heteroclinic connections passing through the singular point $z=0$. \end{itemize}

\begin{example}\label{example:k4}
Let us consider the family $\dot z= z^5-\eps$. 
\Cref{fig_k4} shows the phase portraits of $\dot z= z^5-\eps$ for five values of $\theta\in[0, \pi/4]$.
\end{example}

\begin{figure}[htbp]
\begin{center}
\subfigure[$\theta=0$]{\includegraphics[width=3.6cm]{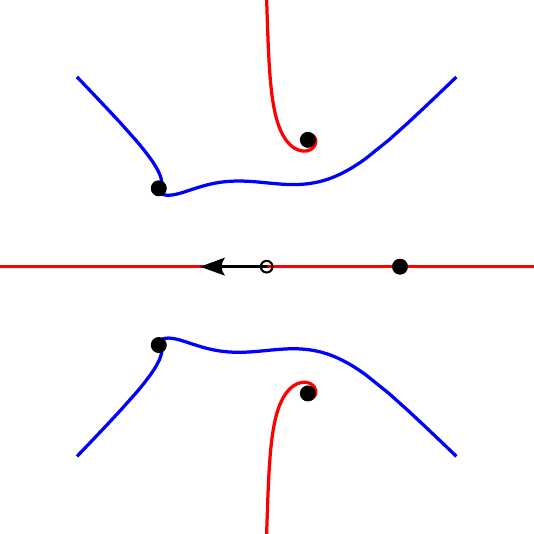}}\qquad
\subfigure[$\theta\in (0,\frac{\pi}8)$]{\includegraphics[width=3.6cm]{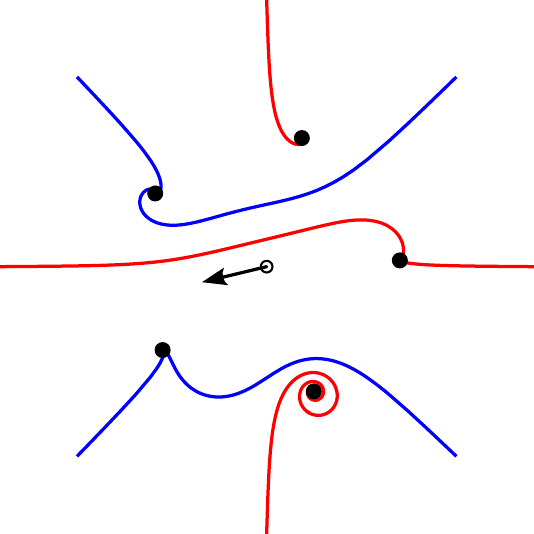}}\qquad
\subfigure[$\theta=\frac{\pi}8$]{\includegraphics[width=3.6cm]{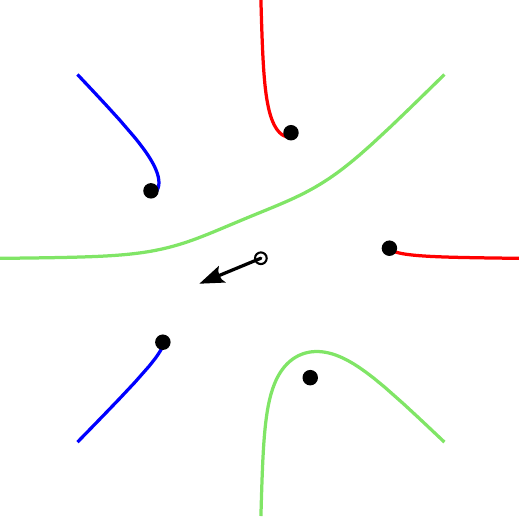}}\qquad
\subfigure[$\theta\in (\frac{\pi}8, \frac{\pi}4)$]{\includegraphics[width=3.6cm]{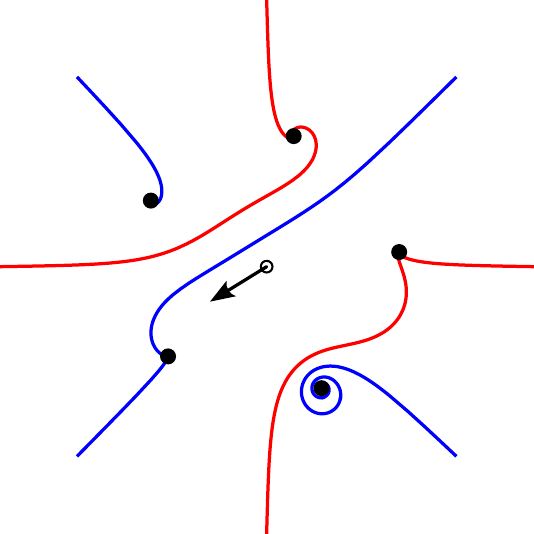}}\qquad
\subfigure[$\theta=\frac{\pi}4$]{\includegraphics[width=3.6cm]{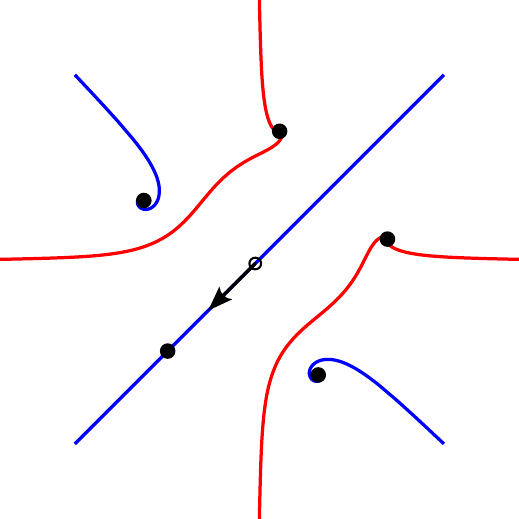}}
\caption{The phase portraits of $\dot z= z^5-\eps$ for $\theta\in[0, \pi/4]$. The arrow indicates the direction of the vector $P_\eps(0)=-\eps$.} \label{fig_k4}
\end{center}
\end{figure}

\begin{example}\label{example:k5} Let us consider the family $\dot z= z^6-\eps$.
\Cref{fig:f6} represents a generic situation. It shows a branch of $t=\int \frac{dz}{z^6-\eps}$ defined on the following simply connected set: the complex plane minus the 6 straight slits depicted in dashed lines, radiating from the roots $z_i$ of $z^6-\eps$ and going to $\infty$.
The branch is injective and its image is a star shaped domain, depicted on the right.
The two parallel sides of a branch of the star on the right are the images of  the two sides of a corresponding slit.
To recover the complex plane (minus singularities), one has to roll and glue the branches, by gluing points separated by a period. A neighborhood of infinity on the left is cut in 6 sectors that are mapped on the right to neighborhoods of the 6 corners.
The separatrices land at the singular points: indeed, no two corners of the star are at the same vertical height.
\\
Because of the symmetries, to highlight the bifurcations it is sufficient to consider $\theta\in\left[0,\frac{\pi}{5}\right]$. The phase portraits appear in \Cref{fig_k5}.

\begin{figure}[htbp]
\begin{center} 
\begin{tikzpicture}
\node at (0,0){\includegraphics[width=6cm]{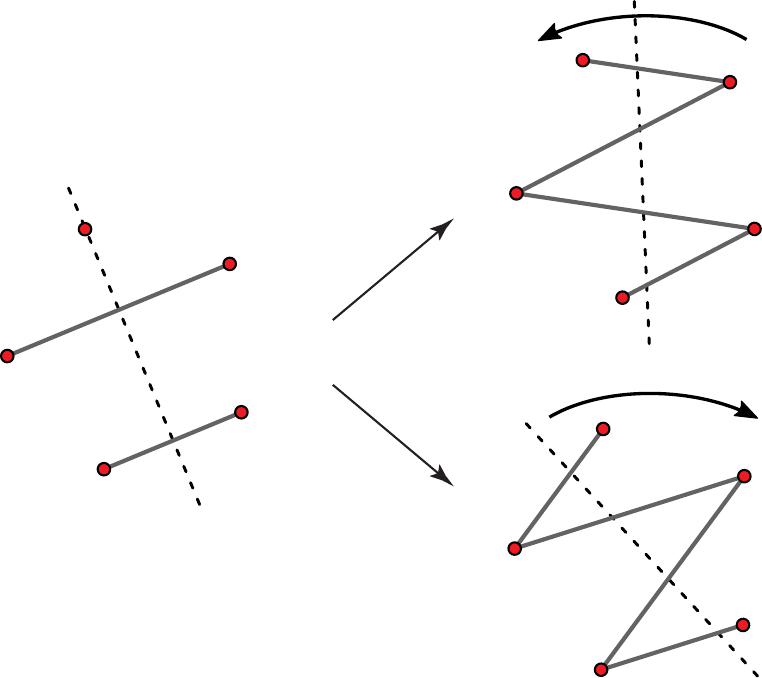}};
\node at (-0.1,1) {$A$};
\node at (-0.1,-1.1) {$B$};
\end{tikzpicture}
\caption{Type of bifurcation of Douady-Estrada-Sentenac invariants for $k+1=5$. The red dots denote the singular points of the vector field $z^{k+1}-\epsilon$ for various values of $\arg\epsilon$. The dashed line is orthogonal to $-\eps = P_\eps(0)$. The symmetry axis for the homoclinic loop can be any of the lines $e^{\frac{{2j+1}}{8}\pi i}\R$. The corresponding bifurcations of the separatrices of $\infty$ through homoclinic loops can be seen in \Cref{fig_k4} (rotated by $\frac{129\pi}{40}$). We have the passage  through arrow $A$ (resp.\ $B$) from \Cref{fig_k4}(c) to \Cref{fig_k4}(d)  (resp.\ \Cref{fig_k4}(b)).}
\label{homoclinic3}
\end{center}
\end{figure}

\begin{figure}[htbp]
\begin{center} 
\subfigure[]{\begin{tikzpicture}
\node at (0,0){\includegraphics[width=5.6cm]{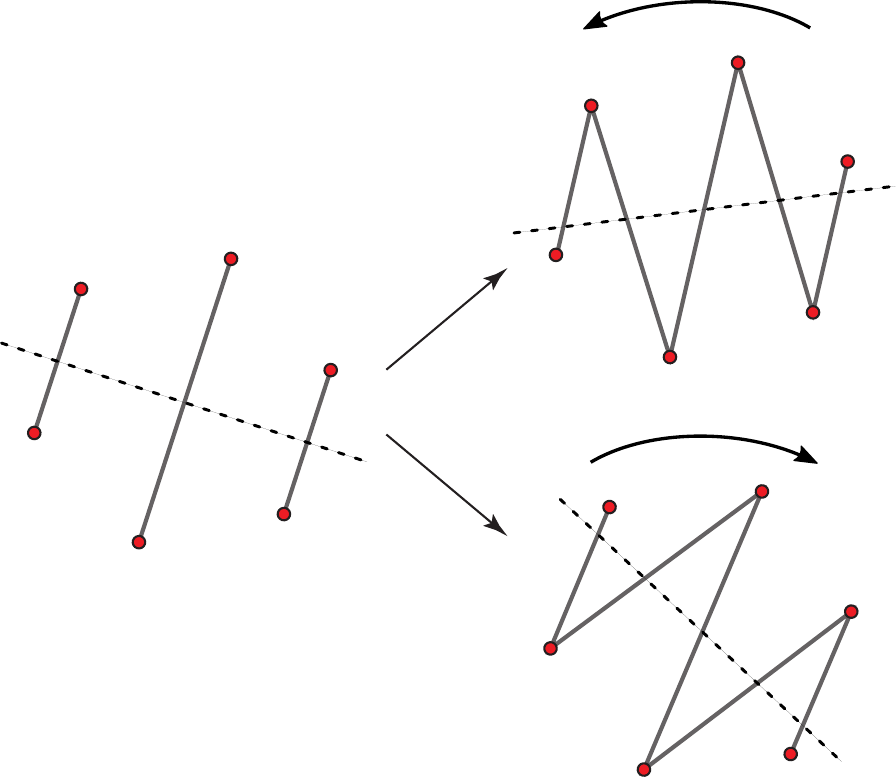}};
\node at (-0.2,0.8) {$A$};
\node at (-0.2,-1) {$B$};
\end{tikzpicture}}
\quad
\subfigure[]{\begin{tikzpicture}
\node at (0,0){\includegraphics[width=5.6cm]{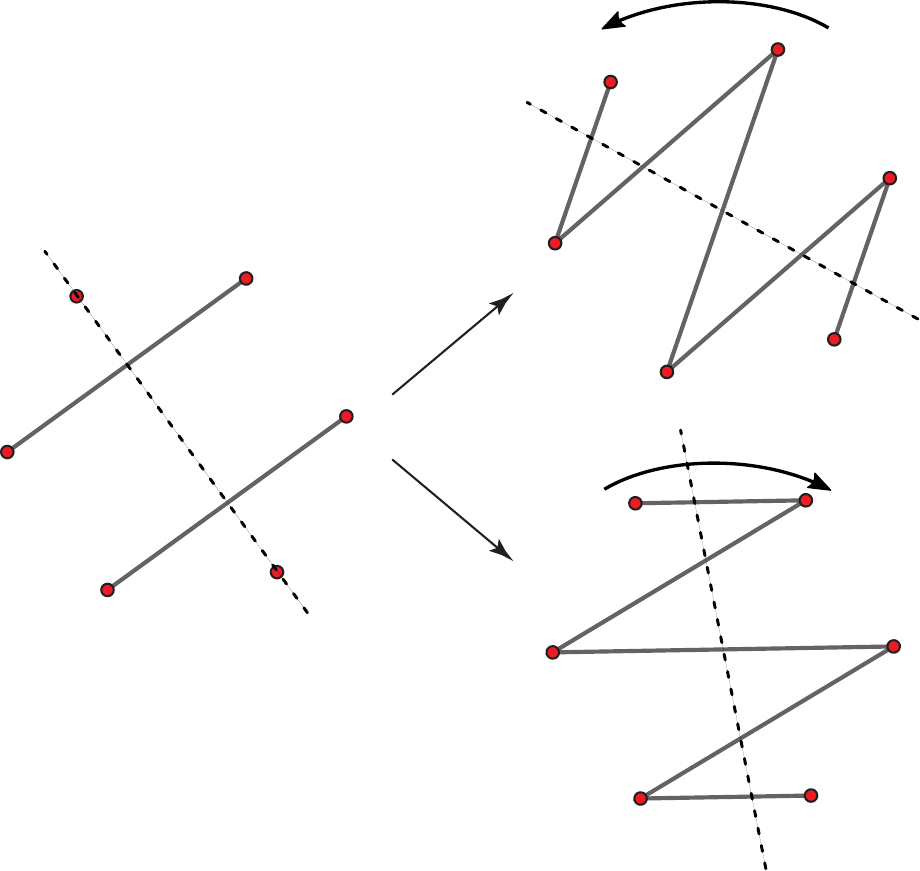}};
\node at (-0.2,0.9) {$A$};
\node at (-0.2,-0.9) {$B$};
\end{tikzpicture}}
\caption{\small The two types of bifurcations of the Douady-Estrada-Sentenac invariant for $k+1=6$. Same conventions as on \Cref{homoclinic3}. For each of the two sub-figures we have on the left the broken combinatorial invariant and on the right the two different combinatorial invariants before and after the homoclinic loops. The symmetry axis for the homoclinic loop can be any of the lines $e^{\frac{(2j+1)\pi }{10}}\R$.  The corresponding bifurcations of the separatrices of $\infty$ through homoclinic loops can be seen in \Cref{fig_k5} (with a rotation in $z$-space). In (a) above, we have  through arrow $A$ (resp.\ $B$) the passage from \Cref{fig_k5}(b) to \Cref{fig_k5}(a)  (resp.\ \Cref{fig_k5}(c)). In (b) above, we have the passage  through arrow $A$ (resp.\ $B$) from \Cref{fig_k5}(e) to \Cref{fig_k5}(d)  (resp.\ \Cref{fig_k5}(f)).}
\label{homoclinic1}
\end{center}
\end{figure}

\begin{figure}[htbp]
\begin{center}
\subfigure[$\theta=-\delta$ for small $\delta>0$]{\includegraphics[width=3.6cm]{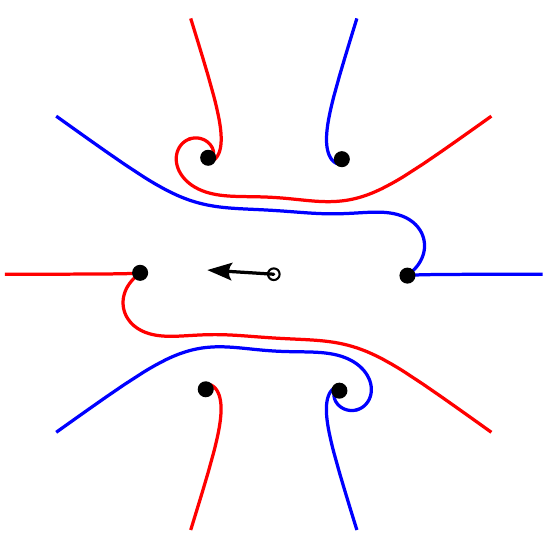}}\qquad
\subfigure[$\theta=0$]{\includegraphics[width=3.6cm]{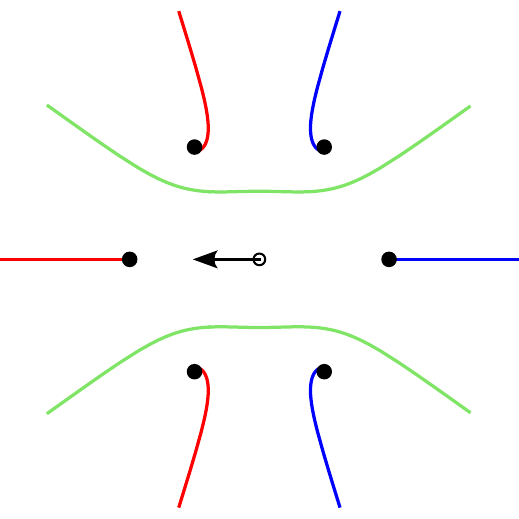}}\qquad
\subfigure[$\theta=\delta$ for small $\delta>0$]{\includegraphics[width=3.6cm]{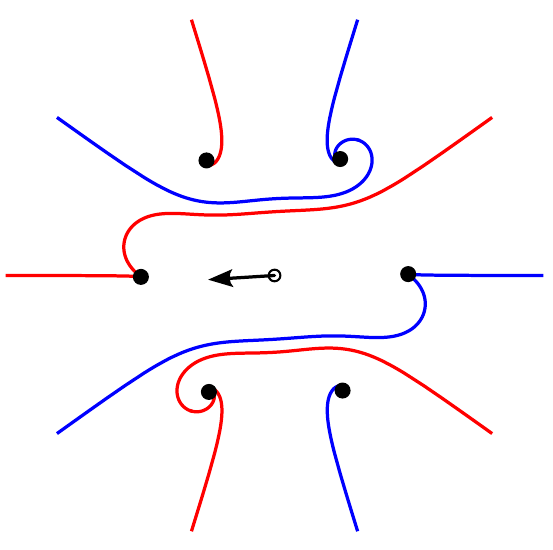}}\qquad
\subfigure[$\theta=\frac{\pi}5-\delta$ for small $\delta>0$]{\includegraphics[width=3.6cm]{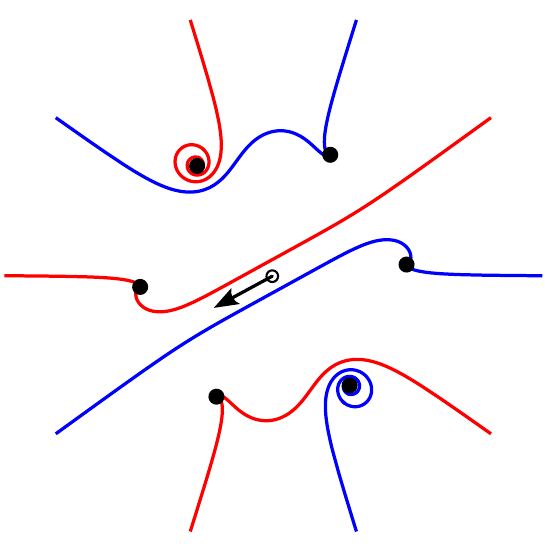}}\qquad
\subfigure[$\theta=\frac{\pi}5$]{\includegraphics[width=3.6cm]{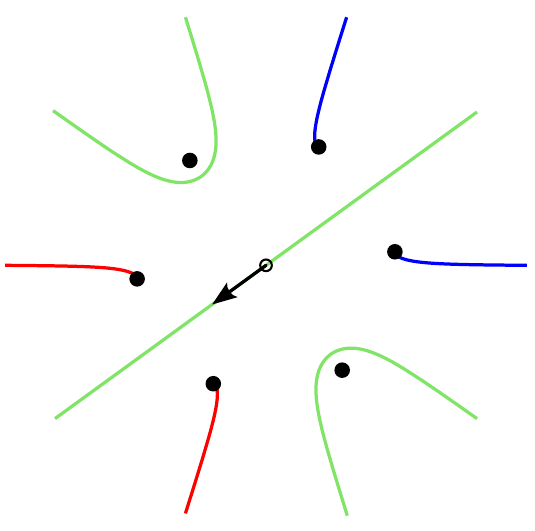}}\qquad
\subfigure[$\theta=\frac{\pi}5+\delta$ for small $\delta>0$]{\includegraphics[width=3.6cm]{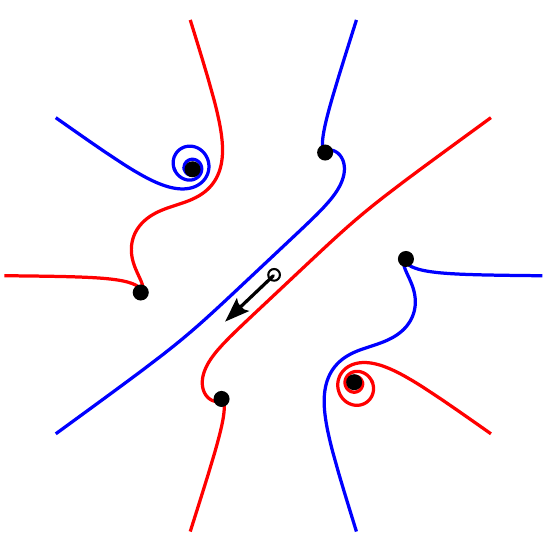}}
\subfigure[$\theta=-\delta$ for small $\delta>0$]{\includegraphics[width=3.6cm]{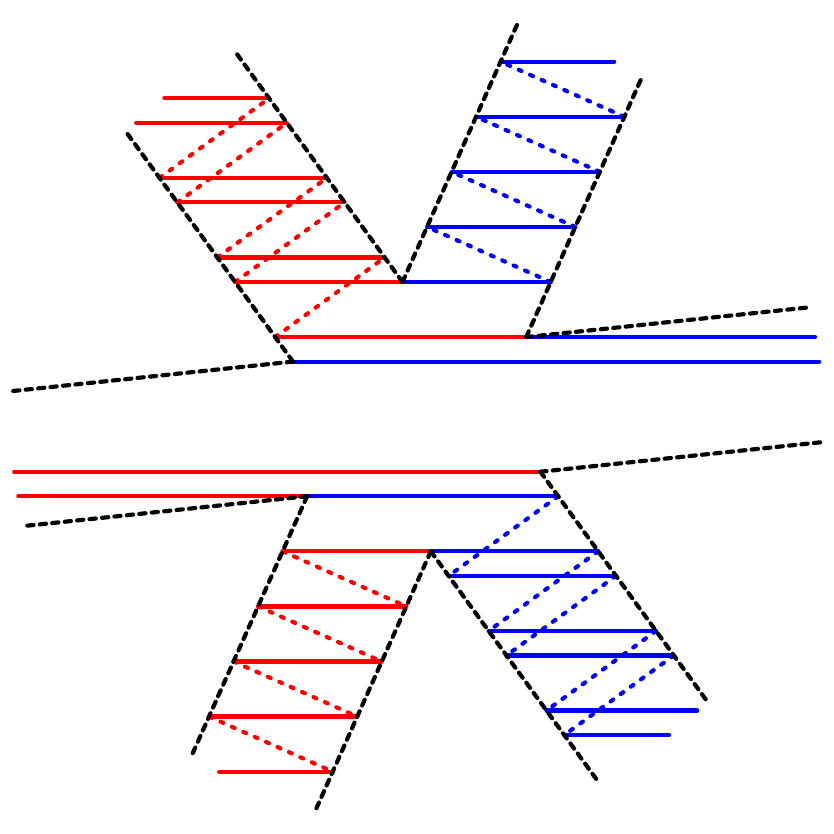}}\qquad
\subfigure[$\theta=0$]{\includegraphics[width=3.6cm]{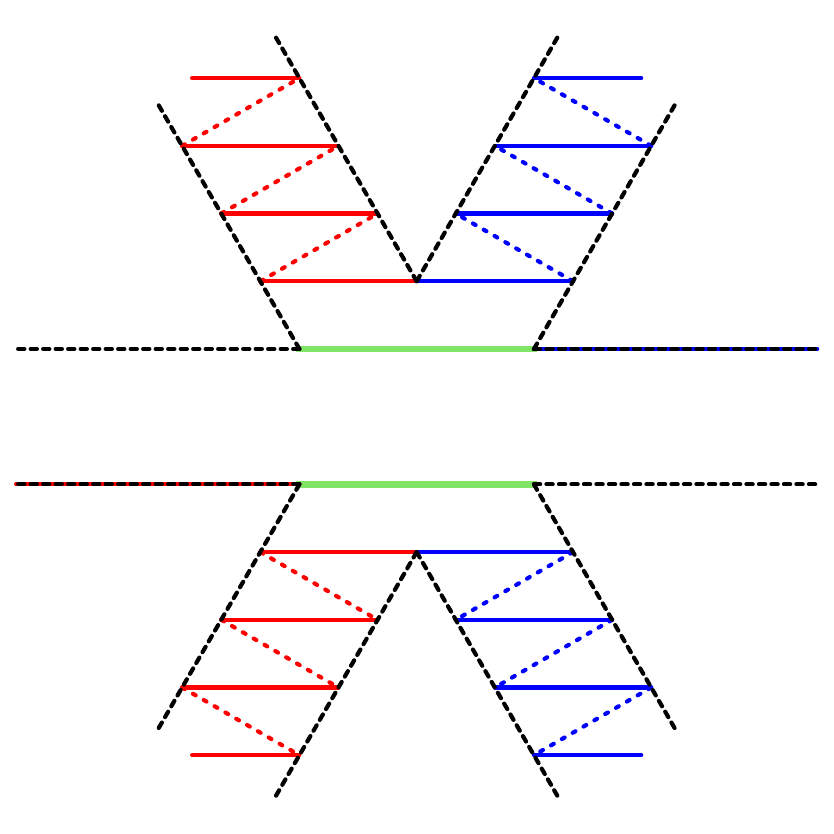}}\qquad
\subfigure[$\theta=\delta$ for small $\delta>0$]{\includegraphics[width=3.6cm]{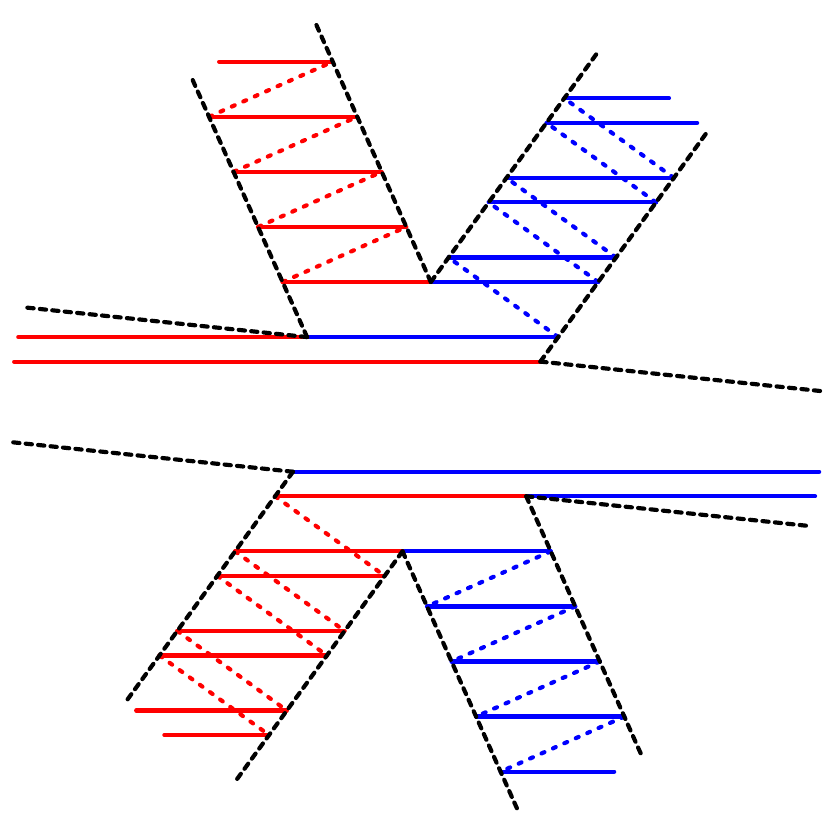}}\qquad
\subfigure[$\theta=\frac{\pi}5-\delta$ for small $\delta>0$]{\includegraphics[width=3.6cm]{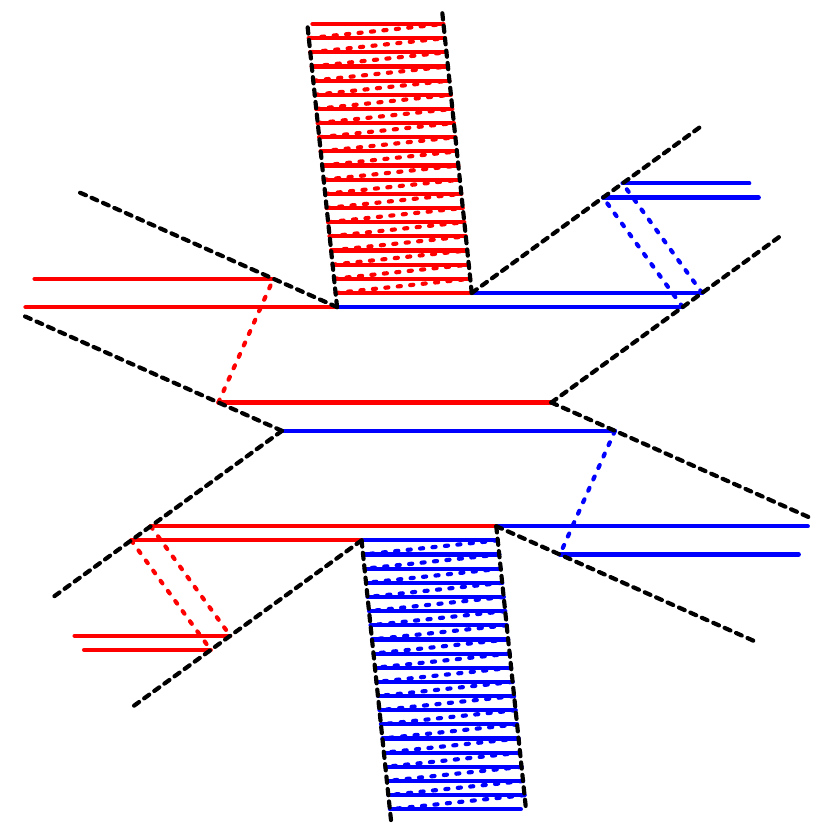}}\qquad
\subfigure[$\theta=\frac{\pi}5$]{\includegraphics[width=3.6cm]{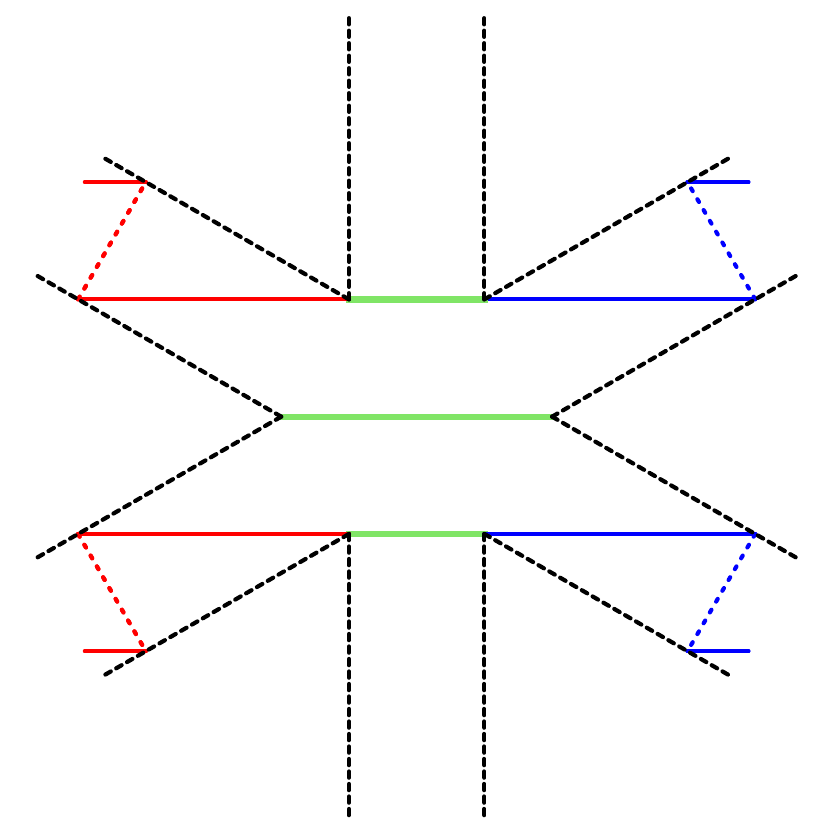}}\qquad
\subfigure[$\theta=\frac{\pi}5+\delta$ for small $\delta>0$]{\includegraphics[width=3.6cm]{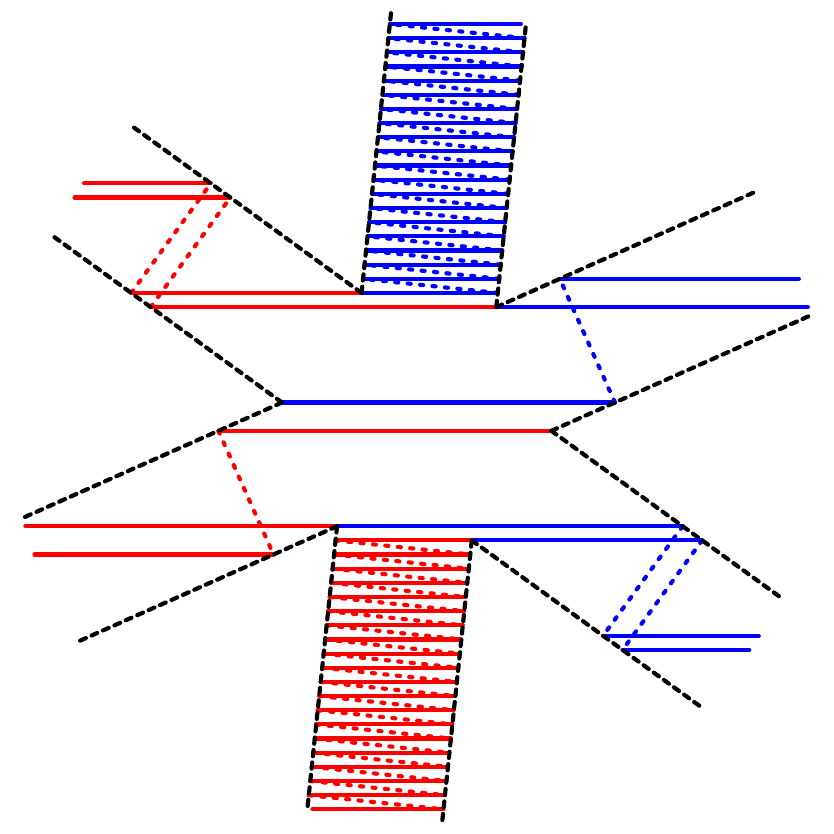}}\qquad
\caption{The phase portraits of $\dot z= z^6-\eps$ and the corresponding lines in $t$-space for $\epsilon=e^{2\pi i \theta}$ and $\theta \in[-\delta,\delta]$ or $\theta\in[-\delta,\frac{\pi}{5}+\delta]$, with $\delta>0$, small. The arrow at the center shows the direction of the vector $P_\eps(0)=-\eps$.} \label{fig_k5}
\end{center}
\end{figure}
\end{example}

\FloatBarrier

We saw in the proof of \Cref{thm:bif_vf_simpl} that when homoclinic loops occur then (if $k>1$ the converse also holds) the vector field $\omega$ for the equation $\dot z=P_\eps(z) = z^{k+1}-\eps$ has an axis $\Delta$ of antisymmetry (the system is reversible w.r.t.\ $\Delta$), i.e.\ $s^* \omega = -\omega$ with $s$ the reflection across $\Delta$. This axis is necessarily an intermediate axis orthogonal to $-\eps = P_\eps(0)$. 
If $-\eps$ is orthogonal to an intermediate axis $\Delta$, then $\Delta$ is an antisymmetry axis of $\omega$.
We orient $\Delta$ according to $-i\eps$: this is the direction that is mapped upwards (argument $\pi/2$) in straightening coordinate.
Let
\[\theta' = \arg (-i\eps ) = \theta - \frac{\pi}{2} \in\R/2\pi \Z
,\] be the argument of the oriented axis $\Delta$.

We recall that the vector field has $k$ separatrices coming from infinity along the $k$ directions for which $z^{k+1} / z \in \R^+$ and $k$ separatrices going to infinity along the $k$ directions for which $z^{k+1}/z \in \R^-$. 
These directions are also respectively the attracting and the repelling direction of the parabolic singularity at $0$ of $\dot z = P_0(z) = z^{k+1}$.
A homoclinic loop $\gamma$ can be seen as a link between an incoming separatrix and an outgoing one. 

\begin{proposition}\label{prop:hl:pairs:dirs}
For each incoming direction $\theta_a$ and each outgoing direction $\theta_r$, the vector field $\dot z = P_\eps(z)$ has a homoclinic loop linking these directions if and only if the argument of the oriented antisymmetry axis (reversion axis) is the midpoint $\theta_m$ of the segment from $\theta_a$ to $\theta_r$ in the anticlockwise direction.
\end{proposition}
As we saw above, for $\eps = e^{i\theta}$ this happens for a unique $\theta\in\R/2\pi\Z$: $\theta = \theta_m+\pi/2$.
\begin{proof}
If there is such a homoclinic loop $\gamma$, then we saw that there is an antisymmetry axis $\Delta$, and also in \Cref{thm:bif_vf_simpl} that the period-gon has a vertical axis of symmetry. Actually the straightening coordinate conjugates the symmetry across $\Delta$ to the symmetry across the imaginary axis.
It maps the half-plane $H_+$ delimited by $\Delta$ and containing $P_\eps(0)=-\eps$ to the right half of the star shaped domain cut by the imaginary axis, and the other half-plane $H_-$ to the other half of the star.
We also saw that it maps $\gamma$ to a horizontal segment, oriented from left to right.
It follows that $\gamma$ is symmetric w.r.t.\ $\Delta$, starts from the half-plane $H_-$, crosses $\Delta$ and ends in $H_+$.
The asymptotic directions of $\gamma$ are symmetric with respect to $\Delta$ and the argument $\theta'$ of $\Delta$ (oriented by $-i\eps$ as above) is the midpoint between $\theta_a$ and $\theta_r$ in anticlockwise order.

Conversely, if $\theta=  \theta_m+\pi/2 $, then $\omega$ has an oriented antisymmetry axis $\Delta$ of argument $\theta_m$, and the period-gon is symmetric w.r.t.\ $i\R$, $H_-$ is mapped to the left of $i\R$ and $H_+$ to the right.
From every incoming direction situated in $H_-$ stems a separatrix connected to the outgoing direction that is symmetric w.r.t\ $\Delta$: indeed in the star shaped domain, incoming separatrices are the horizontal lines from the vertices of the period-gon, while the outgoing ones are the horizontal lines to these vertices.
\end{proof}

\section{Generic one-parameter unfoldings of vector fields}\label{sec:local_vect}

\subsection{Definitions}

Consider a germ of holomorphic vector field in one complex dimension 
\[ \dot{z}=\omega_0(z)= Az^{k+1} +O(z^{k+2}),\ A\in\C^*
,\]
with a parabolic point of codimension $k\geq1$.

\emph{Repelling axes} of $\omega_0$ are defined as the set of complex numbers $z$ such that $Az^{k+1}$ and $z$ have the same argument. They are the set of points for which, at first order, the vector field points away from the origin. They form $k$ half-lines and should be thought of as living in the tangent space to $\C$ at the origin. 

\begin{definition}\label{def:gen_vf}
 A germ of one-parameter analytic family of vector fields $\omega_\eps$ unfolding a parabolic germ $\omega_0(z)$ is \emph{generic} if
\begin{equation}\label{eq:generic}
 \frac{\partial\omega_\eps(0)}{\partial \eps}\neq 0
\end{equation}
for $\eps=0$.\footnote{Hence in a neighborhood $\epsilon=0$.}
\end{definition}

The genericity condition implies that
\[
\omega_\eps(z) = A z^{k+1} - B \eps + O(z^{k+2},\eps z,\eps^2),
\]
where the terms of the $O$ can be for instance deduced from the Newton polygon:

\begin{center}% The fact that this is not a {figure} environment is intentional
\begin{tikzpicture}
\node at (0,0) {\includegraphics{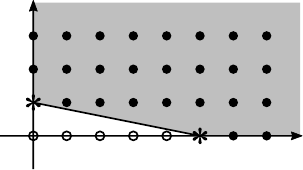}};
\node at (2.7,-1) {$z$};
\node at (-2.3,1.4) {$\eps$};
\end{tikzpicture}
\end{center}

\begin{definition}\label{def:unitary}
We say that a family $\omega_\eps$ as in \Cref{def:gen_vf} is \emph{unitary} if $A=1$ and $B=1$, i.e.\ if
\begin{equation}\label{eq:generic:unitary}
\omega_\eps(z) = z^{k+1} - \eps + O(z^{k+2},\eps z,\eps^2).
\end{equation}
\end{definition}

Without loss of generality, we assume in the rest of \Cref{sec:local_vect} (except in \Cref{sub:ppal}) that $\omega_\eps$ is unitary.
This can be achieved by a linear change of variable and a linear change of parameter.
In \Cref{sub:ppal} we explain the geometric meaning of such a choice.

The map $\omega_0$ has a multiple root of order $k+1$ and it follows that, near $0$, $\omega_\eps$ has $k+1$ roots counted with multiplicity. From the condition $\frac{\partial}{\partial \eps} \omega_\eps = 1 \neq 0$ for $(z,\eps)=(0,0)$, the implicit function theorem implies that one can locally parametrize the zero locus of $\omega_\eps(z)$ as $(z,f(z))$ for a holomorphic $f$ with $f(0)=0$. Plugging this into \Cref{eq:generic:unitary} implies that $f(z)\sim z^{k+1}$ as $z\to 0$.
As a consequence:

\begin{lemma}\label{lem:roots:asymp}
For all $r>0$ small enough there exists $\rho>0$ such that for $\eps\in B(0,\rho)$ and $\eps\neq 0$, there are $k+1$ singularities of $\omega_\eps$, and as $\eps\to 0$, their affixes are asymptotic to the solutions of $z^{k+1}-\eps = 0$.
\end{lemma}
\noindent We state a stronger result in \Cref{sub:chvar_roots}.

\medskip

We are interested in the classification of germs of generic families of vector fields unfolding a parabolic singularity. 

\begin{definition}\label{def:conj}
Two generic germs $\omega_\eps$ and $\widetilde{\omega}_{\tilde{\eps}}$ are \emph{conjugate} if there exist $\rho>0$, $r>0$ and:
\begin{itemize}
\item a change of parameter $\tilde \eps = h(\eps)$ where the analytic map $h:B(0,\rho)\to \C$ satisfies $h(0)=0$ and $h'(0)\neq 0$,
\item a germ of analytic diffeomorphism near $0$:
$\Psi: B(0,r)\times B(0,\rho) \rightarrow \C^2$,
$(z,\eps)\mapsto (\psi_\eps(z),h(\eps))$
such that $\Psi(0,0)=(0,0)$ 
and such that $\forall \epsilon \in B(0,\rho)$,
$\psi_\eps$ is a conjugacy over $B(0,r)$ from the vector field $\omega_\eps$ to (a restriction of) the vector field $\widetilde{\omega}_{\tilde{\eps}}$.
\end{itemize}
The map $\Psi$ is called a \emph{change of variable and parameter}.
\end{definition}

Note that the differential of $\Psi$ at any point is necessarily upper triangular.

\begin{definition}\label{def:u}
If $h(\eps) = \epsilon$, then $\Psi$ is said to \emph{preserve the parameter}.
We say that $\Psi$ is \emph{$0$-fixing} if $\psi_\eps(0)=0$ for all $\eps$, that $\Psi$ is \emph{unitary} if its differential at $(0,0)$ is the identity, and that it is \emph{unipotent} if it is of the form
\[
D_{(0,0)} \Psi = \left[\begin{array}{cc} 1 & \bullet \\ 0 & 1 \end{array}\right]
\]
where the dot stands for any complex number.
\end{definition}

If $\omega_\eps$ is unitary, then it remains unitary after a unipotent change of variable and parameter.
A change of variable and parameter that is unipotent and $0$-fixing is in particular unitary.

For unitary generic germs $\omega_\eps$ and $\tilde \omega_{\tilde \eps}$, being conjugate by a unipotent change of variable and parameter or by a unitary one are equivalent since we have the following result:
\begin{lemma}\label{lem:0fix}
If $\Psi$ is a unipotent conjugacy between two generic germs in the sense of \Cref{def:gen_vf} then there exists a unipotent change of variable $\Phi$ fixing the parameter and such that $\Phi\circ\Psi$ is a $0$-fixing conjugacy between the same germs. \end{lemma}
\noindent Being moreover unipotent, $\Phi\circ\Psi$ is thus unitary.
\begin{proof}[Proof of \Cref{lem:0fix}]
Since $\Psi(0,\eps)= (a(\eps), h(\eps))$ for some function $a$ and $h$ with $h'(0)=1$, we look for a self conjugacy $\Phi$ of $\tilde\omega_{\tilde\eps}$ of the form $\Phi (\tilde z,\tilde \eps)= (\phi_{\tilde \eps}(\tilde z), \tilde \eps)$ satisfying  $\phi_{\tilde \eps}(a(\eps))  = 0$, i.e.\ $\phi_{\tilde \eps}(a(h^{-1}(\tilde \eps))) = 0$, i.e.\ $\phi_{\tilde \eps}(g(\tilde \eps)) = 0$ (for all $\tilde \eps$ close to $0$) where $g = a \circ h^{-1}$.
Such a family of conjugacies $\phi_\eps$ is provided by \Cref{lem:1} below, as the time $t(\eps)$ of the flow of  $\tilde\omega_{\tilde\eps}$ for some well chosen $t(\eps)$.
\end{proof}

From now on, we choose to formulate everything in terms of existence of unipotent conjugacies, keeping in mind that it implies the existence of $0$-fixing unitary ones.

\begin{lemma}\label{lem:1}
For any holomorphic function $g(\eps)$ with $g(\eps) = 0$ there exists a germ of change of variable fixing the parameter $\Phi(z,\eps) = (\phi_\eps(z),\eps)$ conjugating $\omega_\eps$ to itself, so that $\phi_0'(0)=1$ (i.e.\ $\Phi$ is unipotent) and $\phi_{\eps} (g(\eps)) = 0$.
\end{lemma}
\begin{proof}
  We can choose $r>0$ sufficiently small, and then $\rho>0$ sufficiently small so, that for all $\eps$ such that $|\eps| < \rho$,
  $\forall z\in B(0,r(\eps))$ with $r(\eps) = |\eps/3|^{1/(k+1)}$,
  $ \left| \frac{\omega_\eps(z)}{-\eps} - 1 \right| < 1/2
  $.
  It follows that the antiderivative $F_\eps$ of $1/\omega_\eps$, taking value $0$ at $\eps$ is (well defined since we avoid singularities and) injective on $B(0,r(\eps))$. We recall that $F_\eps$ is a complex time coordinate, which straightens the vector field.
  Since $g(0)=0$ we have $g(\eps) = \eps f(\eps)$ for some holomorphic function $f$. 

  We let $\phi_{\eps}$ be the time-$t(\eps)$ map of the vector field $\omega_{\eps}$, where  $t(\eps) = - F_\eps(g(\eps))$.
  We have $g(\eps) = o(r(\eps))$ as $\eps\to 0$, so it follows that the straight line $[0,t(\eps)]$ is contained in $F_\eps(B(0,r(\eps)))$, whence $\phi_\eps(g(\eps))=0$.
  We have $t(\eps) = -F_\eps(g(\eps)) = -\int_0^{g(\eps)} dz/\omega_{\eps} (z) = \int_0^1 \eps f(\eps)dt / (\eps - \eps^{k+1} f(\eps)^{k+1} + \ldots) = \int_0^1 f(\eps)dt / (1-\eps^{k} f(\eps)^{k+1} + \ldots)$ is well-defined for $\eps$ small and tends to $f(0)$ as $\eps\to 0$, so it is holomorphic in a neighborhood of $0$.
  Since $\omega_0$ has a multiple zero at the origin, $\phi_0(z)\sim z$ as $z\to 0$.
\end{proof}

\begin{remark}\label{rem:uniq}
We also mention the following fact: a self-conjugacy of a vector field that fixes a non-singular point is the identity. It follows that a $0$-fixing self-conjugacy of a germ of unfolding in our sense (not necessarily unitary) is the identity. As a consequence a parameter-fixing and $0$-fixing conjugacy between germs of unfoldings in our sense is \emph{unique}.
\end{remark}

\subsection{Statements}\label{sub:statements}

We will give a complete classification for the following two equivalence relations stemming from \Cref{def:conj,def:u} and identify the corresponding moduli space:
\begin{itemize}
\item conjugacy under a unipotent change of variable preserving the parameter,
\item conjugacy under a unipotent change of variable and parameter.
\end{itemize}
The first one is interesting in itself, but also will be used in the second one.

We prove in \Cref{sub:eig} that the eigenvalues of the $k+1$ singularities of a unitary generic $\tilde \omega_\eps$ are given by the values at the $(k+1)$-th roots of $\eps$ of a holomorphic function that we call the \emph{natural eigenvalue function}.

\begin{theorem}[Classification without change of parameters]
Two unitary generic families $\omega_\epsilon$ and $\widetilde \omega_{\epsilon}$ are conjugate near $(0,0)$ by a unipotent change of variable $z\mapsto \tilde z = \tilde z (z,\epsilon)$ fixing the parameter if and only if they have the same germ of natural eigenvalue function.
\end{theorem}

As a consequence we get \Cref{prop:class} in \Cref{sub:canon}, which we copy here:
\begin{proposition}
Consider two unitary generic families of vector fields $\omega$ and $\bar \omega$ and denote their respective canonical eigenvalue functions by $\lambda$ and $\bar \lambda$. The families are conjugate by a unipotent change of variable and parameter if and only if there exists a holomorphic map $\xi$ with $\xi'(0)=1$ that commutes with the rotation by $2\pi/(k+1)$ and such that the following holds near $0$:
\begin{equation}
\lambda = \bar \lambda \circ \xi.
\end{equation}
\end{proposition}

And \Cref{thm:mod_fix}:
\begin{theorem*}
The modulus space of germs of unitary generic families, up to  unipotent conjugacy fixing the parameter, is naturally in bijection with the set of of holomorphic functions of the form
\[\lambda(\delta) = (k+1)\delta^k\sigma(\delta)\]
with $\sigma(0) = 1$. 
\end{theorem*}

Finally, we identify a canonical parameter by defining a canonical representative in the set of eigenvalue functions up to the equivalence relation induced by the above proposition: this is part of \Cref{thm:canon_param} in \Cref{sub:canon}.

\begin{theorem*}[Canonical parameter]
Let $\omega_\eps$ be a unitary generic family of vector fields.
There exists a unique analytic change of parameter, $\eps\mapsto \tilde{\eps}$, and there exists a non-unique change of variable that depends on the parameter, such that:
\begin{itemize}
\item the change of variable and parameter is unipotent,
\item the conjugate $\tilde \omega$ of $\omega$ has the form $\tilde\omega_{\tilde \eps}(\tilde z) = (\tilde z^{k+1}-\tilde \eps) (1+O(\tilde z,\tilde\eps))$;
\item the \emph{natural} eigenvalue function $\tilde \lambda(\tilde \delta) = (\tilde\omega_{\tilde \delta^{k+1}})'(\tilde \delta)$ takes the form $\tilde{\lambda}(\tilde{\delta})= (k+1)\tilde{\delta}^k(1+O(\tilde{\delta}))$ with $\tilde{\delta}^{k+1}=\tilde{\eps}$, where $\tilde{\lambda}(\tilde{\delta})$  contains no terms in $\tilde{\delta}^{k+m(k+1)}$ for $m\geq1$.
\end{itemize}
\end{theorem*}

This will enable us to define notions of \emph{canonical parameter} and of \emph{prepared} germs of families. 
In particular two prepared germs of families are unipotently conjugate if and only if there exists a unipotent conjugacy preserving the parameter.

\medskip

\subsection{About the position of the singularities: positioned coordinate}\label{sub:chvar_roots}

We saw in \Cref{lem:roots:asymp} that the singularities are approximately on the vertices of a regular $(k+1)$-gon centered on the origin.
In fact a stronger statement is known to hold: we can make a change of variable independent of the parameter and placing them exactly on those vertices, for all $\eps$ sufficiently small.
Let us justify this.

We first recall a general feature of equations near a multiple root.

\begin{proposition}\label{prop:zeroes}
  Consider an analytic family of equations $u_\epsilon(z)=0$ defined in a neighborhood of $(z,\epsilon)=0$ such that $u_0(z)$ has a root of order $d$ at the origin. Make the following genericity assumption: $\frac{\partial u}{\partial \epsilon}(0,0) \neq 0$. Then there exists a change of variable $z\mapsto \tilde z$ fixing $0$, independent of the parameter and a restriction of $u_\eps$ in a neighborhood of $(0,0)$, such that for all $\epsilon$ the roots of $u_\eps$ are exactly the solutions of $\tilde z^{d}-\epsilon=0$.
  If moreover the coefficient of $u_0(z)$ in $z^d$ is $1$ and $\frac{\partial u}{\partial \epsilon}(0,0)= -1$, then the change of variable can be taken to have derivative $1$ at $0$. 
\end{proposition}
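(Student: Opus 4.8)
The plan is to use the Weierstrass preparation theorem to reduce $u_\eps(z)$ to a polynomial in $z$ of degree $d$, then to kill the intermediate coefficients by a parameter-dependent change of variable which is nonetheless analytic in $(z,\eps)$ jointly. First I would write, after dividing by a unit, $u_\eps(z) = c(z,\eps)\bigl(z^d + b_{d-1}(\eps) z^{d-1} + \dots + b_1(\eps) z + b_0(\eps)\bigr)$ with $c(0,0)\neq 0$ and each $b_j$ analytic in $\eps$ with $b_j(0)=0$ (since $u_0$ has a root of order exactly $d$ at $0$). The genericity hypothesis $\frac{\partial u}{\partial\eps}(0,0)\neq 0$ translates into $b_0'(0)\neq 0$: indeed $u(0,\eps) = c(0,\eps) b_0(\eps)$, so $\frac{\partial u}{\partial\eps}(0,0) = c(0,0) b_0'(0)$. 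Hence $\eps \mapsto b_0(\eps)$ is a local biholomorphism of $(\C,0)$; however, we are \emph{not} allowed to change the parameter, so $b_0$ must be dealt with differently than the other $b_j$.

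Next I would perform a Tschirnhaus-type translation $z = \tilde z + t(\eps)$ to remove the degree $d-1$ term: choosing $t(\eps) = -b_{d-1}(\eps)/d$, which is analytic and vanishes at $\eps=0$, brings the polynomial to the form $\tilde z^d + \tilde b_{d-2}(\eps)\tilde z^{d-2} + \dots + \tilde b_0(\eps)$ with all $\tilde b_j$ still analytic and vanishing at $0$. The remaining intermediate coefficients $\tilde b_{d-2},\dots,\tilde b_1$ should be removed by a change of variable of the form $\tilde z = w\,\phi(w,\eps)$ with $\phi(0,0)\neq 0$; more honestly, I expect to need a change of variable tangent to the identity in $w$ of the shape $\tilde z = w + \sum_{j\geq 2} c_j(\eps) w^j$, or better, a rescaling combined with such a map. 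The cleanest route is: rescale so that the resulting polynomial becomes monic with constant term exactly $-\eps$ up to a unit — but again the no-parameter-change constraint forbids rescaling $\eps$.

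So the honest argument is the following substitution trick, which I would carry out carefully. Set $\eps_1 = -\tilde b_0(\eps)$; since $\tilde b_0'(0) \neq 0$ (this survives the translation because $t(0)=0$), the map $\eps\mapsto\eps_1$ is invertible, and we may \emph{reparametrize the proof} — not the family — by using $\eps$ itself as the variable but writing all coefficients as analytic functions of $\eps_1$, hence of $\eps$. Now I want a change of variable $\tilde z = \rho(\eps) w$ (a parameter-dependent real$\cdot$complex scaling, $\rho(0)\neq 0$) followed by tangent-to-identity maps, arranged so that after substitution the polynomial equals $\bigl(\text{unit}\bigr)\cdot(w^d - \eps)$. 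The key algebraic fact I would invoke is that a monic degree-$d$ polynomial $w^d + \beta_{d-2}w^{d-2}+\dots+\beta_1 w + \beta_0$ with $\beta_0 = -\eps$ and all $\beta_j \to 0$ as $\eps\to 0$ can be brought to $w^d - \eps$ by a biholomorphic change of variable $w \mapsto W$ fixing $0$, depending analytically on $\eps$ — this is because the family of such polynomials is, after the substitution $w = \eps^{1/d}\zeta$, a perturbation of $\zeta^d - 1$ with simple roots, whose roots depend analytically and can be matched to the $d$-th roots of $\eps$; one then constructs the conjugacy by interpolation (Lagrange/partial fractions) between the two root sets, obtaining a map analytic jointly in $(w,\eps)$ after clearing the apparent $\eps^{1/d}$ ambiguities (all symmetric functions are single-valued in $\eps$). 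This joint analyticity across $\eps = 0$ is precisely the content one gets from applying Proposition~\ref{prop:zeroes}'s philosophy, or more elementarily from the fact that the elementary symmetric functions of the roots are the coefficients, which are analytic in $\eps$.

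The main obstacle I anticipate is exactly this last point: producing a change of variable $z \mapsto \tilde z$ that is \emph{independent of $\eps$} (as the statement demands) rather than merely analytic in $(z,\eps)$. Re-reading the statement, the change of variable is asked to be parameter-independent — which is strong — so the real work is to check that after the Weierstrass preparation, the Tschirnhaus translation, and the root-matching, the composite change of variable has \emph{no $\eps$-dependence} once we also allow ourselves to shrink the neighborhood and, crucially, once we observe that the family $u_\eps$ and the family $\tilde z^d - \eps$ are related by a conjugacy of the total space $(z,\eps)\mapsto(\tilde z,\eps)$ whose restriction to each fiber could a priori move; the trick is that because the genericity condition makes $b_0$ a coordinate on the $\eps$-line, one can first change variables in the total space freely (allowing $\eps$-dependence) to reach $\tilde z^d - b_0(\eps)$, and then the apparent $\eps$-dependence is absorbed. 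I would present the total-space statement first (a change of variable analytic in $(z,\eps)$) and then argue the parameter-independence as a corollary of uniqueness: any two such normalizations differ by a symmetry of $\tilde z^d - \eps$, which by the earlier discussion of principal parts is a rotation of finite order, hence $\eps$-independent. I expect the write-up to lean on Weierstrass preparation and on the elementary theory of how roots of a polynomial with analytic coefficients depend on parameters, with the subtlety concentrated in the passage from "conjugacy of total spaces" to "$\eps$-independent change of variable."
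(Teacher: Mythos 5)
There is a genuine gap, and it sits exactly where you located it: the parameter-independence of the change of variable. Every step of your construction — the Weierstrass preparation is fine, but the Tschirnhaus translation $z=\tilde z - b_{d-1}(\eps)/d$ and the subsequent root-matching interpolation — produces a change of variable $\psi_\eps$ that genuinely depends on $\eps$, and your proposed mechanism for removing that dependence does not work. Uniqueness of normalizations up to a rotation of order $d$ (which is Corollary~\ref{cor:factor}(2), and is itself proved only for $\eps$-\emph{independent} normalizations) shows at best that an $\eps$-independent normalization, \emph{if one exists}, is essentially unique; it cannot manufacture one. Concretely, if $\psi_\eps$ sends the roots of $u_\eps$ to the $d$-th roots of $\eps$, you cannot conclude that $\psi_0$ does too: $\psi_\eps\circ\psi_0^{-1}$ would have to preserve the set of $d$-th roots of $\eps$, which is precisely what you are trying to prove. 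The phrase "the apparent $\eps$-dependence is absorbed" is an assertion, not an argument. Moreover your "key algebraic fact" (that a monic family with $\beta_0=-\eps$ can be brought to $w^d-\eps$ analytically across $\eps=0$) is, as you half-admit, essentially the proposition itself, so the argument is circular at that point as well.

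The idea that closes the gap — and it is the entirety of the paper's proof — is to apply the implicit function theorem in the $\eps$-direction rather than manipulate the polynomial coefficients. Since $\frac{\partial u}{\partial\eps}(0,0)\neq 0$, the zero locus $\{u=0\}$ is locally a graph $\eps=f(z)$ with $f$ holomorphic and $f(0)=0$; substituting $\eps=f(z)$ into the expansion $u_\eps(z)=*z^d+*\eps(1+O(z,\eps))+O(z^{d+1})$ forces $f$ to vanish to order exactly $d$, so $f(z)=z^dh(z)$ with $h(0)\neq 0$, and $\tilde z=g(z)=z\,h(z)^{1/d}$ satisfies $u_\eps(z)=0\iff \tilde z^{\,d}=\eps$. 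The point is that the genericity hypothesis makes the zero set a graph over the $z$-axis, and a graph $\eps=f(z)$ is by its nature described by a single function of $z$ alone — that is where the parameter-independence comes from, and it is not recoverable from fiberwise normalizations.
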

\begin{proof} There are many ways to prove this, one goes as follows. By the implicit function theorem, the zero locus of $u$ can be locally parameterized as $\epsilon = f(z)$ for some holomorphic $f$. 
Hence
\begin{equation}
\left.u_\eps\right|_{\eps=f(z)}\equiv 0.\label{eqf}
\end{equation}
Since $u_\eps(z)= *z^d+*\eps(1+O(z,\eps))+O(z^{d+1})$ ($*$ denotes non-zero constants), it follows by substituting $f(z)=\sum_{n\geq 1} a_nz^n$ in \eqref{eqf}
that $f$ has a root of exact order $d$ at the origin.
Then $f(z)= z^dh(z)= \tilde{z}^d$ for $\tilde{z}=g(z)=zh^{1/d}(z)$ where $g$ is analytic, since $h(0)\neq0$.
The conditions of the last sentence of the statement tell that $u_\eps(z)= z^d-\eps+O(z\eps,\eps^2,z^{d+1})$, which leads to $h(0)=1$ and we can thus take $g'(0)=1$.
\end{proof}

In the situation of the proposition above, the function $u_\eps$ factors by $\tilde z^d-\eps$ and the quotient does not vanish near $(0,0)$.

\begin{corollary}[positioned coordinate]\label{cor:factor}
Let $\omega_\eps$ be a germ of unitary generic one-parameter analytic family of vector fields unfolding a parabolic germ. 
Then there exists a local change of variable $z\mapsto \tilde{z}= g(z)$ independent of the parameter, such that $g'(0)=1$, and bringing the family to the form
\begin{equation}
\dot{\tilde{z}} = \widetilde{\omega}_\eps(\tilde{z})=(\tilde{z}^{k+1}-\eps) v_\eps(\tilde{z}).
\label{form_omega}
\end{equation}
Then $v_\eps(\tilde{z})=1+O(\tilde{z},\eps)$.
\end{corollary}
\begin{proof}
Before justifying the corollary, let us note that for any change of variable $\tilde z = g(z) = \alpha z + O(z^2)$ we have (by $\tilde \omega_\eps(\tilde z) = \omega_\eps(z) \frac{\partial \tilde z}{\partial z}$)
\[ \tilde \omega_\eps(\tilde z) = \alpha^{-k} \tilde z^{k+1} - \alpha \eps + O(\tilde z^{k+2},\tilde z \eps, \eps^2)
.\]
The claim then follows from \Cref{prop:zeroes} applied to $u=\omega$.
\Cref{prop:zeroes} tells us we can take $g'(0)=1$ and then the computation above gives that $v_{0}(0) = 1$.
\end{proof}

We say that the vector field is in \emph{positioned form}. Note that the conjugacy provided by \Cref{cor:factor} is unipotent and $0$-fixing, hence unitary.

\subsection{About the eigenvalues}\label{sub:eig}

The eigenvalue of the vector field $\omega$ at a singularity $z$ is just the complex number $\lambda=\omega'(z)$. Its dynamical significance is that the flow $\Phi_t$ then fixes $z$ with multiplier $\Phi_t'(z)=\exp(\lambda t)$.

Eigenvalues are invariant by a change of variable. How do they depend on $\epsilon$, and what are the collections that actually arise?

\begin{definition}\label{def:nat}
Given a vector field in positioned form in the sense of the previous section, i.e.\ 
\begin{equation}\label{form_factor}
\dot z = \omega_\eps(z) = (z^{k+1}-\eps) v_\eps(z),
\end{equation}
with $v_\eps(0) = 1$, we define the \emph{natural eigenvalue function} $\lambda$ as follows: $\lambda(\delta)$ is the eigenvalue of $\omega_\eps$ at the singularity $z=\delta$ for $\eps=\delta^{k+1}$:
\[\lambda(\delta) = \omega_{\delta^{k+1}}'(\delta)
.\]
\end{definition}

\begin{remark}\label{rem:lam:1}
An elementary computation yields
\begin{equation}\label{eq:lam:0}
\lambda(\delta)= (k+1)\delta^kv_{\delta^{k+1}}(\delta)
.
\end{equation}
In particular:
\begin{equation}\label{eq:lam:1}
  \lambda(\delta)\sim(k+1)\delta^k
\end{equation}
as $\delta \to 0$.
\end{remark}

\begin{remark}\label{rem:lam:2}
The whole collection of eigenvalues of $\omega_\eps$ is given by the values of $\lambda$ at the $(k+1)$-th roots of $\eps$. Note here a remarkable fact: the collection is given by a \emph{single} function $\lambda$. This can be explained in Ribon's general setting from the fact that the singular set is a smooth curve in $(z,\eps)$-space, hence has only one local component at $(0,0)$.
\end{remark}

We now extend the notion of natural eigenvalue function to generic families that are not in special form, using \Cref{cor:factor}:

\begin{definition}\label{def:eig}
Given a unitary generic vector field $\omega_\eps$ as per \Cref{def:gen_vf}, consider the 
coordinate $\tilde z$ provided by \Cref{cor:factor}.
Consider then the natural eigenvalue function $\lambda$ of the conjugated family under the change of variable $z \mapsto \tilde z$: this function is called the \emph{canonical eigenvalue function} of $\omega_\eps$. 
\end{definition}

Consider the canonical eigenvalue function $\lambda$ of a unitary generic vector field as we just defined. By invariance of eigenvalues, $\lambda(\delta)$ is the eigenvalue of some singularity of $\omega_\eps$ when $\eps = \delta^{k+1}$ and, as in \Cref{rem:lam:1}, the set of all eigenvalues of $\omega_\eps$ is given by the values of $\lambda$ at the $(k+1)$-th roots of $\eps$.

\begin{proposition}[form of the eigenvalue functions]\label{prop:eig}
Let $\omega_\eps$ be a unitary germ of generic $1$-parameter family of vector fields in the sense of \Cref{def:gen_vf}. 
\begin{enumerate}
\item If $\lambda$ is the canonical eigenvalue function of $\omega_\eps$, then $\lambda$ has a zero of order $k$ at the origin: 
\begin{equation} \lambda(\delta)= (k+1) \delta^k \sigma(\delta), \label{form_lambda}\end{equation}
with $\sigma(0) = 1$.
\item Conversely any function $\lambda$ of the form \eqref{form_lambda} with $\sigma(0)=1$ is the canonical eigenvalue function of some unitary generic $\omega_\eps$.
\end{enumerate}
\end{proposition}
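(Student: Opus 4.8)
The plan is to handle the two parts separately, deriving part (1) directly from the computations already set up, and part (2) by constructing an explicit realizing family.

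For part (1), recall from \Cref{cor:factor} that every eigenvalue function arises as the natural eigenvalue function of a conjugated family of the form \eqref{form_factor}, $\dot z = (z^{k+1}-\eps)v_\eps(z)$ with $v_\eps(0)\neq0$. By \Cref{rem:lam} we have the identity $\lambda(\delta) = (k+1)\delta^k v_{\delta^{k+1}}(\delta)$. Since $v$ is analytic near $(0,0)$ and $v_0(0)=v_{\delta^{k+1}}(\delta)|_{\delta=0} = K \neq 0$, the function $\sigma(\delta) := v_{\delta^{k+1}}(\delta)$ is analytic in $\delta$ near $0$ (it is the composition of the analytic map $\delta \mapsto (\delta,\delta^{k+1})$ with the analytic function $v$) and satisfies $\sigma(0) = K \neq 0$. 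This gives \eqref{form_lambda} with the stated non-vanishing, and the zero of $\lambda$ at $0$ has order exactly $k$ because the factor $\delta^k$ is multiplied by a unit. One should also note that different eigenvalue functions for the same $\omega_\eps$ differ by precomposition with multiplication by a $(k+1)$-st root of unity, so the order-$k$ vanishing is intrinsic.

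For part (2), given an arbitrary analytic germ $\lambda(\delta) = (k+1)\delta^k\sigma(\delta)$ with $\sigma(0)\neq0$, the natural candidate is to simply reverse the computation: set $v_\eps(z)$ to be a function whose restriction along $\delta \mapsto (\delta,\delta^{k+1})$ recovers $\sigma$. Concretely, write $\sigma(\delta) = \sum_{n\geq0} c_n \delta^n$ with $c_0 \neq 0$, split each index $n$ as $n = q(k+1) + s$ with $0 \leq s \leq k$, and define $v_\eps(z) = \sum_{q\geq0}\sum_{s=0}^{k} c_{q(k+1)+s}\,\eps^{q} z^{s}$; then $v_{\delta^{k+1}}(\delta) = \sum_n c_n \delta^n = \sigma(\delta)$ as desired, and $v_0(0) = c_0 \neq 0$. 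One must check convergence of this double series in a bidisk: since $\sigma$ converges on $|\delta|<r$, its coefficients satisfy $|c_n|\leq M/r^n$, so $|c_{q(k+1)+s}\eps^q z^s| \leq M |\eps|^q |z|^s / r^{q(k+1)+s}$, which is summable whenever $|z| < r$ and $|\eps| < r^{k+1}$; thus $v_\eps(z)$ is analytic near $(0,0)$. Then the family $\dot z = (z^{k+1}-\eps)v_\eps(z)$ is of the form \eqref{form_factor}, hence generic in the sense of \Cref{def:gen_vf} (its principal part is $K(z^{k+1}-\eps)$, so $\partial_\eps\omega_\eps(0) = -K \neq 0$), and by \Cref{rem:lam} its natural eigenvalue function is exactly the prescribed $\lambda$.

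I expect no serious obstacle here: part (1) is essentially unwinding \Cref{rem:lam}, and the only mildly technical point in part (2) is justifying convergence of the rearranged power series, which is the routine estimate sketched above. The one subtlety worth stating carefully is that "eigenvalue function" is only defined up to precomposition with a $(k+1)$-st root of unity, but since the class of functions of the form \eqref{form_lambda} is invariant under such precompositions (the factor $\delta^k$ picks up a root of unity and $\sigma$ gets precomposed by a rotation, staying non-vanishing at $0$), both the statement and its proof are unambiguous.
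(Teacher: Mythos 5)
Your argument is correct. Part (1) is essentially the paper's proof: unwind Remark~\ref{rem:lam} and set $\sigma(\delta)=v_{\delta^{k+1}}(\delta)$; your extra remarks about analyticity of the composition and invariance under precomposition by a $(k+1)$-st root of unity are harmless elaborations. For part (2) you take a genuinely more elaborate route than necessary. The paper simply takes $v_\eps(z)=\sigma(z)$, \emph{independent of $\eps$}, i.e.\ $\omega_\eps(z)=(z^{k+1}-\eps)\sigma(z)$; then $v_{\delta^{k+1}}(\delta)=\sigma(\delta)$ tautologically and the realization is a one-line verification with no convergence issue. Your construction instead redistributes the coefficients of $\sigma$ according to their residue class mod $k+1$, producing an $\eps$-dependent $v_\eps(z)=\sum_{q,s}c_{q(k+1)+s}\,\eps^q z^s$ which for each $\eps$ is a polynomial of degree at most $k$ in $z$. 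This is correct (your identity $v_{\delta^{k+1}}(\delta)=\sigma(\delta)$ and the geometric-series convergence estimate both check out, as does the genericity verification $\partial_\eps\omega_\eps(0)|_{\eps=0}=-c_0\neq0$), and it has the mildly interesting byproduct of anticipating the polynomial normal form \eqref{pol_normal_form} discussed later in the paper; but it buys nothing for the statement at hand and costs you the rearrangement-and-convergence argument that the paper's choice avoids entirely. The lesson is that the realization problem only constrains $v$ along the curve $(z,\eps)=(\delta,\delta^{k+1})$, so one should pick the simplest extension off that curve, namely the $\eps$-independent one.
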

\begin{proof}\hfill
\begin{enumerate}
\item
By definition, there exists a coordinate change, independent of the parameter, for which $\omega_\eps$ takes the form $(z^{k+1}-\eps)v_\eps(z)$ where $v_0(0)=1$, so $v_\eps(z)$ does not vanish in a neighborhood of $(0,0)$.
By \cref{eq:lam:0}: $\lambda(\delta)= (k+1)\delta^kv_{\delta^{k+1}}(\delta)$. This gives the first claim,  letting $\sigma(\delta)=v_{\delta^{k+1}}(\delta)$.
\item
For the converse, it is enough to consider $\omega_\epsilon :=  ( z^{k+1}-\epsilon) \sigma(z)$ for some $\sigma$ that does not depend on $\epsilon$. Then $\lambda(\delta)=(k+1)\delta^{k}\sigma(\delta)$.
\end{enumerate}
\end{proof}

\subsection{Classification under unitary conjugacies that fix the parameter}\label{sub:parfixclass}

We start with a preliminary result that will be used several times.
\begin{lemma}\label{lem:lambda:eq}
Assume that $\lambda$ and $\tilde \lambda$ are two holomorphic functions satisfying $\lambda(\delta) \sim (k+1)\delta^k \sim \tilde\lambda(\delta)$ as $\delta \to 0$.
Assume moreover that
\[\setof{ \lambda(\delta) }{ \delta^{k+1}=\eps } = \setof{ \tilde \lambda(\delta) }{ \delta^{k+1}=\eps }\]
Then $\lambda=\tilde\lambda$ near $0$.
\end{lemma}
\begin{proof}
For every small $\delta$, there exists $u=u(\delta)\in\U_{k+1}$ such that $\tilde\lambda(\delta)=\lambda(u\delta)$;
the complex number $u=u(\delta)$ a priori depends on $\delta$ but using that $\lambda(u(\delta)\delta)\sim u(\delta)^{-1}\lambda(\delta)$ as $\delta\to 0$, we get that $u(\delta) \sim \lambda(\delta)/\tilde\lambda(\delta) \tend \sigma(0)/\tilde\sigma(0) = 1$, so $u(\delta)=1$ for $\delta\neq 0$ small enough.
\end{proof}

As already mentioned, the collection of the eigenvalues at the singularities is invariant under holomorphic conjugacies.
Are they the only invariants in our setting of generic germs of unfoldings? We will see in \Cref{thm:lambda_inv:AB} that the answer is yes.
The version below is adapted to the classification under unipotent change of variables and is proved by a powerful method, which deserves to be more known. We provide a second proof in the appendix, by the classical path method.
Note that in \cite{Ri}, Proposition~5.10, gives a much more general result (in an abstract setting, in which the simple cases are hard to figure out at first reading).

\begin{theorem}[Classification without change of parameters]\label{thm:lambda_inv}
Two unitary generic families $\omega_\epsilon$ and $\widetilde \omega_{\epsilon}$ are conjugate near $(0,0)$ by a unipotent change of variable $z\mapsto \tilde z = \tilde z (z,\epsilon)$ fixing the parameter if and only if they have the same germ of canonical eigenvalue function.
\end{theorem}

\begin{proof}
The direct implication follows from \Cref{lem:lambda:eq}.

For the converse, thanks to \Cref{cor:factor}, we can first perform a unitary change of variable on each family, independent of the parameter, so that in each case the singular values are the solutions of $z^{k+1}=\eps$, and so that $\omega_\eps = (z^{k+1}-\eps)v_\eps(z)$ and $\tilde \omega_\eps = (z^{k+1}-\eps)\tilde v_\eps(z)$ with $v_0(0) = \tilde v_0(0)$.

We then use a method which was probably first developed in the 2-dimensional case by Teyssier \cite{Teyssier}. We do not know if this method existed earlier in the 1-dimensional case.
See \Cref{app:path} for an alternative proof of the converse, using the \emph{path method}.

For $t\in\C$, denote by $z\mapsto F(\eps,t,z)$ the time-$t$ map of the vector field $\omega_\eps$.
We will seek the conjugacy (actually, the inverse conjugacy, from $\tilde \omega_{\eps}$ to $\omega_\eps$) in the following form: $\phi_\eps(z) = F(\eps, g(z,\eps), z)$ for a well-chosen function $g(z)$.
By \cite{CMR}, Proposition~2.10 page~507, the pull-back of
$V_\eps = \omega_\eps(z) \frac{\partial}{\partial z} $ by $\phi_\eps$ is
\begin{equation}\label{eq:cj}
\frac{1}{1+V_\eps\cdot g_\eps} V_\eps
\end{equation}
where $V_\eps$ is viewed as a differential operator acting on the function $g_\eps(z) = g(z,\eps)$.
Here we want to turn $V_\eps=(z^{k+1}-\eps) v_\eps(z) \partial/\partial z$ into $\tilde V_\eps=(z^{k+1}-\eps) \tilde v_\eps(z) \partial/\partial z$.
Note that $V_\eps = \frac{v_\eps}{\tilde v_\eps} \tilde V_\eps$.
By \cref{eq:cj} the function $\phi_\eps$ conjugates $V_\eps$ to $\tilde V_\eps$ if and only if
\[ 1+V_\eps\cdot g_\eps = \frac{v_\eps}{\tilde v_\eps}
.\]
Since $V_\eps\cdot g_\eps = (z^{k+1} - \eps) v_\eps g_\eps'$,
we obtain the following equation for $g$:
\[ g_\eps' = \frac{\tilde v_\eps-v_\eps}{(z^{k+1} - \eps)\tilde v_\eps v_\eps}
.\]
(This can be re-expressed as $g_\eps' = \frac{1}{\omega_\eps} - \frac{1}{\tilde \omega_\eps}$).
Note that when $z^{k+1}-\eps=0$ then $z$ is a zero of both vector fields, and by hypothesis they have the same eigenvalues at this point. This implies that $v_\eps(z)|_{z^{k+1}-\eps=0}=
\tilde{v}_\eps(z)|_{z^{k+1}-\eps=0}$, hence $\tilde v_\eps-v_\eps$ factors by $(z^{k+1} - \eps)$.
Therefore the right hand side of the equation above is holomorphic and vanishes at $z=0$ and is $O(\eps)$ at $z=0$.
So $g_\eps$ can be obtained by a simple integration.
Taking the antiderivative that vanishes at $0$ provides a $0$-fixing change of variable.
\end{proof}

Note that we actually obtain a $0$-fixing change of variable and parameter.

As a corollary:

\begin{theorem}\label{thm:cl_e}
For any unitary generic family $\omega_\epsilon$, let $\lambda(\delta) = (k+1)\delta^k \sigma(\delta)$ its canonical eigenvalue function.
Then there exists a unipotent conjugacy preserving the parameter between $\omega_\epsilon$ and the vector field
\[\dot z = \Omega_\epsilon(z) = (z^{k+1}-\epsilon)\sigma(z)
.\]
\end{theorem}
\begin{proof}
Let $\epsilon = \delta^{k+1}$. Then $\delta$ is a singularity of $\Omega_\epsilon$ and its eigenvalue is $\partial \Omega_\epsilon/\partial z = (k+1) \delta^k \sigma(\delta)$ at $z=\delta$. So $\omega_\epsilon$ and $\Omega_\epsilon$ have a common eigenvalue function and we conclude using \Cref{thm:lambda_inv}.
\end{proof}

\Cref{thm:lambda_inv} also allows to determine the modulus space for Equivalence Problem~1:
\begin{theorem}\label{thm:mod_fix}
The modulus space of germs of unitary generic 1-parameter unfoldings of codimension $k$ parabolic vector fields, up to unipotent conjugacy fixing the parameter, is naturally in bijection with the set of holomorphic functions of the form
\[\lambda(\delta) = (k+1)\delta^k\sigma(\delta)\]
with $\sigma(0) = 1$. 
\end{theorem}

\subsection{Classification under general unitary conjugacies, canonical parameter}\label{sub:canon}

Here we study the effect of a change of parameter $\epsilon$ on the eigenvalue function $\lambda$ and provide a unique canonical normal form for $\lambda$. We also introduce a notion of \emph{prepared family} and draw consequences in terms of classification of $\omega_\epsilon$ by unitary conjugacy.

For the discussion below we ask the reader to keep in mind that analytic maps $\xi$ that commute near the origin with the rotation by $2\pi /(k+1)$, which we denote by $R_{2\pi/(k+1)}$, have a power series expansion whose only non-zero terms have exponent belonging to $1+(k+1)\Z$. 
Moreover, given $p(\delta)=\delta^{k+1}$, there existe a holomorphic map $\phi$ such that $p\circ \xi= \phi\circ p$.
Conversely, to any holomorphic germ $\phi$ corresponds exactly $(k+1)$ analytic germs $\xi$ satisfying $p\circ \xi = \phi \circ p$, they commute with $R_{2\pi/(k+1)}$ and for all $\eps$ small enough
\begin{equation}\label{eq:sc}
\setof{ \tilde\delta }{ \tilde\delta^{k+1} = \phi(\eps) }
= \setof{ \xi(\delta) }{ \delta^{k+1} = \eps }
= \setof{ \nu\xi(\delta_0) }{ \nu^{k+1} = 1 }
\end{equation}
where $\delta_0$ is any of the $(k+1)$-th roots of $\eps$.
The $k+1$ maps $\xi$ associated to a given $\phi$ differ by composition with a power of $R_{2\pi/(k+1)}$ (by commutation, the composition on the left or on the right are the same).
If $\phi'(0)=1$ then there is a unique associated $\xi$ with $\xi'(0)=1$, which we call the \emph{canonically associated} map.

\begin{proposition}\label{prop:class}
Consider two families of vector fields $\omega$ and $\tilde \omega$ that are generic and unitary in the sense of \Cref{def:unitary}. Consider their respective canonical eigenvalue functions $\lambda$ and $\tilde \lambda$. The families are conjugate by a unipotent change of variable and parameter if and only if there exists a holomorphic map $\xi$ that commutes with the rotation by $2\pi/(k+1)$, with $\xi'(0)=1$ and such that the following holds near $0$:
\begin{equation}
\lambda = \tilde \lambda \circ \xi.
\end{equation}
\end{proposition}
\begin{proof}
Let us prove that the condition is necessary.
Consider a unipotent change of variable and parameter
$(z,\eps)\to(\tilde z,\tilde\eps)$, with $\tilde\epsilon = \phi(\eps)$. 
By invariance of eigenvalues under conjugacy, for all $\eps$ small enough, and $\tilde \eps=\phi(\eps)$,
\begin{equation}\label{eq:tud}
\setof{ \lambda(\delta) }{ \delta^{k+1} = \eps }
=
\setof{ \tilde\lambda(\tilde\delta) }{ \tilde\delta^{k+1} = \tilde\eps }
.\end{equation}
By \Cref{eq:lam:1} $\lambda(\delta)\sim (k+1)\delta^k$ as $\delta \to 0$ and $\tilde\lambda(\tilde \delta)^k \sim (k+1)\tilde\delta^k$ as $\tilde\delta \to 0$.
Consider a canonically associated holomorphic change of parameter $\tilde\delta = \xi(\delta)$ semi-conjugate to the change of parameter $\tilde\epsilon=\phi( \eps)$ with $\xi'(0)=1$, as explained above.
By \cref{eq:sc,eq:tud}, for all $\delta_0$ small enough
\[ \setof{ \lambda(\mu\delta_0) }{ \mu^{k+1} = 1}
= \setof{ \tilde\lambda(\nu \xi(\delta_0)) }{ \nu^{k+1} = 1 }
= \setof{ \tilde\lambda(\xi(\nu  \delta_0)) }{ \nu^{k+1} = 1 }
\]
since $\xi$ commutes with the multiplication by any $(k+1)$-th root of $1$.
Note that $\tilde\lambda\circ \xi (\delta) \sim (k+1)\delta^k \sim \lambda(\delta)$ as $\delta\to 0$.
By \Cref{lem:lambda:eq}, $\lambda = \tilde\lambda \circ \xi$.

For the sufficiency of the condition, we proceed by reduction to the parameter fixing classification (\Cref{thm:lambda_inv}).
Indeed, if $\lambda = \tilde \lambda \circ \xi$ then let us perform the semi-conjugate change of parameter $\bar\eps = \phi(\eps) =\xi(\!\!\sqrt[k+1]\eps)^{k+1}$ on the first family $\omega$. This gives a family $\bar \omega$. Now the two families $\tilde\omega$ and $\bar \omega$ have the same canonical eigenvalue function, which is $\tilde\lambda$. By \Cref{thm:lambda_inv} there is thus a further unipotent change of variable fixing the parameter that brings $\tilde\omega$ to $\bar \omega$.
\end{proof}

The classification is thus reduced to a problem of classifying germs of functions $\lambda$ up to composition as follows: let
\begin{itemize}
\item $\cal G_k$ be the set of germs of functions $\lambda$ with $\lambda(\delta) \sim (k+1)\delta^k$ as $\delta\to 0$.
\item $\Xi_k$ be the group of germs $\xi$ that commute with the rotation by $2\pi/(k+1)$ and such that $\xi'(0)=1$.
\end{itemize}
Then $\Xi_k$ acts on $\cal G_k$ by right-composition.
In this language, \Cref{prop:class} can be rephrased as: 
\begin{corollary}
The modulus space of germs of unitary generic 1-parameter unfoldings of codimension $k$ parabolic vector fields, up to conjugacy by a unipotent change of variable and parameter, is naturally in bijection with $\cal G_k/\Xi_k$.
\end{corollary}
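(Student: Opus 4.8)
The plan is to exhibit the claimed bijection explicitly and verify it is well defined, injective and surjective; by this point all the substantive work is already contained in \Cref{prop:class}, \Cref{prop:eig} and \Cref{cor:factor}, so what remains is essentially bookkeeping.

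First I would define the map $\Theta$ on conjugacy classes of generic families, with values in $\cal G_k/\Xi$. Given a generic $\omega_\eps$, \Cref{cor:factor} furnishes a parameter-independent change of variable bringing it to the form $(z^{k+1}-\eps)v_\eps(z)$, whose natural eigenvalue function $\lambda(\delta)=(k+1)\delta^k v_{\delta^{k+1}}(\delta)$ lies in $\cal G_k$ by \Cref{prop:eig}(1), and I set $\Theta([\omega_\eps])=[\lambda]$. To see this does not depend on which of the $k+1$ changes of variable was used, I would invoke the fact recorded just after \Cref{def:nat}, that the $k+1$ eigenvalue functions of a single family differ by pre-composition with multiplication by a $(k+1)$-st root of unity; since $\U_{k+1}\subset\Xi$, they all determine the same class in $\cal G_k/\Xi$. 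To see $\Theta$ depends only on the conjugacy class, I would use \Cref{prop:class}: if $\omega$ and $\bar\omega$ are conjugate then $\lambda=\bar\lambda\circ\xi$ for some $\xi\in\Xi$ and the chosen eigenvalue functions, hence $[\lambda]=[\bar\lambda]$.

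Next I would establish bijectivity. For injectivity, if $\Theta([\omega])=\Theta([\bar\omega])$ then by definition of the quotient there is $\xi\in\Xi$ with $\lambda=\bar\lambda\circ\xi$ for the chosen eigenvalue functions, and \Cref{prop:class} then delivers a conjugacy between $\omega$ and $\bar\omega$, so the two conjugacy classes coincide. For surjectivity, given a class in $\cal G_k/\Xi$ I would pick a representative $\mu\in\cal G_k$, write it as $\mu(\delta)=(k+1)\delta^k\sigma(\delta)$ with $\sigma(0)\neq0$, and take the family $\omega_\eps:=(z^{k+1}-\eps)\sigma(z)$; it is generic in the sense of \Cref{def:gen_vf} because $\partial\omega_\eps(0)/\partial\eps=-\sigma(0)\neq0$, and $\mu$ is one of its eigenvalue functions by \Cref{prop:eig}(2), so $\Theta([\omega_\eps])$ equals the prescribed class.

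The hard part is entirely upstream and is assumed here: the geometric construction underlying \Cref{thm:lambda_inv} and its upgrade \Cref{prop:class}. Within the present step the only point that deserves a line of argument is the compatibility of the two successive quotients --- that forming $\cal L=\cal G_k/\U_{k+1}$ (the modulus space of \Cref{cor:mod_fix}) and then quotienting by the induced $\Xi$-action yields the same object as quotienting $\cal G_k$ by $\Xi$ directly --- which is immediate from the inclusion $\U_{k+1}\subset\Xi$ noted before the statement. No new estimate or construction is needed.
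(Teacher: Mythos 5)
Your argument is correct and is essentially the paper's: the corollary is obtained there by combining \Cref{prop:class} with the realization statement of \Cref{prop:eig} and the observation that $\U_{k+1}\subset\Xi$ collapses the two successive quotients, which is exactly the bookkeeping you carry out. The only cosmetic difference is that the paper routes through $\cal L=\cal G_k/\U_{k+1}$ (already identified with the fixed-parameter modulus space in \Cref{cor:mod_fix}) before quotienting by $\Xi$, whereas you define the bijection on $\cal G_k/\Xi$ directly; the content is the same.
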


We can now seek a unique representative of the orbits of $\Xi_k$ in $\cal G_k$. One natural choice is provided in \Cref{prop:mod_space}, which we state and prove now.

\begin{proposition}[Canonical representative]\label{prop:mod_space}
A set of unique representatives for the equivalence classes of the quotient $\cal G_k/\Xi_k$ is given by the subset $\cal G'_k\subset \cal G_k$ which consists in the germs of analytic functions $\lambda(\delta)$ with first term $(k+1)\delta^k$ and with all terms of order $k+m(k+1)$ vanishing for $m\geq 1$, i.e.
\begin{equation}
\lambda(\delta) = \delta^k\left(k+1+ \sum_{j=1}^{k} \delta^j a_j(\delta^{k+1})\right) =(k+1) \delta^k \sigma(\delta)
\label{nf_lambda}
\end{equation}
with $a_j$ analytic.
\end{proposition}

It follows from the following statement:

\begin{lemma}\label{lemma_lambda}
For any holomorphic function $\lambda: (\C,0)\rightarrow (\C,0)$ of the form $\lambda(\delta) = (k+1)\delta^k\sigma(\delta)$ with $\sigma(0)=1$, there exists a unique germ of holomorphic diffeomorphism $h: (\C,0)\rightarrow (\C,0)$ of the form $h(\eta) =\eta g(\eta^{k+1})$ with $g$ analytic (in other words, $h$ commutes with the multiplication by any $(k+1)$-th root of unity), such that $h$ is tangent to the identity (i.e.\ $g(0)=1$) and such that all terms of the power series expansion of $\tilde\lambda = \lambda\circ h$ of degree $k+m(k+1)$ vanish for $m\geq 1$.
\end{lemma}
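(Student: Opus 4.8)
The plan is to reduce the elimination condition to a single analytic functional equation in the variable $u=\eta^{k+1}$ and then solve that equation by the holomorphic implicit function theorem. First I would write out $\tilde\lambda=\lambda\circ h$ explicitly: with $h(\eta)=\eta g(\eta^{k+1})$ and $\lambda(\delta)=(k+1)\delta^k\sigma(\delta)$ one gets
\[
\tilde\lambda(\eta)=(k+1)\,\eta^k\,g(\eta^{k+1})^k\,\sigma\!\big(\eta g(\eta^{k+1})\big),
\]
so $\tilde\lambda=(k+1)\eta^k\tilde\sigma$ with $\tilde\sigma(\eta)=g(\eta^{k+1})^k\sigma(\eta g(\eta^{k+1}))$, and $h$ is a germ of diffeomorphism tangent to the identity exactly when $g(0)=1$. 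The requirement that $\tilde\lambda$ have no term of degree $k+m(k+1)$ for $m\geq 1$ is the requirement that the part of $\tilde\sigma$ supported on exponents divisible by $k+1$ equal its constant term $\tilde\sigma(0)=g(0)^k=1$. Writing $\sigma(\delta)=\sum_i s_i\delta^i$ and using that each $g(\eta^{k+1})^j$ carries only exponents divisible by $k+1$, the $(k+1)$-divisible part of $\tilde\sigma=\sum_i s_i\eta^i g(\eta^{k+1})^{i+k}$ comes exactly from the indices $i=\ell(k+1)$; after substituting $u=\eta^{k+1}$ and factoring, it equals $g(u)^k\,\Sigma\!\big(u\,g(u)^{k+1}\big)$, where $\Sigma$ is the germ determined by $\Sigma(\delta^{k+1})=\tfrac1{k+1}\sum_{j=0}^{k}\sigma(\zeta^j\delta)$, $\zeta=e^{2\pi i/(k+1)}$ (this is holomorphic near $0$, with $\Sigma(0)=\sigma(0)=1$). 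Hence, for $h$ of the prescribed form and tangent to the identity, the elimination property is \emph{equivalent} to the single equation
\[
g(u)^k\,\Sigma\!\big(u\,g(u)^{k+1}\big)=1\quad\text{near } u=0,\qquad g(0)=1.
\]

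To solve this I would apply the holomorphic implicit function theorem to $F(u,y)=y^k\Sigma(uy^{k+1})-1$, which is holomorphic near $(u,y)=(0,1)$: one has $F(0,1)=\Sigma(0)-1=0$ and $\partial_y F(0,1)=k\,\Sigma(0)=k\neq 0$ (here $k\geq 1$ is used). This yields a unique germ of holomorphic function $g$ with $g(0)=1$ solving $F(u,g(u))\equiv0$, and then $h(\eta)=\eta g(\eta^{k+1})$ is the desired diffeomorphism. Uniqueness of $h$ follows because $g$ is recovered from $h$ via $g(\eta^{k+1})=h(\eta)/\eta$ and must solve the functional equation above. (Equivalently one can argue formally: comparing coefficients of $u^m$ in $g(u)^k\Sigma(ug(u)^{k+1})=1$ isolates the term $k g_m$ on the left-hand side, so $g_m=-\tfrac1k\cdot(\text{polynomial in }g_1,\dots,g_{m-1}\text{ and the coefficients of }\sigma)$, giving a unique formal solution; convergence then comes from the implicit function theorem, or from a routine majorant estimate.)

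The only genuine work is the bookkeeping in the first step — correctly identifying which monomials of $\sigma(\eta g(\eta^{k+1}))\,g(\eta^{k+1})^k$ survive into the $(k+1)$-divisible part, and repackaging them as $g(u)^k\Sigma(u g(u)^{k+1})$. Once the problem is put into this one-variable form everything else is immediate, and the hypothesis ``tangent to the identity'' (i.e.\ $g(0)=1$) is precisely what pins down the solution: dropping it would allow multiplying $g$ by a suitable constant and destroy uniqueness, which is exactly the residual $k$-th-root-of-unity freedom that reappears in \Cref{thm:class_general} and \Cref{thm:mod_space}.
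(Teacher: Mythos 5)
Your proof is correct, and the first (bookkeeping) step is essentially the paper's: the paper likewise groups the terms of $\lambda$ and of $\lambda\circ h$ by the residue class of the exponent modulo $k+1$, derives the transformation rule $b_j(\zeta)=g(\zeta)^{k+j}a_j\bigl(\zeta g(\zeta)^{k+1}\bigr)$ for the class-$j$ parts (your $\Sigma$ is its $a_0$), and reduces the lemma to the single equation $g(u)^k\,a_0\bigl(u\,g(u)^{k+1}\bigr)=1$ with $g(0)=1$ --- exactly your functional equation. Where you genuinely diverge is in solving it. The paper extracts a $k$-th root: it sets $\ell(\delta)=\delta\,a_0(\delta^{k+1})^{1/k}$, rewrites the condition as $\ell\bigl(\eta g(\eta^{k+1})\bigr)=\eta$, hence gets the closed-form solution $g(\eta^{k+1})=\ell^{-1}(\eta)/\eta$, and must then verify separately that this quotient is a function of $\eta^{k+1}$ alone; uniqueness is argued by showing via the $b_0$-relation that a transition between two solutions must be linear. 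You instead apply the implicit function theorem to $F(u,y)=y^k\Sigma(uy^{k+1})-1$ at $(0,1)$, using $\partial_yF(0,1)=k\neq 0$, which delivers existence and uniqueness of $g$ in one stroke and builds the symmetric form of $h$ in from the start. The trade-off is that the paper's route produces the explicit formula $h=\ell^{-1}$ (and makes visible where the residual $k$-th-root-of-unity ambiguity would enter if one dropped $g(0)=1$, as you also note), while yours is shorter and avoids the extra verification. Both arguments are complete.
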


\begin{proof} 
Let us group the terms in the power series expansions of $\lambda$ and $\lambda\circ h$ according to the class modulo $k+1$ of their exponent.
This yields  $\lambda(\delta) =  (k+1) \delta^k \sum_{j=0}^k \delta^j a_j(\delta^{k+1}) $,  and $\lambda\circ h(\eta) =(k+1)  \eta^k\sum_{j=0}^k \eta^j b_j(\eta^{k+1})$   for some holomorphic germs $a_j$ and $b_j$ with $a_0(0)= b_0(0)=1$.  Substituting $h(\eta) = \eta g(\eta^{k+1})$ for $\delta$, one realizes that for all $j$, the function $b_j$ depends on $a_j$ but not on the other $a_i$: more precisely if we let $\zeta=\eta^{k+1}$ then 
\begin{equation}\label{eq:ajbj}
 b_j(\zeta) = g(\zeta)^{k+j} a_j(\zeta g(\zeta)^{k+1}) .
\end{equation}
The problem is thus equivalent to finding $g$ analytic such that $g(0)=1$ and  $(\eta g(\eta^{k+1}))^k a_0((\eta g(\eta^{k+1}))^{k+1}) = \eta^k$. 
Let $\ell(\delta) =  \delta \left(a_0(\delta^{k+1})\right)^{\frac1k}$  be the holomorphic germ where we take the branch of $k$-th root that maps $1$ to $1$. The function $\ell$ is analytic and invertible since $\ell'(0)=1$.
Also $$(\eta g(\eta^{k+1}))^k a_0((\eta g(\eta^{k+1}))^{k+1}) =\left( \ell(\eta g(\eta^{k+1}))\right)^k.$$
Hence the problem is equivalent to  $\eta \nu= \ell(\eta g(\eta^{k+1}))$ for some solution of $\nu^k=1$. Derivating with respect to $\eta$ at $\eta=0$ yields $\nu=1$.
The equation thus becomes $\eta = \ell(\eta g(\eta^{k+1}))$, i.e.\ $g(\eta^{k+1}) = \frac{\ell^{-1}(\eta)}{\eta}$, where $\frac{\ell^{-1}(\eta)}{\eta}$ is analytic. We need to prove that $\frac{\ell^{-1}(\eta)}{\eta}$ is a function of $\eta^{k+1}$ alone. From its definition, $\ell$ has the form 
$\ell(\delta)= \delta b(\delta^{k+1})$ for some analytic function $b$ with $b(0)=1$, yielding that $\ell^{-1}$ has the same form. Hence, $\frac{\ell^{-1}(\eta)}{\eta}$ is a function of $\eta^{k+1}$ alone, yielding the result.
\\
Uniqueness. Suppose that $h_1, h_2$ are two solutions. Then $h_2\circ (h_1)^{-1}$ sends $\tilde{\lambda}_1=\lambda\circ h_1$ to $\tilde{\lambda}_2=\lambda\circ h_2$. This is only possible if $h_2\circ (h_1)^{-1}$ is linear: to prove this, let $h_2\circ (h_1)^{-1}(\eta) = \eta g(\eta)$ and use \eqref{eq:ajbj} with $j=0$.
\end{proof}

\begin{theorem}[Canonical parameter]\label{thm:canon_param}
Let $\omega_\eps$ be a unitary generic\footnote{see \Cref{def:gen_vf,def:unitary}} family of vector fields. There exists a unique analytic change of parameter, $\eps\mapsto \tilde{\eps}$, and there exists a non-unique change of variable that depends on the parameter, such that:
\begin{itemize}
\item the change of variable and parameter is unipotent,
\item the conjugate $\tilde \omega$ of $\omega$ is positioned, i.e.: $\tilde\omega_{\tilde \eps}(\tilde z) = (\tilde z^{k+1}-\tilde \eps) (1+O(\tilde z,\tilde\eps))$;
\item the natural eigenvalue function $\tilde \lambda$ (we recall it is defined by $\tilde \lambda(\tilde \delta) = (\tilde\omega_{\tilde \delta^{k+1}})'(\tilde \delta)$) takes the form $\tilde{\lambda}(\tilde{\delta})= (k+1)\tilde{\delta}^k(1+O(\tilde{\delta}))$ with $\tilde{\delta}^{k+1}=\tilde{\eps}$, where $\tilde{\lambda}(\tilde{\delta})$  contains no terms in $\tilde{\delta}^{k+m(k+1)}$ for $m\geq1$.
\end{itemize}
The parameter $\tilde{\eps}$ will be called the \emph{canonical parameter}.
A family as above, i.e.\ whose parameter is canonical and which is positioned will be called \emph{prepared}.
\end{theorem}

This is a direct consequence of \Cref{lemma_lambda} and \Cref{cor:factor}.

\subsection{Normal forms}\label{sub:normal}

There are several notions of classification we can be interested in and, for each notion, there are interesting normal forms. Here we list a few, summing up what was done in the previous sections and adding a few more.

We still assume that $\omega_\eps$ is, in the sense of \Cref{def:unitary}, a unitary generic $1$-parameter unfolding of a vector field having initially a parabolic singularity: $\omega_0(z) = z^{k+1} + O(z^{k+2})$, $k\geq 1$.

We saw in \Cref{cor:factor} that for $\eps$ small enough and in a uniform neighborhood of $0$ in the $z$-coordinate, $\omega_\eps$ can be turned by a change of variable $z$ independent of the parameter, fixing $0$ and with derivative $1$ at $0$, into a (non-unique) form (that we called a positioned form)
\begin{equation}
\dot z = (z^{k+1}-\epsilon) \times h_\eps(z), \label{family_not_normal}
\end{equation}
for some function $h_\eps$, i.e.\ we can place the singularities exactly on the solutions of $z^{k+1}=\eps$.
However, this is \emph{not a normal form}: there are many possible changes of variables, leading to the function $h_\eps$ being highly non-unique.

\subsubsection{The local normal form}\label{subsub:tlnf}

\begin{theorem}
Let $\omega_\eps$ be a unitary generic unfolding in the sense of \Cref{def:unitary}. There exists a unitary change of coordinate and parameter to the unique normal form
\begin{equation}\label{unf}
\begin{gathered}
\dot{\tilde{z}} =\widetilde{\omega}_{\tilde\eps}(\tilde z)= (\tilde z^{k+1}-\tilde \eps)\times\sigma(\tilde z)
\text{, where }
\\
\sigma(\tilde z)= 1+\sum_{j=1}^k a_j(\tilde z^{k+1})\tilde z^j
\end{gathered}
\end{equation}
i.e.\ $\sigma(\tilde z)$ is independent of the parameter and its power series expansion at $0$ has no terms of exponent in $(k+1)\N^*$.
In this form the parameter is canonical.
\end{theorem}
\begin{proof} We first use \Cref{thm:canon_param} to transform the family to a form $\overline{\omega}_{\tilde\eps}(\bar{z})$
in which the parameter $\tilde{\eps}$ is canonical and the natural eigenvalue function $\lambda(\delta)=(k+1)\delta^k\sigma(\delta)$ contains no term of degree $k+m(k+1)$. 
Composing with a second change of coordinate provided in \Cref{thm:cl_e} (and independent of the canonical parameter $\tilde{\eps}$)  yields the result.

For uniqueness, note that a family in the form \cref{unf} has its parameter which is canonical, since its eigenvalue at $\tilde z=\tilde \delta$ for parameter $\tilde\eps = \tilde\delta^q$ is $(k+1)\tilde\delta^k\sigma(\tilde\delta)$.
Since $\sigma$ is deduced from $\lambda$, we conclude using uniqueness of the canonical parameter (proved in \Cref{thm:canon_param}).
\end{proof}

\medskip

Two other possibilities for which the singular points are located at the roots of $z^{k+1} -\eps$ are given by polynomial and rational vector fields on $\CP^1$, which we explore below.

\subsubsection{The polynomial normal form}

The \emph{polynomial normal form} has up to $k$ additional singular points on $\CP^1$ and a pole at infinity.

\begin{theorem}\label{thm-poly-normal-form}
Let $\omega_\eps$ be a unitary generic unfolding in the sense of \Cref{def:unitary}.

\begin{enumerate}
\item There exists a unipotent change of variable preserving the parameter $\omega_\eps$ to the unique polynomial normal form
\begin{equation}\dot{\tilde{z}} 
= \widetilde{\omega}_{\eps}(\tilde{z})=(\tilde{z}^{k+1}-\epsilon) \times Q_{\epsilon}(\tilde{z} ),
\label{pol_normal_form:1}
\end{equation}
where $Q_{\epsilon}$ is a polynomial in $\tilde{z}$ of degree at most $k$, and $Q_{0}(0)= 1$.
\item In this form the parameter is canonical in the sense of \Cref{thm:canon_param} if and only if $Q_\eps(0) =1$ in a neighborhood of $\eps=0$.
\item There exists a unipotent change of coordinate and parameter bringing $\omega_\eps$ to the unique polynomial normal form
\begin{equation}\dot{\tilde{z}} = \widetilde{\omega}_{\tilde{\eps}}(\tilde{z})=(\tilde{z}^{k+1}-\tilde{\epsilon}) \times Q_{\tilde{\epsilon}}(\tilde{z} ),
\label{pol_normal_form:2}
\end{equation}
where $Q_{\tilde{\epsilon}}$ is a polynomial in $\tilde{z}$ of degree at most $k$, $Q_{\tilde{\epsilon}}(0)\equiv1$ and $\tilde{\epsilon}$ is the unique canonical parameter of \Cref{thm:canon_param}.
\end{enumerate}
\end{theorem}

\begin{proof}
To prove claim (1), thanks to \Cref{thm:lambda_inv} it is enough to choose the coefficients of the polynomial $Q_{\tilde{\epsilon}}$ so that the eigenvalues of $\tilde{\omega}_\eps$ are the same as those of $\omega_\eps$. 

A polynomial of degree at most $k$ being uniquely determined by its values at $k+1$ points, there is a unique solution for $\eps\neq 0$. If $\delta_i$, $i=1, \dots, k+1$ are the singular points and $w_i=\sigma(\delta_i)$, such a polynomial is given by the Lagrange interpolation formula $$Q_\eps(z)= \sum_{i=1}^{k+1} w_i\prod_{j\neq i} \frac{(z-\delta_j)}{(\delta_i-\delta_j)}.$$ It depends holomorphically on $\eps\neq0$ and is well-defined (single-valued) since it is invariant under permutations of the $\delta_i$.  To prove the convergence when $\eps\to 0$ we consider the variables $\delta_j$ as independent, we let $\Delta = (\delta_1,\ldots,\delta_{k+1})$, and for all  $\Delta\in\C^{k+1}$ we introduce  this other formula for the Lagrange interpolation polynomial
$$Q_\Delta(z) := \frac{\left|\begin{matrix} 0&1&z&\cdots&z^k\\
\sigma(\delta_1) &1&\delta_1&\cdots& \delta_1^k\\
\vdots&\vdots&\vdots&\ddots&\vdots\\
\sigma(\delta_{k+1}) &1&\delta_{k+1}&\cdots& \delta_{k+1}^k\end{matrix}\right|}{\left|\begin{matrix} 1&\delta_1&\cdots& \delta_1^k\\
\vdots&\vdots&\ddots&\vdots\\
1&\delta_{k+1}&\cdots& \delta_{k+1}^k\end{matrix}\right|}.$$
Then the denominator is $\prod_{i< j}(\delta_j-\delta_i)$. Each $\delta_i-\delta_j$ divides the numerator since $w_i-w_j=O(\delta_i-\delta_j)$, and hence,  $\delta_i-\delta_j$ divides the difference of the lines starting by $\sigma(\delta_i)$ and $\sigma(\delta_j)$.

Claim (2): In the normal form of \cref{pol_normal_form:1}, it is very easy to check if the parameter is canonical in the sense of \Cref{thm:canon_param}. 
Indeed, write $Q_\eps(z) = a_0(\eps) + a_1(\eps)z + \cdots +a_k(\eps )z^k$. 
The only terms in $\delta^{k+m(k+1)}$, $m\in\N$, in the eigenvalue function $\lambda(\delta) = (k+1) \delta^k Q_{\delta^{k+1}}(\delta)$ come from $a_0(\eps)$.
Hence the parameter is canonical if and only if $Q_\eps(0) = 1$ for all $\eps$ close to $0$.

Claim (3): A unipotent and $0$-fixing change of variable and parameter from form \eqref{pol_normal_form:1} to \eqref{pol_normal_form:2} is simply of the form
$(z,\eps)\mapsto (\tilde{z},\tilde{\eps})=\left(zQ_\eps^{1/k}(0), \eps Q_\eps(0)^{1+\frac1k} \right)$ (using the $k$-th root of $Q_\eps(0)$ closest to $1$), which is analytic in $(z,\eps)$ (the factor in front of $z$ is so that the vector field factors by $\tilde z^{k+1}-\tilde \eps$).
\end{proof}

\subsubsection{The rational normal form} 

\begin{theorem}\label{thm-rational-normal-form}
Let $\omega_\eps$ be a unitary generic unfolding in the sense of \Cref{def:unitary}. There exists a unipotent change of coordinate and parameter bringing $\omega_\eps$ to the unique \emph{rational normal form}
\begin{equation}
\dot{\tilde{z}} = \widetilde{\omega}_{\tilde{\eps}}(\tilde{z})=(\tilde{z}^{k+1}-\tilde{\epsilon})/ R_{\tilde{\epsilon}}(\tilde{z}),
\label{{rat_normal_form}}
\end{equation}
where $R_{\tilde{\epsilon}}$ is a polynomial in $\tilde{z}$ of degree at most $k$, $\tilde{\epsilon}$ is the unique canonical parameter of \Cref{thm:canon_param} and $R_0(0)=1$.
\end{theorem}
\begin{proof}
The idea of the proof is the same as that of \Cref{thm-poly-normal-form}. We can first transform $\omega_\eps$ through $(z,\eps)\mapsto (\tilde{\tilde{z}}, \tilde{\eps})$ to the form given by \Cref{thm:canon_param}, in which the parameter is canonical. The further change of coordinate will preserve the parameter.  The coefficients of the polynomial $R_\eps$ will be chosen so that the eigenvalues of $\tilde{\omega}_\eps$ are the same as those of $\omega_\eps$. For the Lagrange interpolation formula we use the function $1/\sigma(\delta)$. The second change of coordinate then follows from \Cref{thm:lambda_inv}.
\end{proof}

There is also an analogue of claims (1) (parameter fixing version) and (2) (characterization of ) of \Cref{thm-poly-normal-form}, which we leave to the reader.

\begin{remark}
It is impossible in \Cref{thm-rational-normal-form} to require $R_{\tilde{\eps}}(0)\equiv 1$ as shown by the following example: 
the unique normal form of \Cref{thm-rational-normal-form} for $\dot z= (z^2-\eps)(1+(1+\eps) z)$, (which is in the unique normal form described in \Cref{thm-poly-normal-form}) is $\dot{\tilde{z}} = \frac{\tilde{z}^2-\eps}{1+a(\eps) +b(\eps)\tilde{z}}$, with $a(\eps)= \frac{\eps(1+\eps)^2}{1-\eps(1+\eps)^2}$ and $b(\eps)= -\frac{1+\eps}{1-\eps(1+\eps)^2}$.
\end{remark}

\begin{remark}
In \Cref{thm-poly-normal-form}, we were able to give a simple criterion on the family of polynomials $Q_{\eps}(\tilde z)$, for the normal form $(\tilde z^{k+1}-\eps)\times Q_\eps(\tilde z)$ to have its  parameter that is canonical: namely that $Q_\eps(0)= 1$ holds in a neighborhood of $\eps=0$.
In \Cref{thm-rational-normal-form}, it is harder to identify for which families of polynomials $R_{\tilde\eps}(\tilde z)$ the normal form $(\tilde{z}^{k+1}-\tilde{\epsilon})/ R_{\tilde{\epsilon}}(\tilde{z})$ has its parameter that is canonical, since the calculations become messy.
But we could redefine the notion of canonical parameter by using $1/\lambda$ instead of $\lambda$ in the definition, i.e.\ we ask that the natural eigenvalue function $\lambda(\delta)$ is such that $1/\lambda(\delta)$ contains no terms in $\delta^{-k+m(k+1)}$ for $m\geq 1$.
\Cref{lemma_lambda} can be adapted to this case.
The new parameter would be canonical in this new sense if and only if $R_\epsilon(0)= 1$ holds in a neighborhood of $\eps=0$.
\end{remark}

\subsubsection{A Kostov-type normal form}

A fourth normal form, which we call \emph{Kostov-type normal form} is indirectly  given in Rib\'on's paper \cite{Ri},  in the proof of his Proposition~5.14:
\begin{equation}
\dot z = P_\epsilon(z)/(1 + A(\epsilon) z^k)
,
\end{equation}
where $A(\epsilon)$ denotes the sum of the inverses of the eigenvalues (by \Cref{lemma:periods} it is analytic in $\eps$), where $P_\epsilon$ is a monic centered polynomial in $z$ of degree $k+1$ with 
\begin{equation}
P_\eps(z)= z^{k+1} + b_{k-1}(\eps) z^{k-1} + \dots + b_1(\eps)z+b_0(\eps)
,
\end{equation}
and $P_0(z) = z^{k+1}$. This normal form is for parameter preserving changes of variables. If we further add a genericity condition (not present in \cite{Ri}), then we will see that a change of parameter can bring $b_0(\eps)= -\eps$. The uniqueness is not proved in \cite{Ri}. We address this question here.

\begin{theorem} Let $\omega_\eps$ be a unitary generic unfolding, in the sense of \Cref{def:unitary}.
There exists a unique unitary change of coordinate and parameter $(z,\eps)\mapsto (\tilde{z},\tilde{\eps})$ bringing $\omega_\eps$ to the unique normal form 
\begin{equation}
\dot{\tilde{z}} = P_{\tilde{\eps}}(\tilde{z})/(1 + A(\tilde{\eps}) \tilde{z}^k),
\end{equation}
with
\begin{equation}
P_{\tilde{\eps}}(\tilde{z})= \tilde{z}^{k+1} + b_{k-1}(\tilde{\eps}) \tilde{z}^{k-1} + \dots + b_1(\tilde{\eps})\tilde{z}-\tilde{\eps}.
\end{equation}
\end{theorem}

\begin{proof}
By \Cref{cor:factor} we can perform a first change of variable independent of the parameter, and of derivative $1$ at $0$, placing the family in positioned form, i.e.\ 
\[
\omega_\eps(z)= (z^{k+1}-\eps) v_\eps(z)
.\]
Below we assume that $\omega_\eps$ is already in positioned form.
The existence of the change of coordinate is then a simple consequence of a theorem of Kostov \cite{K}, as follows.

We enlarge the $1$-parameter family it to the generic $k$-parameter family $\dot z = \omega_{\eta}(z)= (z^{k+1}+ \eta_{k-1}z^{k-1} + \dots+\eta_1z-\eps) v_\eps(z)$ with the multi-parameter $\eta=(\eta_{k-1}, \dots, \eta_1,\eps)$.
Kostov's Theorem states that there exists a change of coordinate and parameters to a normal form
\begin{equation}\dot{\tilde{z}}=\widetilde{\omega}_{a}(z) = \widetilde{P}_{a}(\tilde z)/(1 + \tilde{A}(a) \tilde z^k) 
\label{form_Kostov}
\end{equation}
with a new multi-parameter $a=(a_{k-1}, \dots, a_1, a_0)$ and
$\widetilde P_a(\tilde z) = \tilde z^{k+1} + a_{k-1} \tilde z^{k-1} + \dots + a_1 \tilde z+a_0 $. The $a_j$ are functions of $\eta$. We then substitute $\eta_1= \dots= \eta_{k-1}=0$. Then the $a_j$ become functions of $\eps$ alone.
Since $\tilde \omega_0(z) = z^{k+1} + \cal O(z^{k+2})$, it follows that $\frac{\partial \tilde{z}}{\partial z}(0,0)$ is a $k$-th root of unity.
Composition of such a change with a rotation of order $k$ preserves the form \eqref{form_Kostov}. Hence it is possible to choose the change of coordinate so that $\frac{\partial \tilde{z}}{\partial z}(0,0)=1$.

Let us now use the genericity assumption. Since $a_0$ is the product of the singularities after the change of variable, and since, before the change of variables, the singularities have modulus $|\eps|^{1/(k+1)}\, \gg |\eps|$, they have the same order of magnitude as $\eps\to 0$.
So their product also does, thus $\frac{\partial a_0}{\partial \eps}\neq0$. Hence we can choose the new parameter $\tilde{\eps}=-a_0(\eps)$. Moreover from $\frac{\partial \tilde{z}}{\partial z}(0,0)=1$, we get $\frac{\partial \tilde{\eps}}{\partial \eps}(0)=1$.

The uniqueness follows from \Cref{thm:uniqueness} below.
\end{proof}

\begin{theorem}\label{thm:uniqueness} Let $\dot z = \omega_\eps(z) =P_\epsilon(z)/(1 + A(\epsilon) z^k)$ and $\dot{\tilde{z} } = \widetilde{\omega}_{\tilde{\eps}}(\tilde{z}) = \widetilde{P}_{\tilde{\epsilon}}(\tilde{z})/(1 + \widetilde{A}(\tilde{\epsilon}) \tilde{z}^k)$ be two $1$-parameter families of vector fields such that $P_\eps$ and $\widetilde{P}_{\tilde \eps}$ are monic centered of degree $k+1$, $P_0(z) =z^{k+1}$, $\widetilde{P}_0(\tilde{z})=\tilde{z}^{k+1}$, $P_\eps(0)=-\eps$, and $\widetilde{P}_{\tilde{\eps}}(0)=-\tilde{\eps}$. 
Suppose the two families are locally conjugate through a unipotent change of coordinate and parameter $(z,\eps)\mapsto (\tilde{z}, \tilde{\eps})= (\varphi_\eps(z), h(\eps))$. Then $h\equiv {\rm id}$,
$\omega_\eps(z)= \widetilde{\omega}_\eps(z)$ (i.e. $A(\eps)=\tilde A(\eps)$ and $P_{\eps} = \tilde P_{\eps}$) and there exists\footnote{$\C\{\eps\}$ denotes the set of convergent power series in $\eps$.} $T\in \C\{\eps\}$ such that 
$
\varphi_\eps= \Phi_\eps^{T(\eps)}$
is the flow of $\omega_\eps$ at time $T(\eps)$.
Moreover the identity is the only $0$-fixing unitary change of variable and parameter between the two families of vector fields.
\end{theorem}

\begin{proof} We use a method of infinite descent as in the proof of Theorem~3.5 in \cite{RT}. 
Since the proof is completely similar, we will be brief on the details. Before starting the infinite descent, we must reduce the problem.

\medskip \noindent{\bf Reduction of the problem.}
It is easily checked that the flow $\Phi_0^t$ of $\dot z = z^{k+1}/(1+A(0)z^k)$ at time $t$ has the form
\begin{equation}\Phi_0^t(z) = z(1+ g_t(z^k))= z+ tz^{k+1} +tO(z^{2k+1}).\label{flow_0}\end{equation}
Let $\Phi_\eps^t$ be the flow of the first equation at time $t$, let $$\psi_\eps(t,z)= \varphi_\eps(z)\circ\Phi_\eps^t,$$ and let
$$K(\eps,t)=\frac{\partial^{k+1}}{\partial z^{k+1}}\psi_\eps(t,0).$$
We want to solve $K(\eps,t)=0$ by the implicit function theorem. We know that there exists $t_0$ such that  $K(0,t_0)=0$ because of the form of $\Phi_0^t$ in \eqref{flow_0}. Indeed, $K(0,t)= \varphi_0^{(k+1)}(0)+t(k+1)!$, yielding
$\frac{\partial K}{\partial t}(0,0)= (k+1)!\neq0$. Hence there exists a unique analytic germ $T(\eps)$ such that $K(\eps,T(\eps))\equiv0$ and $T(0)=t_0$. 
We change $z\mapsto \Phi^{T(\eps)}z$ in the first system. 

\medskip \noindent{\bf The infinite descent.} After the two reductions, we can suppose that  $\varphi_0 = id$
 and that $\varphi_\eps^{(k+1)} (0) \equiv 0$. 
We now show that $\varphi_\eps= {\rm Id}$ and $h(\eps)=\eps$. Note that $A\equiv \widetilde{A}\circ h$  since the sum of the residues at the singular points is invariant. Let 
$$\begin{cases} P_\eps(z)= z^{k+1} + b_{k-1}(\eps) z^{k-1} + \dots + b_1(\eps)z+b_0(\eps),\\
\widetilde{P}_{\tilde{\eps}}(\tilde{z}) = \tilde{z}^{k+1}+ c_{k-1}(\eps) \tilde{z}^{k-1} + \dots + c_1(\eps)\tilde{z}+c_0(\eps),\\
\varphi_\eps = z + \sum_{j\geq0} f_j(\eps)z^j,\end{cases}$$ where all $b_j, c_j, f_j\in \C\{\eps\}$, and we simply write $b_j$ instead of $b_j(\eps)$, etc. for the functions $b_j, c_j, f_j$ and $h$. 
We introduce the principal ideal $I= \langle \eps\rangle$ in $\C\{\eps\}$. We show by induction that $b_j-c_j, h-\eps, f_j\in I^n$ for all $n\in \N^*$, from which it will follow that they are identically zero. 
Note that $h-\eps=b_0-c_0$.

The conjugacy condition is
\begin{align} \begin{split}
&(1+Az^k)\left(\left(z+\sum_{j\geq0} f_jz^j\right)^{k+1} + \dots +c_1\left(z+\sum_{j\geq0} f_jz^j\right)+ c_0\right) - \\
&\qquad \left(1+A\left(z+\sum_{j\geq0} f_jz^j\right)^k\right)\left(z^{k+1} +  \dots + b_1z+b_0\right)\left(1+\sum_{j\geq1} jf_jz^{j-1}\right)=0.\label{conjugacy_eq}\end{split}\end{align}
which we simply write as $\sum_{j\geq0}g_jz^j=0$. Hence all $g_j$ must be identically $0$. 
The $g_j$ are quite complicated but they have a very simple structure of linear terms and this is what we will exploit. 
\begin{itemize}
\item From the two reductions, it is clear that $h, b_j, c_j, f_j\in I$. This is our starting point.  
\item The only linear terms in the equations $g_j=0$ for $j=0, \dots, k-1$, are $b_j-c_j$. Hence $b_j-c_j\in I^2$. 
\item The equations $g_{k+j}=0$ with $0\leq j\leq k$ yield $f_j\in I^2$, since the only linear terms are
$A(c_j-b_j) + (k+1-j)f_j=0$ when $j<k$ and $Af_0+f_k$ when $j=k$. 
\item Remember that $f_{k+1} \equiv 0$ because of the reduction. 
\item The equations $g_\ell=0$ with $\ell>2k+1$ yield $f_{\ell-k}\in I^2$, since the only linear terms in $g_\ell$ are $-(\ell-2k-1)(f_{\ell-k}+Af_{\ell-2k})$.
\item Hence, all $b_j-c_j, f_j\in I^2$. 
\item We now suppose that $b_j-c_j, f_j\in I^n$, and let us to show that there are in $I^{n+1}$. 
\item The equations $g_j=0$ for $j=0, \dots k-1$, yield $b_j-c_j\in I^{n+1}$. 
\item The equations $g_{k+j}=0$ with $0\leq j\leq k$ yield $f_j\in I^{n+1}$. 
\item The equations $g_\ell=0$ with $\ell>2k+1$ yield $f_{\ell-k}\in I^{n+1}$.
\item Hence, all $b_j-c_j, f_j\in I^{n+1}$. 
\end{itemize}
This concludes the first part of the proof. 

\medskip\noindent  For the last part, let $\Phi_\eps^t$ be the flow of $\omega_\eps$ at time $t$.
Then
\begin{equation}\label{eq:khsfcv}
\frac{\partial}{\partial \eps} \frac{\partial}{\partial t} \Phi_\eps^t (z) = \frac{\partial}{\partial t} \frac{\partial}{\partial \eps} \Phi_\eps^t (z).
\end{equation}
Since $\frac{\partial}{\partial t} \Phi_\eps^t (0) = \omega_\eps(\Phi_\eps^t (0)) = -\eps + \cal O(\Phi_\eps^t(0)^{k+1},\Phi_\eps^t(0) \eps, \eps^2) $, and $\Phi_0^t(0) = 0$, so $\Phi_\eps^t(0) = \cal O(\eps)$,
the left hand side of \cref{eq:khsfcv} is equal to $-1+O(\eps)$ when $z=0$. Hence, integrating the right hand side at $z=0$ yields $\left.\frac{\partial}{\partial \eps} \Phi_\eps^t (0)\right|_{\eps=0}=-t$. So if the change of variable $\Phi^{T(\eps)}$ is $0$-fixing, then necessarily $T(\eps) \equiv 0$.
\end{proof}

\subsection{Principal parts and non-unipotent changes of variable and parameter}\label{sub:ppal}

Consider a family $\omega_\eps$ that is generic as per \Cref{def:gen_vf}. We recall that this condition can be written as
\begin{equation}\label{eq:ABdef}
\omega_\eps(z) = A z^{k+1} - B\eps + O(z^{k+2},\eps z,\eps^2)
\end{equation}
with $A,B\in\C^*$, which we call the \emph{principal part}.

We recall that the attracting axes are the solutions $z$ to $Az^{k+1}/z \in \R_+$.
We call \emph{explosion axes} the $k+1$ asymptotic directions in which the singularities are located when $\eps\to0$ along the positive real axis. They are the solutions $z$ of $Az^{k+1}/B \in \R_+$.
There are $k$ attracting axes and $k+1$ explosion axes.

\subsubsection{Effect of a change of variable on the principal parts and on the axes.}\label{ss:chvarpp}

Denote by
\[(\tilde z,\tilde \eps) = \Psi(z,\eps) = (\psi_{\eps}(z),h(\eps))\]
a change of variable.

The principal part in the new coordinate is of the form $\tilde A \tilde z^{k+1} -\tilde B\tilde\eps$ and
the coefficients $\tilde A$ and $\tilde B$ depend only on the linear part of the change of variable and parameter, and more precisely only on the diagonal part: let
\begin{equation}
D_{(0,0)} \Psi = \left[\begin{array}{cc} \alpha & \bullet \\ 0 & \beta \end{array}\right]
\end{equation}
denote the differential of $\Psi$ at $(z,\eps) = (0,0)$, where the dot stands for any complex number. Then using $\omega_\eps(z) = \tilde \omega_{\tilde \eps} (\tilde z) \frac{\partial z}{\partial \tilde z}$ we get
\begin{equation}\label{eq:AB}
\renewcommand{\arraystretch}{1.5}
\begin{array}{rcr}
\tilde A & = & \alpha^{-k} A, \\
\tilde B & = & \alpha \beta^{-1} B.
\end{array}
\end{equation}

The constants $\alpha$ and $\beta$ give us
two degrees of freedom to simplify the principal part: in particular
\begin{lemma}
  There exists precisely $k$ pairs $(\alpha,\beta)$ of non-zero complex numbers such that 
  \[\tilde A=\tilde B=1,\]
  they are of the form $(\alpha,\alpha B)$ with $\alpha$ varying in the set of $k$-th roots of $A$, and there is one for each choice of repelling axis of $\omega_0$ that we bring to $\R_+$ under $z\mapsto \alpha z$.
\end{lemma}

\subsubsection{Parameter-fixing changes of variable}

Note that $A=B=1$ if and only if $\R_+$ is both a repelling axis and an explosion axis.

A parameter fixing conjugacy is forced to have $\beta=1$.
It will rotate every repelling axis and every explosion axis by $\arg \alpha$.
Note that $\tilde A {\tilde B}^k = A B^k$, so $AB^k$ is an invariant quantity (it is also valid when the parameter is not fixed, provided $\beta=1$).
The condition $AB^k \in \R_+$ corresponds to the fact that a repelling axis and an explosion axis coincide. See \Cref{fig:axes45}. Note that only one repelling axis and one explosion axis can coincide, because their directions are evenly distributed on the circle, and there are respectively $k$ and $k+1$ axes of each type.

For parameter fixing changes of variables, or more generally under the condition $\beta=1$, there remains only one degree of freedom to simplify the principal part. We see two reasonable normalizations in this case:
\begin{lemma}
There are exactly $k$ values of $\alpha$ such that with $(\alpha,\beta) =(\alpha,1)$ we have $\tilde A=1$,
one for each choice of repelling axis that we want to bring to $\R_+$ under the map $z\mapsto \alpha z$.

There are exactly $k+1$ values of $\alpha$ such that with $(\alpha,\beta) =(\alpha,1)$ we have $\tilde A=\tilde B$, i.e.\ such that the principal part is $\tilde A\times(z^{k+1}-\eps)$,
one for each choice of explosion axis that we want to bring to $\R_+$ under the map $z\mapsto \alpha z$.
\end{lemma}
See \Cref{fig:axes3}.

\begin{figure}[htbp]
\begin{tikzpicture}
\node at (0,0) {\includegraphics[width=12cm]{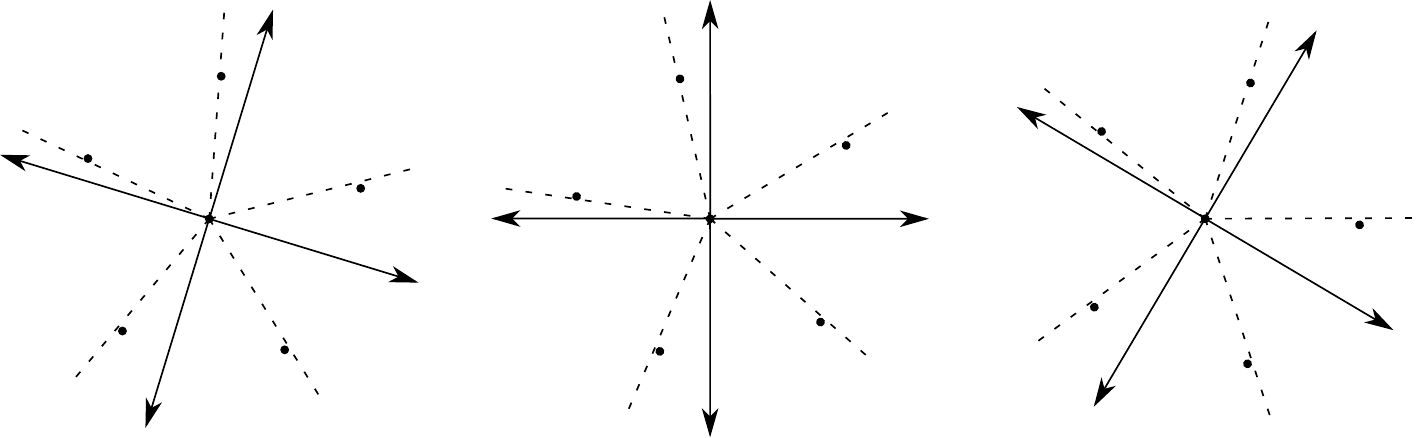}};
\node at (-4.2,-2.2) {initial};
\node at (0,-2.2) {$\arg \tilde A = 0$};
\node at (4.2,-2.2) {$\arg \tilde A = \arg \tilde B$};
\end{tikzpicture}
\caption{Two possible normalizations if we fix the parameter. Left: Initial situation, repelling axes with the arrows, explosion axes as dashed lines, examples of position for $\eps>0$ of the singularities as black dots. They are close, but not necessarily on, the dashed lines. Middle: one of the $k$ possible choices of rotation in the $z$-plane bringing a repelling axis to $\R_+$. Right: one of the $k+1$ possible choices of rotation in the $z$-plane bringing an explosion axis to $\R_+$.}\label{fig:axes3}
\end{figure}

\begin{figure}[htbp]
\subfigure[]{\begin{tikzpicture}
\node at (0,0) {\includegraphics[width=4cm]{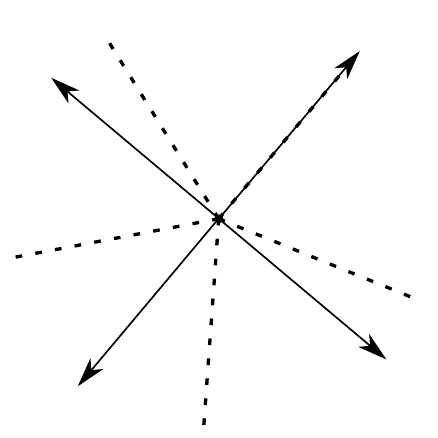}};
\end{tikzpicture}}\subfigure[]{\begin{tikzpicture}
\node at (0,0) {\includegraphics[width=4cm]{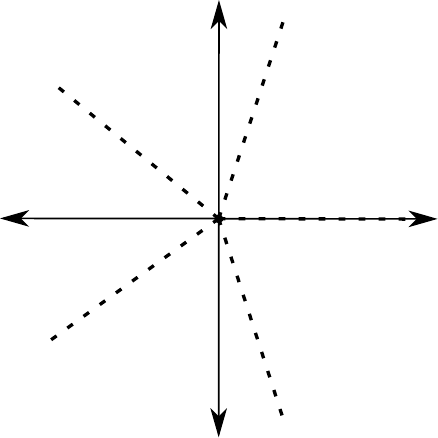}};
\end{tikzpicture}}
\caption{(a): $\arg A B^k = 0$ if and only if a repelling axis and an explosion axis coincide.
(b): $\arg A= \arg B= 0$ if and only if the real axis is both a repelling axis \emph{and} an explosion axis.}
\label{fig:axes45}
\end{figure}

The second normalization ($A=B$) seems more interesting to us because it still allows to define a natural eigenvalue function, $\lambda(\delta)=(k+1)\delta^k \sigma(\delta)$ (with the difference that $\sigma(0)=A$).

\subsubsection{Non-unipotent changes of variables preserving unitary generic vector fields}

If $\omega_\eps$ is unitary, i.e.\ if $\omega_\eps = z^{k+1}-\eps +\cal O(z^{k+2},z\eps,\eps^2)$, then $A=B=1$. Under a general change of variable and parameter $\Psi$ fixing $0$ as in \Cref{ss:chvarpp}, the new family is unitary (i.e.\ $\tilde A=\tilde B=1$) if and only if $\alpha^k=1$ and $\beta=\alpha$, i.e.\ if and only if
\begin{equation}
D_{(0,0)} \Psi = \left[\begin{array}{cc} \zeta^n & \bullet \\ 0 & \zeta^n \end{array}\right]
\end{equation}
where $\zeta = e^{2\pi i/k}$ and $n\in\Z$.
The simplest way to achieve this is of course to take
\begin{equation}\label{eq:simple}
(\tilde z,\tilde \eps) = (\zeta^n z, \zeta^n \eps)
\end{equation}
and $\Psi$ above is just the composition (in any order) of a unipotent change and of \cref{eq:simple}.
At the level of the arguments of $\alpha$, $\beta$, $A$ and $B$, it corresponds for $z$ to bringing another attracting axis to $\R_+$, but then $\R^+$ is not anymore an explosion axis, which is why we also need to adjust $\eps$.

Let us detail the effect of the simple change of \cref{eq:simple} on a few key notions:
for the natural eigenvalue functions we get $\tilde\lambda(\tilde \delta) = \lambda(\delta/\zeta^n)$;
in particular, if $\eps$ is a canonical parameter for the family $\omega_\eps$ then $\tilde \eps = \zeta^n \eps$ is also a canonical parameter for the family $\tilde \omega_{\tilde \eps}$.

We get from \Cref{prop:class}:
\begin{proposition}\label{prop:class:AB}
Consider two families of vector fields $\omega$ and $\tilde \omega$ that are unitary and generic in the sense of \Cref{def:unitary} and their respective natural eigenvalue function $\lambda$ and $\tilde \lambda$. The families are conjugate by a change of variable and parameter if and only if there exists a holomorphic map $\xi$ that commutes with the rotation by $2\pi/(k+1)$ and such that the following holds near $0$:
\begin{equation}
\lambda = \tilde \lambda \circ \xi.
\end{equation}
\end{proposition}

And from \Cref{sub:canon} (where $\cal G'_k$ is defined):
\begin{corollary}
The modulus space of germs of generic parabolic unfoldings of codimension $k$ parabolic vector fields, up to conjugacy by a change of variable and parameter, is naturally in bijection with
$\cal G'_k/\U_k$ where the action of $\rho\in\U_k$ is \emph{pre-composition} with multiplication by $\rho^{-1}$.
\end{corollary}

For parameter-fixing changes, we are interested in maintaining the normalization $A=B$, which still allows for the definition of the natural eigenvalue function.
Since $\beta=1$, we have $\tilde A/\tilde B = \alpha^{-(k+1)} A/  B$, so if $A=B$ then $\tilde A= \tilde B$ if and only if $\alpha^{k+1}=1$, i.e.\ the differential of the change is
\begin{equation}
D_{(0,0)} \Psi = \left[\begin{array}{cc} \xi^n & \bullet \\ 0 & 1 \end{array}\right]
\end{equation}
with $\xi = e^{2\pi i/(k+1)}$ (note the difference with $\zeta$ above) and $n\in\Z$.
The simplest such change is $\tilde z = \xi^n z$.
The effect of this change on the natural eigenvalue function is $\tilde \lambda(\delta) = \lambda(\delta/\xi^n)$.
This allows the following short statement of the analogue of \Cref{thm:lambda_inv} for general changes of variables fixing the parameter on (non necessarily unitary) generic families:
\begin{theorem}[Classification without change of parameters]\label{thm:lambda_inv:AB}
Two generic families $\omega_\epsilon$ and $\widetilde \omega_{\epsilon}$ are conjugate near $(0,0)$ by a change of variable $z\mapsto \tilde z = \tilde z (z,\epsilon)$ fixing the parameter if and only if for all $\eps$ near $0$, they have the same sets of eigenvalues.
\end{theorem}

From \Cref{thm:cl_e}:
\begin{corollary}\label{thm:cl_e:AB}
For any generic family $\omega_\epsilon$, let $\lambda(\delta) = (k+1)\delta^k \sigma(\delta)$ be an associated eigenvalue function. Then there exists a conjugacy preserving the parameter between $\omega_\epsilon$ and the vector field
\[\dot z = \Omega_\epsilon(z) = (z^{k+1}-\epsilon)\sigma(z)
.\]
\end{corollary}

Let $\hat {\cal G}_k$ denote the set of germs of functions $\lambda$ with a root of order exactly $k$ at $0$.
Let $\U_{d}$ denote the multiplicative group of $d$-th roots of unity.
From \Cref{thm:mod_fix} we get:
\begin{corollary}\label{thm:mod_fix:AB}
The modulus space of germs of generic 1-parameter unfoldings of codimension $k$ parabolic vector fields, up to conjugacy fixing the parameter, is naturally in bijection with $\hat{\cal G}_k/\U_{k+1}$ where\footnote{The fact that $\rho$ is of order $k+1$ and not $k$ is not a mistake.} $\rho \in \U_{k+1}$ acts by \emph{pre-composition} by multiplication by $\rho^{-1}$.
\end{corollary}

\subsubsection{Auxiliary question}

We may wonder when a generic family $\omega_\eps$ is conjugate near $(0,0)$ to the simple family $\tilde\omega_\eps(z) = z^{k+1}-\eps$ studied in \Cref{sec:ideal:global}. The natural eigenvalue for this family is $\tilde \lambda(\delta)=(k+1)\delta^k$. \Cref{prop:class} implies:

\begin{corollary}\label{cor:cl_simp}
Let $\lambda$ be an eigenvalue function for $\omega_\eps$. Then $\omega_\eps$ is conjugate to the family $z^{k+1}-\eps$ by a change of variable and parameter near $(0,0)$ if and only if $\lambda(\delta)= \delta^k\sigma(\delta^{k+1})$ for some germ of analytic function $\sigma$ with $\sigma(0)\neq 0$, i.e.\ if and only if $\lambda(\zeta \delta) = \zeta^{-1}\lambda(\delta)$ holds near $0$, with $\zeta = e^{2\pi i/(k+1)}$.
\end{corollary}

\begin{proof}
 By \Cref{prop:class}, such a conjugacy exists if and only if there exists a germ $\xi$ 
 of the form\footnote{The holomorphic germs $\xi$ that commute with $\delta\mapsto \zeta \delta$ are the germs of this form.} $\xi(\delta)=\delta g(\delta^{k+1})$ with $g(0)\neq 0$, such that $\lambda(\delta) = \tilde \lambda\circ\xi(\delta) = (k+1)\xi(\delta)^k$, i.e.\ $\lambda(\delta) = (k+1)\delta^k g(\delta^{k+1})^k$. 
 For every germ $\sigma$ with $\sigma(0)\neq 0$ there is a germ $g$ such that $(k+1)g(\eps)^k = \sigma(\eps)$ holds near $0$.
\end{proof}

\section{A geometric model}\label{sec:geom}

\subsection{Introduction}\label{sub:intro_geo}

In this section we construct, for $1$-parameter vector fields $\omega_\eps$ satisfying the unitary genericity conditions of \Cref{def:unitary} and recalled below, a domain whose image in rectifying coordinate is an analogue of the  star shaped region found in \Cref{sec:ideal:global}, see for instance \Cref{fig:f6}. For this we will use the fact that the situation is close, up to a change of variable and parameter, to the vector field $\dot z = z^{k+1}-\epsilon$.
Since we are doing a local analysis, i.e.\ $z\in B(0,r)$, we have to remove a neighborhood (varying with $\eps$ but of bounded size) of the vertices of the shape.
There are complications, for instance the sum of the periods does not anymore need to be $0$, which means we cannot anchor the half-lines to a $k+1$-gon (before removing neighborhood of vertices).

Let us recall the unitary genericity condition:
\[
  \omega_\eps(z) = z^{k+1}-\eps + \cal O(z^{k+2},z\eps,\eps^2).
\]
We assume that $\omega_\eps(z)$ is defined for $(z,\eps)\in B(0,r)\times B(0,\rho)$.
Since we are only dealing with a local result concerning the family $\omega_\eps$ we may decrease in the sequel the values of $r$ and $\rho$ when needed. Of course this will be done at most finitely many times.
Note that in this section we will not perform changes of parameter. 

\subsection{Rectifying coordinate and translation surfaces}\label{sub:translation_geo}

We recall that the associated rectifying coordinate, which is called straightening coordinate, or $t$-coordinate, or complex time, is given by
\[ t=\int\frac{dz}{\omega_\eps(z)}
,\]
and that, in this coordinate, the vector field $\dot z = \omega_\eps(z)$ becomes $\dot t =1$.
This cooordinate is only defined on the complement of the singularities, is well-defined locally but usually multivalued if considered over a non simply connected open subset of the complement of the singularities.
  
Recall that a Riemann surface is a topological surface\footnote{We require topological surfaces to be Hausdorff separated.} with an atlas for which the transition maps are holomorphic.
\begin{definition} A \emph{translation surface} is a topological surface with an atlas for which the transition maps between charts are translations.
\end{definition} 
\noindent A translation surface is automatically a Riemann surface.
A translation surface naturally carries a flat Riemannian metric of the form $ds=|dt|$ in the charts. We will use the associated geodesic distance.

\medskip

The patches of rectifying coordinates of a holomorphic vector field $\omega$ on a Riemann surface $S$ define an atlas of a translation surface, on $S$ minus the singularities of $\omega$. For instance, in the case of the vector field $\dot z = z^{k+1}-\eps$ taken on $S=\C$, the translation surface is isomorphic to the star shaped domain of \Cref{sec:ideal:global}, where each strip has its two sides glued together by a translation (of vector $\mu_j$ for the corresponding $j$).

\medskip

An \emph{isomorphism} between two translation surfaces is a homeomorphism that, when expressed in the charts of the atlas, is locally a translation. It is in particular holomorphic.

\medskip

We claim the following: 
\begin{lemma} Two vector fields $\omega$ on $S$ and $\omega'$ on $S'$ are isomorphic if and only if the associated translation surfaces are isomorphic and the isomorphism has a continuous extension to a homeomorphism between $S$ and $S'$.
\end{lemma}
\begin{proof}
One direction is immediate and the other essentially comes from the erasable singularity theorem applied to the isomorphism expressed in charts of the two Riemann surfaces.
\end{proof}

In the sequel we take $S=B(0,r)$ and call $\cal V_\eps$ the translation surface associated to $\omega_\eps$, assuming that $\omega_\eps(z)$ is defined for $(z,\eps)\in B(0,r)\times B(0,\rho)$.

\subsection{About the periods}

In this article we only consider vector fields that do not vanish identically on any connected component of their domain, i.e.\ their set of singularities is (closed and) discrete.

\begin{definition}\label{def:period}
The \emph{period} $\mu_j$ associated to a singularity $z_j$ of a vector field $\dot z = \omega(z)$ is the integral of $\int\frac{dz}{\omega(z)}$ around a positively oriented small loop enclosing $z_j$, so by the residue theorem, $\mu_j = 2\pi i \operatorname{Res}\left(1/\omega,z_j\right)$. If the singularity is not parabolic, i.e.\ $\omega'(z_j)\neq 0$, then the period is just $\mu_j = 2\pi i / \omega'(z_j)$. Otherwise it can be any complex number (in particular it is finite).
\end{definition}

The canonical eigenvalue function $\lambda$ was defined in \Cref{def:eig}. Its values at the $(k+1)$-th roots of $\eps$ give the eigenvalues of $\omega_\eps$.

\begin{lemma}\label{lemma:periods}
  The sum of the periods is analytic in $\eps$ and given by $2\pi i A(\eps)$, where
\begin{equation}
A(\eps):=\sum_{j=0}^{k+1} \frac1{\lambda(\delta_j)}.\label{def:a}
\end{equation}
The limit of the sum of the periods as $\eps\to 0$ is equal to $2\pi i$ times the residue of $1/\omega_0$ at the origin. 
\end{lemma}
\begin{proof} 
It follows from the fact 
$A(\eps)=\frac1{2\pi i} \oint_C\frac{dz}{\omega_\eps(z)},$
where $C$ is a circle around the $k+1$ singular points.
\end{proof}

Let us note the following bonus:

\begin{lemma}\label{lemma:periods_bonus}
For \emph{any} holomorphic germ $\lambda$ with a root of order exactly $k$ at the origin, the sum of $1/\lambda(\delta)$, where $\delta$ ranges on the solutions of $\delta^{k+1}=\epsilon$, has a limit when $\epsilon\tend 0$, equal to $k+1$ times the constant term in the Laurent series of $1/\lambda$ at the origin.
\end{lemma}
\begin{proof} 
It follows from the previous lemma and the existence of a vector field with periods associated to $\lambda$ (second part of \Cref{prop:eig}).
More simply, it can also be directly checked: $1/\lambda(\delta)$ is a Laurent series in $\delta$ with exponents ranging from $-k$ to $+\infty$. The sum thus cancels out all negative exponents.
\end{proof}

\subsection{Periodic domains}\label{sub:per:dom}

We will use the construction of \cite{KR}, which was inspired from a previous version of the present article: \cite{CR0}.

Consider a holomorphic vector field $\omega$ on a simply connected subset of the plane and a singular point of the vector field, and assume that $z$ is simple, i.e.\ that the eigenvalue $\lambda$ of $\omega$ at $z$ is not $0$. 
We then associate to $z$ a \emph{periodic domain}
\[U(z)\]
which is the maximal rotation domain around $z$ of the vector field $\mu\omega$ where $\mu = 2\pi i/\lambda$ is the \emph{period} of $z$. For convenience, we call $\mu \omega$ the \emph{circular} vector field of the periodic domain and we call $i\mu\omega$ the \emph{radial} vector field of the periodic domain.
A periodic domain is necessarily homeomorphic to a punctured disk.

\begin{lemma}
Two different simple singularities of $\omega$ have disjoint periodic domains.
\end{lemma} 
\begin{proof} Let $z_1$ and $z_2$ two singularities, with respective eigenvalues $\lambda_1$ and $\lambda_2$. Theorem~4 in \cite{Duff} states that no two cycles of $e^{i\alpha}\omega$ and $e^{i\alpha'}\omega$ can intersect if $\alpha\not\equiv \alpha'\bmod \pi$.
So if $\lambda_1/\lambda_2\notin \mathbb R$, then  the periodic domains are disjoint.
If $\lambda_1/\lambda_2\in \mathbb R$ and the periodic domains are not disjoint, then $z_1$ and $z_2$ are simultaneously centers for the same $e^{i\theta}\omega$.
The set $U(z_i)$ is foliated by periodic trajectories separating $z_i$ from infinity. Let us orient such a periodic trajectory, not by the vector field, but anticlockwise. Then the winding number of $e^{i\theta}\omega$ along this oriented closed curve is independent of the trajectory in $U(z_i)$ and is equal to $1$.
Assume by contradiction that there is a common point in $U(z_1)$ and $U(z_2)$.
Then both $z_1$ and $z_2$ belong to the bounded component $U$ delimited by the closed trajectory of this point.
On the other hand,
since $e^{i\theta}\omega$ is holomorphic, all its singular points have positive Poincaré index (which coincides with the vanishing degree of $\omega$ at the singularity).
So $U$ can only contain one singular point, contradicting our assumption.
\end{proof}

We now focus on our generic 1-parameter families $\omega_\eps$, on a disk $B(0,r)$, with $\eps\in B(0,\rho)$.
For any $\eta>0$, by reducing $r$, then $\rho$, we saw that we can assume that $\omega_\eps$ is defined on $\overline{B(0,r)}$ and that $\omega_\eps$ is close to the ideal vector field $\tilde\omega_\eps(z) = z^{k+1}-\eps$ in the following sense:
\begin{enumerate}
\item There are $k+1$ singularities $z_j$, they are simple, and $\left|\frac{z_j}{\tilde z_j}-1\right|<\eta$.
\item Their periods satisfy $\left|\frac{\mu_j}{\tilde \mu_j}-1\right|<\eta$.
\item $\forall z\in\partial B(0,r)$, $\left|\frac{\omega_\eps(z)}{z^{k+1}}-1\right|<\eta$
and
$\left|\frac{\tilde\omega_\eps(z)}{z^{k+1}}-1\right|<\eta$.
\end{enumerate}

\begin{lemma}\label{lem:order}
For all $r$ small enough, there exists $\rho>0$ such that $\forall \eps\in B(0,\rho)$, the order of contact of any trajectory at a point where it is tangent to $\partial B(0,r)$ is $1$.
\end{lemma}
\begin{proof}
The vector field is then close to $z^{k+1}$, for which $\partial B(0,r)$ maps in time coordinate to a curve that, when rescaled by a factor of $r^k$ is $C^n$ close (for any given $n$) to a circle of radius depending only on $k$.
In time coordinate, trajectories are straight lines, so the order of contact is $1$, which is preserved under change of variables.
\end{proof}

Recall that the ideal singularities form the vertices of a regular $k+1$-gon. 
We can label them $\tilde z_j$ with $j\in \Z/(k+1)\Z$ according to a circular order and this gives a circular order on the singularities $z_j$ of $\omega_\eps$.

\begin{lemma}\label{lem:ujvszj}
  For any $\eta>0$ one can choose $r$, then $\rho$, so that for all $\eps\in B(0,\rho)-\{0\}$, the periodic region $U(z_j)$ has a unique contact point $u_j$ with $\partial B(0,r)$, the order of contact is $1$, and
  \[|\arg(u_j/\tilde z_j)| < \eta
  .\]
\end{lemma}
In particular, the contact points $u_j$ have the same circular ordering as the $z_j$.
\begin{proof}
The vector field $\mu_j \omega_\eps$ is tangent to $\partial U(z_j)$ at any point of this boundary, and moreover orients this boundary in the anticlockwise direction.
The contact point $u_j$ must thus satisfy
\[\arg \frac{i u_j}{\omega_\eps(u_j)\mu_j} \equiv 0 \bmod 2\pi\]
(which expresses that the rotated vector field $\mu_j\omega_\eps$ is tangent to $\partial B(0,r)$ at $u_j$ and that the orientations of $\partial B(0,r)$ and $\partial U(z_j)$ coincide).
Given $\mu_j$, this equation on $u_j$ has exactly $k$ solutions.
(This gives $k$ possibilities for each of the the $k+1$ contact points $u_j$.)
Since $\mu_j$ is close to $\tilde \mu_j$ and $\omega_\eps$ to $\tilde \omega_0 (z) = z^{k+1}$ on $\partial B(0,r)$, $u_j$ is close to one of the $k$ solutions $u\in \partial B(0,r)$ of
\[\arg \frac{i u}{u^{k+1}\tilde \mu_j} = 0,\]
which include $r \tilde z_j/|\tilde z_j|$.
The result then follows by \Cref{lem:order} and the lemma below, which prevents in particular $u_j$ from jumping from one solution to the other.
\end{proof}

\newcommand{\symb}{\circ}
\begin{lemma}
  Assume $\dot z =\omega_{\symb}(z)$ is a holomorphic vector field defined in a neighborhood $B(0,r')$ of $\ov B(0,r)$ ($r'>r$).
  Assume that some zero $z_{\symb}$ of $\omega_{\symb}$ in $B(0,r)$ is simple,
  and that its periodic region $U(z_{\symb})$ relative to $B(0,r)$ has a unique contact point $u_{\symb}$ with $\partial B(0,r)$ and that the order of contact is $1$.
  Then for any sufficiently close holomorphic vector field $\omega$ on $B(0,r')$, the periodic region of $\omega$ relative to the zero $z_\omega$ of $\omega$ nearest to $z_{\symb}$ also has a unique contact point $u_\omega$ with $\partial B(0,r)$,  the order of contact is $1$, and $u_\omega \tend u_\symb$ as $\omega\to \omega_{\symb}$.
\end{lemma}
\begin{proof}
  The periodic region $U(z_\symb)$ is compactly contained in the periodic region $U'(z_\symb)$ relative to $B(0,r')$.
  There is a neighborhood $V$ of the closure of $U(z_{\symb})$, independent of $\omega$, such that for any $\omega$ close enough to $\omega_{\symb}$ on $B(0,r')$, the antiderivative $\phi_\omega$ of $\frac{\omega'(z_\omega)}{2\pi i\omega(z)}$ is well-defined, modulo addition of an integer, on $V-\{z_\omega\}$, and conjugates the vector field to the horizontal vector field on an open subset of $\C/\Z$.
  For $V$ chosen small enough, the image of $\partial B(0,r)$ under $\phi_{\omega_{\symb}}$ is a curve with a unique tangency with a horizontal closed line of $\C/\Z$, and the contact is of order $1$.
  This is preserved under perturbation.
\end{proof}

In the rectifying coordinate of $\omega_\eps$, the periodic domains $U(z_j)$ are cylinders, i.e.\ $U(z_j)$ is isomorphic as a translation surface to the quotient of the infinite half-strip of equation $\Im(-t/\mu_j)>0$, $\Re(-t/\mu_j)\in[0,1]$ by the map $z\mapsto \mu_j + z$.

\subsection{The construction}\label{sub:constr}

Consider the simply connected set
\[U = B(0,r)\setminus \bigcup_j \ov U(z_j).\]
Consider in each periodic domain $U(z_j)$ the radial vector field $-\lambda_j^{-1}\omega_\eps$, orthogonal to the rotation vector field $\mu_j \omega_\eps$.
Consider its unique trajectory that starts from the contact point $\{u_j\} = \partial U(z_j) \cap \partial B(0,r)$. It tends to $z_j$ as the time tends to $+\infty$.
For different $j$, the trajectories are necessarily disjoint since the periodic domains are disjoint.
Let us cut $B(0,r)$ along these $k+1$ curves and their limit points $z_j$: we get a simply connected set $D'$ containing $U$.

Consider a determination $z\mapsto t(z)$ of $\int dz/\omega_\eps(z)$ over $D'$.
The next proposition is illustrated by \Cref{fig:dr0,fig:dr}.

\begin{figure}[htbp]
\begin{tikzpicture}
\node at (0,0) {\includegraphics[scale=0.6]{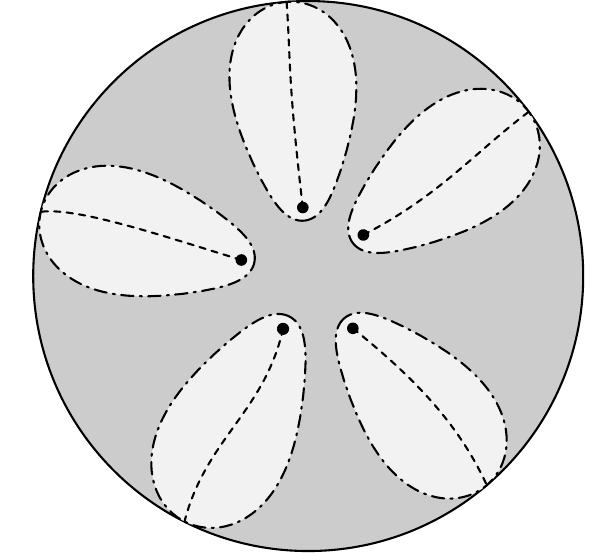}};
\node at (0.28,0.1) {$z_j$};
\node at (2.49,1.82) {$u_j$};
\end{tikzpicture}
\caption{The dark grey set is $U$. The disk minus the slits is $D'$.}
\label{fig:dr0}
\end{figure}

\begin{figure}[htbp]
\begin{tikzpicture}
\node at (0,0) {\includegraphics[scale=0.6]{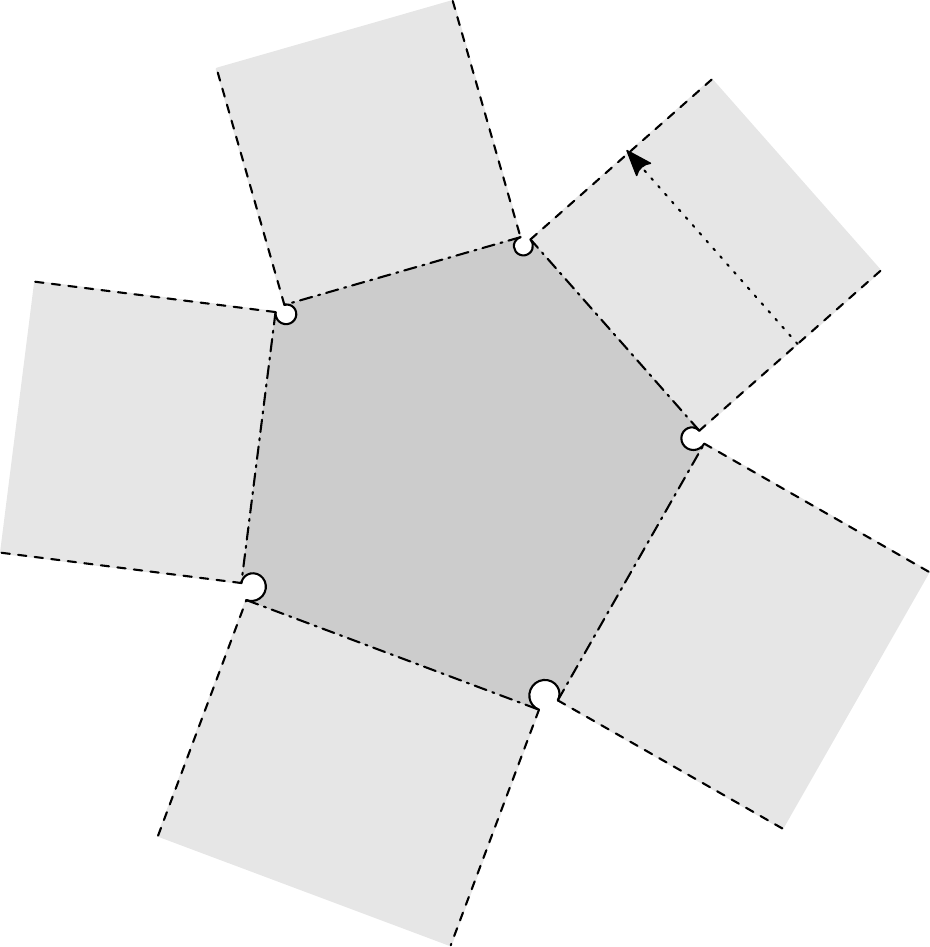}};
\node at (2.9,2.5) {$-\mu_j$};
\node at (0.45,1.6) {\footnotesize $C'_{j+1/2}$};
\draw (0.45,1.8) -- (0.55,2.2);
\end{tikzpicture}
\caption{The image in rectifying coordinate of the set $U$ of \Cref{prop:drsc} is the dark greyed set. The light grey set is the image of $D'-U$. The curves $C'_{j+1/2}$ are called the \emph{eyelets}.}
\label{fig:dr}
\end{figure}

\begin{proposition}\label{prop:drsc}
When $r$ and $\rho$ are small enough, the map $t$ is injective on $U$.
The image of $U$ is a simply connected set bounded by a Jordan curve made of alternating straight segments of vector $-\mu_j$ and nearly circular arcs $C'_{j+1/2}$ of diameter that stays bounded as $\eps\to 0$ and that we call the \emph{eyelets}. See \Cref{fig:dr}.
\end{proposition}
\begin{proof} To prove injectivity, by the winding number property, it is enough to show that the image of $\partial U$ is a simple closed curve.
The image of the boundary of the periodic domain of $z_j$ is a straight line segment tracing the vector $-\mu_j$.
The image of a portion of $\partial B(0,r)$ between two consecutive contact points $u_j$ is a curve that is $C^n$-close (for any given $n$ when $\rho$ and $r$ are small) to a circular arc of circle of diameter that depends explicitly on $r$, and of angle close to $2\pi-2\pi/(k+1)$.
The segments are tangent to the ends of these curves. When $r$ and $\rho$ are small enough, none of these parts can intersect.
\end{proof}

The image in rectifying coordinate of a slit periodic domain is a semi-infinite strip whose base is one of the straight segments bounding $t(U)$ and which is orthogonal to it.

\begin{proposition}\label{prop:drsc2}
When $r$ and $\rho$ are small enough, the map $t$ is injective on $D'$, in particular the image of $D'$ is a simply connected set.
\end{proposition}
\begin{proof}
The proof of is similar to \Cref{prop:drsc}: let us double the slit lines so that $\partial D'$ can be considered as a simple closed curve on a space projecting to $\ov B(0,r)$.
Then the image of $\partial D'$ in $t$-coordinate consists in $k+1$ disjoint curves, each of which tends to $\infty$ at both ends (see \Cref{fig:dr}).
Since $D'$ is connected, these curves necessarily bound a common domain, which is the image of $D'$ and for which $t$ is a bijection from $D'$ to it, by the winding number property.
\end{proof}

\begin{remark}
This model allows to give a geometric proof that the only invariant under conjugacy that preserves the parameter is the collection of eigenvalues: one first realizes that the underlying translation surfaces are isomorphic (if we shrink the domain $B(0,r)$ appropriately, and $\rho$ accordingly). Then one has to justify that, among the possible isomorphisms, a choice can be made that depends analytically with $\eps$.
This is detailed in a previous version of this article: see \cite{CR0}, Section~3.6.
Of course Teyssier's method used in the proof of \Cref{thm:lambda_inv} convincingly covers more general situations than this geometric method, or at least it is not obvious how to generalize the latter.
\end{remark}

\FloatBarrier

\section{Bifurcation diagram}\label{sec:bif2}

In full generality, the purpose of a bifurcation diagram of a family of vector fields is to describe the changes in the topological orbital behavior of the flow, as the parameter changes.
By topological orbital behavior we mean the equivalence class up to topological conjugacy in the $z$ variable, and to reparametrization of the \emph{real} time parameter. 
Quite often in $2$-dimensional real flows, the bifurcation diagram consists in finitely many open regions where the flow is \emph{structurally stable} (there is a homeomorphism varying continuously\footnote{an isotopy} with $\eps$ conjugating, up to real time reparametrization, flows for nearby parameters). When the parameter space has real dimension 2 (which is the case here) the regions of structural stability are usually separated by curves, called \emph{bifurcation curves}, that may intersect at higher codimension bifurcations.
This will be the case here.

In this section we provide a bifurcation diagram for a generic unfolding of a parabolic point 
over a sufficiently small disk $B(0,r)$ for all values of $\eps$ in a sufficiently small disk $B(0,\rho)$. In most statements, we will choose $r$ first, then $\rho$. The homeomorphisms in the definition of structural stability will have to consist in homeomorphisms of the whole set $B(0,r)$ to itself.

\begin{remark}
It is customary in the context of a local study of vector fields to first reduce to a normal form, for instance to $\omega_\eps(z) = (z^{k+1}-\eps)\sigma(z)$, which makes some descriptions easier.
However, this deforms the disk $B(0,r)$ to a more general domain, that moreover varies with $\eps$.
Since here we limit ourselves to the study to restrictions of vector fields to disks, it will be more general to study vector fields that are not in normal form.
Ideally we would allow for domains that are close, but not equal, to disks and depend on $\eps$, but we do not expect essentially different phenomena to appear.
The proximity condition with disks would include that the boundary of these domains is composed by exactly $2k$ arcs, namely $k$ entry arcs, $k$ exit arcs, separated by exactly $2k$ points where the vector field is tangent to the boundary.
\end{remark}

Recall that the notion of being \emph{unitary and generic} for a family of vector fields $\dot z = \omega_\eps(z)$ with $\omega_0(z) = z^{k+1} + O(z^{k+2})$, $k\geq 1$, was given in \Cref{def:gen_vf}: it means that
\[\omega_\eps(z) = z^{k+1} - \eps + O(z^{k+2},\eps z, \eps^2)\]
and that we call \emph{ideal vector field} the following:
\[\tilde \omega_\eps(z) = z^{k+1} - \eps.\]
We restrict the vector field to a ball $B(0,r)$ compactly contained in its domain of definition.
This allows to use analytical tools at the boundary, and to extend the trajectories at and beyond the boundary if needed.

\begin{definition}\label{def:bif:etc}
Consider a unitary generic family as above.
\begin{itemize}
\item We say that a trajectory of the restriction of the vector field $\omega_\eps$ to $B(0,r)$ has a \emph{double\footnote{This cannot be mistaken with a tangency point of higher contact order, since, in our situation, trajectory always have a contact of minimal order, i.e.\ order $1$, with $\partial B(0,r)$ by \Cref{lem:order}.} tangency} (to $\partial B(0,r)$) if it tends at its two extremities to points of $\partial B(0,r)$ where the vector field is tangent to $\partial B(0,r)$.
These points may coincide, in which case the trajectory extends to a periodic trajectory.
\item We denote by $\cal {DT}$ the set of parameters $\eps\neq 0$ for which there is a double tangent trajectory, and call it the \emph{double tangency locus}.
\item A trajectory from $\partial B(0,r)$ to itself is a \emph{boundary connection}.
\item We denote by $\cal C$ the set of parameters $\eps\neq 0$ for which there is a boundary connection and call it the \emph{connection locus}.
\item The set of parameters for which structural stability does not hold is called the \emph{bifurcation locus} and denoted by $\cal B$.
\end{itemize}
\end{definition}

\begin{remark}
The notion of boundary connection is one possible replacement for the notion of a homoclinic loop of a polynomial vector field on $\CP^1$.
The notion of outgoing separatrix of a polynomial vector field is played by the trajectories that exit $B(0,r)$, while incoming separatrices are replaced by trajectories entering $B(0,r)$.
In each case, what were isolated curves on $\CP^1$ are now bunches of curves.
See \Cref{fig:d12}.
\end{remark}

\subsection{Statements}\label{sub:bif:statements}

Before describing the bifurcation diagram of the family $\omega_\eps$ on a disk, it is interesting to recall the bifurcation diagram of the ideal vector field on the whole set $\CP^1$.
It consisted in bifurcation curves of parameters for which $\tilde \omega_\eps$ has
homoclinic loops through infinity.
In terms of the Douady-Estrada-Sentenac invariant, as the parameter crosses these curves, the zig-zag of trajectories and its attachment to the separatrices changed topological type: $2k$ different structurally stable topological types were occuring. 
The general spirit for the bifurcation diagram of $\omega_\eps$ will be the same (see \Cref{local_bif_diag}):
\begin{itemize} 
\item there are still $2k$ large sectorial regions of $B(0,\rho)$ where the different topological types of zig-zag of trajectories joining the singular points are present in the disk in the same order as for the ideal vector field on $\CP^1$; 
\item separated, not by curves, but by $2k$ cusp-like thin regions bounded by analytic arcs, where the zig-zag is broken and where there exist boundary connections separating the singularities;
\item all bifurcations are part of the double tangency locus. This includes the boundaries of each thin region and other bifurcation curves inside the thin regions.
\end{itemize}
We will see that the bifurcation locus and the double tangency locus coincide. It is
is composed of $4k^2$ (not necessarily distinct) real analytic arcs starting at the origin.

\begin{remark}
Usually, when determining local bifurcation diagrams, one first studies the type of the singular points (termed \emph{local bifurcations}), and then the more global bifurcations. 
Here, it suffices to study the global bifurcations. Indeed the center domain inside a disk ends up in a periodic trajectory that is tangent to the boundary and that can be seen as a double tangent trajectory to the boundary: we start at the tangency point, we follow the trajectory, and we come back to a tangency point.
In the case of $\dot z= z^{k+1}-\eps$ over $\CP^1$, the local bifurcation of a singular point when becoming a center occurs precisely when there is a homoclinic loop surrounding the center.
\end{remark} 

The whole bifurcation diagram is governed by the behavior of $\omega_\eps$ along $\partial B(0,r)$, which we describe now, and which is illustrated in \Cref{fig:bord}~(b). 

For this, we use the arguments of the attracting, repelling and intermediate axes of $\omega_0$, whose definition we recall: these are the set of $z\in\C^*$ such that, $z^{k+1}/z$ is respectively real positive, real negative, and imaginary, see \Cref{fig:bord}~(a). Their arguments $\phi$ are of the form $\frac{n\pi}{2k}$, where the class of $n$ modulo $4$ tells the type: $0$ for repelling, $2$ for attracting, $1$ or $3$ for intermediate.
We let
\begin{equation}\label{eq:def:phi:m}
 D = \setof{\phi_m = \frac{m + 1/2}{k}\pi}{0\leq m <2k}\subset \R/2\pi\Z
\end{equation}
be the set of argument $\phi_m$ of the intermediate axes.
In the sequel the notation $\phi_a$ will be used  for the argument of an attracting axis and $\phi_r$ for a repelling axis.

\begin{figure}[htbp]
\begin{center}
\subfigure[Attracting axes in blue, repelling axes in red, intermediate axes as dotted half-lines, for the singular point $z=0$ of $\omega_0$.]{
\includegraphics[width=4.5cm]{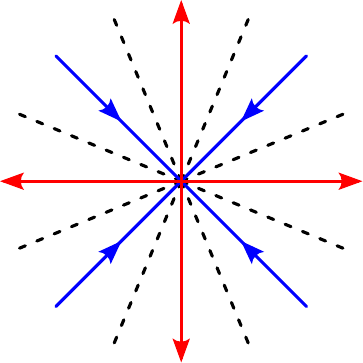}
}
\\
\subfigure[Behavior of $\omega_\eps$ along $\partial B(0,r)$ in the case $k=4$: there are $2k$ points of tangency $T_\phi$ (see \Cref{lem:tgcy:pts}) where $\phi$ is the argument of an intermediate axis, separated by $2k$ regions where the vector field is alternately entering (blue) or exiting (red) $B(0,r)$. We denote these regions by $A_\phi$ or $R_\phi$, where $\phi$ is the argument of the attracting/repelling axis that they contain. All these objects depend on $\eps$, which we omitted on this figure for clarity.]{
\begin{tikzpicture}
\node at (0,0) {\includegraphics[width=5cm]{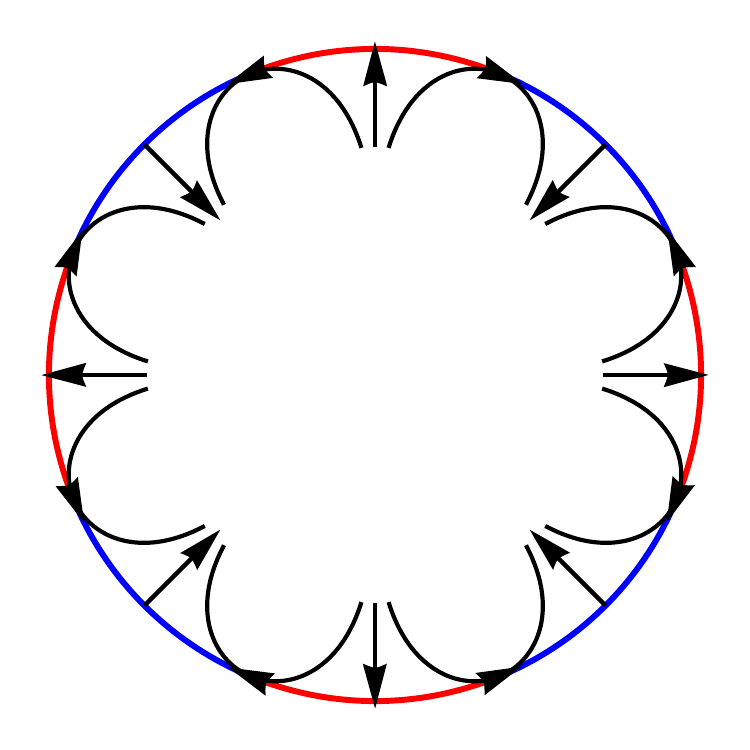}};
\node at (2.55,1.0) {$T_{\pi/8}$};
\node at (1.0,2.35) {$T_{3\pi/8}$};
\node at (-1.0,2.35) {$T_{5\pi/8}$};
\node at (-2.6,1.0) {$T_{7\pi/8}$};
\node at (-2.6,-1.0) {$T_{9\pi/8}$};
\node at (-1.0,-2.4) {$T_{11\pi/8}$};
\node at (1.1,-2.4) {$T_{13\pi/8}$};
\node at (2.7,-1.0) {$T_{15\pi/8}$};

\node at (6,0) {\includegraphics[width=5cm]{bord-v2.pdf}};
\node at (8.6,0) {$R_0$};
\node at (8,1.9) {$A_{\pi/4}$};
\node at (6,2.6) {$R_{\pi/2}$};
\node at (4,1.9) {$A_{3\pi/4}$};
\node at (3.2,0) {$R_{2\pi/2}$};
\node at (4,-1.9) {$A_{5\pi/4}$};
\node at (6,-2.6) {$R_{3\pi/2}$};
\node at (8.1,-1.9) {$A_{7\pi/4}$};
\end{tikzpicture}
}
\end{center}
\caption{Case $k=4$}
\label{fig:bord}
\end{figure}

\begin{lemma}\label{lem:tgcy:pts}
  For all $\eta>0$,
  provided $r$ and $\rho$ are small enough, then for all $\eps \in B(0,\rho)$, there are exactly $2k$ tangency points between the vector field and $\partial B(0,r)$ (i.e.\ points where the vector field and $\partial B(0,r)$ are tangent)
  and their arguments are $\eta$-close to the $2k$ arguments $\phi$ of the intermediate axes.
  They depend real-analytically on $\eps$ over $B(0,\rho)$ and we denote them by
  \[ T_{\phi}(\eps) \in\partial B(0,r) \text{ with }
     \left| \arg(T_\phi(\eps)) -\phi \right|<\eta,\quad \phi\in D
    .
  \]
\end{lemma}

The proof is elementary, given in \Cref{sub:bif:pf}.
The tangency points $T_{\phi}(\eps)$ come in two flavors: those for which the vector field is at this point in the same direction as the anticlockwise orientation of $\partial B(0,r)$ and which we call of \emph{top type}, and those for which it is in the clockwise direction, and which we call \emph{bottom type}.\footnote{
The top/bottom nomenclature comes from the following:
in complex time coordinate, $\partial B(0,r)$ is locally mapped to a circular curve, while the trajectory is horizontal and goes to the right.
A top tangency point corresponds to a point of this curve whose imaginary part is locally maximal, while for a bottom point it is locally minimal.}
With the numbering of \cref{eq:def:phi:m}, if $m$ is even then $T_{\phi_m}(\eps)$ is of top type and if $m$ is odd then $T_{\phi_m}(\eps)$ is of bottom type.

\begin{definition}\label{def:double:tg}
  Given a double tangent trajectory with an entry point $T_{\phi}(\eps)$ and an exit point $T_{\phi'}(\eps)$, the angles $\phi,\phi'$ modulo $2\pi$ are called the \emph{entry and exit angles} (or entry and exit indices) of $\gamma$.
  Given $\phi$ and $\phi'$ in $D$ we denote by $\cal{DT}_{\phi,\phi'}$ the set of parameters $\eps$ which have a double tangent trajectory with entry and exit indices $\phi$ and $\phi'$. 
  In particular:
  \[\cal{DT} = \bigcup_{\phi,\phi'\in D} \cal{DT}_{\phi,\phi'}.\]
\end{definition}

Repelling and attracting directions alternate, are equally spaced, and the $\phi_m = \frac{m+\frac{1}{2}}{k}\pi\in D$ are situated in between, at equal distance.
So $\phi_m$ has a nearest attracting direction, which is $\phi_m+(-1)^m\frac{\pi}{2k}$ and a nearest repelling direction, which is $\phi_m-(-1)^m\frac{\pi}{2k}$.

Recall that the negated periods $-\tilde \mu_j = -2\pi i/(k+1)z_j^{k}$, $z_j^{k+1}=\eps$ of the ideal vector field $\tilde \omega_\eps(z) = z^{k+1}-\eps$ form the sides of a regular $k+1$-gon that we called the \emph{ideal period-gon}.
\begin{definition}\label{def:bif:dir}
We call \emph{bifurcation directions} the values of $\arg \eps$ such that the ideal period-gon has a vertical axis of symmetry.
This set is
\begin{equation*}
\setof{\frac{\pi}{2} + \frac{\pi}{2k} + \frac{n\pi}{k} }{ 0\leq n\leq 2k-1} = \begin{cases}
\{\frac{2n\pi}{2k}\mid 0\leq n\leq 2k-1\}, & k\ \text{odd},
\\
\{\frac{(2n+1)\pi}{2k}\mid 0\leq n\leq 2k-1\}, & k\ \text{even}.
\end{cases}
\end{equation*}
\end{definition}

\begin{theorem}\label{thm:bifurcation}
Provided $r$, then $\rho$, are chosen small enough, the following properties hold.
\begin{enumerate}
\item\label{item:tc0} The bifurcation locus of the family $\omega_\eps$ coincides with the double tangency locus $\cal{DT}$.
\item\label{item:tc1} Each of the $4k^2$ sets $\cal{DT}_{\phi,\phi'}$ is a real analytic arc from $(0,0)$ to $\partial B(0,\rho)$.
We call it a \emph{bifurcation curve}.
\item\label{item:tc2} The curve $\cal{DT}_{\phi,\phi'}$ has a half-tangent at the origin, of direction $\theta_{\phi,\phi'}$ that can be determined as follows: 
Let $\phi_a$ be the attracting direction nearest to $\phi$. 
Let $\phi_r$ be the repelling direction nearest to $\phi'$.
Then $\theta_{\phi,\phi'} = \phi_{\mathrm{mid}} +\frac{\pi}{2}$ where $\phi_{\mathrm{mid}}$ is the the midpoint, in the circle of directions, of the arc from $\phi_a$ to $\phi_r$ in the anticlockwise orientation.
\item\label{item:tc3} The union of $\theta_{\phi,\phi'}$ over all indices in $D^2$ is the set of bifurcation directions as in \Cref{def:bif:dir}.
\item\label{item:tc4} Let $s=|\eps|^{\frac1{k+1}}$ and $\theta=\arg\eps$. The curve $\cal {DT}_{\phi,\phi'}$ has an equation of the form $\theta = \theta_{\phi,\phi'} +  f_{\phi,\phi'}(s)$, where $f_{\phi,\phi'}$ is analytic in $s$  and $f_{\phi,\phi'}(0)=0$.
\end{enumerate}
\end{theorem}

The (closed) arcs of $\partial B(0,r)$ between two consecutive tangency points are regions in the interior of which the vector field either enters or exits $B(0,r)$.
We denote them respectively by $A_\phi(\eps)$ and $R_\phi(\eps)$, where $\phi$ is the argument of the attracting/repelling axis that intersects the arc.
See \Cref{fig:bord}.

Recall that we defined in \Cref{def:bif:etc} the connection locus $\cal C$ as the set of parameters for which there is a boundary connection.
A boundary connection has an entry arc, which is one of the $A_{\phi_a}(\eps)$, and an exit arc, which is one of the $R_{\phi_r}(\eps)$.
\begin{definition}\label{def:connection}
We denote by $\cal C_{\phi_a,\phi_r}$ the set of parameters $\eps\in B(0,\rho)$ such that there is a boundary connection with entry arc $A_{\phi_a}$ and exit arc $R_{\phi_r}$.
\end{definition}
Hence
\[\cal C = \bigcup_{\phi_a,\phi_r}\cal C_{\phi_a,\phi_r}
,\]
where the union is not necessarily disjoint.

\begin{figure}[htbp]
\begin{center}
\includegraphics[width=6cm]{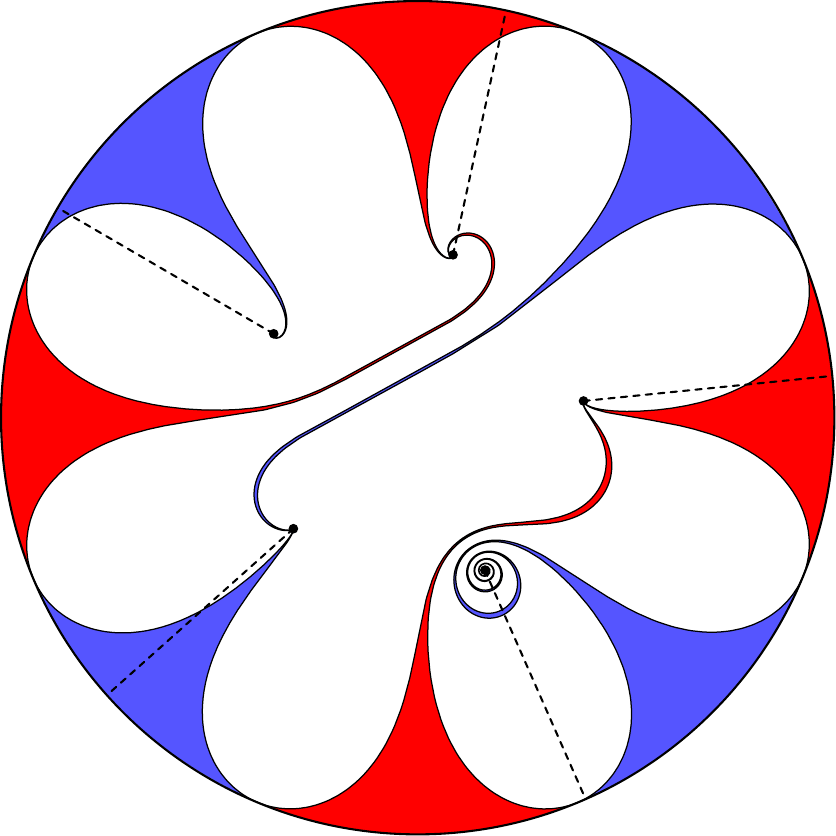}
\includegraphics[width=6cm]{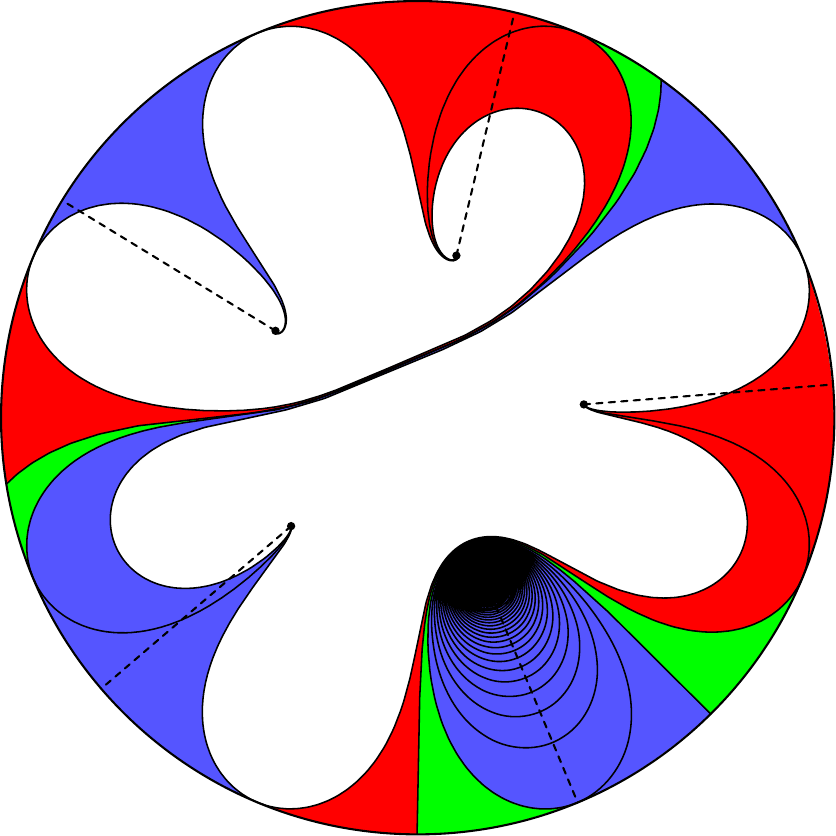}
\caption{Restricting the vector field to B(0,r), the notion of separatrix becomes a bunch of exiting or entering trajectories. They cover regions indicated in colors.
These images have been accurately drawn by numerical methods in the case of the vector field $\dot z = z^5-\eps$ on $B(0,1)$ for two values of $\eps$.
Green: boundary connections. Blue union green: incoming trajectories. Red union green: outgoing trajectories.
In the left image, one can define is a zig-zag of trajectories linking the singularities, by taking any one trajectory in each (open) white region. In the right image, the boundary connections prevent this.
}
\label{fig:d12}
\end{center}
\end{figure}

It is immediate that $\cal {DT}\subset \cal C$.

\begin{proposition}\label{prop:CDT}
Assume $r$, then $\rho$, are chosen small enough.
Let $\phi$ and $\phi'$ be the angles of two intermediate axes of $\omega_0$.
Let $\phi_a$ be the attracting direction nearest to $\phi$ and $\phi_r$ the repelling direction nearest to $\phi'$.
Then
\[\cal{DT}_{\phi,\phi'}\subset \cal C_{\phi_a,\phi_r}.\]
\end{proposition}

\begin{theorem}\label{thm:connection}
Still under the hypothesis that $r$, then $\rho$, are chosen small enough:
\begin{enumerate}
\item\label{item:cx:asymp} $\cal C_{\phi_a,\phi_r}$ is asymptotic at $0$ to the direction $\phi_{\mathrm{mid}} + \frac{\pi}2$ where $\phi_{\mathrm{mid}}$ is the midpoint, in the circle of directions, of the arc from $\phi_a$ to $\phi_r$ in the anticlockwise orientation.
\item\label{item:cx:1} Consider an attracting direction $\phi_a$ and a repelling direction $\phi_r$. Denote by $\phi_{2\ell-1},\phi_{2\ell}\in D$ the intermediate axis directions closest to $\phi_a$ and $\phi_{2m},\phi_{2m+1}\in D$ the ones closest to $\phi_r$.
The set $\cal C_{\phi_a,\phi_r}$ is closed in $B(0,\rho)\setminus\{0\}$ and bounded by the two bifurcation curves $\cal {DT}_1 =\cal {DT}_{\phi_{2\ell-1},\phi_{2m}}$ and 
$\cal {DT}_2 = \cal {DT}_{\phi_{2\ell},\phi_{2m+1}}$, which are both tangent to the same direction at $0$ (see Point~\ref{item:tc2} of \Cref{thm:bifurcation}).
The curve $\cal {DT}_1$ precedes $\cal {DT}_2$ in trigonometric order and $\cal C_{\phi_a,\phi_r}$ is the cusp-like domain contained in between.
In this section, the curves $\cal {DT}_1$, $\cal {DT}_2$ and the associated bifurcations are called \emph{essential}.
\item\label{item:cx:width} $\cal C_{\phi_a,\phi_r}$ has an opening of order $|\eps|^{\frac{k}{k+1}}$.
\item\label{item:cx:2} The curves $\cal {DT}_{\phi_{2\ell},\phi_{2m}}$ and $\cal {DT}_{\phi_{2\ell-1},\phi_{2m+1}}$ are contained in the interior of $\cal C_{\phi_a,\phi_r}$ (i.e.\ they lie strictly between $\cal {DT}_1$ and $\cal {DT}_2$). Their relative position within $\cal C_{\phi_a,\phi_r}$ depends on the family $\omega_\eps$ and they may coincide (see Figure~\ref{local_bif_diag} for a potential local bifurcation diagram in a sectorial neighborhood of $\mathcal{C}_{\phi_a,\phi_r}$). We call them, and the associated bifurcations, \emph{inessential}.
\end{enumerate} 
\end{theorem}

The terms essential and inessential are justified as follows: crossing $\mathcal{DT}_1$ and $\mathcal{DT}_2$ leads to the appearance or disappearance of boundary connections, while crossing the other curves does not.
See \Cref{sub:descr:bif} for more details.

\begin{figure}
\begin{tikzpicture}
\node at (0,0) {\includegraphics[width=12cm]{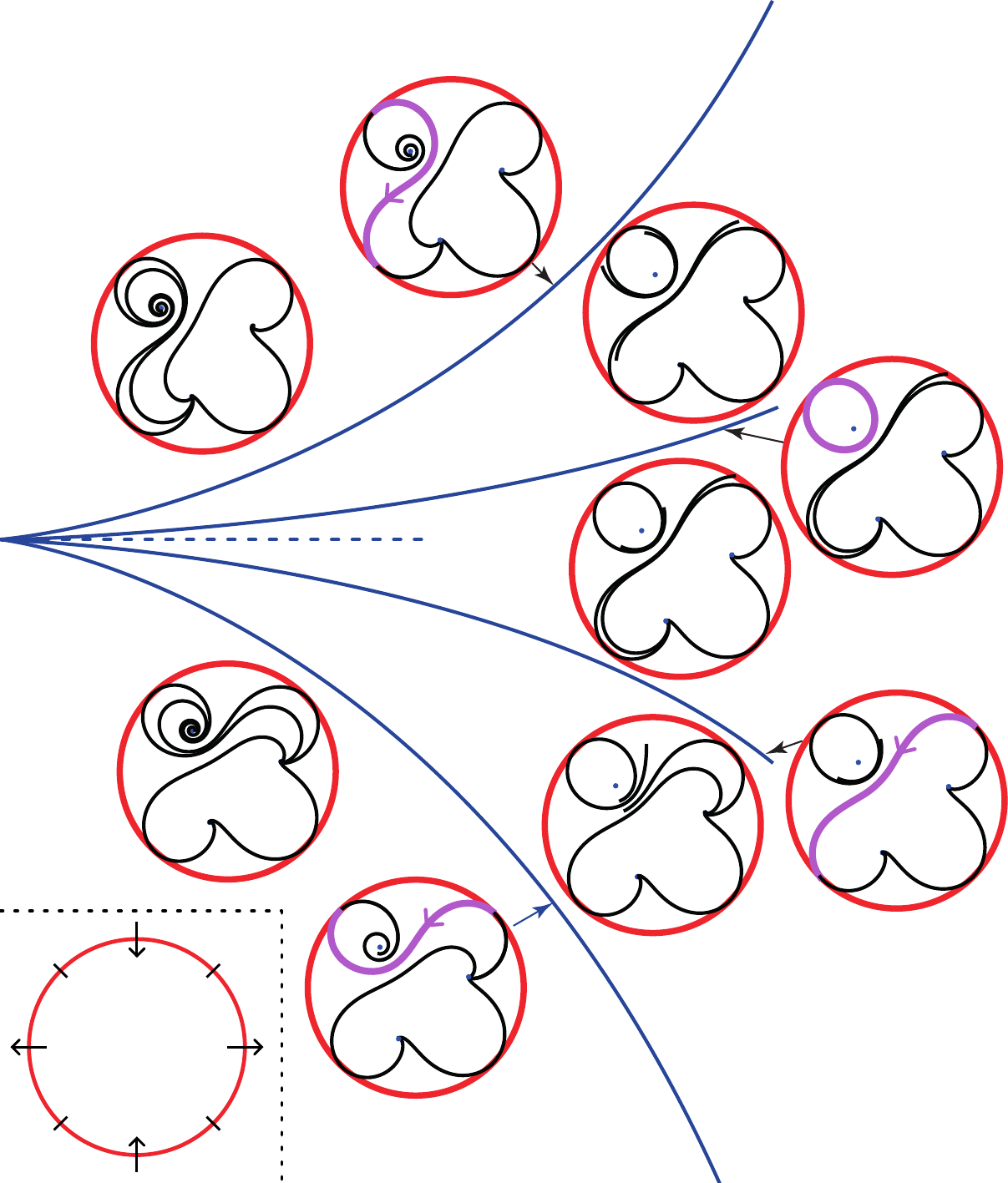}};
\node at (-6,0.95) {$0$};
\node at (2.1,6.5) {$\cal{DT}_{\phi_1,\phi_2}$};
\node at (-0.7,1.5) {$\cal{DT}_{\phi_1,\phi_1}$};
\node at (-0.7,-0.6) {$\cal{DT}_{\phi_0,\phi_2}$};
\node at (2.7,-5.5) {$\cal{DT}_{\phi_0,\phi_1}$};
\node at (-0.4,0.6) {$\pi/4$};
\node at (-3.1,-4.2) {$\phi_0$};
\node at (-4.35,-4.6) {\small $\pi/2$};
\node at (-5.6,-4.2) {$\phi_1$};
\node at (-5.2,-5.45) {\small $\pi$};
\node at (-5.6,-6.7) {$\phi_2$};
\node at (-3.1,-6.7) {$\phi_3$};
\end{tikzpicture}
\caption{Case $k=2$. A possible local bifurcation diagram around a cusp-like region, of asymptotic angle $\pi/4$ in $\eps$-space.
For presentation purposes, we turned the parameter picture by $-\pi/4$, so as to make this axis horizontal
The boundary connection region $\cal C_{\pi/2,\pi}$ is bounded by the essential bifurcations of double tangency $\cal{DT}_{\phi_1,\phi_2}$ and $\cal{DT}_{\phi_2,\phi_3}$.  For a generic vector field, the inessential bifurcation curves do not coincide.
In every case, they must sit in between $\cal{DT}_{\phi_1,\phi_2}$ and $\cal{DT}_{\phi_2,\phi_3}$.
The red circles represent the disk $B(0,r)$ in phase space, black and purple curves are trajectories, the purple ones being double tangencies.
See \Cref{local_bif_diag_t-coord} for an interpretation in terms of the rectifying coordinate.}
\label{local_bif_diag}
\end{figure}

\begin{figure}[htbp]
\begin{center}
\begin{tikzpicture}
\node at (0.05,0.35) {\includegraphics[width=\textwidth]{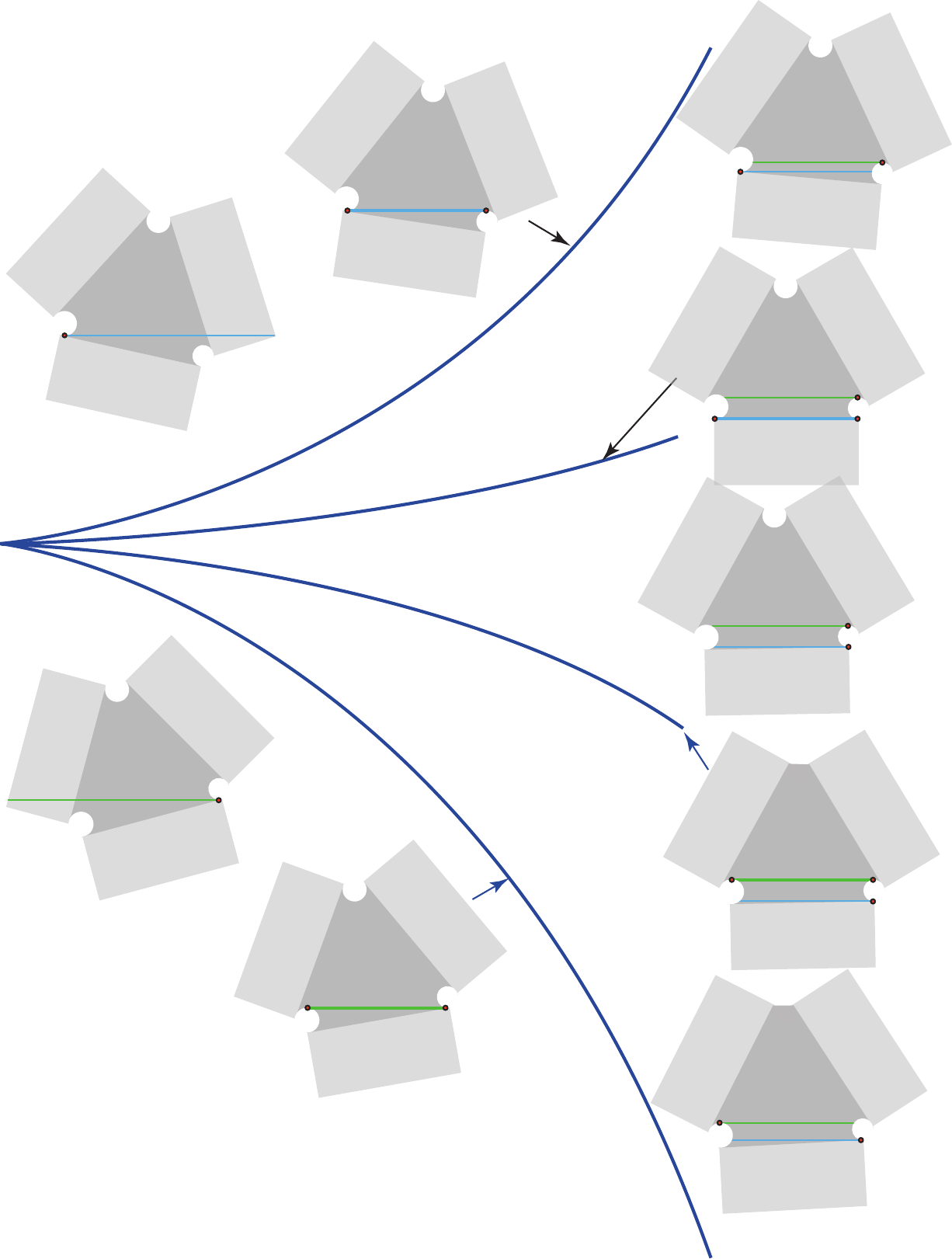}};
\node at (1.8,7.2) {$\cal{DT}_{\phi_1,\phi_2}$};
\node at (-0.3,2.45) {$\cal{DT}_{\phi_1,\phi_1}$};
\node at (-0.1,0.85) {$\cal{DT}_{\phi_0,\phi_2}$};
\node at (2.15,-4.15) {$\cal{DT}_{\phi_0,\phi_1}$};
\end{tikzpicture}
\end{center}
\caption{The local bifurcation diagram of Figure~\ref{local_bif_diag} in the rectifying coordinate.  The double tangent trajectories are drawn thicker. The dots indicate the tangency points.}
\label{local_bif_diag_t-coord}
\end{figure}

\begin{remark}
While $\mathcal{DT}$ and $\mathcal{C}$ depend on the radius of the domain $B(0,r)$, or its shape for more general domains, the existence of cusp like regions with separating solutions for all topological type is independent of the radius or shape of the domain $B(0,r)$, and this is what makes them interesting.
Finally, in the setting of discrete dynamics, generic $1$-parameter perturbations of a parabolic fixed point $f(z) = z^{k+1}+\cal O(z^{k+2})$ should be well approximated by a corresponding generic family of vector fields as in \Cref{def:gen_vf} and we believe that the part of the bifurcation locus related to the origin will be contained in $\cal C$ and can be described as a union of sets related to each curve $\cal {DT}$.
\end{remark}

\begin{proposition}\label{prop:bif2:6}
Consider the following family
\[ \dot z = \omega_\eps(z) = z^{k+1}+a z^{k+2} - \eps + a' \eps z + O(z^{k+3},\eps z^2,\eps^2)
,\]
where $(a,a')\in\C^2$.
For a generic pair $(a,a')$ (more precisely for $(a,a')$ outside a finite set of real hyperplanes in $\C^2 \cong \R^4$), the $4k^2$ arcs in \Cref{thm:bifurcation} are all disjoint.
\end{proposition}
As a consequence, this is also the case for ``most'' vector field families. 

\subsection{Description of the bifurcations}\label{sub:descr:bif}

From now on we will make abundant use of the geometric model of time coordinate described in \Cref{sec:geom}, which we sum up also in the proofs section (\Cref{sub:bif:pf}).

From the model, we will deduce that the bifurcations occur exactly when there are double tangent trajectories, which correspond exactly in time coordinate to the horizontal segments $H$ from the topmost or bottommost point of an eyelet $E$ to the topmost or bottommost point of another $E'$.
See \Cref{local_bif_diag_t-coord,fig:ess:bif:model,fig:iness:bif:model}.

This requires the eyelets to have the same imaginary part, up to a bounded amount, so the $k+1$ segments $S_j$ bounding the image of the periodic domains, and supported by vectors $-\mu_j$, are close to forming a regular $k+1$-gon with vertical axis of symmetry.

As $\arg \eps$ varies while $|\eps|$ is kept constant, the position of the center of the eyelets varies fast, compared to the position of the topmost/bottommost points of the eyelets relative to the center. On first approximation these centers are on a circle of radius tending to $\infty$ as $\eps$ tends to $0$, and their argument is approximated by $-\frac{k}{k+1}\arg \eps$ plus a quarter turn.
It follows that for parameters close to a bifurcation parameter, increasing $\arg \eps$ while keeping $|\eps|$ fixed, we will see the eyelet $E'$ plunge downwards (in a direction where the imaginary part decrease) while $E$ ascends.

From this, we get the following description of the possible bifurcations:
The case where the horizontal trajectory $H$ links a topmost point of $E$ to a bottommost point of $E'$ (or the opposite) is illustrated in \Cref{fig:ess:bif:model}.
For slighlty smaller values of $\arg \eps$, the connection between the two eyelets disappears: there are no more trajectories between them. For slightly bigger values of $\arg \eps$, there is a bunch of parallel horizontal lines linking the two eyelets, none of which is double tangent. They appear in green in \Cref{fig:ess:bif:model} while the double tangent trajectory is in yellow. We call this an \emph{essential bifurcation}.

The case where the horizontal trajectory $H$ links a topmost point of $E$ to a topmost point of $E'$ (or two bottommost points) subdivides into more subcases, according to which eyelet has a bigger height (they may also have the same height for the $\eps$ where the double tangency occurs).
In any case the bunches of boundary connections still exist for nearby $\eps$, which is why we call this an \emph{inessential bifurcation}.
We illustrated one case in \Cref{fig:iness:bif:model}, where the right eyelet is less tall.
Then, for slightly smaller values of $\arg \eps$, there are exiting trajectories that do not exist anymore for bigger values of $\arg \eps$, for which instead a bunch of entering trajectories appears.

\begin{figure}[htbp]
\begin{center}
\includegraphics[width=\textwidth]{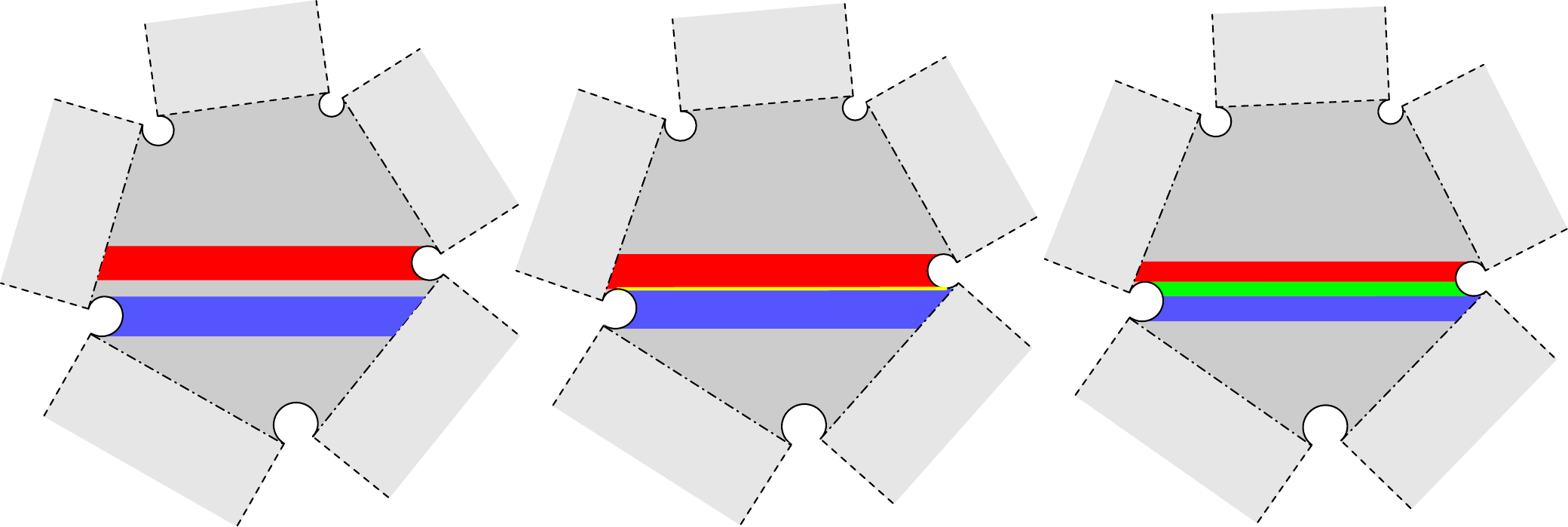}
\end{center}
\caption{An essential bifurcation. Exiting trajectories in red, entering trajectories in blue, boundary connections are in green, while the double tangent trajectory is in yellow.}
\label{fig:ess:bif:model}
\end{figure}

\begin{figure}[htbp]
\begin{center}
\includegraphics[width=10cm]{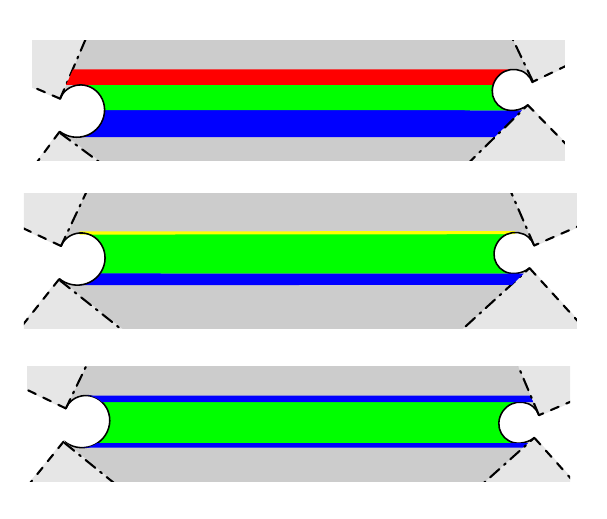}
\end{center}
\caption{An inessential bifurcation. The conventions are the same as in \Cref{fig:ess:bif:model}.}
\label{fig:iness:bif:model}
\end{figure}

\FloatBarrier

\subsection{The case of the ideal vector field}\label{sub:ideal:case}

In the case of $\dot z = z^{k+1}-\eps$, we can take advantage of the detailed study of \Cref{sec:ideal:global} to give a simple argument and description, that can serve as a training for the general case.

\begin{theorem}\label{thm:bifan}
For the ideal vector field, each group of bifurcations tending to $0$ along a given bifurcation direction $\theta$ contains an odd number $n_\theta\geq3$ of curves, one of which is a straight ray. The other ones come in symmetric pairs on each side of the straight ray, with a tangency at $\eps=0$ of order $2-1/(k+1)$. See \Cref{fig_2b} for an illustration.

The number $n_\theta$ can be computed explicity. For $\arg(\eps) = \theta$, the period gon is symmetric with respect to the vertical axis and there exist horizontal segments joining the vertices two by two. The number $n_\theta$ is simply 
\begin{equation}\label{def_n_j}
n_\theta=1+2m,
\end{equation}
with $m$ being equal to the number of different lengths of this set of horizontal segments, namely 
\begin{equation}
m=\begin{cases} \frac{k}2,&k\;{\rm even},\\
\lfloor\frac{k+1}4\rfloor,&k\:{\rm odd,\: } \theta = \frac{\pi j}{k},\, j\;{\rm even},\\
\lfloor\frac{k+3}4\rfloor,&k\:{\rm odd,\: } \theta = \frac{\pi j}{k},\, j\;{\rm odd}.
\end{cases}
\end{equation}
\end{theorem}

In the rest of \Cref{sub:ideal:case} we prove this theorem.

\begin{proposition}\label{prop:ideal}
  In the case of $\dot z = z^{k+1}-\eps$, the slits are straight segments from the singularities $z_j$ to the contact points $u_j$, which are $u_j = r z_j/|z_j|$.
  The periodic region $U(z_j)$ is symmetric with respect to the line $L_j$ supporting this segment.
  Denoting by $R$ the rotation by $2\pi/(k+1)$ and $z_{j+1} = R(z_j)$ we have $U(z_{j+1}) = R(U(z_j))$.
  The domains $U$ and $D'$ are thus invariant by $R$.
  The time coordinate $t$ defined on $D'$ and vanishing at $0$ commutes with $R$, hence the geometric model $t(D')$ is invariant by $R$.
  The symmetry along $L_j$ is conjugated by $t$ to a symmetry along some axis through $0$.
\end{proposition}

\begin{proof}
In \Cref{sec:ideal:global}, we described the many symmetries of this vector field, over $\CP^1$, and the image under the time coordinate of the plane slit along the lines from $z_j$ to $\infty$ that are radial w.r.t. $0$. This image was called the \emph{star shaped domain} and we denote it by $S$ here. 
It consists in a regular $k+1$-gon centered on $0$, which we called the \emph{period-gon}, on the sides $s$ of which are attached $k+1$ semi-infinite strips of sides orthogonal to $s$.
The time coordinate commutes with the rotation $R$ by $2\pi /(k+1)$ and conjugates any reflection symmetry preserving the set of singularities to a reflection symmetry of $S$.

Restricting the vector field to $B(0,r)$ removes from $S$ a collection of $k+1$ neighborhoods of the $k+1$ vertices $v$ of $S$, that are bounded by curves $C'_v$ that tend, when $\eps\to 0$ to arcs of circles centered on the vertices of $S$, of radius $1/kr^k$ and of angle $2\pi - 2\pi/(k+1)$. We have $C'_{R(v)} = R(C'_v)$. The tangent to $C'_v$ at an endpoint $t\in\C$ is orthogonal to the line bounding $S$ and containing $t$.
We denote by $S'$ the image of $B(0,r)$ in $S$.

Consider two consecutive vertices $v$ and $v'$ of $S$.
Consider the half-strip $H$ of $S$ attached to $[v,v']$.
It is associated to a singularity $z_j$ (a point tends to $z_j$ if and only if its image in $S$ tends to infinity along $H$).
Consider the maximal half-strip $H'$ contained in $H\cap S'$. 
It follows from the above that $H'$ touches an endpoint of $C'_v$ and an endpoint of $C'_{v'}$.
The preimage of $H'$ is thus $U(j)$.
Examples can be found in \Cref{fig:bifk4,fig:bifk5A,fig:bifk5B}.
\end{proof}

\begin{figure}[htbp]
\begin{center}
\includegraphics[width=5.6cm]{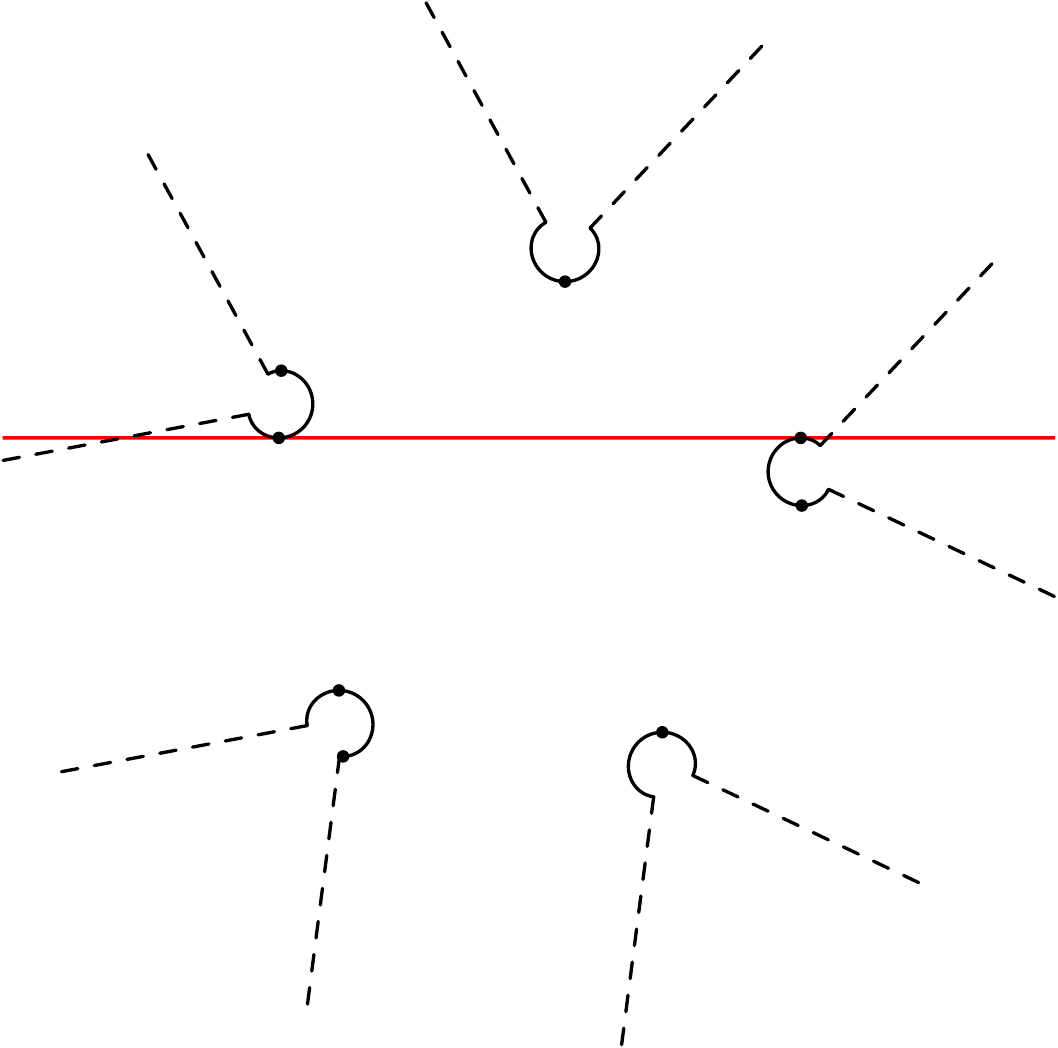}
\quad
\includegraphics[width=5.6cm]{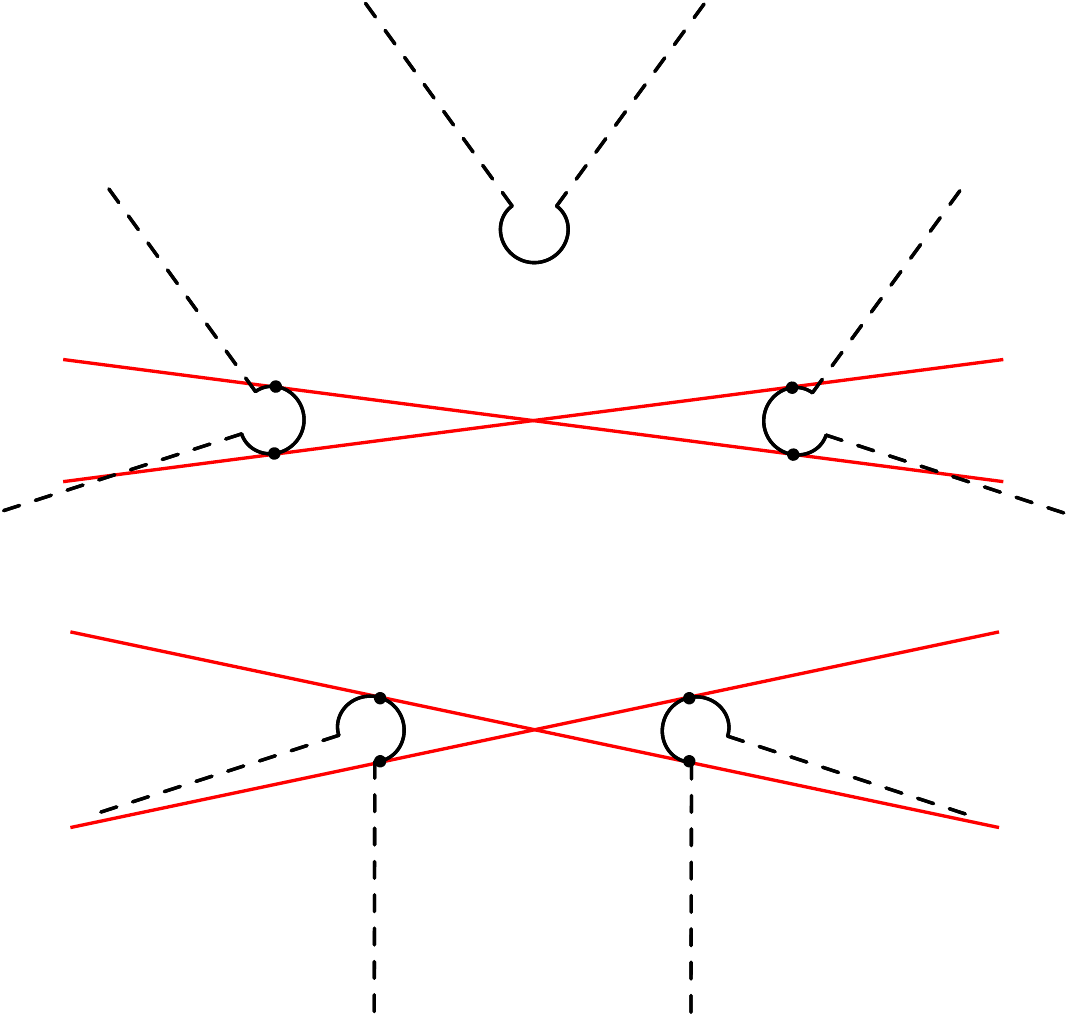}
\vskip1em
\includegraphics[width=5.6cm]{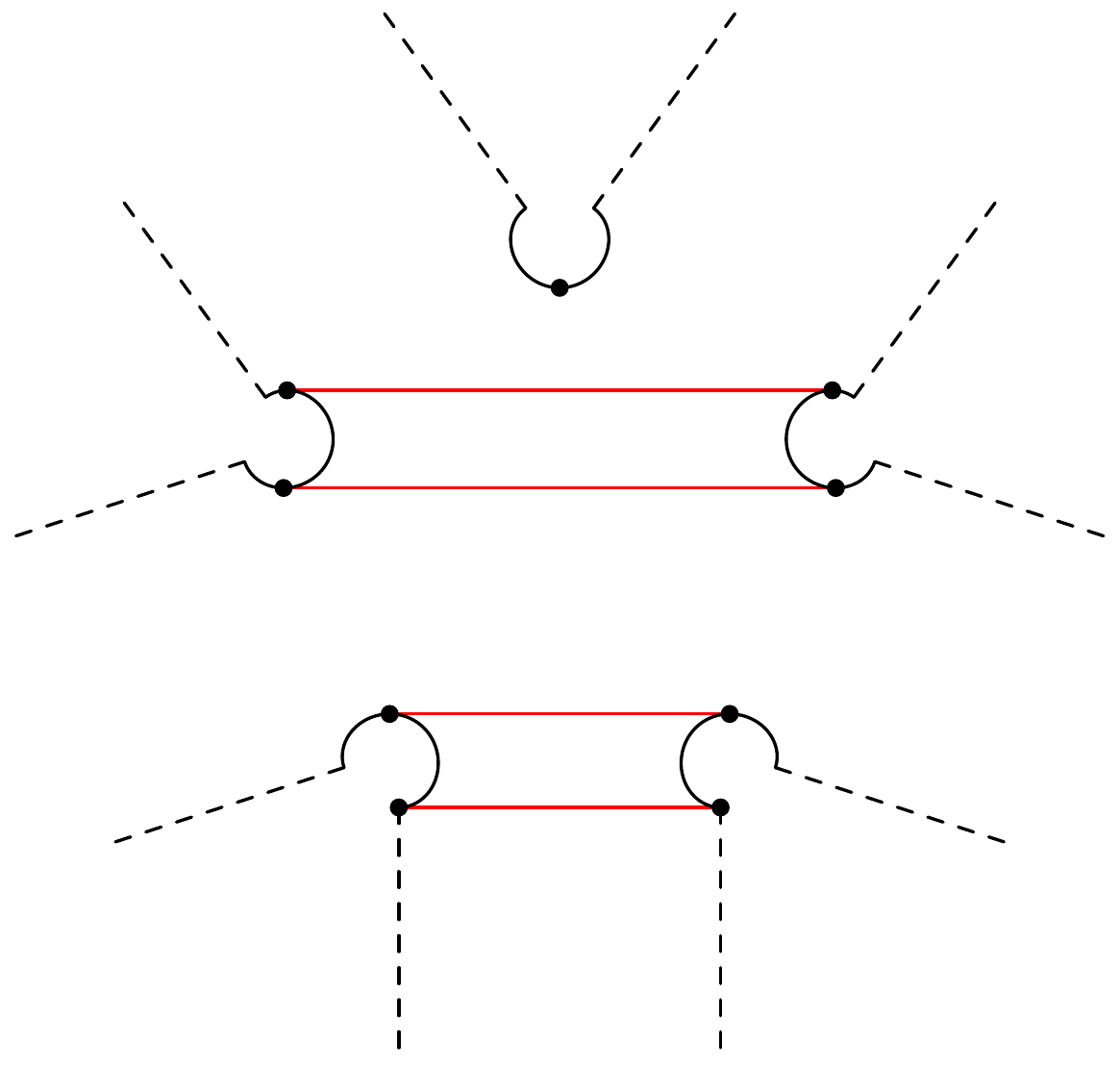}
\quad
\includegraphics[width=5.8cm]{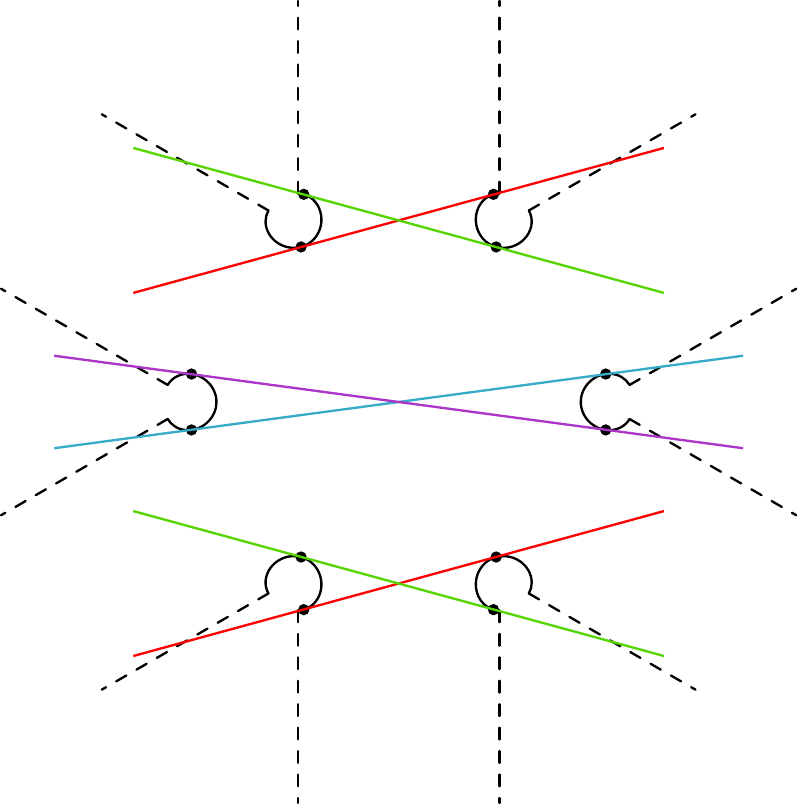}
\caption{
\footnotesize
In the case of the ideal vector field, adding $\delta\theta$ to $\arg \eps$ while maintaining $|\eps|$ just rotates the image $t(D')$ by $-\delta \theta/(k+1)$.
When the star is symmetric with respect to the vertical axis, many tangencies occur simultaneously. These pictures have been accurately numerically computed. Note how the eyelets fail to be exactly circular arcs.
\\
Top-left: For this value of $\theta$, only one double tangent trajectory occurs: the part of the red horizontal line between the two tangency points.
The black dots mark the points where the tangent to the curves is horizontal: they are the images in time coordinate of the tangency points. \\
Bottom-left: For this value of $\theta$, four inessential  tangencies occur at the same time. (They may occur at different times when we consider the general case.)
\\
Top-right and bottom-right: we drew in red the lines that are tangent to two eyelets and that are nearly horizontal. Those lines will be the horizontal lines with essential double tangencies, for values of $\theta$ that are close. In the top picture, no two tangencies occur at the same time, while on the bottom one, there are two orientations for which two tangencies occur at same time, indicated as follows: parallel lines have the same color. In any case, two lines having double tangencies with the same pair of eylets necessarily correspond to bifurcations occuring at different times.}
\label{fig_W0}
\end{center}
\end{figure}

\begin{figure}[htbp]
\begin{center}
\includegraphics[width=6cm]{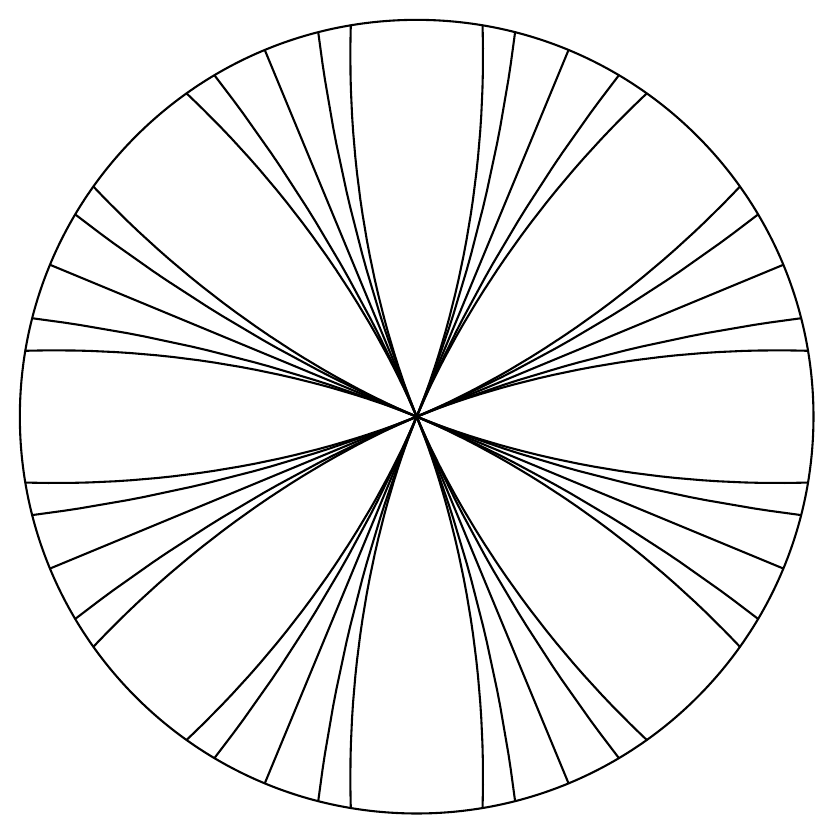}
\quad
\includegraphics[width=6cm]{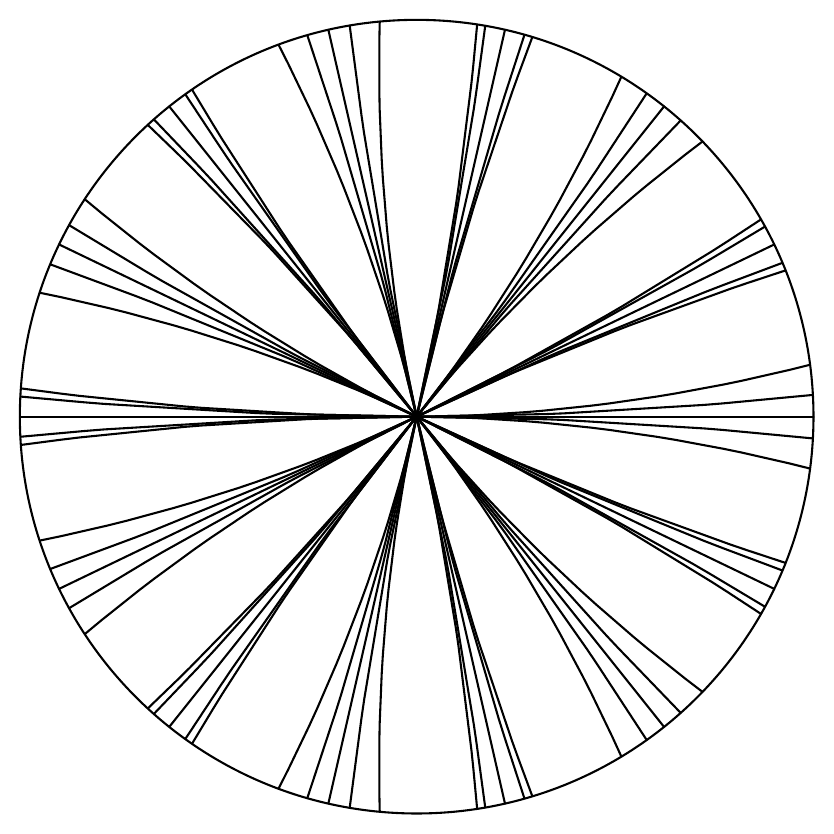}
\caption{Double tangency loci near $\eps=0$ for $r=1$ and $k+1=5$ (left image, $\rho=0.35$) and $k+1=8$ (right image, $\rho=0.26$). Those pictures have been accurately computed by a numerical method.}
\label{fig_2b}
\end{center}
\end{figure}

\begin{figure}[htbp]
\begin{tikzpicture}
\node at (0,0) {\includegraphics*[scale=0.45]{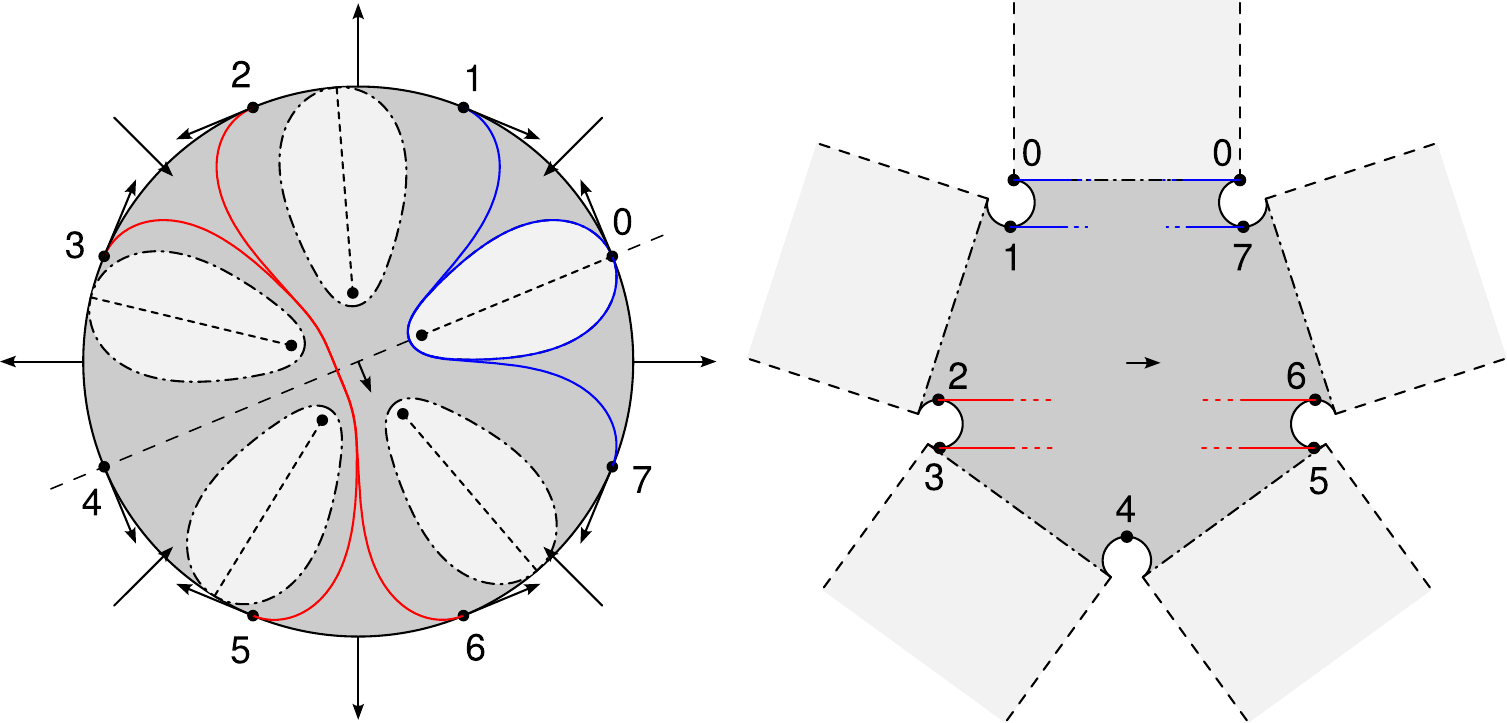}};
\node at (-5.55,-1.03) {$\Delta$};
\end{tikzpicture}
\caption{
In each figure in this series of three, we drew on the left an example with accurate trajectories for the ideal vector field $\dot z = z^{k+1}-\eps$, for an $\eps$ such that vector field has homoclinic connection, hence there is an antisymmetry axis $\Delta$ (axis of reversion).
The numbers $m$ label the tangency points (which depend slightly on $\eps$).
The arrows indicate the attracting and repelling axes, and the vector field direction at the tangency points.
Double tangent trajectories are drawn in colors.
\\
On the right we drew the image of the left part in the geometric model. However, we chose another value of $|\eps|$, one much closer to $1$, otherwise the nearly circular arcs are too small.
Here $k+1=5$.}
\label{fig:bifk4}
\end{figure}

\begin{figure}[htbp]
\begin{tikzpicture}
\node at (0,0) {\includegraphics*[scale=0.45]{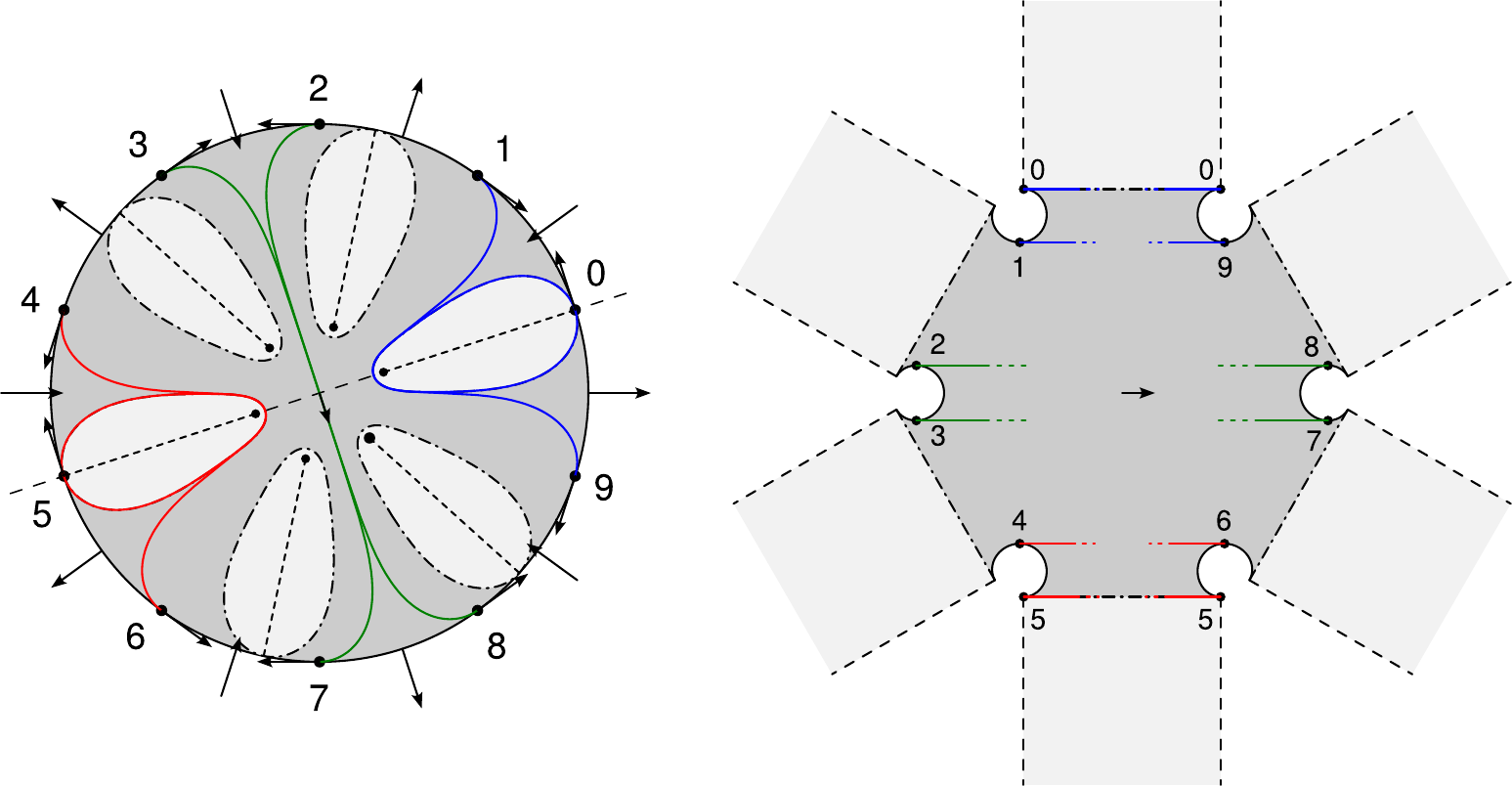}};
\node at (-6,-0.85) {$\Delta$};
\end{tikzpicture}
\caption{$k+1=6$}
\label{fig:bifk5A}
\end{figure}

\begin{figure}[htbp]
\begin{tikzpicture}
\node at (0,0) {\includegraphics*[scale=0.45]{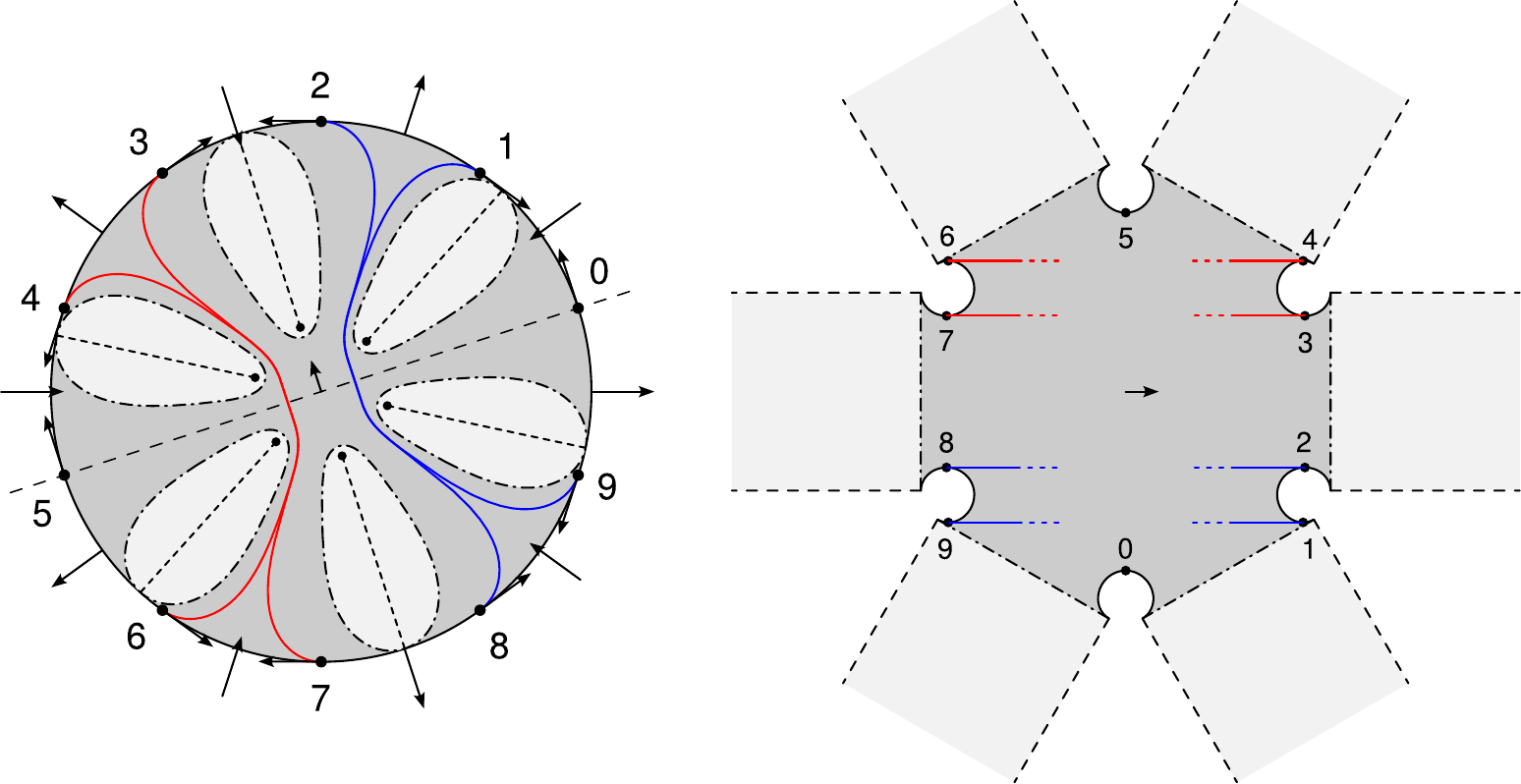}};
\node at (-6,-0.85) {$\Delta$};
\end{tikzpicture}
\caption{$k+1=6$}
\label{fig:bifk5B}
\end{figure}

It follows that when $\arg \eps$ is such that the period-gon has a vertical axis of symmetry, all the inessential bifurcations associated to the bifurcation direction $\arg \eps$ occur simultaneously, see the bottom left part of \Cref{fig_W0}.
Concerning the essential bifurcations they occur for $\arg \eps$ near but not equal to the bifurcation direction $\theta$. The distance of the vertex pair influences the angle difference, in such a way that if a vertex pair $v_a,v_b$ with the same imaginary part is chosen for $\arg \eps=\theta$, and another pair $v_c,v_d$ with the same imaginary part, then an essential bifurcation occurs simultaneously for them if and only if $|v_a-v_b|=|v_c-v_d|$ (there is one simultaneous bifurcation with $\eps$ close to and less than $\theta$, and one with $\eps$ close to and greater than $\theta$). See the right part of \Cref{fig_W0}. 

On \Cref{fig_2b} we accurately drew the bifurcation diagram (tangency locus) near $0$ for two values of $k$.

\subsection{Proofs}\label{sub:bif:pf}

In this section we prove the statements of \Cref{sub:bif:statements}.

\medskip

We start with \Cref{lem:tgcy:pts}, about basic properties of the tangency points between the vector field and $\partial B(0,r)$.

\begin{proof}[Proof of \Cref{lem:tgcy:pts}]
For $\eps=0$, the vector field $\omega_0$ expands as $\omega_0(z) = z^{k+1}+O(z^{k+2})$. For a given $r$, the points $z=re^{i\theta}$ for which the vector field is tangent to $\partial B(0,r)$ are characterized by the equation:
\[\arg \omega_0(re^{i\theta}) \equiv \theta + \pi/2 \bmod \pi. \]
For $r$ small enough, there are exactly $2k$ roots of this real analytic equation in $\theta$, which are close to the solutions of $(k+1)\theta \equiv \theta + \pi/2 \bmod \pi$.
Moreover, they are simple roots of the equation so, by the implicit function theorem, for small values of $\eps$ there are still exactly $2k$ solutions, which are still simple roots, and they depend real-analytically on $\eps$.
\end{proof}

\subsubsection{First consequences of the geometric model, limit ideal situation}\label{ss:first:csq}

Recall the periodic domains $U(z_j)$ of the singularities $z_j$ having contact points $u_j$ with $\partial B(0,r)$.
Recall also the ideal vector field $\tilde \omega_\eps (z) = z^{k+1}-\eps$ to which we will compare $\omega_\eps$.
Its negated periods, $-\tilde \mu_j = -2\pi i/(k+1)z_j^{k}$, $z_j^{k+1}=\eps$, form the sides of a regular $k+1$-gon that we called the \emph{ideal period-gon}.

We draw here consequences from the description of the translation surface associated to the vector field given in \Cref{sub:constr} and that we called the \emph{geometric model}.
We remind briefly here some essential features.
It consists in the image of the periodic domains, which are half-infinite strips attached to 
a central part, the image of $B(0,r)$ minus the periodic domains.
The central part is a simply connected set bounded by a Jordan curve made of alternating straight segments of vector $-\mu_j$, bounding a half-strip, and nearly circular arcs $C'_{j+1/2}$ of diameter that stays bounded as $\eps\to 0$, that we call \emph{eyelets}, and which are tangent to the segments at the point where they meet.
They are the image of the arcs $C_{j+1/2}$ on $\partial B(0,r)$ between $u_{j}$ and $u_{j+1}$.
The vectors $-\mu_j$ are in correspondence with the vectors $-\tilde \mu_j$ for the ideal vector field, in the sense that $\mu_j/\tilde \mu_j\tend 1$ as $\eps\to 0$. In particular their modulus tends to infinity.
However, the difference $\mu_j-\tilde\mu_j$ will tend to infinity in the generic case, so one should not imagine that the distance from the eyelets to the vertices of the ideal period-gon can be chosen bounded.
(It is to be noted that, \emph{scaling the model down}, as $\eps\to 0$, it tends to the star shaped model of the ideal vector field: the eyelets vanish and the segments tend to the ideal period-gon.)
In \Cref{fig:d16} we give the image in the geometric model of the right part of \Cref{fig:d12}.

\begin{figure}[htbp]
\begin{center}
\includegraphics[width=7.5cm]{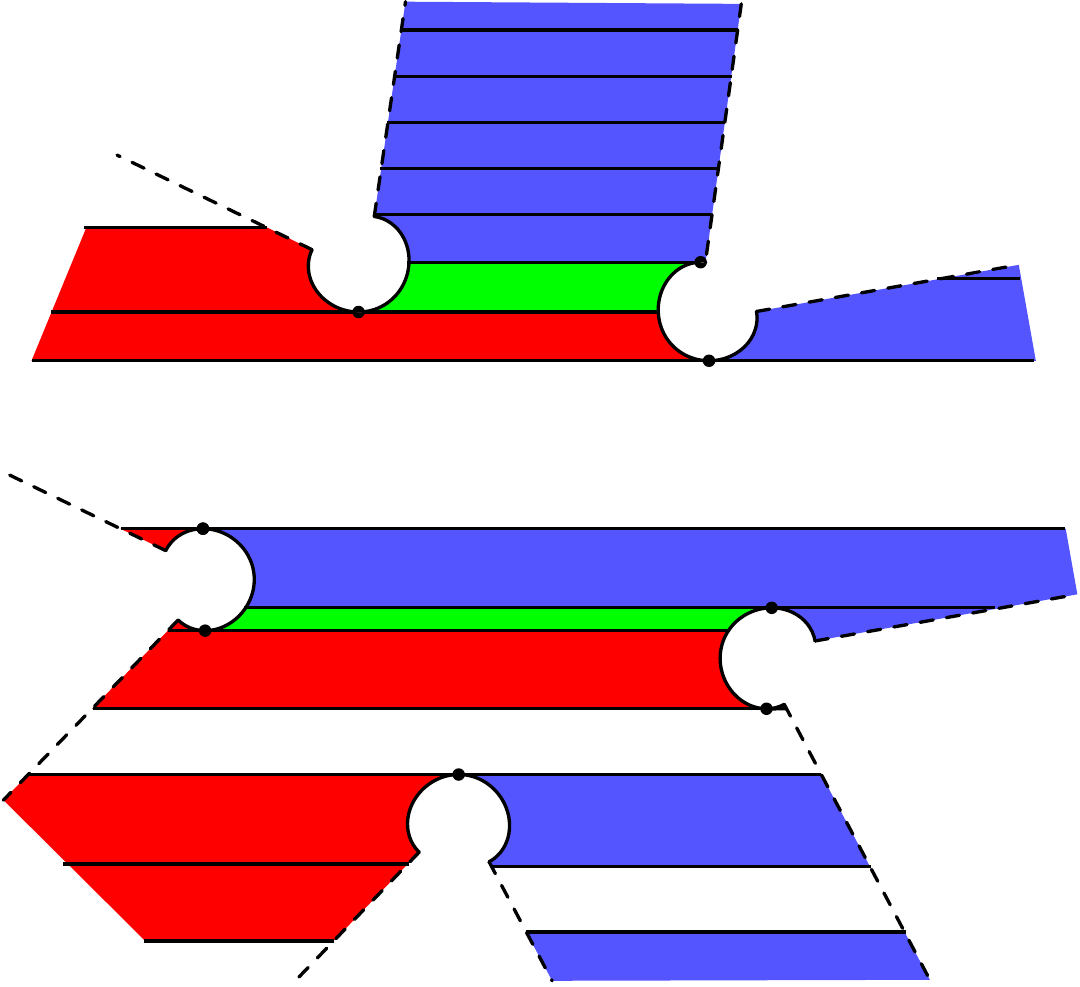}
\caption{Version of the right part of \Cref{fig:d12} in $t$-coordinate. The size of the eyelets constituting the boundaries is exaggerated, otherwise they would be invisible. Tangency points of trajectories (horizontal lines) with the boundary are indicated as black dots. There are no double tangency so this phase portrait is structurally stable, even though there are two groups of boundary connections (in green).}
\label{fig:d16}
\end{center}
\end{figure}

\begin{lemma}\label{lem:dtt}
Provided $r$ and $\rho$ are small enough, then for all $\epsilon\neq 0$ in $B(0,\rho)$ and for any boundary connection $\gamma$, its image in the geometric model is a horizontal segment that starts from some eyelet $C'_{j+1/2}$ and ends on some other eyelet.
The trajectory $\gamma$ is either equal to the boundary of some $U(z_j)$, in which case $z_j$ is neutral ($\mu_j$ is real), or is disjoint from the sets $U(z_j)$.
\end{lemma}

\begin{proof}
The periodic domains are traps in the following sense.
If the singularity is attracting it is indeed a trap in the usual sense: every orbit entering $U(z_j)$ or starting in $U(z_j)$ remains forever there in the future.
If the singularity is repelling, $U(z_j)$ is a trap for the opposite of the vector field (equivalently, traveling backward in time).
If $z_j$ is neutral, then no orbit can cross the boundary of $U(z_j)$.

As a consequence the image of the trajectory is a single segment (it does not cross the half-lines glued by the translations $-\mu_j$). It thus goes from a $C'_{j+1/2}$ to a $C'_{j'+1/2}$ and $j'\neq j$ since $C'_{j+1/2}$ is contained in the boundary of a convex set.

Another key feature that we want to highlight is the following: no trajectory that crosses the boundary of a $U(z_j)$ can both start and end on $\partial B(0,r)$.
The conclusion follows. (See the bottom row of \Cref{fig:asymp:dir}.)
\end{proof}

\begin{corollary}\label{cor:d0}
Consider the two sets that $\gamma$ cuts $B(0,r)$ into. Each of these two sets contains consecutive singularities of the vector field, and at least one.
\end{corollary}

\begin{proof}
Recall that the singularities are circularly ordered.
The first claim follows from the horizontal segment starting and ending on $\bigcup_j C'_{j+1/2}$ without entering the images $T_j$ of $U(z_j)$.\footnote{An alternative proof of the first claim, so as to avoid the use of the geometric model, may go as follows: prove without the model that the periodic regions $U(z_j)$ touch $\partial B(0,r)$ in a unique point $u_j$ and that the circular order of the $u_j$ are the same as the circular order of $j\in \Z/(k+1)\Z$.}

The second claim from the fact that it starts and ends on different $C'_{j+1/2}$.
\end{proof}

\begin{proposition}\label{prop:asymp}
The connection locus $\cal C$ is asymptotic to the set of bifurcation directions.
\end{proposition}
\begin{proof}
The curves $C'_{j+1/2}$ have a uniformly bounded diameter, while the vectors $-\mu_j\sim -\tilde \mu_j$ have a size that tends to $\infty$ when $\eps$ tends to $0$. Consider a parameter for which there is a boundary connection.
The image of the boundary connection in rectifying coordinates is a horizontal segment and links two different $C'_{j+1/2}$.
It follows that the ideal period-gon, rescaled to have its vertices of modulus one, has two vertices of imaginary part whose difference tends to $0$ as $\eps\to 0$ while $\eps\in \cal C$.
\end{proof}

Recall that the double tangency locus $\cal{DT}$ is contained in $\cal C$, hence is also asymptotic to the set of bifurcation directions.

\begin{definition}\label{def:limit:ideal}
We associate to a boundary connection $\gamma$  a \emph{limit ideal situation} that consists in an ideal vector field and one of its homoclinic trajectories.
It is defined as follows.
The ideal vector field is $\tilde\omega_{\tilde \eps} = z^{k+1}-\tilde\eps$ for $\left|\tilde\eps\right| = |\eps|$, and $\arg\tilde\eps$ equal to the bifurcation direction.
The image of $\gamma$ in the geometric model connects the eyelet $C_{j_0+1/2}$ to $C_{j_1+1/2}$. The homoclinic trajectory is the one whose image in the star-shaped domain is a horizontal segment between the vertices $v_{j_0+1/2}$ and $v_{j_1+1/2}$ of the ideal period-gon.
See the bottom row of \Cref{fig:asymp:dir}.
\end{definition}

\begin{definition}\label{def:top:bottom}
The image of the set of tangency points $T_\phi(\eps)$ in the geometric models is the set of points on the eyelets where the latter have a horizontal tangent.
Since these points come in two types: (locally) topmost and bottommost, we qualify the tangency points $T_\phi(\eps)$ with the type, \emph{topmost or bottommost}, of their image on the eyelet.
If we label the intermediate axes as in \cref{eq:def:phi:m}, then $T_\phi(\eps)$ is topmost for $m$ even and bottommost for $m$ odd.
\end{definition}

\begin{remark}\label{rem:inv}
  Conversely, to a given limit ideal situation, i.e.\ the choice of a bifurcation direction $\theta_0$ and of a pair of vertices with the same imaginary parts of the corresponding ideal period-gon, there corresponds exactly $4$ pairs $(m_0,m_1)$ of entry and exit indices such that this is the limit situation for parameters in $\cal {DT}_{\phi_{m_0},\phi_{m_1}}$.
  Indeed the limit situation determines a homoclinic connection of the ideal vector field, with an incoming (resp.\ outgoing) direction that is an attracting (resp.\ repelling) direction of $\omega_0$, and for each such direction, there are exactly two intermediate directions to which it is nearest. 
  Note that for each choice of topmost or bottommost type for the entry and exit indices, there is a unique corresponding pair $(m_0,m_1)$.
  See \Cref{fig:asymp:dir:1}.
\end{remark}

\begin{proof}[Proof of \Cref{item:tc0} of \Cref{thm:bifurcation}]
We saw in \Cref{sub:descr:bif} that a double tangency induces a bifurcation as $\arg \eps$ varies.
Let us prove that conversely, a parameter without double tangencies is in the stable region.
If any singularity were a center, then the boundary of its periodic domain would be double tangent. So there is no center.

In the central part of the geometric model, one can trace two horizontal segment starting from each topmost and each bottommost points of eyelets, until they reach the boundary of the central part.
By hypothesis, in absence of double-tangent trajectories, none of these segments will hit another topmost or bottommost point.
When perturbing $\eps$ this picture deforms in a continuous way, and we can conjugate the flows too,  with a reparametrization of time that depends on the lengths of the open segments, which vary with parameter and position.

The initial choice of conjugacy (up to time reparametrization) on the central part imposes the conjugacy on the boundary of the periodic domains.
They have an explicit extension on the rest of the periodic domain to a conjugacy without reparametrization of time: indeed periodic domains are isomorphic to vertical half-infinite strips with the boundaries glued by a non-horizontal translation, on which the constant vector field $dz/dt = 1$ is acting.
\end{proof}

\begin{figure}[htbp]
\begin{tikzpicture}
\node at (0.05,5.95) {\includegraphics[scale=0.5]{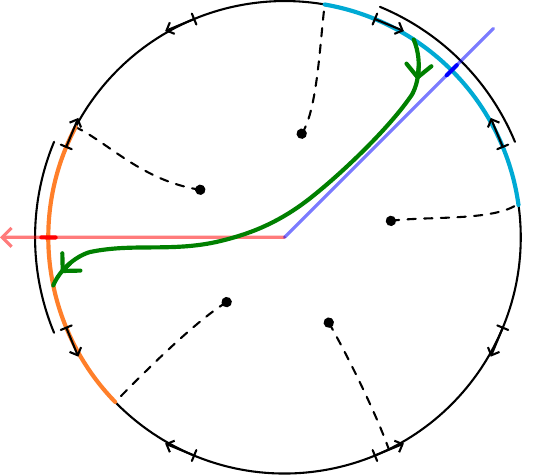}};
\node at (0.9,6) {\footnotesize $1$};
\node at (0.25,6.6) {\footnotesize $2$};
\node at (-0.3,6.3) {\footnotesize $3$};
\node at (-0.1,5.5) {\footnotesize $4$};
\node at (0.5,5.5) {\footnotesize $5$};

\node at (6,6) {\includegraphics[scale=0.5]{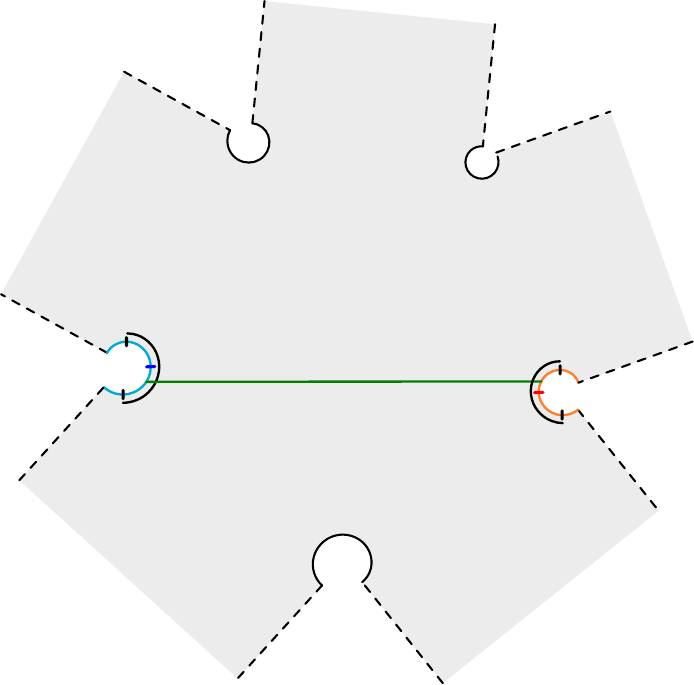}};
\node at (4,7.25) {\footnotesize $1$};
\node at (4.3,4.3) {\footnotesize $2$};
\node at (7.4,4.1) {\footnotesize $3$};
\node at (8.25,6.8) {\footnotesize $4$};
\node at (6.3,8.3) {\footnotesize $5$};

\node at (0,0) {\includegraphics[scale=0.5]{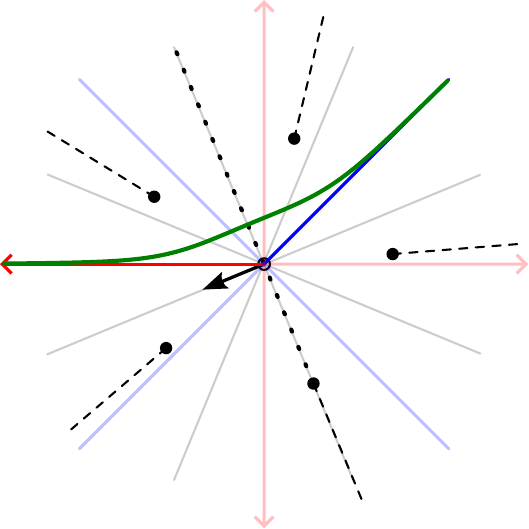}};

\node at (6,0) {\includegraphics[scale=0.5]{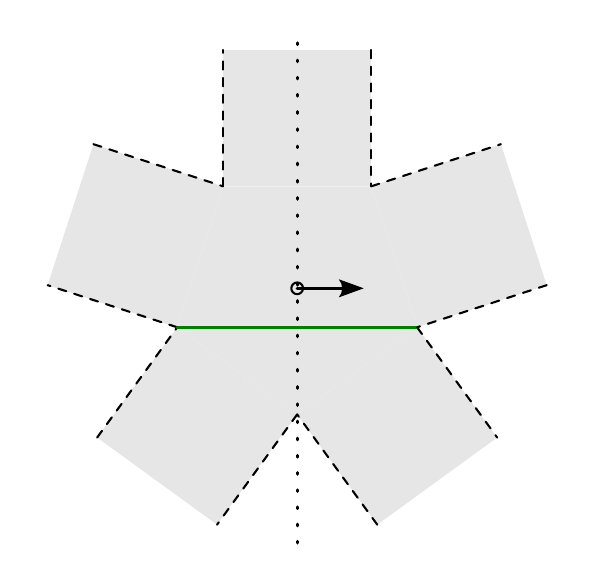}};
\end{tikzpicture}
\caption{
Left column: vector fields.
Right column: their geometric models (a.k.a.\ straightening coordinate, time coordinate).
Top row: sketch of the vector field $\omega_\eps$ with a boundary connection $\tau$ in green; the cyan and orange arcs are respectively the $C_{1+1/2}$ and $C_{3+1/2}$ on the left and $C'_{1+1/2}$ and $C'_{3+1/2}$ on the right.
The entry arc $A_{\phi_a}(\eps)$ and exit arc $R_{\phi_a}(\eps)$ are indicated as arcs of a slightly bigger circle.
Bottom row: limit ideal situation, i.e.\ the ideal vector field $\tilde\omega_\eps=z^{k+1}-\eps$ for $\arg \eps$ equal to the nearby bifurcation direction, the green line is the homoclinic connection associated to $\tau$ and the black arrow gives the direction of the vector field at the origin.
\\
In all frames:
Red arrows are repelling axes and blue segments are attracting ones.
Gray lines are the intermediate axes.
Dashed black lines represent cuts for the straightening coordinate. 
Dotted lines are anti-symmetry axes of the ideal vector field.
}
\label{fig:asymp:dir}
\end{figure}

\begin{figure}[htbp]
\begin{tikzpicture}
\node at (0,6) {\includegraphics[scale=0.5]{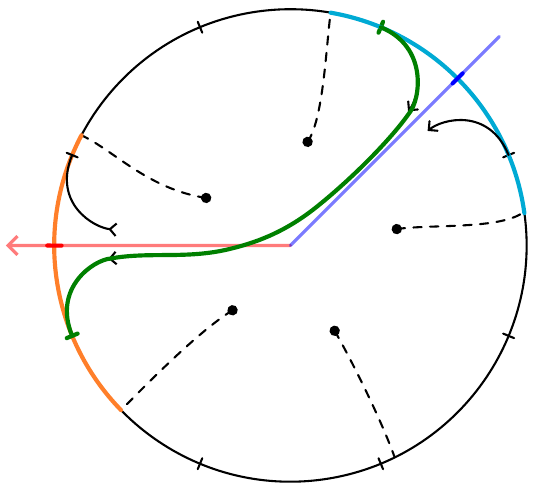}};
\node at (0.9,6) {\footnotesize $1$};
\node at (0.25,6.6) {\footnotesize $2$};
\node at (-0.3,6.3) {\footnotesize $3$};
\node at (-0.1,5.5) {\footnotesize $4$};
\node at (0.5,5.5) {\footnotesize $5$};

\node at (6,6) {\includegraphics[scale=0.5]{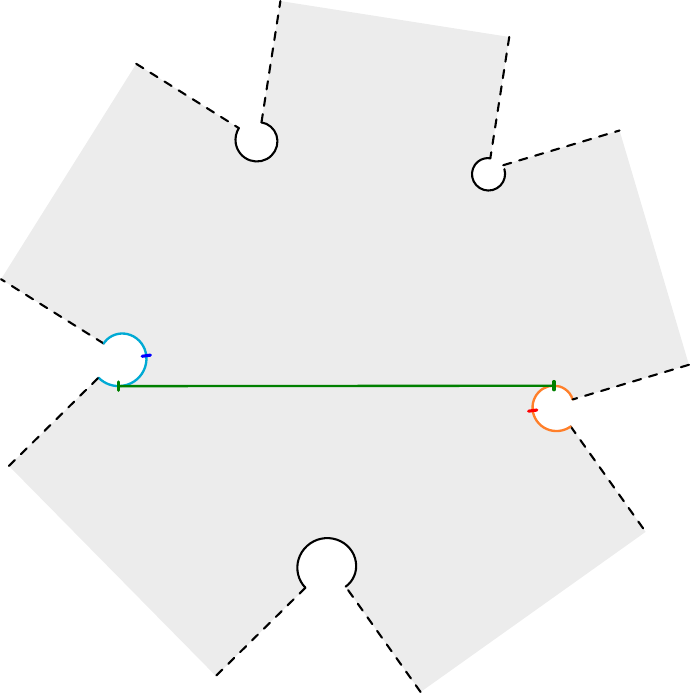}};
\node at (4,7.25) {\footnotesize $1$};
\node at (4.25,4.35) {\footnotesize $2$};
\node at (7.35,4.1) {\footnotesize $3$};
\node at (8.25,6.75) {\footnotesize $4$};
\node at (6.35,8.3) {\footnotesize $5$};
\end{tikzpicture}
\caption{
Version of the top row of \Cref{fig:asymp:dir} in a case with a double tangent trajectory.
}
\label{fig:asymp:dir:1}
\end{figure}

\begin{figure}[htbp]
\begin{tikzpicture}
\node at (0,6) {\includegraphics[scale=0.75]{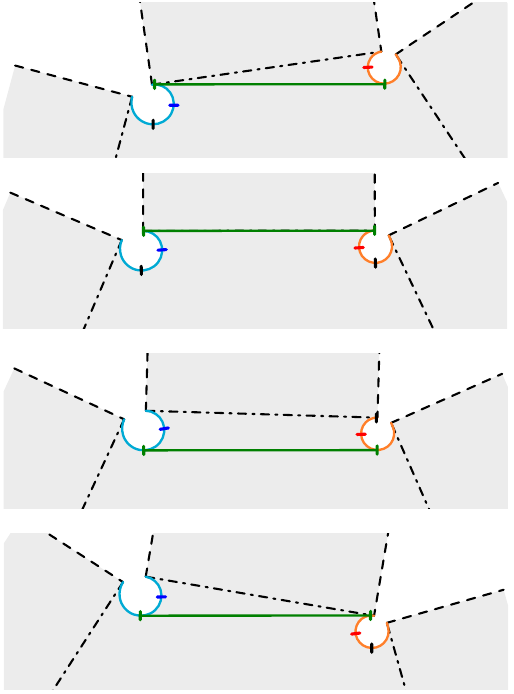}};
\end{tikzpicture}
\caption{
Even if the eyelets are consecutive, the arc from the tangency point to the point mapped to the closest attracting/repelling axis (marked with a blue/red tick) is still contained in a single eyelet.
}
\label{fig:asymp:dir:2}
\end{figure}

In the next statement, we compare the positions of certain features of the vector field with the ideal vector field, and do the same for their geometric models.
\begin{lemma}\label{lem:compare}
For all $\eta>0$, we can ensure by taking $r$ small, then $\rho$ small, that for all $\eps\in B(0,\rho)$:
\begin{enumerate}
\item the arguments of the endpoints of the arcs $C_{j+1/2}\subset \partial B(0,r)$ for $\omega_\eps$ and $\tilde C_{j+1/2} \subset \partial B(0,r)$ for $\tilde \omega_\eps$, differ by less than $\eta$;
\item the angle span $J'_{j+1/2}$ and $\tilde J'_{j+1/2}$ of the outward pointing normal vector to $C'_{j+1/2}$ and to $\tilde C'_{j+1/2}$ are intervals whose endpoints differ by less than $\eta$.
\end{enumerate} 
\end{lemma}
\begin{proof}
 The first claim follows from \Cref{lem:ujvszj} since the endpoints of $C_{j+1/2}$ are $u_{j}$ and $u_{j+1}$ while those of $\tilde C_{j+1/2}$ are $\tilde u_{j}$ and $\tilde u_{j+1}$ and $\arg \tilde u_j = \arg \tilde z_j$.

 The second claim from the fact that the eyelets are images of $C_{j+1/2}$ by the antiderivative of $\omega_\eps$ and $\omega_\eps$ is close to $\omega_0$ which is close to $z^{k+1}$.
\end{proof}

The interval $\tilde J'_{j+1/2} \subset \R/2\pi \Z$ introduced in \Cref{lem:compare} has length $2\pi \frac{k}{k+1}$ and its midpoint is $\pi + \arg v_{j+1/2}(\delta)$, i.e.\ it is antipodal to the corresponding vertex of the ideal period-gon.

\FloatBarrier

\begin{lemma}\label{lem:ihi}
For all $\eta>0$ and $r>0$, we can ensure by taking $\rho$ small that the following holds.
For all $\eps\in B(0,\rho)$ such that $\arg \eps$ is a bifurcation direction, for all horizontal trajectory joining two vertices of the ideal period-gon,
consider the corresponding trajectory $\gamma$ of $\tilde \omega_\eps$.
We recall that $\gamma$ is a homoclinic connection, and that it has an incoming direction $\phi_a$ and an outgoing one $\phi_r$ which are respectively an attracting and a repelling direction of $\tilde\omega_0$.
Then $\gamma$ intersects $\partial B(0,r)$ in two points, whose arguments are $\eta$-close to $\phi_a$ and $\phi_r$.
\end{lemma}
\begin{proof}
We will use the scaling invariance of the vector field (see \cref{rescaling}): let $\eps' = \eps/|\eps| =  e^{i\arg\eps}$.
Let $m(z) = z/|\eps|^{1/(k+1)}$.
Then $m^* \tilde \omega_\eps = |\eps|^{k/(k+1)} \tilde\omega_{\eps'}$ and $B(0,r)$ is transformed into $B(0,r/|\eps|^{1/(k+1)})$.
So, up to a real linear reparametrization of time, $m\circ \gamma$ is a trajectory of $\tilde \omega_{\eps'}$.

For $\tilde \omega_{\eps'}$, a straightening coordinate near $\infty$ is given by an antiderivative of the convergent power series $1/(z^{k+1}-\eps') = \sum_{n\geq 0} \frac{{\eps'}^{n}}{z^{(n+1)(k+1)}}$.
By choosing the integration constant appropriately, one of these antiderivatives $F(z)$ satisfies $F(z) = \frac{-1}{kz^k} + \cal O(1/z^{2k+1})$.
Near $\infty$, separatrices are preimages by $F$ of the segment $(0,x)$ or $(-x,0)$, so are of the form $t\mapsto e^{i\phi} \times (kt)^{-1/k}\times(1+\cal O(t^{(k+1)/k})) = e^{i\phi}k^{-1/k}(t^{-1/k} + \cal O(t))$, where $\phi$ is an attracting or repelling direction of $\tilde\omega_0$, and $t$ is small.

It follows that a separatrix of $\tilde\omega_\eps$ has the repelling/attracting axis as an asymptote.
Coming back from $\tilde\omega_{\eps'}$ to $\tilde\omega_\eps$ via the scaling function $m$, this implies the claim.
\end{proof}

\begin{proof}[Proof of \Cref{item:cx:asymp} of \Cref{thm:connection}]
This proof is illustrated by \Cref{fig:asymp:dir}.
Let $\eps \in \cal{C}_{\phi_a,\phi_r}$ and consider a boundary connection with entry arc $A_{\phi_a}(\eps)$ and exit arc $R_{\phi_r}(\eps)$.
We saw that in the geometric model, this trajectory is mapped to a horizontal segment $S$ between two distinct eyelets $C'_{j_0+1/2}$ and $C'_{j_1+1/2}$.
We also saw that $\arg \eps$ is close to a value $\theta_b$ for which the ideal period-gon has its vertices of indices $j_0+1/2$ and $j_1+1/2$ of the same imaginary part, in particular the imaginary axis is a symmetry axis of this ideal period-gon.

The eyelets $C'_{j+1/2}$ are arcs of nearly circular curves: indeed they are obtained as the image of arcs $C_{j+1/2}$ of $\partial B(0,r)$, of angular span close to $2\pi/(k+1)$ (see \Cref{lem:ujvszj}), under the antiderivative of $z\mapsto 1/\omega_\eps(z)$, which can be assumed close to $1/z^{k+1}$ since we can choose $r$ and $\rho$ small. This antiderivative is close to $z\mapsto -1/kz^k$, up to a translation.
So the eyelet is close to an arc of circle of angular span $2\pi k /( k+1 )= 2\pi - 2\pi/(k+1)$.
It starts at a point where its tangent is parallel to $-\mu_j$ and ends at one where its tangent is parallel to $-\mu_{j+1}$.

It follows that, provided $\eps$ is small enough, the left (resp.\ right) eyelet contains an arc where the outward pointing vector spans a set of argument containing $[-\pi/2+\eta,\pi/2-\eta]$ (resp.\ the antipodal interval).
There is a unique point in the part of $C_{j_0+1/2}$ (resp.\ $C_{j_1+1/2}$) mapped to this arc and whose argument is $\phi_a$ (resp.\ $\phi_r$).
Since the argument span of every $C_{j+1/2}$ is close to $2\pi/(k+1)$ while distinct attracting directions differ by $2\pi/k$, we get that:
\emph{$\phi_a$ ($\phi_r$) is the unique attracting (repelling) direction in the argument span of $C_{j_0+1/2}$ ($C_{j_1+1/2}$)}.

Consider now the limit ideal period-gon, i.e.\ the geometric model of the ideal vector field for $\arg\eps = \theta_b$.
By \Cref{lem:ihi}, the horizontal line joining the vertices of indices $j_0+1/2$ and $j_1+1/2$ in this ideal period-gon corresponds in the $z$-coordinate to a homoclinic connection $\gamma$ for the limit ideal vector field, which intercepts $\partial B(0,r)$ at two points of arguments $\eta$-close to the incoming  and outgoing directions $\hat\phi_a$ and $\hat\phi_r$ of $\gamma$.
Note that these two intercept points belong to the arcs $\tilde C_{j_0+1/2}$ and $\tilde C_{j_1+1/2}$ that get mapped to the eyelets of $\tilde \omega_\eps$ of respective indices $j_0+1/2$ and $j_1+1/2$.
It follows that $\hat\phi_a \in \tilde C_{j_0+1/2}$ and
$\hat\phi_r\in\tilde C_{j_1+1/2}$.
Moreover for $\eps$ small enough there is some definite margin (of $\pi/4k$ for instance) between $\hat\phi_a$ (resp.\ $\hat\phi_r$) and the arguments of the extremities of $\tilde C_{j_0+1/2}$ (resp.\ of $\tilde C_{j_1+1/2}$).
By \Cref{lem:compare}, which compares the eyelets for $\omega_\eps$ and $\tilde \omega_\eps$, we get that $\hat\phi_a\in C_{j_0+1/2}$ and $\hat\phi_r\in C_{j_1+1/2}$.
By the aforementioned uniqueness, $\hat\phi_a=\phi_a$ and $\hat\phi_r = \phi_r$.
We saw in \Cref{prop:hl:pairs:dirs} that $\theta_b = \phi_{\mathrm{mid}} + \frac{\pi}{2}$.
\end{proof}

\begin{proof}[Proof of \Cref{prop:CDT}]
We recall that the $2k$ tangency points $T_\phi(\eps)$, are indexed by the intermediate directions $\phi$, are close to $re^{i\phi}$ by \Cref{lem:tgcy:pts}, and cut $\partial B(0,r)$ into $k$ entry arcs $A_{\phi_a}(\eps)$, with $\phi_a$ close to the midpoint of the arc, and $k$ exit arcs
$R_{\phi_r}(\eps)$, with $\phi_r$ close to its midpoint.

So the tangency point $T_\phi(\eps)$ belongs to a unique entry arc $A_{\phi_a}(\eps)$ and $\phi_a$ is the closest attracting direction to $\phi$.
Similarly the tangency point $T_{\phi'}(\eps)$ belongs to a unique exit arc $R_{\phi_r}(\eps)$ and $\phi_r$ is the closest attracting direction to $\phi$.
\end{proof}

\Cref{item:tc2} of \Cref{thm:bifurcation} is an immediate consequence of \Cref{prop:CDT} and \Cref{item:cx:asymp} of \Cref{thm:connection}.

\begin{remark}\label{rem:meso}
Though not needed for the rest of the article, we find interesting to give the following interpretation of the figures, illustrated in \Cref{fig:meso}.
When $\eps$ is small, the initial part $A$ of a bi-tangent trajectory is close to a tangent trajectory of $\omega_0$, which is itself not too far from the very simple vector field $dz/dt = z^{k+1}$ on $B(0,r)$ if $r$ has initially been chosen small.
A straightening (time) coordinate for this vector field is $t = -1/kz^k$ and the trajectory is well approximated by the image of a straight line (not passing through $0$) by $t\mapsto (-kt)^{1/k}$.
The trajectory plunges towards $0$ and at a mesoscopic scale $B$, it is indistiguishable from an attracting axis.
In the microscopic phase $C$, the trajectory reaches a distance to $0$ comparable to the absolute value $|\eps|^{1/(k+1)}$ of the singularities and their influence deviates the trajectory.
Being double tangent, the trajectory then goes through a similar regime $B$, this time along a repelling axis, and finally a macroscopic phase $A$ to a tangency point.
\end{remark}

\begin{figure}
\begin{tikzpicture}
\node at (0,-1) {\includegraphics[scale = 0.4]{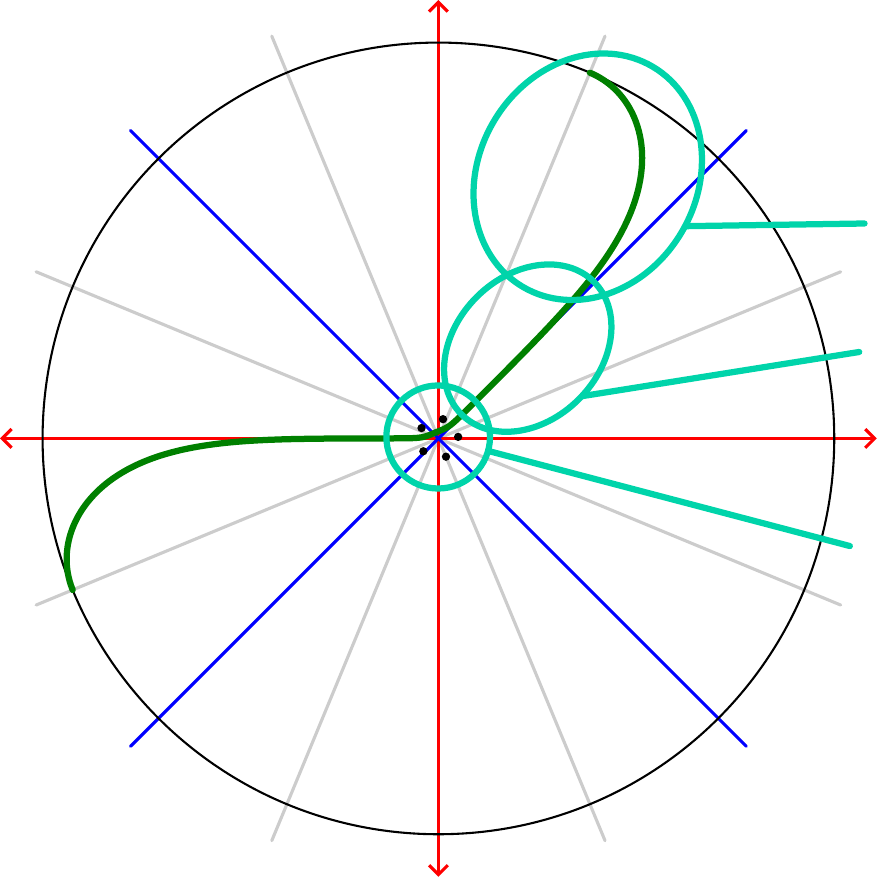}};
\node at (3.2,0.5) {A};
\node at (3.2,-0.4) {B};
\node at (3.1,-1.8) {C};

\node at (-4,-6) {\includegraphics[width = 3cm]{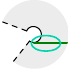}};
\node at (-4,-8) {A};
\node at (-0.5,-6) {\includegraphics[width = 3cm]{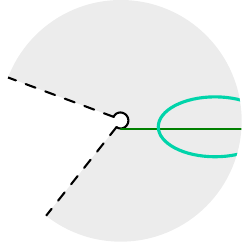}};
\node at (-0.5,-8) {B};
\node at (4,-6) {\includegraphics[width = 4.5cm]{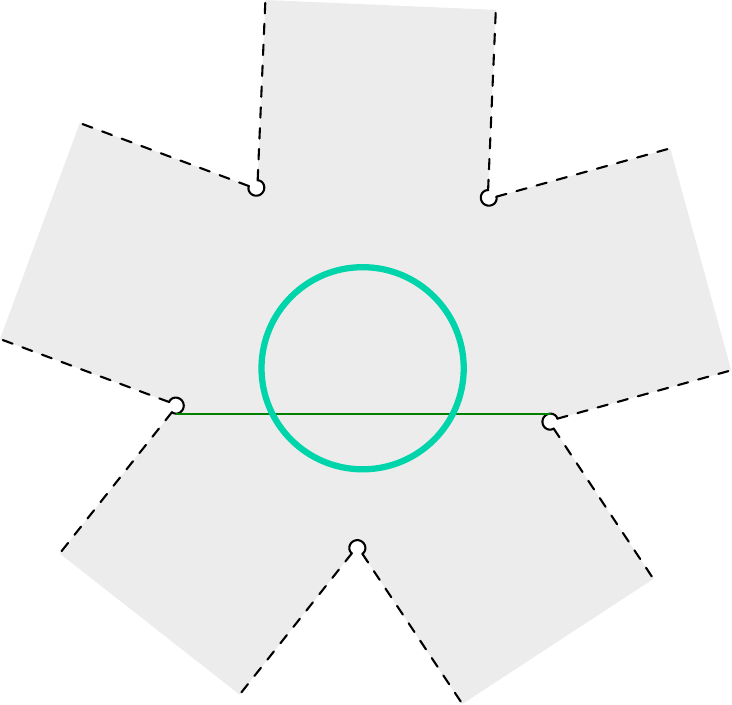}};
\node at (4,-8) {C};
\end{tikzpicture}
\caption{Illustration of \Cref{rem:meso}.
A double tangent trajectory, when $\eps$ is small, can be decomposed into 3 scales. A: macroscopic, B: mesoscopic, C: microscopic.
The corresponding parts of the trajectory are circled, both in the original coordinate and in the geometric model.}
\label{fig:meso}
\end{figure}

\subsubsection{Asymptotics in parameter space for a trajectory with a double tangency of given type}\label{subsub:d}

We now turn to the analytic aspect of the bifurcation sets.
Here we introduce analytic equations and prove that the bifurcation sets are contained in them.
The proof of the converse inclusion is given in \Cref{subsub:0}.

We recall here an important point: the singularities $z_j$ of $\omega_\eps$ and $\tilde z_j$ of $\tilde \omega_\eps$ are circularly indexed by $j\in\Z/(k+1)\Z$ but there is no way to choose the indexing in a continuous way as a function of $\eps\in B(0,\rho)$ because when $\eps$ winds once around $0$, then the indexing has shifted by $-1$.
A way to solve this is to take a $(k+1)$-fold covering of the set of values of $\eps$: we set $\eps = \delta^{k+1}$ and take $\delta\in B(0,\rho^{1/(k+1)})$ as a new parameter.
Then we can set
\begin{equation}\label{eq:delta}
\tilde z_j = \xi^{j} \delta
\end{equation}
with $\xi = \exp(2\pi i/(k+1))$.
Similarly, we can take $z_j=g(\tilde z_j)$ for some holomorphic function $g$ independent of $j$ satisfying $g(0)=0$ and $g'(0)=1$, see \Cref{prop:zeroes}.
As usual, we let
\[\mu_j = \frac{2\pi i}{\omega_\eps'(z_j)},\ 
\tilde \mu_j = \frac{2\pi i}{\tilde \omega_\eps'(\tilde z_j)} = \frac{2\pi i}{(k+1)\tilde z_j^k} = \frac{2\pi i}{(k+1)\eps} \tilde z_j.\]
The vertices $v$ of the ideal period-gon are separated by the vectors $-\tilde \mu_j$.
We label them using $j+1/2$, so that
\[ v_{j+1/2} - v_{j-1/2} = -\tilde \mu_j
.\] 
Then, since we center the period-gon at the origin:
\[ v_{j-1/2} = c' \xi^j \delta^{-k}
\]
with
\[c' = \frac{2\pi i}{ (k+1) (1-\xi)}
.\]
Note that the eyelet labeled $C'_{j+1/2}$ is the one closest to $v_{j+1/2}$.

\begin{convention}\label{conv:num}
In the sequel, we take the parameter $\delta$, we let $\eps=\delta^{k+1}$ and we take $z_j$, $\tilde z_j$, $\mu_j$, $\tilde \mu_j$, $v_{j+1/2}$ and $C'_{j+1/2}$ as above.
\end{convention}

Let $\phi$ and $\phi'$ be intermediate axes directions.
Consider $\eps \in \cal {DT}_{\phi,\phi'}$ and a trajectory $\tau$ of $\omega_\eps$ which has a double tangency to $\partial B(0,r)$ from $T_\phi(\eps)$ to $T_{\phi'}(\eps)$.
We saw that if $\eps$ is small then its argument is close to a bifurcation direction $\theta_0$ which we identified, and that the image of $\tau$ in the geometric model is a horizontal segment $H$ linking an eyelet $C'_{j_0+1/2}$ to another $C'_{j_1+1/2}$ and tangent to each, as in \Cref{essential_tangencies}.

\begin{figure}
\begin{center}
\subfigure{\includegraphics[width=5.5cm,angle=180]{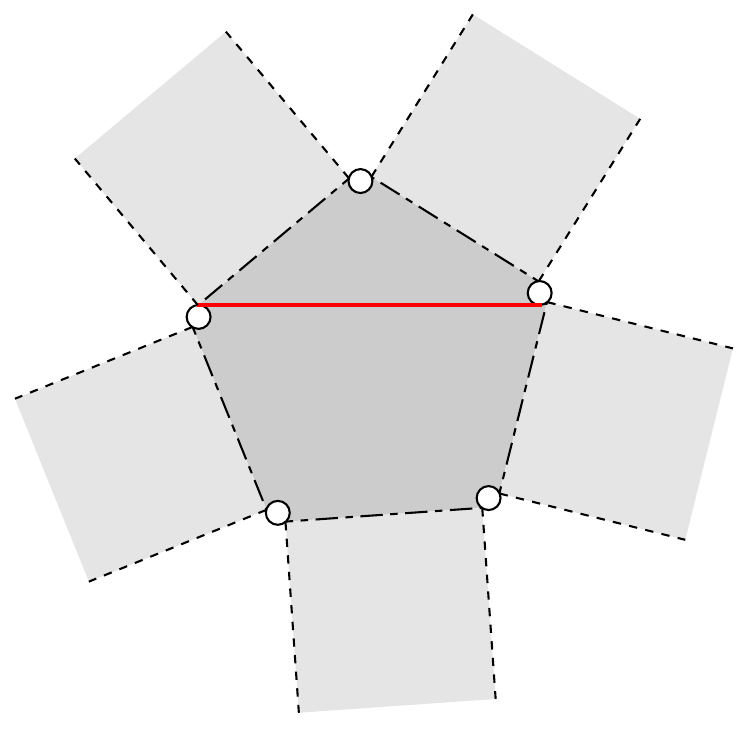}}
\qquad
\subfigure{\includegraphics[width=5.5cm,angle=180]{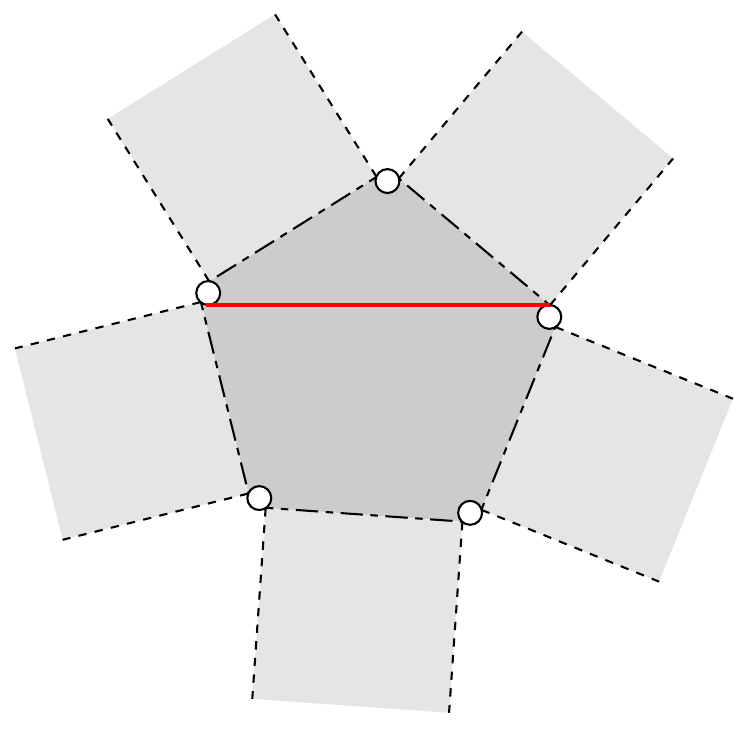}}
\caption{The two essential double tangency bifurcations for parameters on the boundary of a region $\mathcal{C}_{2j,2\ell+1}$.}
\label{essential_tangencies}
\end{center}
\end{figure}

\begin{convention}\label{conv:delta:p}
We choose, among the $k+1$ roots $\delta$ of $\eps$, the one such that
\[j_0=-1,\]
so the horizontal segment $H$ goes from $C'_{-1/2}$ to $C'_{p-1/2}$ for some integer $p$ with
\[0<p<k+1\]
(using the numbering \Cref{conv:num}). 
Note that $\arg \delta$ is close to a value $\alpha_0 \in\R/2\pi \Z$ such that $(k+1)\alpha_0 \equiv \theta_0 \bmod 2\pi \Z$.
This $\alpha_0$ is characterized as the one so that the ideal period-gon vertex $v_{-1/2}$ is the left end of the homoclinic connection of the associated limit ideal situation (see \Cref{def:limit:ideal}).
\end{convention}

If the two tangency points of $\tau$ are equal, then we saw that $\tau$ is the boundary of a periodic region for a singularity which is a center, i.e.\ whose eigenvalue is purely imaginary.
This condition is analytic and takes the form
\[\Re(\mu_0) = 0\]
if the ideal vertex $v_{-1/2}$ is at the bottom of the period-gon (then the tangency point is of bottom type), or
\[\Re(\mu_{-1}) = 0\]
if it is at the top (then the tangency point is of top type).
Conversely if this condition is satisfied and if the argument of $\delta$ is close to $\alpha_0$, then the boundary of the periodic region of the center is a double tangent trajectory of type $\cal {DT}_{\phi,\phi}$.

If the two tangency points of $\tau$ are disjoint, let us close the path $\tau$ into a simple loop by following the arc $C_\eps$ of $\partial B(0,r)$ that goes \emph{clockwise} from the endpoint $T_{\phi'}(\eps)$ to the starting point $T_{\phi}(\eps)$ of $\tau$.
The integral of $dz/\omega_\eps(z)$ along this closed path is equal to the sum of negated periods $\sum -\mu_j$ on the set $J\subset \Z/(k+1)\Z$ of indices $j$ of the singuarities $z_j$ enclosed by the simple loop.
Hence (since the integral of the holomorphic $1$-form $dz/\omega_\eps(z)$ along any trajectory is a real number)
\begin{equation}\label{eq:1}
\im \left(\int_{C_\eps} \frac{dz}{\omega_\eps(z)}\right) + \im \left (\sum_{j\in J} \mu_j \right)=0.
\end{equation}
Recall that $r$ is fixed to a small value and that both $C_\eps$ and $\mu_j$ depend on $\delta$.

\medskip

By \Cref{cor:d0}, the singularities enclosed by the loop we defined above are consecutive and since following $C_\eps$ clockwise implies, in the model, that each curve $C'_{j+1/2}$ is followed anticlockwise, we deduce that the indices of the enclosed singularities are of the form
\[J = \{0,1,\ldots,p-1\}.\]
See \Cref{fig:0p}.

\begin{figure}[htbp]
\begin{tikzpicture}
\node at (0,0) {\includegraphics*[scale=0.6]{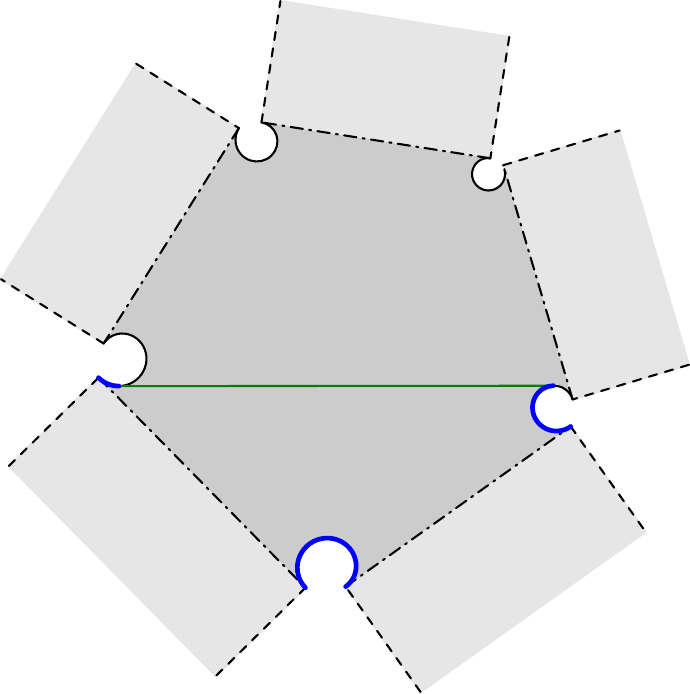}};
\node at (-2.6,-2.6) {$T_0$};
\node at (2.2,-3) {$T_1$};
\node at (3.45,1.05) {$T_2$};
\node at (0.6,3.6) {$T_3$};
\node at (-3.05,2) {$T_4$};
\node at (6.5,0) {\includegraphics*[scale=0.55]{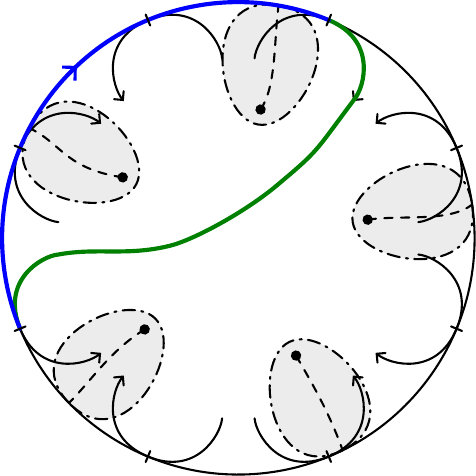}};
\node at (4.8,2.1) {$C_\eps$};
\node at (6.7,-0.1) {$\tau$};
\node at (6.5,0.9) {$z_0$};
\node(A) at (7,3) {$U(z_0)$};
\draw (A) -- (6.6,1.9);
\node at (5.8,0.45) {$z_1$};
\end{tikzpicture}
\caption{Enclosed singularities, $k+1=5$, $p=2$. Left: in the geometric model ($t$-space). Right: sketch in the $z$-space; we indicated germs of trajectories through the $8$ tangency points.}
\label{fig:0p}
\end{figure}

There are only finitely many possible equations of type \cref{eq:1}, according to:
\begin{enumerate}
\item the value of $p$ in $\{1,\ldots,k\}$,
\item which of the $2k$ potential tangency points on $\partial B(0,r)$ the trajectory $\tau$ joins.
\end{enumerate}
(All ordered pairs of tangency points on $\partial B(0,r)$ are possible and this determines uniquely the trajectory $\tau $ and the value of $p$ in (1), but we do not need to discuss these details at this point of the proof).
For each choice, the first term in \cref{eq:1} is a function of $\eps$ that is defined and real analytic in a domain that contains $\eps = 0$. In particular it is bounded and differentiable.

Recall that $\tilde z_{j} = \xi^j \delta$ with $\xi = \exp(2\pi i/(k+1))$, see \eqref{eq:delta}.
Each $\mu_j$ is close to $\tilde \mu_j =  \frac{2\pi i}{(k+1) \tilde z_j^k} = \frac{2\pi i}{(k+1)} \xi^j \delta^{-k} $.

\begin{lemma}\label{lem:d2}
\[\sum_{j=0}^{p-1} \mu_j = \frac{h_p(\delta)}{\delta^k}\]
where $h_p$ is holomorphic and $h_p(0)\neq 0$.
\end{lemma}
\begin{proof}
By \Cref{prop:eig},
\[ \lambda_j := \omega_\eps'(z_j) = (k+1) \tilde z_j^k + \tilde z_j^{k+1} s(\tilde z_j)\]
for some holomorphic function $s$ independent of $j$ (using $\eps = \tilde z_j^{k+1}$), and since
$\mu_j = 2\pi i / \lambda_j$ we get
\[\mu_j = \frac{2\pi i}{(k+1)\tilde z_j^k} g(\tilde z_j)\]
for some holomorphic function $g$ independent of $j$ that satisfies
\[g(0)=1.\] 
Now 
$\tilde z_j = \xi^j \delta$ so
\[\sum_{j=0}^{p-1} \mu_j = \frac{2\pi i}{(k+1) \delta^k}\sum_{j=0}^{p-1} \frac{g(\xi^j \delta)}{\xi^{jk}}.\]
and using $\xi^{jk} = 1/\xi^j$:
\[\sum_{j=0}^{p-1} \mu_j = \frac{2\pi i}{(k+1) \delta^k}\sum_{j=0}^{p-1} \xi^j g(\xi^j \delta).\]
So the claim holds with
\[h_p(\delta) := \frac{2\pi i}{(k+1)}\sum_{j=0}^{p-1}\xi^j g(\xi^j \delta),\]
for which
\[h_p(0) = \frac{2\pi i}{(k+1)}\sum_{j=0}^{p-1}\xi^j.\]
We have $h_p(0)\neq 0$ since $\sum_{j=0}^{p-1} \xi^j = \frac{1-\xi^{p}}{1-\xi}$, and $0<p<k+1$.
\end{proof}

The following statement will be used in the proof of \Cref{prop:ut} in \Cref{subsub:0}.

\begin{lemma}\label{lem:h0}
For the ideal period-gon $\tilde P$, we have
\[\sum_{j=0}^{p-1} \tilde\mu_j = \frac{h_p(0)}{\delta^k}\]
for the same function $h_p$ as in the previous lemma.
\end{lemma}
\begin{proof} In the previous proof we introduced a function $g$.
The corresponding function $g$ for the ideal vector field is constant, hence the corresponding $h_p$ is constant.
The value of $h_p$ at the origin was computed in the previous proof, so coincides for the families $\omega_\eps$ and $\tilde \omega_\eps$.
\end{proof}

\begin{lemma}
The set $\cal {DT}_{\phi,\phi'}$ is contained in a curve satisfying \Cref{item:tc4} of \Cref{thm:bifurcation}.
\end{lemma}
\begin{proof}
By \cref{eq:1} and \Cref{lem:d2}, points $\eps\in \cal {DT}_{\phi,\phi'}$ satisfy
\[ \im \left ( \frac{h_p(\delta)}{\delta^k} \right) = - \im \left(\int_{C_\eps} \frac{dz}{\omega_\eps(z)}\right) \]
where $\delta$ is an appropriate $(k+1)$-th root of $\eps$, see \Cref{conv:delta:p}.
The term on the right depends analytically on $\eps$ and remains bounded in $B(0,\rho)$.
According to \Cref{conv:delta:p}, the argument of $\delta$ is close to some $\alpha_0$
such that the term $\frac{h_p(\delta)}{\delta^k} = \sum_{j=0}^{p-1} \mu_j$ is close to horizontal, in particular $h_p(0)/e^{ik\alpha_0}$ is real and positive.
Let $\delta = s e^{i(\alpha_0+a)}$. Multiplying the equation by $s^{k}$ we get
\[ \im \left ( f(\delta) e^{-ika} \right) + s^k g(\eps) = 0\]
with $f,g$ analytic and $f(0)$ real positive.
For $s=0$, the value $a=0$ is a solution, and
we conclude using the implicit function theorem, since the differential of the equation w.r.t.\ $a$ at $(s,a)=(0,0)$ does not vanish: it is equal to $-k f(0)$.
\end{proof}

For fixed intermediate axes directions, $\phi$ and $\phi'$, the integration arc $C_\eps$ along $\partial B(0,r)$, which  goes from $T_{\phi'}(\eps)$ to $T_\phi(\eps)$ \emph{clockwise}, depends continuously on $\eps$.
Let
\[T_0 = T_{\phi}(0)\text{ and }T_1 = T_{\phi'}(0).\]

Let us write that $\omega_\eps(z) \approx z^{k+1}$ on $\partial B(0,r)$, where $\approx$ means that by reducing $r$ (then $\rho$) we can ensure that the quotient is arbitrarily close to $1$. 
Then on the one hand, by the tangency of the vector field with $\partial B(0,r)$ at $T_0$ and at $T_1$, we get that
$\omega_0(T_0) \in i T_0\R$ and $\omega_0(T_1) \in i T_1\R$,
whence
\[T_0^{k} \approx \pm i r^k\ \text{ and } T_1^{k} \approx \pm i r^k.\]
On the other hand $\forall \eps\in B(0,\rho)$:
\[\int_{C_\eps} \frac{dz}{\omega_\eps(z)} \approx \frac{-1}{k}\left(\frac{1}{T_1^{k}}-\frac{1}{T_0^{k}}\right).\]
By the above two estimates, it follows that 
\[\Im \int_{C_\eps} \frac{dz}{\omega_\eps(z)} \approx \frac{(\pm 1) +(\pm 1) }{kr^k}.\]

Let us use the labelling $\phi_m$ of the intermediate axes of $\omega_0$ introduced in \cref{eq:def:phi:m}.
In \Cref{def:top:bottom} we introduced the notion of topmost and bottomost types for the tangency points $T_{\phi_m}(\eps)$ when $\phi_m$ is an intermediate axis of $\omega_0$.
In only depends on the parity of $m\in\{1,\ldots 2k\}$, not on $\eps$.
In \Cref{sub:descr:bif}, we called \emph{essential} the case of a double tangent trajectory that reaches two tangency points $T_{\phi_{m_0}}(\eps)$ and $T_{\phi_{m_1}}(\eps)$ having different tangency sides.
In this case:
\[
\im \int_{C_\eps} \frac{dz}{\omega_\eps(z)} \approx \frac{\pm 2}{kr^k}
.\]
(This is actually valid for all $\eps$ provided $m_0$ and $m_1$ have different parity, even in the absence of a double tangency.)

If we are it the other case, where $m_0$ and $m_1$ have the same parity, then $\frac{1}{T_1^{k}}-\frac{1}{T_0^{k}}$ may compensate and depending on the situation, it may be hard to tell the order, or the disjointness, of the bifurcation arcs in this case. All we can do is to ensure that for any fixed $\eta>0$ in advance, then provided $r$ and $\rho$ are small enough we have
\[
\left|\int_{C_\eps} \frac{dz}{\omega_\eps(z)}\right| \leq \frac{\eta}{kr^k}
.\]

In any case, let
\begin{equation} \label{eq:def:I0}
I_0 = I_0[m_0,m_1] = \im \int_{C_{0}} \frac{dz}{\omega_0(z)}
\end{equation}
where $C_0$ is the limit as $\eps\to 0$ of the arc $C_\eps$, i.e.\ it is the arc that goes clockwise from $T_{\phi'}(0)$ to $T_\phi(0)$.

By \cref{eq:1} and \Cref{lem:d2}:

\begin{lemma}
For any parameter $\eps$ with a double tangent trajectory $\tau$,
\begin{equation} \label{eq:b2e}
\im\frac{h_p(\delta)}{\delta^k} = -I_0[m_0,m_1] + O(\eps)
\end{equation}
where $\delta$ is the $(k+1)$-th root of $\eps$ chosen in \Cref{conv:delta:p} (where we also defined the value of $p$). Moreover $h_p$ is the function of \Cref{lem:d2} and $\phi_{m_0},\phi_{m_1}$ are the entry and exit indices of $\tau$.
\end{lemma}

Consider the map
\begin{equation}\label{eq:def:p}
q(\delta) = h(\delta)/\delta^k
\end{equation}
as taking values in the Riemann sphere.
It sends $0$ to $\infty$ with a critical point of local degree $k$.
Let us denote the functions $h$ and $q$ as $h_{p}$ and $q_{p}$, to stress the dependence of $h$ on $p$, and
\[\cal {DT}'[m_0,m_1]\]
the set of $\delta\in B(0,\rho^{1/(k+1)})$ such that $|\arg(\delta)-\alpha_0|<\pi/k$ ($\alpha_0$ is defined just after \Cref{conv:delta:p}) and such that for $\eps = \delta^{k+1}$, $\omega_\eps$ has a double tangent trajectory from $T_{\phi_{m_0}}$ to $T_{\phi_{m_1}}$ (i.e.\ $\eps \in \cal{DT}_{\phi_{m_0},\phi_{m_1}}$).
By \cref{eq:b2e}:

\begin{corollary}
  The image of $\cal {DT}'[m_0,m_1]$
  under $q_{p}$ is contained in a curve asymptotic to the horizontal line of imaginary part $-I_0[m_0,m_1]$.
\end{corollary}
\noindent By asymptotic, we mean that $\forall \eta>0$, $\exists M>0$ such that for every point $w$ in this image such that $|\Re(w)|>M$ then $|\Im(w)+I_0[m_0,m_1]|<\eta$.
So the above does not imply that the image is a curve or even non-empty: this is the task of \Cref{subsub:0}.

\subsubsection{Finding trajectories with a double tangency of a given type}\label{subsub:0}

Consider two intermediate directions $\phi=\phi_{m_0}$ and $\phi'=\phi_{m_1}$.
As in the previous section, we use the variable $\delta$ with $\eps = \delta^{k+1}$, the indexing of \Cref{conv:num} and let $\alpha = \arg \delta$.
We let $\alpha_0$ and $p$ be defined as in \Cref{conv:delta:p}.
Recall $C_\eps$ denotes the arc of circle from $T_{\phi'}(\eps)$ to $T_{\phi}(\eps)$ in clockwise order.
The bifurcation directions split the circle of all possible arguments for $\eps$ into $K=2k$ equal intervals.
Let $K'=(k+1)K$.

\begin{theorem}\label{thm:erdt}
Assume $s$ small enough, $\alpha\in [\alpha_0-1/10K',\alpha_0+1/10K']$ and $s'\in(0,s)$. Let $\delta = s' e^{2\pi i\alpha}$ and $\eps=\delta^{k+1}$.
Then $\eps \in \cal{DT}_{\phi,\phi'}$ if and only if 
\begin{equation}\label{eq:2}
\im \left(\int_{C_\eps} \frac{dz}{\omega_\eps(z)}\right) + \im \left (\sum_{j=0}^{p-1} \mu_j(\delta) \right)= 0
.
\end{equation}
\end{theorem}

\begin{proof}
The direct implication follows from \Cref{eq:1}.

For the converse, assume that $|\alpha- \alpha_0|<1/10K'$.
We saw that a double tangent trajectory from $T_{\phi}$ to $T_{\phi'}$ maps in the geometric model to a horizontal segment that tangentially links $C'_{0-1/2}$ to $C'_{p-1/2}$ with our numbering conventions.

As in the proof of \Cref{eq:1}, we use that the straightening map from the phase space $B(0,r)$, with $k+1$ (not necessarily straight) slits, to the geometric model, splits $\partial B(0,r)$ into $k+1$ \emph{closed} arcs $C_{j-1/2}$ running anticlockwise from $u_{j-1}$ to $u_j$ where $u_j\in\partial B(0,r)$ are points satisfying, by \Cref{lem:ujvszj}, $|\arg(u_j/\tilde z_j)| < \eta$ and, by \Cref{lem:tgcy:pts}, $|\arg(T_{\phi_m}(\eps))-\phi_m|<\eta$, with $\eta$ as small as we like.

The argument span of $C_{j-1/2}$ is $\eta$-close to the interval \[I_j(\alpha) = (\arg(\tilde z_{j-1}(\delta)),\arg(\tilde z_{j}(\delta))) \subset \R/2\pi \Z
\]
which differs from $I_j(\alpha_0)$ by a translation of $\alpha-\alpha_0$ in $\R/2\pi\Z$.
We denote by $\psi_j$ the angles $\psi_j$ bounding $I_j(\alpha_0)$, so that
\[I_j(\alpha_0) = (\psi_{j-1},\psi_j)
.\]
The curve $C'_{j-1/2}$ is arbitrarily close to an arc of circle. Its argument span is the translation by $-k\times(\alpha-\alpha_0)$ of
\[ J_j(\alpha_0) := \left[ \Big( \frac{j}{k+1}-\frac{1}{4} \Big) 2\pi, \Big( \frac{j}{k+1}-\frac{1}{4}+\frac{k}{k+1} \Big) 2\pi \right]
\]
in $\R/2\pi \Z$.
The interval $J_j(\alpha_0)$ has length $2\pi \frac{k}{k+1}$ and its center is antipodal to the argument of the vertex $v_{j-1/2}(\delta)$ of the ideal period-gon.

Denote $\phi = \phi_{m_0}$ and $\phi' = \phi_{m_1}$.
We may or may not have $\phi_{m_0}\in \partial I_0(\alpha_0)$. This happens if and only if $p=1$ or $p=k$, i.e.\ the eyelet $\tilde C'_{j_0+1/2}$ is respectively bottommost or topmost, and the initial tangency point of the trajectory in the model is the bottommos/topmost of $C'_{j_0+1/2}$.
A similar statement holds for the condition $\phi_{m_1}\in \partial I_0(\alpha_0)$.

We first deal with the case where both angles $\phi_{m_0}$ and $\phi_{m_1}$ respectively belong to the interior of $I_0(\alpha_0)$ and $I_p(\alpha_0)$. Their distance to $\partial I_0(\alpha_0)$ (resp.\ $\partial I_p(\alpha_0)$) is then at least $\frac{2\pi}{2k(k+1)}>\frac{2}{10K'}$.
Hence for $|\alpha-\alpha_0|<1/10K'$ and $\eta$ initially chosen small enough, $\arg T_{\phi_{m_0}}(\eps)\in I_0(\alpha)$ and $\arg T_{\phi_{m_1}}(\eps)\in I_p(\alpha)$.
The images $c(\eps)$ and $c'(\eps)$  of $T_{\phi_{m_0}}(\eps)$ and $T_{\phi_{m_1}}(\eps)$ in the geometric model belong respectively to $C'_{0-1/2}$ and $C'_{p-1/2}$.
Choose any $\delta_0$ such that $\arg\delta_0 = \alpha_0$ (its modulus does not matter) and let $v'_{j-1/2}=v_{j-1/2}(\delta_0)$.
Since $\alpha_0$ is a distinguished bifurcation direction we have that the arguments of $v'_{0-1/2}$ and $v'_{p-1/2}$ both make an angle of at least $2\pi/2(k+1)$ with the imaginary axis.
Since $|\alpha-\alpha_0|<1/10K'$ then, provided $s$ was initially chosen small enough, the point $c(\eps)$ has negative real part while $c'(\eps)$ has positive real part.
In particular, for the $\eps$ corresponding to such $\alpha$, the arc $C_\eps$ crosses the contact points $u_j(\delta)$ with the same indices $j$ as the angles $\psi_j$ crossed by the arc going clockwise from $\phi_{m_1}$ to $\phi_{m_0}$.
From this and the geometric model, it follows that the left hand side of \Cref{eq:2} represents the difference in imaginary part between $c(\eps)$ and $c'(\eps)$.

If this difference vanishes, then the interior of the straight horizontal segment between these points is contained in the geometric model: these points are topmost or bottommost points on $C'_{0-1/2}$ and $C'_{p-1/2}$, where these nearly circular curves have a horizontal tangent, and 
the other $C'_{j-1/2}$ are too far away (fix $r$ and take $s$ small).
This provides a double tangent trajectory from $T_{\phi_{m_0}}$ to $T_{\phi_{m_1}}$ and proves the theorem.

We now deal with the case where $\phi_{m_0}$ belongs to the boundary of $I_0(\alpha_0)$ while $\phi_{m_1}$ belongs to the interior of $I_p(\alpha_0)$.
The side type of $T_{\phi_{m_0}}(\eps)$ only depends on $m_0$.
If it is of bottom type then $p=1$ otherwise $p=k$. We explain how to treat the bottom type case, the other case being similar.
In this case, $m_1$ is of top type (otherwise, given that $p=1$, we get $m_1=m_0$, so $\phi_{m_1}=\phi_{m_0}$ belongs to the boundary of $I_p(\alpha_0)$, in contradiction with the hypotheses).
The argument of the previous case still applies to $T_{\phi_{m_1}}(\eps)$ and proves that it belongs to $I_p(\alpha)$.
To prove that $T_{\phi_{m_0}}(\eps)$ belongs to $I_0(\alpha)$ we proceed by contradiction: if not, then it is in the closure of $I_p(\alpha)$ ($p=1$); but then $\Im(-\mu_0(\delta))>0$ while $\Im(\int_{C_\eps} dz/\omega_\eps(z)) < 0$ hence \Cref{eq:2} cannot hold. These last two inequalities can be proved by direct estimates or by considering the geometric situation illustrated on \Cref{fig:geo:ill}.
The rest of the proof is then the same.

\begin{figure}[htbp]
\begin{tikzpicture}
\node at (0,0) {\includegraphics[scale=0.5]{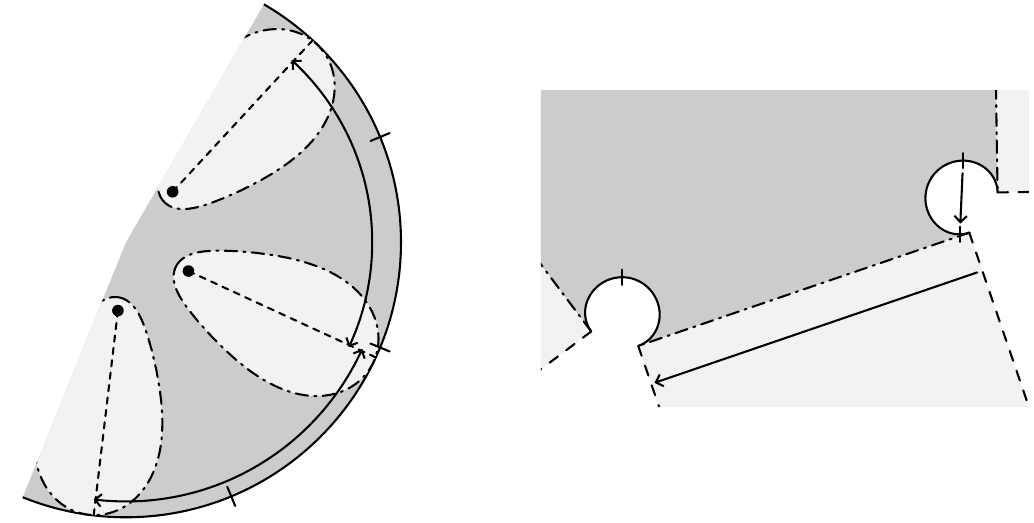}};
\node at (-2.3,-2.4) {\footnotesize $T_{\phi_{m}}(\eps)$};
\node at (-3.05,0.05) {$z_0$};
\node at (-0.8,-1.1) {$u_0(\delta)$};
\node at (-0.48,-0.67) {\footnotesize $T_{\phi_{m_0}}(\eps)$};
\node at (-0.5,1.2) {\footnotesize $T_{\phi_{m_1}}(\eps)$};
\node at (-2.45,-1.55) {$C_{-1/2}$};
\node at (-1.55,0.4) {$C_{1/2}$};

\node at (2.5,-1) {$\mu_0$};
\node at (3.2,0.9) {$C'_{1/2}$};
\node at (1.4,0.1) {$C'_{-1/2}$};
\end{tikzpicture}
\caption{Case where $\phi_{m_0}=\psi_0$ i.e.\ $\phi_{m_0}=\arg(u_0(\delta_0))$.
Then $T_{\phi_{m_0}}(\eps)$ is close to $u_0(\delta)$. We illustrate here what happens if $T_{\phi_{m_0}}(\eps)$ does not belong to $C_{-1/2}$.
Since $\mu_0$ is tangent to $\partial C'_{1/2}$ at the low endpoint of $C'_{1/2}$, its imaginary part is negative. The small arrow circled by $C'_{1/2}$ is a vector representing the integral of $dz/\omega_\eps(z)$ along the clockwise arc from $T_{\phi_{m_1}}(\eps)$ to $T_{\phi_{m_0}}(\eps)$.
The imaginary part of the sum of $\mu_0$ and of this vector is negative, so \cref{eq:2} cannot hold.}
\label{fig:geo:ill}
\end{figure}

The case where $\phi_{m_0}$ belongs to the interior  of $I_0(\alpha_0)$ while $\phi_{m_1}$ belongs to the boundary of $I_p(\alpha_0)$ is treated the same way.

Finally, we assume that $\phi_{m_0}$ belongs to the boundary of $I_0(\alpha_0)$ while $\phi_{m_1}$ belongs to the boundary of $I_p(\alpha_0)$. In this case $m_1=m_0$.
Again there are two cases: either these are bottom-type points and $p=1$, or these are top-type points and $p=k$.
Again we only treat the case $p=1$, the other being similar.
Then the integral part of \cref{eq:2} is null, while the other term is just $\Im(\mu_0)$. Its vanishing means that the periodic region of $z_0$ is bounded by a closed trajectory of $\omega_\eps$, and this loop joins $T_{\phi_{m_0}}(\eps)$ to $T_{\phi_{m_1}}(\eps)$.
\end{proof}

This proves \Cref{item:tc1} of \Cref{thm:bifurcation}.
\Cref{item:tc4} of \Cref{thm:bifurcation} is proved by the second point of the following statement:

\begin{proposition}\label{prop:ut}
Provided $r$ has been chosen sufficiently small, there exists $\rho>0$ such that
for all real $s>0$ with $s^k < \rho)$:
\begin{enumerate}
\item\label{item:ut1} There is a unique $\alpha = \alpha(s) \in [\alpha_0-1/10K',\alpha_0+1/10K']$ such that for $\delta=s e^{2\pi i\alpha}$, \cref{eq:2} holds.
\item\label{ut:2} The solution $\alpha(s)$ depends real analytically on $s$, and this function has a real analytic extension through $0$.
\end{enumerate}
\end{proposition}
\begin{proof}
The term $\int_{C_\eps} \frac{dz}{\omega_\eps(z)}$ is bounded independently of $\alpha$ and $s$, so its imaginary part too.

\Cref{eq:2} depends real-analytically on $\delta$, hence on $(s,\alpha)$ for $s>0$.
\Cref{lem:d2} also applies here, so we can write
$\sum_{j=0}^{p-1} \mu_j(\delta) = h_p(\delta) / \delta^k$ for some holomorphic function $h$ with $h(0)\neq 0$.

The directions $\arg \delta=\alpha_0$ was chosen so as to satisfy the following condition: $\arg(v_{p-1/2}-v_{-1/2}) \equiv 0 \bmod 2\pi$.
This means exactly that $\arg \sum_{j=0}^{p-1} \tilde \mu_j \equiv 0\bmod 2\pi$.
By \Cref{lem:h0}, we have thus, for $\arg \delta = \alpha_0$:
$ \arg \frac{h_p(0)}{\delta^k} \equiv 0 \bmod 2\pi
$.
So $\arg h_p(0) \equiv k\arg \alpha_0 \bmod 2\pi$.
Hence
\[ \Im \frac{h_p(\delta)}{\delta^k} = c_p |\delta|^{-k} \sin (k(\alpha_0-\alpha)) + \Im( g_p(\delta) / \delta^{k-1}) \] 
with $c_p = |h_p(0)|$ and $h_p(\delta) = h_p(0) + \delta g_p(\delta)$.
 
Existence, uniqueness and analytic dependence on $s$, of a solution of \Cref{eq:2}, follow from the implicit function theorem applied to \Cref{eq:2} multiplied by $s^k$ with $s=|\delta|$. 
We check that it has an analytic extension through $s=0$, vanishing at $(\alpha,s)=(\alpha_0,0)$ and that the differential with respect to $\alpha$ at $s=0$ does not vanish.
One computes
\[
\frac{\partial}{\partial\alpha}s^k \Im \frac{h_p(\delta)}{\delta^k}
 \to -k c_p \cos(k(\alpha_0-\alpha)) 
\]
Note that $|k(\alpha_0-\alpha)|< 1/10$ so the $\cos$ term is bounded away from $0$.
The term $\int_{C_\eps} \frac{dz}{\omega_\eps(z)}$ is analytic in $\eps$ in a neighborhood of $\eps=0$. Multiplying by $s^k$, its derivative w.r.t.\ $\alpha$ uniformly vanishes at $s=0$.
\end{proof}

\begin{proof}[Proof of \Cref{item:cx:1,item:cx:2} of \Cref{thm:connection}]
We saw in \Cref{rem:inv} that for a given 
choice of bifurcation direction $\theta_0$ for $\eps$ and a choice of pair of vertices with the same imaginary part in the corresponding limit ideal period-gon, there are $4$ different pairs $(m_0, m_1)$ corresponding to these choices,
one for every possible pair of tangency types.
The $4$ corresponding values of $I_0=I_0[m_0,m_1] = \im \int_{C_{\eps=0}} \frac{dz}{\omega_0(z)}$ (see \Cref{eq:def:I0}) are generically different, and we can ensure at least $3$ different values by taking $r$ small enough: indeed we saw in \Cref{subsub:d} that for a $(\text{bottom},\text{top})$ tangency, $I_0\approx -2/kr^k$, for a $(\text{top},\text{bottom})$ $I_0\approx 2/kr^k$ and for a $(\text{bottom},\text{bottom})$ or $(\text{top},\text{top})$, $|I_0|\leq \eta/r^k$ where $\eta$ can be made as small as we wish by taking $r$ small.
When $\eps$ is small, $\Im\int_{C_\eps} \frac{dz}{\omega_\eps(z)}$ is close to $I_0$.
The analysis done in the proof of \Cref{prop:ut} carries out here and shows that as $\alpha=\arg\delta$ increases in $[\alpha_0-1/10K',\alpha_0+1/10K']$ while $s=|\delta|$ remains constant, the quantity $\im \left (\sum_{j=0}^{p-1} \mu_j(\delta) \right)$ decreases. It will thus be equal to the value of $\Im\int_{C_\eps} \frac{dz}{\omega_\eps(z)}$ for the $(\text{top},\text{bottom})$ type first and for the $(\text{bottom},\text{top})$ last, while the other two types happen in between.

From the geometric model, for $\arg\delta \in [\alpha_0-1/10K',\alpha_0+1/10K']$ and $\eps=\delta^{k+1}$, then $\eps \in \cal C_{\phi_a,\phi_r}$ if and only if the right facing part of $C'_{-1/2}$ and the left facing part of $C'_{p-1/2}$ have points with the same imaginary part.
This happens if and only if one eyelet has a topmost or/and a bottomost tangency point between the topmost and bottommost tangency point of the other one.
We know from the previous statements that for each type pair and $s$ fixed small enough there is only one value of $\alpha$ close to $\alpha_0$ for which \cref{eq:2} holds, and from the proof of \Cref{thm:erdt}, that the quantity in this equation represents the difference of imaginary parts of the images in the model of the tangency points $T_\phi(\eps)$ and $T_{\phi'}(\eps)$.
It follows that $\cal C_{\phi_a,\phi_r}$ is the space between the $(\text{bottom},\text{top})$ type and the $(\text{top},\text{bottom})$ one.
\end{proof}

\begin{proof}[Proof of \Cref{item:cx:width} of \Cref{thm:connection}]
Let $(m_0,m_1)$ be the tangency point indices for one of the curves bounding $\cal C_{\phi_a,\phi_r}$ and $(m'_0,m'_1)$ the ones for the other curve.
Going back to the equation $\im\frac{h_p(\delta)}{\delta^k} = -I_0[m_0,m_1] + O(\eps)$
with $h_p(\delta)$ tending to $h_p(0) = \frac{2\pi i}{(k+1)}\sum_{j=0}^{p-1}\xi^j \neq 0$ as $\delta\to 0$, with $\xi = e^{2\pi i/(k+1)}$, and $\eps = \delta^{k+1}$,
we get that the angle span of $\cal C_{\phi_a,\phi_r}$ is equivalent as $\eps\to 0$ to
\[\left|\frac{I[m_0,m_1]-I[m'_0,m'_1]}{h_p'(0)}\right| |\delta|^k\]
(we saw previously that $I[m_0,m_1]-I[m'_0,m'_1]$ does not vanish).
\end{proof}

\begin{remark}
Though not needed, we find interesting to provide an alternate approach to \Cref{thm:connection}, which we will not detail completely.
As we mentioned, the sum of all periods $A=\sum_{j=0}^{k}\mu_j$ may fail to be equal to $0$.
One idea is to introduce \emph{pseudoperiods} $\hat\mu_j =\mu_j - \frac{A}{k+1}$, so that $\sum_{j=0}^{k}\hat\mu_j = 0$, and a \emph{pseudoperiod-gon} of vertices $\hat v_{j+1/2}$ such that $\hat v_{j+1/2}-\hat v_{j-1/2} = \hat \mu_j$ and $\sum_{j=0}\hat v_j = 0$.
Consider the Laurent series expansion for $|\eps|<|z|<r$
\[\frac{1}{\omega_\eps(z)} = \sum_{n\in\Z} a_n(\eps) z^n.\]
It is possible to place the eyelets using on $C_{j+1/2}$ the antiderivative
\[\hat v_{j+1/2} + a_{-1} \log_p \frac{z}{e^{i\theta_{j+1/2}(\eps)}} + \sum_{n\in\Z^*} a_n(\eps) \frac{z^{n+1}}{n+1}\]
where $\log_p$ is the principal branch of the logarithm and $\theta_{j+1/2}(\eps)$ is the argument of the ideal period-gon vertex $\tilde v_{j+1/2}$.
This is why we may call $\tilde v_{j+1/2}$ an \emph{eyelet pseudocenter}.
Let us show how for instance this allows for an interesting proof of \Cref{item:cx:1} of \Cref{thm:connection}: As $\arg \eps$ varies near $\theta_0$ while $|\eps|$ is kept constant, the position of the pseudocenter vertices $\hat v_{j+1/2}$ vary fast, compared to the position of the topmost/bottommost points of the eyelets relative to the center of the eyelet.
It follows that for parameters close to a bifurcation parameter, increasing $\arg \eps$ while keeping $|\eps|$ fixed, we will see the right eyelet $C'_{j_1+1/2}$ plunge downwards (in a direction where the imaginary part decreases) while the left eyelet $C'_{j_0+1/2}$ ascends, at a much higher speed than the topmost and bottomost tangency points move relative to $\hat v_{j+1/2}$.
It follows that, fixing $|\eps|$ to a small value, in an interval of angles of width $\pi/2k$ and centered on a bifurcation direction, there is a unique parameter for which a horizontal segment can link the bottommost horizontal point of the left eyelet to the topmost of the right eyelet.
Moreover, this must happen before the value of $\arg\eps$ for which topmost and bottommost are permuted.
\end{remark}

\subsubsection{Proof of \Cref{prop:bif2:6}}

Consider
\[ \omega_\eps = z^{k+1}+a z^{k+2} - \eps + a' \eps z + O(z^{k+3},\eps z^2,\eps^2)
.\]
We know that the singularity $z_0(\delta)$ expands as $z_0(\delta)  = \delta + b_1 \delta^2+O(\delta^3)$ and substituting in $\tilde \omega_\eps(z_0)=0$ we get
that
\[ b_1 = -\frac{a'+a}{k+1}
.\]
Recall $\lambda_0(\delta) = \omega_\eps'(z_0)$. 
Since $\omega_\eps'(z) = (k+1)z^k +a(k+2)z^{k+1} +a'\eps + O(z^{k+2},\eps z,\eps^2)$
we get
\[ \lambda_0(\delta) = (k+1)\delta^k + \Big( k(k+1)b_1 +a(k+2) +a' \Big) \delta^{k+1} + O(\delta^{k+2})
,\]
hence
\[\mu_0(\delta) = \frac{2\pi i}{k+1} \delta^{-k} \Big( 1 + c\delta + O(\delta^2) \Big)
,\]
with
\[c = \frac{(k-1)a'-2a}{k+1}.\]
We recall that
\[ \mu_j(\delta) = \mu_0(\xi^j\delta) \text{ where } \xi = e^{2\pi i/(k+1)}
.\]
As usual, we denote $\alpha = \arg \delta$.
For a given bifurcation direction $\theta_0$ for $\eps=\delta^{k+1}$ (one of the $2k$ solutions $\theta$ of $\arg ((ie^{i\theta})^k) \equiv 0 \bmod \pi$) there are $k+1$ solutions $\alpha$ of $(k+1)\alpha = \theta_0$.
For each choice of $\alpha$, we chose in \Cref{conv:num} to label the ideal period-gon vertices as $v_{j-1/2}$ so that, in particular
\begin{equation}\label{eq:argvl}
\arg v_{-1/2} = \pi -\frac{\pi}{k+1}-k\alpha.
\end{equation}
Among $k+1$ possible choices of $\alpha$ for a given $\theta_0$, half of them, or slightly less (precisely $k/2$ if $k+1$ is odd and either $(k+1)/2$ or $(k-1)/2$ if $k+1$ is even), are such that, $v_{-1/2}$ is the leftmost point of a horizontal segment linking it to $v_{p-1/2}$ for some $0<p<k+1$.
(This is coherent with \Cref{conv:delta:p}.) 
We call such $\alpha$ the \emph{distinguished lifted bifurcation direction}.
Given $\theta_0$, denote by $v_{bl}$ the bottommost ideal period-gon vertex strictly to the left of the imaginary axis. Then $v_{bl}$ may make an angle of either $\pi/(k+1)$ or $2\pi/(k+1)$ with the imaginary axis.
We get
\begin{dseries*}
\begin{math}
\arg v_{bl} = -\frac{\pi}{2} -\frac{\pi}{k+1}
\end{math}
(Case 1), or
\begin{math}
\arg v_{bl} = -\frac{\pi}{2} -\frac{2\pi}{k+1} 
\end{math}
(Case 2),
\end{dseries*}
and this means that, denoting by $\alpha_0$ the unique value of $\alpha\in\R/2\pi\Z$ satisfying \Cref{eq:argvl} for $v_{-1/2} = v_{bl}$ and $(k+1)\alpha \equiv \theta_0 \bmod 2\pi$, we have
\begin{dseries*}
\begin{math}
k\alpha_0 \equiv -\frac{\pi}{2} \bmod 2\pi
\end{math}
(Case 1) or
\begin{math}
k\alpha_0 \equiv -\frac{\pi}{2} + \frac{\pi}{k+1} \bmod 2\pi
\end{math}
(Case 2).
\end{dseries*}
Hence
\begin{dseries*}
\begin{math}
\alpha_0 \equiv \theta_0 + \frac{\pi}{2} \bmod 2\pi
\end{math}
(Case 1) or
\begin{math}
\alpha_0 \equiv \theta_0 + \frac{\pi}{2} - \frac{\pi}{k+1} \bmod 2\pi
\end{math}
(Case 2).
\end{dseries*}
The $2k$ values of $\alpha_0$ satisfying these equations are in bijection by $\alpha\mapsto (k+1)\alpha$ with the $2k$ bifurcation directions $\theta_0$: let us temporarily call $\alpha_0$ the \emph{preferred lift} of $\theta_0$.

We now consider a bifurcation direction $\theta_0$ and its preferred lift $\alpha_0$.
The distinguished lifted bifurcations directions associated to $\theta_0$ are of the form
$\alpha_q := \alpha_0 - q \frac{2\pi}{k+1}$
with $q\in\N$, $0\leq q$ and
\begin{dseries*}
\begin{math}
\frac{\pi}{k+1} + q\frac{2\pi}{k+1}<\pi
\end{math}
(Case 1), or
\begin{math}
\frac{2\pi}{k+1} + q\frac{2\pi}{k+1}<\pi
\end{math}
(Case 2).
\end{dseries*}
Let $\theta$ close to $\theta_0$ be the argument of $\eps$ in the sense that $|\theta-\theta_0|<1/10k$, let $\delta$ be its unique lift by  $\delta^{k+1}=\eps$ such that $\alpha=\arg \delta$ is close to $\alpha_0$ in the sense that $|\alpha-\alpha_0|<1/10k(k+1)$.
The other lifts of $\eps$ are $\xi^{-q}\delta$ for $q$ satisfying the inequalities above.

The corresponding index $p$ (such that $v_{-1/2}$ and $v_{p-1/2}$ have the same imaginary part) is
\begin{dseries*}
\begin{math}
p=2q+1
\end{math}
(Case 1) or
\begin{math}
p=2q+2
\end{math}
(Case 2).
\end{dseries*}
Note that the inequalities above are equivalent to $p < k+1$.

The equation of the bifurcation curve is then 
\begin{equation}\label{eq:3}
\Im F_p(\delta) = -I[m_0,m_1](\eps),
\end{equation}
with
\[ 
F_q(\delta) :=  \sum_{j=0}^{p-1} {\mu_j(\xi^{-q}\delta)}
\text{ and } I[m_0,m_1](\eps) = \Im \int \frac{dz}{\omega_\eps(z)},
\]
where the integral is taken along an arc of $\partial B(0,r)$ followed \emph{clockwise} from $T_{\phi_{m_1}}(\eps)$ to $T_{\phi_{m_0}}(\eps)$, where the $T_{\phi_m}(\eps)$ denote the tangency points and where $m_0$ and $m_1$ each are one of the two possibilities for the given bifurcation direction and ideal period-gon vertex pair, see \Cref{rem:inv}.

Now, using $\xi^{-k}=\xi$ and the expansion of $\mu(\delta)$, a computation gives
\[ \sum_{j=0}^{p-1} {\mu_j(\xi^{-q}\delta)} =
\frac{2\pi i}{k+1} \delta^{-k} \Big(\sum_{j=0}^{p-1} \xi^{j-q} + c\delta\sum_{j=0}^{p-1} \xi^{2(j-q)} + O(\delta^2) \Big)
.\]
In Case~1 we can transform the expression using classical trigonometric identities into
\[ F_q(\delta) =
\frac{2\pi i}{k+1} \delta^{-k}
\frac{\sin\frac{p\pi}{k+1}}{\sin\frac{\pi}{k+1}}
\left( 1+ c\delta \frac{\cos\frac{p\pi}{k+1}}{\cos\frac{\pi}{k+1}}
+ O(\delta^2) \right)
\]
and in Case~2, denoting $\zeta = e^{\frac{\pi i}{k+1}}$:
\[ F_q(\delta) =
\frac{2\pi i}{k+1} \delta^{-k} \zeta
\frac{\sin\frac{p\pi}{k+1}}{\sin\frac{\pi}{k+1}}
\left( 1+ c\delta \zeta \frac{\cos\frac{p\pi}{k+1}}{\cos\frac{\pi}{k+1}}
+ O(\delta^2) \right)
.\]

We claim that if $\Im (ce^{i\alpha_0}) \neq 0$ in Case~1 or $\Im (ce^{i\alpha_0}\zeta) \neq 0$ in Case~2, then the bifurcation arcs for different values of $q$ are disjoint sufficiently close to $0$.
If $k=1$ there is nothing to prove as there can only be one value of $p$, so we assume $k>1$.
Assume $q'\neq q$ and consider the associated $p'=2q'+1$.
Let $S_p = 2\pi\sin\frac{p\pi}{k+1} / (k+1)\sin\frac{\pi}{k+1}$ and $\Gamma_p = \cos\frac{p\pi}{k+1} / \cos\frac{\pi}{k+1}$, which are both real constants.
The estimate above gives that 
\begin{align*}
\frac{\Im F_{q'}(\delta)}{S_{p'}} - \frac{\Im F_{q}(\delta)}{S_p} & = (\Gamma_{p'}-\Gamma_{p}) \Im\left( i c\delta^{1-k}  \right) + O(|\delta|^{2-k}) \\
& = (\Gamma_{p'}-\Gamma_{p}) \Im\left( i e^{-ik\alpha} c e^{i\alpha}  \right) |\delta|^{1-k} + O(|\delta|^{2-k}).
\end{align*}
As $\delta \to 0$, we have $\alpha\tend \alpha_0$.
We also know that $ie^{-ik\alpha}\tend \pm 1$; this can be checked by a direct computation but also follows from the following facts: $F_q(\delta)\sim S_p i \delta^{-k}$ while $S_p$ is real, $\Im F_q(\delta)$ remains bounded (it is equal to $-I[m_0,m_1](\delta)$) and $|\delta|^{1-k} \to +\infty$).
So $\Im\left( i e^{-ik\alpha} c e^{i\alpha}  \right)$ tends to a non-zero constant because of the assumption about $c$.
Since moreover $\Gamma_{p'}\neq \Gamma_p$ ($p\pi/(k+1)$ and $p'\pi/(k+1)$ are different and both belong to $[0,\pi]$ so their cosines are different), it follows that
\[ \left|\frac{\Im F_{q'}(\delta)}{S_{p'}} - \frac{\Im F_{q}(\delta)}{S_p} \right| \to +\infty
.\]
If $\delta$ were a bifurcation parameter both for $q$ and $q'$, then the difference $\Im F_q(\delta)$ and $\Im F_{q'}(\delta)$ would stay bounded (they equal respectively $I[m_0,m_1](\delta)$ and $I[m'_0,m'_1](\delta)$), which leads to a contradiction.
The proof in Case~2 is similar, with the condition on $c$ being replaced by $\Im(c \zeta e^{i\alpha_0}) \neq 0$.
Note that the union of all these conditions on $c$ for Case~1 and Case~2 and all the possible values of $\alpha_0$, is the complement of a union of straight lines in $\C$, so it is generic.

There remains to treat arcs coincidence for a given value of $q$, for which we recall that there are $4$ possible arcs, given by $4$ possible choices of the pair $(m_0,m_1)$.
We already saw that the only curves that could be identical are the $(\text{top},\text{top})$ and (bottom, bottom) types.
Let us check that these types generically give disjoint curves.
Note that $m_0$ and $m_1$ have the same parity for a $(\text{top},\text{top})$ type and the same other parity for a $(\text{bottom},\text{bottom})$ type. Let us first develop
\[\hat{\phi}_m :=\arg T_{\phi_m}(0)\]
as a function of $r$: since $\phi_m = \frac{\pi}{2k} + m\frac{\pi}{k} = \lim_{r\to 0} \hat{\phi}_m$, we get from the equation $\Re(\omega_0(re^{i\hat{\phi}_m})/re^{i\hat{\phi}_m}) = 0$
that
\[ \hat{\phi}_m = \phi_m + r\frac{(-1)^m\Im(a e^{i\phi_m})}{k} + O(r^2).\]

Integrating clockwise around $\partial B(0,r)$ and using that $e^{-ik\phi_{m_0}} = e^{-ik\phi_{m_1}} = (-i)(-1)^{m_0} =  (-i)(-1)^{m_1}$: we get for $k>1$
\begin{multline*}
\int_{re^{i\hat{\phi}_{m_1}}}^{re^{i\hat{\phi}_{m_0}}} \frac{dz}{\omega_0(z)} = \int_{\cdot}^{\cdot} (z^{-k-1}-az^{-k}+O(z^{-k+1}))dz =
\\
\left[ -\frac{z^{-k}}{k}+a\frac{z^{1-k}}{k-1}+O(z^{2-k})
\right]_{re^{i\hat{\phi}_{m_1}}}^{re^{i\hat{\phi}_{m_0}}}
= \frac{r^{-k}}{k}\left( e^{-ik\hat{\phi}_{m_0}} - e^{-ik\hat{\phi}_{m_1}}\right)
\\
 +\frac{a}{k-1} r^{1-k} \left( e^{i(1-k)\hat{\phi}_{m_0}} - e^{i(1-k)\hat{\phi}_{m_1}} \right) + O(r^{2-k}) = 
O(r^{2-k}) + 
\\
\left( \Im(a e^{i\phi_{m_1}}) - \Im(a e^{i\phi_{m_0}})
 +\frac{a}{k-1} (-i)(-1)^m\left( e^{i\phi_{m_0}} - e^{i\phi_{m_1}} \right)\right)r^{1-k}
.
\end{multline*}
Taking the imaginary part of this expression and using that $\Im(\Im(z))=0$ we get:
\[ I[m_0,m_1](0) = \frac{(-1)^{m_0}}{k-1} \Re \left(a( e^{i\phi_{m_1}} - e^{i\phi_{m_0}})\right) r^{1-k}+ O(r^{2-k}) 
.\]
By imposing the supplementary (real-linear) condition that
\[(-1)^{m_0}\Re \left(a( e^{i\phi_{m_1}} - e^{i\phi_{m_0}}) \right) \neq (-1)^{m'_0} \Re\left( a( e^{i\phi_{m'_1}} - e^{i\phi_{m'_0}} )\right)
\]
for every $m_0,m_1,m'_0,m'_1$ with $m_0$ and $m_1$ of one parity and $m'_0$ and $m'_1$ of the other parity, $m_0$ and $m'_0$ adjacent across an attracting axis and $m_1$ and $m'_1$ adjacent across a repelling axis, we know that $I[m_0,m_1](0) \neq I[m'_0,m'_1](0)$ for all $r$ small enough, which implies the two bifurcation arcs are disjoint.
This is equivalent to
$\Re \left(a( e^{i\phi_{m_1}} - e^{i\phi_{m_0}} + e^{i\phi_{m'_1}} - e^{i\phi_{m'_0}} )\right) \neq 0$
and the condition is generic since
$e^{i\phi_{m_1}} - e^{i\phi_{m_0}} + e^{i\phi_{m'_1}} - e^{i\phi_{m'_0}}$ cannot vanish: indeed it is equal to 
$(e^{i\frac{\phi_{m_1}+\phi_{m'_1}}{2}} - e^{i\frac{\phi_{m_0}+\phi_{m'_0}}{2}}) \cos \frac{\pi}{2k}$
so it can vanish only if $\frac{\phi_{m_1}+\phi_{m'_1}}{2} \equiv \frac{\phi_{m_0}+\phi_{m'_0}}{2} \bmod 2\pi $.
But these are the directions of a repelling axis and an attracting axis so they cannot coincide.

In the case $k=1$ the two curves in question correspond to the simpler equations $\mu_0(\delta)\in\R$ and $\mu_1(\delta)\in\R$.
We have $c=-a$, $\Im(\mu_0(\delta)) = \pi \Re(\delta^{-1}-a)+O(|\delta|)$ and  $\Im(\mu_1(\delta)) = \pi \Re(-\delta^{-1}-a)+O(|\delta|)$, and it is enough to require $\Re(a)\neq 0$ to ensure the disjointness of the two curves near $0$.
\hfill$\Box$

\section*{Acknowledgements}

We would like to thank Wolf Jung for stimulating discussions and for suggesting a simplification in \Cref{sub:ideal:case}.
We also thank an early reader for pointing us to the work of \cite{Ri}, and for many useful remarks.

% -------------------------------------------

\appendix

% -------------------------------------------

\section{The path method}\label{app:path}

Here we give an alternative proof of the converse implication in \Cref{thm:lambda_inv}, based on the \emph{path method}, attributed by Roussarie in \cite{Rou} to Mather (in another context) though it may date from before.
The form we use here is a re-interpretation of the form in \cite{Ma}, proof of Proposition~1.1.
The latter form is used in \cite{Ri}, proof of Proposition~5.10, to give a very general classification theorem of unfoldings of vector fields.\footnote{The path method allows Rib\'on to reduce the classification modulo change of variable and parameter to a simpler non-dynamical problem: consider the multi-dimensional space consisting of the phase space times the parameter space; there the eigenvalues (Rib\'on rather speaks of the residues) are defined on the variety of singular points and, in some circumstances, it is enough that these varieties be sent one to another so as to preserve the eigenvalues. In full generality the statement has some complications.}
This result is then used in the same article to give a \emph{formal} classification theorem of unfoldings of \emph{diffeomorphisms} in the most general case.
In our context, the use of the path method is easier.

\begin{proof}
Choose 
\[ \omega_{\eps,s} = (z^{k+1}-\eps) v_{\eps,s}(z)
\]
for $s\in[0,1]$ to be any interpolation\footnote{A linear interpolation works and will be not only $C^2$ but real-analytic.} between $\omega_{\eps} = \omega_{\eps,0}$ and $\tilde\omega_{\eps} = \omega_{\eps,1}$ such that $v_{\eps,s}$ is holomorphic in $(z,\eps)$, depends in a $C^2$ way on $s$, stays bounded away from $0$ and such that the eigenvalues of $\omega_{\eps,s}$ at its singularities do not depend on $s$.
The last condition is possible since $\omega$ and $\tilde \omega$ have the same eigenvalues (and this amounts to forcing $v_{\eps,s}(\delta)$ to remain independent of $s$ when $\delta^{k+1}=\eps$).
The method will construct a conjugacy $g_{\eps,s}$ from $\omega_{\eps}=\omega_{\eps,0}$ to $\omega_{\eps,s}$ (uniformly in a neighborhood $B(0,\rho)\times B(0,r)$ of the origin in $(z,\eps)$-space) such that $g_{\eps,s}(0)=0$, by solving an O.D.E.\ involving $\frac{\partial}{\partial s}g_{\eps,s}$.

We recall that straightening coordinates of a vector field $dz/dt = \omega(z)$ are locally given by antiderivatives: $F=\int dt = \int dz/\omega(z)$.
More globally, such a function is likely to be multivalued near a singularity $a$. More precisely, winding once around $a$ in the positive direction adds $2\pi i \res$ where $\res$ is
the residue at $a$ of the meromorphic $1$-form $dz/\omega(z)$. 
If we introduce $F_{\eps,s}$ as an anti-derivative of $1/\omega_{\eps,s}$, it is a multivalued function on $B(0,r)\setminus \setof{z\in B(0,r)}{z^{k+1}=\eps}$.
There is a choice of additive constant, and we choose it so that a branch sends $0$ to $0$ for $\eps\neq 0$
(note that we choose not to normalize $F_{0,s}$).
For $\eps\neq 0$, $\partial F_{\eps,s}/\partial s$ is well-defined (takes only one value at a given $z$): indeed, the residues do not depend on $s$. It is a holomorphic function of $z$ and has erasable singularities at the singularities of the vector field: indeed $1/\omega_{\eps,s}$ is written as
\[ \frac{1}{\omega_{\eps,s}(z)} = h_{\eps,s}(z)+\sum_j \frac{\res_j}{z-z_j}
,\]
where $h_{\eps,s}$ is holomorphic in $z$ throughout $B(0,r)$, so
$\partial F_{\eps,s}/\partial s$ coincides for $\eps\neq 0$ with the antiderivative $Q_{\eps,s}$, w.r.t.\ $z$, of $\partial h_{\eps,s}/\partial s$ that vanishes at $z=0$ (because of the normalization).
Note that $Q_{0,s}$ is also defined and that $Q_{\eps,s}(z)$ depends $C^1$ on $s\in[0,1]$ and analytically on $(\eps,z)$.

For $\eps\neq 0$, a function $g_{\eps,s}$ conjugates $\omega_{\eps,0}$ to $\omega_{\eps,s}$ if and only if it is, locally, a translation in the respective straightening coordinate, i.e.\ $F_{\eps,s}\circ g_{\eps,s} - F_{\eps,0}$ is locally constant (w.r.t.\ $z$).
By our choice of normalization $F_{\eps,s}(0)=0$, the condition $g_{\eps,s}(0)=0$ imposes that this constant can only be $0$.
Differentiating with respect to $s$, this gives that
\begin{equation}\label{eq:edo1}
\Big(\frac{\partial }{\partial s} F_{\eps,s}\Big) \circ g_{\eps,s} + \frac{1}{\omega_{\eps,s}\circ g_{\eps,s}} \frac{\partial }{\partial s} g_{\eps,s} = 0
.
\end{equation}
\Cref{eq:edo1} is equivalent to 
\begin{equation}\label{eq:edo2}
\frac{\partial }{\partial s} g_{\eps,s} = -\Big(\omega_{\eps,s}\times Q_{\eps,s}\Big) \circ g_{\eps,s}
,
\end{equation}
which has the advantage of also making sense for $\eps=0$.
Conversely, if $g_{\eps,0}$ is the identity and \cref{eq:edo2} holds for all $s$ and $\eps$, then $g_{\eps,s}(0)=0$ and for all $\eps\neq 0$, $F_{\eps,s}\circ g_{\eps,s} - F_{\eps,0}=0$.
So $g_{\eps,s}$ sends $\omega_{\eps,0}$ to $\omega_{\eps,s}$ and, by continuity, this holds for $\eps=0$ too.
This O.D.E.\ can be considered on the space of $(\eps,z,s)\in B(0,\rho)\times B(0,r)\times [0,1]$ and encodes the flow with time parameter $s$ of a non-autonomous vector field in the $(\eps,z)$ space.
As such, it has a solution by the Cauchy-Lipschitz theorem.\footnote{A.k.a.\ Picard–Lindelöf theorem.}
\end{proof}

In this version of the proof, we also obtain a $0$-fixing change of variable and parameter.

% -------------------------------------------

\printbibliography

% -------------------------------------------

\end{document}